\renewcommand\subsection{\leftskip 0pt\@startsection{subsection}{2}{\z@}%
                                     {-3.25ex\@plus -1ex \@minus -.2ex}%
                                     {1.5ex \@plus .2ex}%
                                     {\normalfont\normalsize\bfseries}}
\renewcommand\subsubsection{\@startsection{subsubsection}{3}{\z@}%
                                     {-3.25ex\@plus -1ex \@minus -.2ex}%
                                     {1.5ex \@plus .2ex}%
                                     {\normalfont\normalsize\bfseries\leftskip 3ex}}
\setlist[enumerate]{label*=\arabic*.}
\title{Recollements and stratification}
\author{Jay Shah}
\address{Fachbereich Mathematik und Informatik, WWU Münster, 48149 M\"{u}nster, Germany}
\email{jayhshah@gmail.com}
\begin{document}

\tikzcdset{arrow style=tikz, diagrams={>=stealth}}

\begin{abstract} 
We develop various aspects of the theory of recollements of $\infty$-categories, including a symmetric monoidal refinement of the theory. Our main result establishes a formula for the gluing functor of a recollement on the right-lax limit of a locally cocartesian fibration determined by a sieve-cosieve decomposition of the base. As an application, we prove a reconstruction theorem for sheaves in an $\infty$-topos stratified over a finite poset $P$ in the sense of Barwick--Glasman--Haine. Combining our theorem with methods from the work of Ayala--Mazel-Gee--Rozenblyum, we then prove a conjecture of Barwick--Glasman--Haine that asserts an equivalence between the $\infty$-category of $P$-stratified $\infty$-topoi and that of toposic locally cocartesian fibrations over $P^{\op}$.
\end{abstract}

\date{\today}
\maketitle

\tableofcontents

\section{Introduction}

The theory of recollements plays an important and ubiquitous role throughout topology, algebraic geometry, and representation theory. It is a common axiomatization of, on the one hand, the adjunctions
\[ \begin{tikzcd}[column sep=4em]
\Shv(U) \ar[hookrightarrow, shift left=2]{r}{j_!} \ar[hookrightarrow, shift right=4]{r}[swap]{j_*} & \Shv(X) \ar[shift left=1]{l}[description]{j^*} \ar[shift left=2]{r}{i^*} \ar[shift right=1, hookleftarrow]{r}[swap]{i_*} & \Shv(Z)
\end{tikzcd} \]
associated to $\infty$-categories of sheaves of spaces on a topological space $X$ decomposed by an open subspace $j: \into{U}{X}$ and its closed complement $i: \into{Z = X \setminus U}{X}$, and, on the other hand, the adjunctions
\[ \begin{tikzcd}[column sep=4em]
\QCoh_Z(X) \ar[hookrightarrow, shift left=2]{r}{j_!} \ar[hookrightarrow, shift right=4]{r}[swap]{j_*} & \QCoh(X) \ar[shift left=1]{l}[description]{j^*} \ar[shift left=2]{r}{i^*} \ar[shift right=1, hookleftarrow]{r}[swap, description]{i_*} \ar[shift right=4]{r}[swap]{i^!} & \QCoh(U)
\end{tikzcd} \]
associated to stable $\infty$-categories of quasicoherent complexes on a qcqs scheme $X$ with open subscheme $i: \into{U}{X}$, where $\QCoh_Z(X)$ denotes those quasicoherent complexes set-theoretically supported on $Z = X \setminus U$.\footnote{The fully faithful left adjoint $j_!$ is the definitional embedding of $\QCoh_Z(X)$ in $\QCoh(X)$, whereas the fully faithful right adjoint $j_*$ embeds $\QCoh_Z(X)$ as $\QCoh(X^{\wedge}_Z) \subset \QCoh(X)$, the full subcategory of quasicoherent complexes on $X$ complete along $Z$; cf. \cite{BarwickGlasmanNoteRecoll}.}

Recollements were introduced by Beilinson--Bernstein--Deligne \cite{BBD} in the context of derived categories of perverse sheaves and were later defined by Lurie in the $\infty$-categorical context in the course of his study of constructible sheaves on stratified spaces \cite[\S A]{HA}. The goal of this article is to continue the development of the general theory of recollements from \cite[\S A.8]{HA}, which we recapitulate in \S\ref{sec:recoll} beginning with the basic definition \ref{dfn:recollement}. Our first contribution is to establish a symmetric monoidal refinement of this theory:

\begin{dfn}[\cref{dfn:monoidalRecollement}]
Let $\sX$ be a symmetric monoidal $\infty$-category that admits finite limits. Then a recollement
\[ \begin{tikzcd}[column sep=4em]
\sU \ar[hookrightarrow, shift right=1]{r}[swap]{j_*} & \sX \ar[shift right=2]{l}[swap]{j^*} \ar[shift left=2]{r}{i^*} \ar[hookleftarrow, shift right=1]{r}[swap]{i_*} & \sZ
\end{tikzcd} \]
is \emph{symmetric monoidal} if the localization functors $j_* j^*$ and $i_* i^*$ are compatible with the symmetric monoidal structure, so that $\sU$ and $\sZ$ uniquely inherit symmetric monoidal structures from $\sX$ such that the functors $j^*$ and $i^*$ uniquely refine to (strong) symmetric monoidal functors.
\end{dfn}

Recall that Lurie shows that given a recollement $(\sU, \sZ)$ on $\sX$, if we define the \emph{gluing functor} of the recollement to be $\phi = i^* j_*$ then we may reconstruct $\sX$ as the fiber product $\Ar(\sZ) \times_{\ev_1, \sZ, \phi} \sU$, where $\Ar(\sZ) \coloneq \Fun(\Delta^1, \sZ)$ is the $\infty$-category of arrows in $\sZ$.\footnote{To be precise, Lurie doesn't quite formulate his result in this way. See \cref{recollEquivalenceToOplaxLim} and the discussion thereafter.} Now given a lax symmetric monoidal functor $\phi: \sU \to \sZ$ of symmetric monoidal $\infty$-categories, we may construct a certain \emph{canonical} symmetric monoidal structure on $\Ar(\sZ) \times_{\ev_1, \sZ, \phi} \sU$ (\cref{dfn:canonicalSMConOplaxLimit}). We then have:

\begin{thm}[\cref{thm:CanonicalMonoidalStructureOnMonoidalRecollement}]
Let $\sX$ be a symmetric monoidal $\infty$-category decomposed by a symmetric monoidal recollement $(\sU, \sZ)$. Then the natural equivalence $\sX \xto{\simeq} \Ar(\sZ) \times_{\ev_1, \sZ, \phi} \sU$ refines to an equivalence of symmetric monoidal $\infty$-categories. In other words, the lax symmetric monoidal structure on the gluing functor reconstructs the symmetric monoidal structure on $\sX$.
\end{thm}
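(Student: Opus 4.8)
The plan is to promote the reconstruction equivalence $\Psi\colon\sX\xrightarrow{\simeq}\Ar(\sZ)\times_{\ev_1,\sZ,\phi}\sU$ of \cref{recollEquivalenceToOplaxLim} to a symmetric monoidal functor, and then to conclude via the elementary fact that a symmetric monoidal functor whose underlying functor is an equivalence is itself an equivalence of symmetric monoidal $\infty$-categories. Recall that, under the identification of the target with the right-lax limit of $\phi$, the functor $\Psi$ is classified by the functor $j^*\colon\sX\to\sU$ together with the natural transformation $i^*\Rightarrow i^*j_*j^*=\phi\circ j^*$ obtained by applying $i^*$ to the unit $\eta\colon\mathrm{id}_{\sX}\Rightarrow j_*j^*$ of the adjunction $j^*\dashv j_*$. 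So it suffices to exhibit symmetric monoidal refinements of these data that assemble to a symmetric monoidal functor into $[\Ar(\sZ)\times_{\ev_1,\sZ,\phi}\sU]^\otimes$.

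By \cref{dfn:monoidalRecollement}, the functors $j^*$ and $i^*$ refine canonically to strong symmetric monoidal functors $j^{*,\otimes}\colon\sX^\otimes\to\sU^\otimes$ and $i^{*,\otimes}\colon\sX^\otimes\to\sZ^\otimes$; their right adjoints $j_*$ and $i_*$ then acquire lax symmetric monoidal refinements (as right adjoints of strong symmetric monoidal functors; cf.\ \cite{HA}), so that $\phi=i^*j_*$ carries the lax symmetric monoidal structure fed into \cref{dfn:canonicalSMConOplaxLimit}. The crucial additional ingredient is that, since $j^*\dashv j_*$ is a symmetric monoidal adjunction, the unit $\eta$ is a \emph{lax symmetric monoidal} natural transformation --- from the strong symmetric monoidal functor $\mathrm{id}_{\sX}$ to the lax symmetric monoidal functor $j_*j^*$. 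Post-whiskering with the strong symmetric monoidal functor $i^{*,\otimes}$ yields a lax symmetric monoidal natural transformation $\xi\colon i^{*,\otimes}\Rightarrow\phi^\otimes\circ j^{*,\otimes}$ refining $i^*\eta$.

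I then unwind the universal property of the canonical symmetric monoidal structure of \cref{dfn:canonicalSMConOplaxLimit}: for a symmetric monoidal $\infty$-category $\cC^\otimes$, a strong symmetric monoidal functor $\cC^\otimes\to[\Ar(\sZ)\times_{\ev_1,\sZ,\phi}\sU]^\otimes$ amounts to a strong symmetric monoidal functor $G\colon\cC^\otimes\to\sU^\otimes$, a strong symmetric monoidal functor $H\colon\cC^\otimes\to\sZ^\otimes$ (``source of the arrow'', taken with respect to the pointwise symmetric monoidal structure on $\Ar(\sZ)=\Fun(\Delta^1,\sZ)$), and a lax symmetric monoidal natural transformation $H\Rightarrow\phi^\otimes\circ G$ --- the ``twist'' by the lax structure of $\phi$ built into the tensor product on the right-lax limit being precisely what allows this last transformation to be merely lax rather than strong. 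Feeding in the triple $(j^{*,\otimes},\,i^{*,\otimes},\,\xi)$ produces a strong symmetric monoidal functor $\Psi^\otimes\colon\sX^\otimes\to[\Ar(\sZ)\times_{\ev_1,\sZ,\phi}\sU]^\otimes$. Forgetting symmetric monoidal structures, $\Psi^\otimes$ is classified by the triple $(j^*,i^*,i^*\eta)$ and so has underlying functor $\Psi$; since $\Psi$ is an equivalence, $\Psi^\otimes$ is an equivalence of symmetric monoidal $\infty$-categories.

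I expect the main obstacle to be matching, coherently, the universal property of \cref{dfn:canonicalSMConOplaxLimit} with the laxness of the adjunction unit. Concretely --- and this is the substance of the argument if one works directly at the level of cocartesian fibrations over $\mathrm{Fin}_*$ --- one must check that $x\mapsto(i^*x\to\phi j^*x,\ j^*x)$ carries cocartesian edges to cocartesian edges, which over an active map $\langle n\rangle\to\langle 1\rangle$ reduces to the identity that the composite $i^*x_1\otimes\cdots\otimes i^*x_n\to\phi(j^*x_1)\otimes\cdots\otimes\phi(j^*x_n)\to\phi(j^*x_1\otimes\cdots\otimes j^*x_n)$ --- first map the unit in each coordinate, second the lax structure of $\phi$ --- agrees with $i^*(\eta_{x_1\otimes\cdots\otimes x_n})$ after the strong-monoidality identifications for $i^*$ and $j^*$; this is exactly the coherence witnessing that $\eta$ is lax symmetric monoidal. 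Beyond such bookkeeping, the only genuinely external input is the standard fact, recalled above, that the unit of a symmetric monoidal adjunction is lax symmetric monoidal.
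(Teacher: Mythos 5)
Your proposal is correct and follows essentially the same route as the paper: the paper encodes the lax symmetric monoidality of the unit as a lax symmetric monoidal functor $\sX^{\otimes} \to (\sX^{\otimes})^{\Delta^1}$ (\cref{lem:unitLaxMonoidal}), composes with $(i^{\ast})^{\otimes}$, strictifies the resulting square of $\infty$-operads (\cref{lem:strictificationOperadSquare}) to obtain a map of operads into the pullback $(\sZ^{\otimes})^{\Delta^1} \times_{\sZ^{\otimes}} \sU^{\otimes}$ covering the underlying equivalence, and then deduces strong monoidality (hence the symmetric monoidal equivalence) from the symmetric monoidality and joint conservativity of $j^{\ast}$ and $i^{\ast}$. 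Your ``universal property'' packaging of the triple $(j^{\ast,\otimes}, i^{\ast,\otimes}, \xi)$ and the cocartesian-edge check over active maps amount to the same construction and the same final verification.
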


\begin{rem}
Although this result is a simple exercise in the theory of $\infty$-operads, it appears that our work was the first to give a proof, and indeed a construction of the canonical symmetric monoidal structure. We note that the work of Ayala--Mazel-Gee--Rozenblyum has since placed this sort of construction within the context of endowing right-lax limits with $\sO$-monoidal structure \cite[\S 4.4]{AMGRb}.
\end{rem}

Our next contribution is motivated by the following problem from equivariant stable homotopy theory:

\begin{problem} \label{problem}
Let $G$ be a finite group and $\cF$ a $G$-family (i.e., a set of subgroups of $G$ closed under taking subgroups and conjugation). Given a (genuine) $G$-spectrum $X \in \Sp^G$ that is $\cF$-complete and a subgroup $H \leq G$ not in $\cF$, give a formula for the $H$-geometric fixed points of $X$ in terms of the $K$-geometric fixed points of $X$ ranging over $K \in \cF$.
\end{problem}

Recollement theory is relevant here because any $G$-family $\cF$ defines a recollement on $\Sp^G$ whose open part is spanned by the $\cF$-complete $G$-spectra (cf. \cite{MATHEW2017994} or \cite{QS21a}). In fact, we may further recast this problem using the stratification theory of Ayala--Mazel-Gee--Rozenblyum \cite{AMGR-NaiveApproach,AMGRb}. In their work, they construct a certain locally cocartesian fibration $\Sp^G_{\phi\text{-locus}} \to P$, where $P$ is the poset of conjugacy classes of subgroups of $G$ and the fiber over $[H]$ is $\Fun(B W_G H, \Sp)$ for $W_G H = N_G H/H$ the Weyl group, such that one has a canonical equivalence
\[ \Fun^{\cocart}_{/P}(\sd(P), \Sp^G_{\phi\text{-locus}}) \simeq \Sp^G \]
where $\sd(P)$ is the barycentric subdivision\footnote{Recall that $\sd(P)$ is the poset whose objects are strings $[a_0 < ... < a_n]$ in $P$ and whose morphisms are string inclusions.} of $P$ regarded as a locally cocartesian fibration over $P$ via the functor that takes the maximum, and the lefthand side denotes the full subcategory spanned by those functors $\sd(P) \to \Sp^G_{\phi\text{-locus}}$ over $P$ preserving locally cocartesian edges. The idea is that this equivalence parametrizes a $G$-spectrum in terms of its geometric fixed points, and indeed given a $G$-spectrum $X$, under this equivalence $X$ transports to a functor $\sd(P) \to \Sp^G$ that sends $[H]$ to $\Phi^H X$. Now by definition any $G$-family $\cF$ defines a \emph{sieve} (i.e., a downward closed subposet) in $P$, and the $\cF$-recollement on $\Sp^G$ transports to a recollement on $\Fun^{\cocart}_{/P}(\sd(P), \Sp^G_{\phi\text{-locus}})$ given by the pair
\[ (\Fun^{\cocart}_{/\cF}(\sd(\cF), \Sp^G_{\phi\text{-locus}}|_{\cF}), \Fun^{\cocart}_{/(P \setminus \cF)}(\sd(P \setminus \cF), \Sp^G_{\phi\text{-locus}}|_{P \setminus \cF}). \]
Establishing a pointwise formula for the gluing functor of this recollement would then yield a solution to \cref{problem}. In general, we prove:

\begin{thm}[\cref{thm:subdivisionExtension}] \label{thm:subdivExtendIntro}
Let $P$ be a poset and let $P_0$ be a sieve in $P$. Let $\sd(P)_0 \subset \sd(P)$ be the subposet on those strings that originate in $P_0$, and note that $\max|_{\sd(P)_0}$ remains a locally cocartesian fibration. Then for every locally cocartesian fibration $C \to P$, the restriction functor
\[ \Fun^{\cocart}_{/P}(\sd(P)_0, C) \to \Fun^{\cocart}_{/P_0}(\sd(P_0), C|_{P_0}) \]
is a trivial fibration.
\end{thm}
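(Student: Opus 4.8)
The plan is to realize the restriction functor as a sequential composite of trivial fibrations, each an instance of a relative left Kan extension. Write $q = \max|_{\sd(P)_0}$, and recall the elementary fact that a morphism $\sigma \hookrightarrow \tau$ of $\sd(P)$ is $q$-locally cocartesian exactly when $\tau = \sigma \cup \{b\}$ for some $b > \max\sigma$, i.e. when $\tau$ is obtained from $\sigma$ by adjoining a single new top element. For $t \geq 0$ let $S_t \subseteq \sd(P)_0$ be the full subposet on the strings $\sigma$ with $|\sigma \setminus P_0| \leq t$. Since $P_0$ is a sieve, each $S_t$ is a sieve in $\sd(P)_0$; moreover $S_0 = \sd(P_0)$, and $\sd(P)_0 = \bigcup_t S_t$ because strings are finite. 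Interpreting $\Fun^{\cocart}_{/P}(S_t, C)$ as the full subcategory of $\Fun_{/P}(S_t, C)$ on the functors carrying those edges of $S_t$ that are $q$-locally cocartesian to locally cocartesian edges of $C$, we obtain $\Fun^{\cocart}_{/P}(\sd(P)_0, C) = \lim_t \Fun^{\cocart}_{/P}(S_t, C)$; its $t = 0$ term is $\Fun^{\cocart}_{/P_0}(\sd(P_0), C|_{P_0})$ (using that $P_0 \hookrightarrow P$ is fully faithful, so that locally cocartesian edges of $C$ over morphisms of $P_0$ agree with those of $C|_{P_0}$), and the restriction functor of the theorem is the resulting canonical projection. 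As trivial fibrations are closed under composition and sequential limits, it suffices to prove that each transition map $\Fun^{\cocart}_{/P}(S_{t+1}, C) \to \Fun^{\cocart}_{/P}(S_t, C)$ is a trivial fibration.

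Fix $t$, put $p \colon C \to P$, and consider the sieve inclusion $S_t \hookrightarrow S_{t+1}$. The key points are: (i) every $G \in \Fun^{\cocart}_{/P}(S_t, C)$ admits a $p$-relative left Kan extension $\widetilde{G}$ along $S_t \hookrightarrow S_{t+1}$; (ii) $\widetilde{G}$ again lies in $\Fun^{\cocart}_{/P}(S_{t+1}, C)$; and (iii) conversely, every object of $\Fun^{\cocart}_{/P}(S_{t+1}, C)$ is a $p$-relative left Kan extension of its restriction to $S_t$. Granting these, the transition map is (the restriction to the cocartesian-section subcategories of) the trivial fibration furnished by the standard theory of relative left Kan extensions, from the full subcategory of such extensions over $S_{t+1}$ onto the full subcategory of functors over $S_t$ that admit one; by (i)--(iii) this trivial fibration preserves and reflects the cocartesian-section subcategories, and such a restriction of a trivial fibration is again one, so we are done. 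For $\sigma \in S_{t+1} \setminus S_t$, write $\sigma = [a_0 < \cdots < a_n]$ and $\sigma^- = \sigma \setminus \{a_n\}$; then $\sigma^- \in S_t$, the edge $\sigma^- \hookrightarrow \sigma$ is $q$-locally cocartesian, and the computation below will identify a $p$-colimit of $G|_{(S_t)_{/\sigma}}$ with $(a_{n-1} < a_n)_{!}\, G(\sigma^-)$, the cocone leg at $\sigma^-$ being a locally cocartesian lift. Then (iii) holds because a cocartesian section over $S_{t+1}$ preserves the edge $\sigma^- \hookrightarrow \sigma$ and therefore realizes this $p$-colimit at $\sigma$; and (ii) holds because the leg of the universal cocone at $\sigma^-$ is, by the same identification, a locally cocartesian lift, so that $\widetilde{G}$ preserves the unique new $q$-locally cocartesian edge with target $\sigma$.

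I expect the one genuinely nonformal step to be the existence and value of this $p$-colimit, which is where the merely \emph{locally} cocartesian (rather than cocartesian) nature of $C \to P$ cannot be avoided: the indexing poset $M_\sigma := (S_t)_{/\sigma}$ of proper substrings of $\sigma$ lying in $S_t$ has in general no terminal object, and its cocone legs sit over chains of morphisms of $P$ that do not compose, so no terminal-object criterion for $p$-colimits applies directly. The plan is to split $M_\sigma = M^-_\sigma \sqcup M^0_\sigma$ into the sieve $M^-_\sigma$ of substrings avoiding $a_n$, which \emph{does} have a terminal object (namely $\sigma^-$), and the complementary cosieve $M^0_\sigma$ of substrings containing $a_n$, observing that the morphisms of $M_\sigma$ from $M^-_\sigma$ into $M^0_\sigma$ are precisely those of $M^-_\sigma$ against the functor $\nu \mapsto \nu \setminus \{a_n\} \colon M^0_\sigma \to M^-_\sigma$. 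For $\nu \in M^0_\sigma$ the edge $\nu \setminus \{a_n\} \hookrightarrow \nu$ is $q$-locally cocartesian and lies in $S_t$, hence is sent by $G$ (a cocartesian section on $S_t$) to a locally cocartesian edge of $C$; unwinding the sieve--cosieve decomposition with this in hand should show that, for every $c \in C$, the space of cocones under $G|_{M_\sigma}$ with apex $c$ is equivalent to that of maps $(a_{n-1} < a_n)_{!}\, G(\sigma^-) \to c$ lying over $a_n \to p(c)$ --- which is exactly the assertion that $(a_{n-1} < a_n)_{!}\, G(\sigma^-)$ is the $p$-colimit. The remaining work is this combinatorial bookkeeping together with the locally-cocartesian manipulations that identify the cocones; everything else is formal, as long as one keeps in mind throughout that $S_t$ is itself not $q$-fibered over $P$, so that "$\Fun^{\cocart}$" must always be read relative to the ambient locally cocartesian structure on $\sd(P)_0$.
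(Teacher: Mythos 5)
Your reduction to the transition maps of the filtration $S_t\subseteq \sd(P)_0$ by $|\sigma\setminus P_0|\leq t$ is fine, but the mechanism you propose for each transition step does not work: for a merely \emph{locally} cocartesian fibration $p\colon C\to P$, the $p$-relative left Kan extensions you invoke generally do not exist, and cocartesian sections are generally \emph{not} relative left Kan extensions of their restrictions, so claims (i) and (iii) both fail. Concretely, take $P=\Delta^2$, $P_0=\{0\}$, $t=0$ and the new string $\sigma=[0<1]$. Then $(S_0)_{/\sigma}=\{[0]\}$, so by cofinality the required $p$-colimit diagram is exactly a $p$-cocartesian lift of $0<1$ with source $x=G([0])$ (an edge is a $p$-colimit diagram iff it is $p$-cocartesian, \cite[Exm.~4.3.1.4]{HTT}). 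Any such edge would in particular be locally cocartesian, hence equivalent to the canonical edge $x\to F_{01}x$; and that edge is $p$-cocartesian iff the lax comparison map $F_{02}x\to F_{12}F_{01}x$ encoded by the fibration is an equivalence, which is precisely the condition that fails for a locally cocartesian fibration that is not cocartesian. So the relative left Kan extension along $S_0\subset S_1$ need not exist, the ``standard'' trivial fibration you want to restrict is not available, and for the same reason a genuine cocartesian section of $\sd(P)_0$ restricted to $S_1$ is not a $p$-left Kan extension of its restriction to $S_0$. (Your computation identifying the colimit with $(a_{n-1}<a_n)_!G(\sigma^-)$ is only a \emph{fiberwise} statement over $C_{a_n}$; the $p$-colimit universal property also tests against objects of $C$ lying over elements $>a_n$, and that is exactly where the non-invertible lax structure maps intervene, e.g.\ already for $\sigma=[0<1<2]$ inside $P=\Delta^3$.)

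The uniqueness of the extension asserted by the theorem is therefore not a relative-Kan-extension universal property at all; it is the weaker statement of unique extension along \emph{locally} cocartesian edges, and this is what the paper's proof encodes. The paper works in the locally cocartesian (marked) model structure on $s\Set^+_{/P}$: it first proves the pushforward statement \cref{prp:LocallyCocartesianPushforward} by transposing the lifting problem and exhibiting the relevant marked inclusion as $\mathfrak{P}$-anodyne via a filtration that adjoins one vertex of the string at a time (using \cite[Lem.~B.1.10]{HA} and the sieve--cosieve pushout criterion \cref{lem:posetPushoutViaFlatness}), and then deduces \cref{thm:subdivisionExtension}(2) by the ``initial segment parametrization'' functor $\delta$, which rewrites an arbitrary lifting problem for $\Fun^{\cocart}_{/P}(\sd(P)_0,C)\to\Fun^{\cocart}_{/P_0}(\sd(P_0),C|_{P_0})$ as one against that trivial fibration. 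If you want to keep your filtration by $|\sigma\setminus P_0|$, the step you would actually have to prove is that each inclusion $S_t\subseteq S_{t+1}$, with the locally cocartesian edges marked, is a trivial cofibration (cocartesian equivalence) in the locally cocartesian model structure over $P$ --- i.e.\ an anodyne-extension argument in the spirit of the paper's --- rather than an appeal to relative Kan extension theory, which imposes $p$-cocartesianness and is strictly too strong here.
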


\begin{thmx}[\cref{thm:ExistenceLaxRightKanExtension}, \cref{prp:closedPartOfRecollement}, and \cref{thm:RecollementRlaxLimitOfLlaxFunctor}] \label{thmA}
Let $P$ be a down-finite poset\footnote{A poset $P$ is \emph{down-finite} if for every $p \in P$, the subposet $P^{\leq p}$ is finite.} and let $p: C \to P$ be a locally cocartesian fibration such that for every $p \in P$, the fiber $C_p$ admits finite limits, and for every $p \leq q$, the associated pushforward functor $C_p \to C_q$ preserves finite limits. Then for every sieve-cosieve decomposition $P_0, P_1 = P \setminus P_0$ of $P$, we obtain a recollement
\[ \begin{tikzcd}[column sep=4em]
\Fun^{\cocart}_{/P_0}(\sd(P_0), C|_{P_0}) \ar[hookrightarrow, shift right=1]{r}[swap]{j_*} & \Fun^{\cocart}_{/P}(\sd(P), C) \ar[shift right=2]{l}[swap]{j^*} \ar[shift left=2]{r}{i^*} \ar[hookleftarrow, shift right=1]{r}[swap]{i_*} & \Fun^{\cocart}_{/P_1}(\sd(P_1), C|_{P_1})
\end{tikzcd} \]
where $j^*, i^*$ are given by restriction and their fully faithful right adjoints $j_*, i_*$ are describable by the following pointwise formulas:
\begin{enumerate}
\item For every $x \in P_1$, let $J_x \subset \sd(P)$ be the subposet on strings $[a_0 < ... < a_n < x]$, $n \geq 0$ with $a_i \in P_0$. Then for every $[f: \sd(P_0) \to C|_{P_0}]$ on the lefthand side, if we let $\overline{f}$ denote the unique extension of $f$ over $\sd(P)_0$ given by \cref{thm:subdivExtendIntro}, then $j_*(f)$  evaluates on $x \in P_1$ to $\lim (\overline{f}|_{J_x}: J_x \to C_x)$.
\item For every $[f: \sd(P_1) \to C|_{P_1}]$ on the righthand side, $i_*(f)$ evaluates on $x \in P_0$ to the final object $\ast \in C_x$.
\end{enumerate}
\end{thmx}

\begin{rem}
In \cite{QS21a}, we use \cref{thmA} to answer \cref{problem} in the form of \cite[Thm.~F]{QS21a}.
\end{rem}

In fact, we prove a more general theorem where we replace $P$ and the sieve $P_0$ by any $\infty$-category $S$ and functor $\pi: S \to \Delta^1$ determining a sieve-cosieve decomposition of $S$, at the possible cost of demanding more conditions on our locally cocartesian fibration $p: C \to S$.

\begin{rem}
Conceptually, a locally cocartesian fibration $C \to P$ is the unstraightening of a \emph{left-lax} diagram $P \to \Cat_{\infty}$, and the $\infty$-category $\Fun^{\cocart}_{/P}(\sd(P), C)$ is then the \emph{right-lax limit} of this left-lax diagram (cf. \cite[\S A]{AMGRb}). \cref{thmA} then amounts to an \emph{existence theorem} for the (pointwise) right-lax Kan extension of $[C \to P]$ along a functor $\pi: P \to \Delta^1$, along with a \emph{transitivity property} of right-lax Kan extensions with respect to the composite $P \to \Delta^1 \to \ast$.
\end{rem}

Although \cref{thmA} may appear innocuous, we can leverage it to great effect in inductive arguments that build up the right-lax limit of a locally cocartesian fibration from its strata. For example, we will use \cref{thmA} to establish the theory of \emph{$1$-generated and extendable objects} in \S \ref{sec:extendable}, which furnishes a proof of an assertion of Nikolaus and Scholze \cite[Rem.~II.4.8]{NS18} on decomposing the $\infty$-category of bounded-below $C_{p^n}$-spectra as an iterated pullback; for a precise statement, see \cref{rem:NikolausScholze}.

In this paper, our main application of \cref{thmA} will be to prove a \emph{reconstruction theorem} for sheaves on an $\infty$-topos stratified over a finite poset $P$ that was conjectured in the work of Barwick--Glasman--Haine \cite[Rem.~8.2.7]{Exodromy}. We recall the definition of a $P$-stratified $\infty$-topos as \cref{dfn:strattopos} and that of a toposic locally cocartesian fibration as \cref{dfn:toposicfibration}. The reader may want to bear in mind the example of a $P$-stratified $\infty$-topos given by $\Shv(X)$ for $X$ a topological space equipped with a continuous map $\pi: X \to P$, where we endow $P$ with the Alexandroff topology (so that its open sets are cosieves).

\begin{thmx}[\cref{thm:ReconstructionTopoi} and \cref{thm:AdjunctionTopos}] \label{thmB}
Let $\sX$ be an $\infty$-topos equipped with a $P$-stratification $\pi_*: \sX \to \Shv(P)$ for a finite poset $P$. Then we may functorially associate to $(\sX, \pi_*)$ a locally cocartesian fibration $\sG(\sX) \to P^{\op}$ such that we have a canonical equivalence
\begin{equation} \label{eqn:comparison_map}
 \Theta_P: \Fun^{\cocart}_{/P^{\op}}(\sd(P^{\op}), \sG(\sX)) \xto{\simeq} \sX.
 \end{equation}
Moreover, $\Theta_P$ is the counit of an adjoint equivalence
\begin{equation} \label{eqn:adjoint_equivalence}
\adjunct{\mathrm{lim}^{\rlax}}{\LocCocart^{\mathrm{top}}_{P^{\op}}}{\StrTop_{\infty,P}}{\sG} 
\end{equation}
between the $\infty$-category of toposic locally cocartesian fibrations over $P^{\op}$ and the $\infty$-category of $P$-stratified $\infty$-topoi.
\end{thmx}

\begin{rem}
We explain how to interpret \cref{thmB} as a reconstruction theorem. Define the \emph{$p$th stratum} $\sX_p$ to be $\Shv(\{ p \}) \times_{\Shv(P), \pi_*} \sX$, where the fiber product is formed in the $\infty$-category $\Top_{\infty}$ of $\infty$-topoi and geometric morphisms thereof. (For example, if $\sX = \Shv(X)$ for a $P$-stratified space $\pi: X \to P$, then $\sX_p \simeq \Shv(X_p)$.) Let
$$\adjunct{\Phi^p}{\sX}{\sX_p}{\rho_p}$$
denote the associated geometric morphism adjunction. Then $\rho_p$ is fully faithful, and we in fact define
$$\sG(\sX) \coloneq \{ (x,p) \in \sX \times P^{\op}: x \in \sX_p \}$$
with respect to $\rho_p: \into{\sX_p}{\sX}$, so that $\sG(\sX)_p \simeq \sX_p$ (\cref{con:GluingDiagram}). Now under the equivalence $\Theta_P$, a sheaf (i.e., object) $x \in \sX$ transports to a functor $f_x: \sd(P^{\op}) \to \sG(\sX)$ whose value on $[p]$ is given by $\Phi^p(x)$ (\cref{rem:transportingSheaf}). The functor $\Theta_p$ then sends $f_x$ to the limit of its projection into $\sX$.
\end{rem}

\begin{rem}
The strategy of our proof of \cref{thmB} is heavily inspired by the work of Ayala--Mazel-Gee--Rozenblyum, who prove a similar statement in the setting of presentable stable $\infty$-categories \cite[Thm.~A]{AMGRb}. Note however that our proof of the equivalence \eqref{eqn:comparison_map} (but not \eqref{eqn:adjoint_equivalence}) is independent of any explicit use of $(\infty,2)$-category theory in the form of the fibrational mate correspondence for locally cocartesian fibrations (that is, \cite[Lems.~A.8.1 and A.8.3]{AMGRb}), which we recall in this paper as \cref{leftlaxTorightlax}. Indeed, we instead use \cref{thmA} as the basis for an inductive argument that establishes \eqref{eqn:comparison_map}. Similarly, one can supply an alternative proof of the comparable part of \cite[Thm.~A]{AMGRb} using the same strategy; as already mentioned, we implement this idea in context of equivariant stable homotopy theory in our proof of \cite[Thm.~F]{QS21a} (cf. the discussion below \cite[Thm.~2.42]{QS21a}).
\end{rem}

\subsection{What's new in this paper}

We briefly comment on the relation of this paper to \cite{QS19}, which we have since split up into this paper, \cite{QS21a}, and \cite{QS21b}. Sections 2, 3, and 4 of this paper are lightly revised versions of the corresponding sections of \cite{QS19}, whereas section 5 on the application to stratified $\infty$-topoi is entirely new. Also, in the intervening time since we wrote \cite{QS19}, Ayala, Mazel-Gee, and Rozenblyum released their work on stratified noncommutative geometry \cite{AMGRb}; this is an expansion of \cite{AMGR-NaiveApproach} and bears greatly on many of the topics treated in this paper. As such, we have added a few remarks throughout (in particular, \cref{rem:amgrCompare} and the new \S \ref{sec:symmMon}) explaining how our work relates to \cite{AMGRb}. One of the main takeaways here is that one can leverage \cref{thmA} to remove the presentability hypotheses in \cite[Thm.~A]{AMGRb}. Finally, our application to the description of bounded-below $C_{p^n}$-spectra as given in \cite{QS19} relied on some work that has now been moved into \cite{QS21b}.

\subsection{Acknowledgements}

I would like to thank J.D. Quigley for an inspiring collaboration that led to the genesis of this work. The author was supported first by NSF grant DMS-1547292 and later by the Deutsche Forschungsgemeinschaft (DFG, German Research Foundation) under Germany’s Excellence Strategy EXC 2044–390685587, Mathematics Münster: Dynamics–Geometry–Structure.

\section{Recollements} \label{sec:recoll}

In this section, we establish the basic theory of recollements, expanding upon \cite[\S A.8]{HA} and \cite{BarwickGlasmanNoteRecoll}. After setting up the definitions and summarizing Lurie's results on recollements, we explain a symmetric monoidal refinement of the theory of recollements, connect the theory of stable symmetric monoidal recollements to that of smashing localizations, and record some useful projection formulas. We conclude by proving a few lemmas concerning families of recollements that we will need in \cite{QS21a,QS21b}.

\begin{dfn}[{\cite[Def.~A.8.1]{HA}}] \label{dfn:recollement} Let $\sX$ be an $\infty$-category that admits finite limits and let $\sU, \sZ \subset \sX$ be full subcategories that are stable under equivalences. Then ($\sU$, $\sZ$) is a \emph{recollement} of $\sX$ if the inclusion functors $j_{\ast}: \sU \subset \sX$ and $i_{\ast}: \sZ \subset \sX$ admit left exact left adjoints $j^{\ast}$ and $i^{\ast}$ such that
\begin{enumerate} \item $j^{\ast} i_{\ast}$ is equivalent to the constant functor at the terminal object $\ast$ of $\sU$.
\item $j^{\ast}$ and $i^{\ast}$ are \emph{jointly conservative}, i.e., if $f: x \to y$ is a morphism in $\sX$ such that $j^{\ast} f$ and $i^{\ast} f$ are equivalences, then $f$ is an equivalence.
\end{enumerate}
We will call $\sU$ the \emph{open} part of the recollement, $\sZ$ the \emph{closed} part of the recollement, and $i^{\ast} j_{\ast}$ the \emph{gluing functor}.\footnote{Our convention on which subcategory is open and which is closed matches that for constructible sheaves, whereas other authors (e.g., \cite{BarwickGlasmanNoteRecoll}) use the opposite convention, which matches that for quasi-coherent sheaves. Also note that in \cite[Def.~A.8.1]{HA}, Lurie calls the open part $C_1$ and the closed part $C_0$.}
\end{dfn}

The main purpose of the theory of recollements is to codify the various ``fracture square'' decompositions that recur throughout algebra and topology. Abstractly, we have:

\begin{prp} \label{recollementFractureSquare} Let $(\sU, \sZ)$ be a recollement of $\sX$ and let $\eta_j: \id \to j_{\ast} j^{\ast}$, $\eta_i: \id \to i_{\ast} i^{\ast}$ denote the unit transformations. Then we have a pullback square of functors
\[ \begin{tikzcd}[row sep=4ex, column sep=6ex, text height=1.5ex, text depth=0.25ex]
\id \ar{r}{\eta_i} \ar{d}[swap]{\eta_j} & i_{\ast} i^{\ast} \ar{d}{i_{\ast} i^{\ast} \eta_j} \\
j_{\ast} j^{\ast} \ar{r}{\eta_i j_{\ast} j^{\ast}} & i_{\ast} i^{\ast} j_{\ast} j^{\ast}.
\end{tikzcd} \]
\end{prp}
\begin{proof}
By joint conservativity of the left-exact functors $j^*$ and $i^*$, it suffices to check that we have a pullback square after applying $j^*$ and $i^*$, which is clear.
\end{proof}

Next, we define morphisms of recollements.

\begin{dfn}
Suppose that $(\sU_1, \sZ_1)$ and $(\sU_2, \sZ_2)$ are recollements on $\sX_1$ and $\sX_2$. Then a functor $F: \sX_1 \to \sX_2$ is a \emph{morphism of recollements} if $F$ sends $j^{\ast}_1$-equivalences to $j^{\ast}_2$-equivalences and $i^{\ast}_1$-equivalences to $i^{\ast}_2$-equivalences. Let $\Recoll$ denote the resulting $\infty$-category of recollements, and let $\Recoll^{\lex}$ be the full subcategory on those morphisms of recollements that are also left-exact.
\end{dfn}

\begin{obs} \label{obs:MorphismOfRecollements} Suppose that $F:\sX_1 \to \sX_2$ is a morphism of recollements $(\sU_1, \sZ_1) \to (\sU_2, \sZ_2)$. Then we may define $F_U = j_2^{\ast} F {j_1}_{\ast}: \sU_1 \to \sU_2$ and $F_Z = i_2^{\ast} F {i_1}_{\ast}$ so that we have a commutative diagram
\[ \begin{tikzcd}[row sep=4ex, column sep=4ex, text height=1.5ex, text depth=0.25ex]
\sU_1 \ar{d}{F_U} & \sX_1 \ar{d}{F} \ar{l}[swap]{j_1^{\ast}} \ar{r}{i_2^{\ast}} & \sZ_1 \ar{d}{F_Z} \\
\sU_2 & \sX_2 \ar{l}[swap]{j_2^{\ast}} \ar{r}{i_2^{\ast}} & \sZ_2,
\end{tikzcd} \]
such that $F$ is left-exact if and only if $F_U$ and $F_Z$ are left-exact. Conversely, if we are given such a commutative diagram, then $F$ is a morphism of recollements.  Indeed, for any morphism $[f: x \rightarrow y] \in \sX_1$ such that $j^* (f)$ resp. $i^*(f)$ is an equivalence, $j^* F(f) \simeq F j^*(f)$ resp. $i^* F(f) \simeq F i^*(f)$ is an equivalence. Moreover, since $F_U \simeq j^* F j_*$ and $F_Z \simeq i^* F i_*$, it follows that functors $\sU_1 \to \sU_2$ and $\sZ_1 \to \sZ_2$ induced by $F$ as a morphism of recollements are then canonically equivalent to $F_U$ and $F_Z$.
\end{obs}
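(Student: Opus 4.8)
The plan is to deduce the observation from the defining properties of a recollement together with standard facts about reflective localizations, in three steps. For the commutativity of the square: the unit $\eta: \id_{\sX_1} \to j_{1\ast} j_1^{\ast}$ of the localization $\sU_1 \subset \sX_1$ is a $j_1^{\ast}$-equivalence --- since $j_{1\ast}$ is fully faithful the counit $j_1^{\ast} j_{1\ast} \to \id$ is an equivalence, and a triangle identity then forces $j_1^{\ast}\eta$ to be invertible. As $F$ is a morphism of recollements it carries $\eta$ to a $j_2^{\ast}$-equivalence, so applying $j_2^{\ast} F$ to $\eta$ yields a natural equivalence $j_2^{\ast} F \xto{\simeq} j_2^{\ast} F j_{1\ast} j_1^{\ast} = F_U j_1^{\ast}$; this is the left-hand commuting square $F_U j_1^{\ast} \simeq j_2^{\ast} F$, and the right-hand one $F_Z i_1^{\ast} \simeq i_2^{\ast} F$ follows verbatim from the unit $\id \to i_{1\ast} i_1^{\ast}$. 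Restricting these equivalences along the fully faithful $j_{1\ast}$, resp.\ $i_{1\ast}$, gives back $F_U \simeq j_2^{\ast} F j_{1\ast}$, resp.\ $F_Z \simeq i_2^{\ast} F i_{1\ast}$; I use this at the end.

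For the left-exactness equivalence: in the forward direction, $F_U = j_2^{\ast} F j_{1\ast}$ is the composite of the limit-preserving right adjoint $j_{1\ast}$, the left-exact functor $F$, and the left-exact functor $j_2^{\ast}$, hence is left-exact, and the same argument applies to $F_Z$. In the converse direction, the one substantive point is to upgrade joint conservativity to the assertion that $(j_2^{\ast}, i_2^{\ast})$ jointly detects finite limits in $\sX_2$: given a cone over a finite diagram in $\sX_2$ whose images under $j_2^{\ast}$ and $i_2^{\ast}$ are limit cones, the comparison map from that cone to the actual limit (which exists since $\sX_2$ admits finite limits) becomes an equivalence after applying the left-exact functors $j_2^{\ast}$ and $i_2^{\ast}$, hence is an equivalence by joint conservativity. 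Granting this, if $c: \bar x \to K$ is a finite limit cone in $\sX_1$, then using the commutativity of the first step together with left-exactness of $j_1^{\ast}$ and of $F_U$ we identify $j_2^{\ast} F(c) \simeq F_U(j_1^{\ast} c)$ with a limit cone over $j_2^{\ast} FK$, and symmetrically $i_2^{\ast} F(c)$ is a limit cone; so $F(c)$ is a limit cone and $F$ is left-exact.

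For the converse statement: given a commuting square $(F_U, F, F_Z)$ as in the observation, if $f$ is a $j_1^{\ast}$-equivalence in $\sX_1$ then $j_2^{\ast} F(f) \simeq F_U(j_1^{\ast} f)$ is an equivalence (functors preserve equivalences), so $F$ sends $j_1^{\ast}$-equivalences to $j_2^{\ast}$-equivalences, and symmetrically on the closed side; hence $F$ is a morphism of recollements. The open- and closed-part functors that $F$ induces as such are by construction $j_2^{\ast} F j_{1\ast}$ and $i_2^{\ast} F i_{1\ast}$, which by the last sentence of the first step are canonically $F_U$ and $F_Z$. The only ingredient beyond formal manipulation of units, counits, and composites of left-exact functors is the promotion of joint conservativity from detecting equivalences to detecting finite limits used in the second step --- a routine lemma --- so I do not anticipate any real obstacle.
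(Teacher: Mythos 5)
Your proposal is correct and follows essentially the route the paper has in mind: the converse direction (morphism of recollements from a commuting square, and identification of the induced functors via $j_1^*j_{1*}\simeq \id$) matches the paper's stated argument, and your forward direction — applying $j_2^*F$ to the unit $\id\to j_{1*}j_1^*$ and using that $F$ preserves the relevant equivalences — is the standard argument the paper leaves implicit. The left-exactness equivalence via upgrading joint conservativity of the left-exact functors $(j_2^*, i_2^*)$ to joint detection of finite limits is exactly the expected filling-in and is correct.
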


\begin{obs}\label{LaxVsStrictMorphismsOfRecollements}
In the situation of \cref{obs:MorphismOfRecollements}, by adjunction we get natural transformations $\nu: F {j_1}_{\ast} \Rightarrow {j_2}_{\ast} F_U$ and $\nu': F {i_1}_{\ast} \Rightarrow {i_2}_{\ast} F_Z$. Note that if $F$ preserves the terminal object, then $\nu'$ is an equivalence; indeed, for all $z \in \sZ_1$ we then have
\[ {j_2}^{\ast} F {i_1}_{\ast}(z) \simeq F_U {j_1}^{\ast} {i_1}_{\ast}(z) \simeq F_U (\ast) \simeq \ast, \]
so the unit map $F {i_1}_{\ast} (z) \to {i_2}_{\ast} i_2^{\ast} F {i_1}_{\ast} (z) = {i_2}_{\ast} F_Z (z)$ is an equivalence. In particular, if $F$ is left exact, then $\nu'$ is an equivalence \cite[Rmk.~A.8.10]{HA}. On the other hand, $\nu$ is an equivalence if and only if
$$\nu'': F_Z {i_1}^{\ast} {j_1}_{\ast} \Rightarrow i_2^{\ast} {j_2}_{\ast} F_U$$
is an equivalence -- indeed, the `only if' direction is obvious, and for the `if' direction we may readily check that ${j_2}^{\ast} \nu$ and ${i_2}^{\ast} \nu$ are equivalences and then invoke the joint conservativity of ${j_2}^{\ast}$ and ${i_2}^{\ast}$.
\end{obs}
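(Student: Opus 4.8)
The plan is to reduce both assertions to the joint conservativity of $j_2^*$ and $i_2^*$ (recollement axiom (2) for the target $(\sU_2,\sZ_2)$): each assertion says that a certain natural transformation in $\sX_2$ is an equivalence, and such a claim may be checked after applying the pair of left-exact functors $j_2^*$, $i_2^*$ objectwise. Before running this, I would pin down the three mates. From the commuting square of \cref{obs:MorphismOfRecollements} one has canonical equivalences $j_2^* F \simeq F_U j_1^*$ and $i_2^* F \simeq F_Z i_1^*$; whiskering the first on the right by $j_{1\ast}$ and using $j_1^* j_{1\ast} \simeq \id$ yields $j_2^* F j_{1\ast} \simeq F_U$, and $\nu\colon F j_{1\ast} \Rightarrow j_{2\ast} F_U$ is then the unit $\eta_{j_2}$ whiskered with $F j_{1\ast}$, followed by this equivalence; $\nu'$ is built identically from $\eta_{i_2}$, and $\nu''$ is $i_2^* \nu$ transported across $i_2^* F j_{1\ast} \simeq F_Z i_1^* j_{1\ast}$.

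For the first assertion --- if $F$ preserves $\ast$, then $\nu'$ is an equivalence --- I would evaluate $\nu'$ on $z \in \sZ_1$ and test against $i_2^*$ and $j_2^*$. Against $i_2^*$: by the triangle identity for the $(i_2^*, i_{2\ast})$-adjunction together with full faithfulness of $i_{2\ast}$, the map $i_2^* \nu'_z$ is homotopic to the identity of $F_Z(z)$, hence an equivalence --- here no hypothesis on $F$ is used. Against $j_2^*$: the target becomes $j_2^* i_{2\ast} F_Z(z) \simeq \ast$ by recollement axiom (1) for $(\sU_2,\sZ_2)$, while the source becomes $j_2^* F i_{1\ast}(z) \simeq F_U(j_1^* i_{1\ast}(z)) \simeq F_U(\ast)$, again by axiom (1) (now for the source recollement); and $F_U = j_2^* F j_{1\ast}$ sends $\ast$ to $\ast$ because $j_{1\ast}$ is a right adjoint, $F$ preserves $\ast$ by hypothesis, and $j_2^*$ is left exact. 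So $j_2^* \nu'_z$ is a map $\ast \to \ast$, hence an equivalence, and joint conservativity gives that $\nu'_z$ is one. (Equivalently, $j_2^* F i_{1\ast}(z) \simeq \ast$ already places $F i_{1\ast}(z)$ in the essential image of $i_{2\ast}$ --- the full subcategory of objects $x$ with $j_2^* x \simeq \ast$ --- on which the unit $\eta_{i_2}$ is an equivalence by construction.) The ``in particular'' statement is then immediate, since a left-exact $F$ preserves $\ast$ automatically; alternatively one cites \cite[Rmk.~A.8.10]{HA}.

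For the second assertion --- $\nu$ is an equivalence if and only if $\nu''$ is --- the ``only if'' direction is immediate from the identification $\nu'' = i_2^* \nu$. For ``if'', I would once more test $\nu$ against $j_2^*$ and $i_2^*$: the map $j_2^* \nu$ is \emph{always} an equivalence, since its source $j_2^* F j_{1\ast}$ and target $j_2^* j_{2\ast} F_U$ are both canonically $F_U$ and, by the triangle identity for the $(j_2^*, j_{2\ast})$-adjunction, the map is the identity; and $i_2^* \nu \simeq \nu''$ is an equivalence by hypothesis. Joint conservativity then finishes.

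I do not anticipate a genuine obstacle; the only point requiring care is bookkeeping with the mate constructions --- in particular, confirming that the transformation written $\nu''$ really does agree with $i_2^* \nu$ under the identification $i_2^* F \simeq F_Z i_1^*$, and that the triangle-identity simplifications of $j_2^* \nu$ and $i_2^* \nu'$ are as claimed. These are formal $2$-categorical manipulations with adjunction units and counits; the only non-formal inputs are recollement axioms (1) and (2) for $(\sU_2, \sZ_2)$ (together with axiom (1) for the source recollement), plus the preservation of the terminal object by the left-exact left adjoints $j_a^*$, $i_a^*$ and by the right adjoints $j_{a\ast}$, $i_{a\ast}$.
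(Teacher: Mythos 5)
Your proposal is correct and follows essentially the same route as the paper: identify $\nu$, $\nu'$, $\nu''$ as whiskered units, note $F_U(\ast)\simeq\ast$ under the hypothesis, and reduce everything to the joint conservativity of $j_2^{\ast}$ and $i_2^{\ast}$ (for $\nu'$ the paper phrases the same computation as ``$j_2^{\ast}F i_{1\ast}(z)\simeq\ast$ places $F i_{1\ast}(z)$ in the image of $i_{2\ast}$, so the unit is an equivalence,'' which is exactly your parenthetical alternative). The mate bookkeeping and triangle-identity checks you flag are indeed the only points of care, and they go through as you describe.
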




\begin{dfn} If $\nu''$ in \cref{LaxVsStrictMorphismsOfRecollements} is an equivalence, then we call $F$ a \emph{strict} morphism of recollements. Let $\Recoll_{\str} \subset \Recoll$ and $\Recoll^{\lex}_{\str} \subset \Recoll^{\lex}$ be the wide subcategories on the strict morphisms.
\end{dfn}

\begin{rem} \label{rem:TwoOutOfThreePropertyEquivalencesStrictMorphismRecoll} If $F: \sX_1 \to \sX_2$ is a strict left-exact morphism of recollements, then $F$ is an equivalence if and only if $F_U$ and $F_Z$ are equivalences \cite[Prop.~A.8.14]{HA}.
\end{rem}

\begin{dfn} Let $\pi: \cM \to \Delta^1$ be a functor of $\infty$-categories with fibers $\cM_0 = \sZ$ and $\cM_1 = \sU$. Then $\pi$ is a \emph{left-exact correspondence} \cite[Def.~A.8.6]{HA} if
\begin{enumerate} \item $\pi$ is a cartesian fibration, so determines a functor $\phi: \sU \to \sZ$.
\item $\sU$ and $\sZ$ admit finite limits and $\phi$ is left-exact.
\end{enumerate}
A \emph{morphism of left-exact correspondences} is a functor $F: \cM_1 \to \cM_2$ over $\Delta^1$. In terms of the left-exact functors $\phi_1$ and $\phi_2$, this corresponds to a right-lax commutative diagram
\[ \begin{tikzcd}[row sep=4ex, column sep=4ex, text height=1.5ex, text depth=0.25ex]
\sU_1 \ar{r}{\phi_1} \ar{d}[swap]{F_U} \ar[phantom]{rd}{\SWarrow} & \sZ_1 \ar{d}{F_Z} \\
\sU_2 \ar{r}[swap]{\phi_2} & \sZ_2.
\end{tikzcd} \]
Let $\Ar^{\rlax}_{\lex}(\Cat_{\infty})$ denote the resulting $\infty$-category of left-exact correspondences as a full subcategory of $(\Cat_{\infty})_{/\Delta^1}$, and let $\Ar_{\lex}(\Cat_{\infty})$ be the wide subcategory on those morphisms that preserve cartesian edges, so that the right-lax commutativity is actually strict. Note that under the straightening correspondence, $\Ar_{\lex}(\Cat_{\infty})$ is the full subcategory of $\Ar(\Cat_{\infty})$ on left-exact functors $\phi: \sU \to \sZ$.

If $F_U$ and $F_Z$ are also left-exact, we say that the morphism $F$ of left-exact correspondences is \emph{left-exact}. We may then view (lax) commutative squares as residing inside $\Cat_{\infty}^{\lex}$ itself. Let $\Ar^{\rlax}(\Cat^{\lex}_{\infty}) \subset \Ar^{\rlax}_{\lex}(\Cat_{\infty})$ and $\Ar(\Cat^{\lex}_{\infty}) \subset \Ar_{\lex}(\Cat_{\infty})$ denote the resulting wide subcategories.
\end{dfn}

\begin{obs} \label{recollEquivalenceToOplaxLim} Let $\cM \to \Delta^1$ be a left-exact correspondence and let $\sX = \Fun_{/\Delta^1}(\Delta^1, \cM)$ be its $\infty$-category of sections. Let $\sU \subset \sX$ be the full subcategory on the cartesian sections and let $\sZ \subset \sX$ be the full subcategory on those sections $\sigma$ such that $\sigma(1)$ is a terminal object of $\sU$. Then $(\sU, \sZ)$ is a recollement of $\sX$ \cite[Prop.~A.8.7]{HA}. Moreover, the formation of sections
\[ \goesto{\cM}{\Fun_{/\Delta^1}(\Delta^1, \cM)} \]
carries morphisms of left-exact correspondences to morphisms of recollements, and thereby defines a functor\footnote{We denote this by $\mathrm{lim}^{\rlax}$ in view of the interpretation of the sections of a cartesian fibration as defining the right-lax limit of the corresponding functor.}
\[ \mathrm{lim}^{\rlax} : \Ar^{\rlax}_{\lex}(\Cat_{\infty}) \xto{\simeq} \Recoll, \]
which is an equivalence of $\infty$-categories by \cite[Prop.~A.8.8]{HA} (for full faithfulness) and \cite[Prop.~A.8.11]{HA} (which shows that if $(\sU,\sZ)$ is a recollement of $\sX$, then $\sX$ is equivalent to the right-lax limit of $i^{\ast} j_{\ast}: \sU \to \sZ$). Furthermore, in view of the discussion in \cref{LaxVsStrictMorphismsOfRecollements}, $\mathrm{lim}^{\rlax}$ restricts to equivalences of subcategories
\begin{align*} \Ar_{\lex}(\Cat_{\infty}) \xto{\simeq} \Recoll_{\str}, \; \Ar^{\rlax}(\Cat^{\lex}_{\infty}) \xto{\simeq} \Recoll^{\lex}, \; \Ar(\Cat^{\lex}_{\infty}) \xto{\simeq} \Recoll^{\lex}_{\str}.
\end{align*}
\end{obs}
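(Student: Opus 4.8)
The plan is to make the recollement of \cite[Prop.~A.8.7]{HA} completely explicit and then match the structures it produces against the dictionary of \cref{obs:MorphismOfRecollements} and \cref{LaxVsStrictMorphismsOfRecollements}, citing Lurie for the two genuinely non-formal assertions. Straightening the left-exact correspondence $\cM \to \Delta^1$ to a left-exact functor $\phi\colon \sU \to \sZ$, a section of $\cM$ is the same as a triple $(z \in \sZ,\ u \in \sU,\ \alpha\colon z \to \phi u)$, so that $\sX \simeq \Ar(\sZ) \times_{\ev_1, \sZ, \phi} \sU$; the cartesian sections are those with $\alpha$ an equivalence, and the sections terminal at $1$ those with $u = \ast$. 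I would then observe that $j_*$ is $u \mapsto (\phi u, u, \id)$ with left adjoint $j^* = \ev_1$, that $i_*$ is $z \mapsto (z, \ast, z \to \ast)$ with left adjoint $i^* = \ev_0$, that both $j^*$ and $i^*$ are left exact because finite limits in $\sX$ are computed componentwise (here using that $\phi$ is left exact), and, crucially, that the gluing functor $i^* j_*$ is identified with $\phi$ itself.

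Next I would upgrade the cotensor functor $(\Cat_\infty)_{/\Delta^1} \to \Cat_\infty$, $\cM \mapsto \Fun_{/\Delta^1}(\Delta^1, \cM)$, to $\mathrm{lim}^{\rlax}$ by equipping each value with the recollement above and checking that a morphism $F\colon \cM_1 \to \cM_2$ of left-exact correspondences induces, by postcomposition on sections, a morphism of recollements. Writing $F_U, F_Z$ for the fiber components of $F$ and $\lambda\colon F_Z \phi_1 \Rightarrow \phi_2 F_U$ for its right-lax structure transformation (obtained by factoring the image under $F$ of a cartesian edge through the cartesian edge lying above it), postcomposition sends $(z, u, \alpha)$ to $(F_Z z,\ F_U u,\ \lambda_u \circ F_Z\alpha)$; reading off the two coordinates yields natural equivalences $j_2^* \circ \mathrm{lim}^{\rlax}(F) \simeq F_U \circ j_1^*$ and $i_2^* \circ \mathrm{lim}^{\rlax}(F) \simeq F_Z \circ i_1^*$, whence $\mathrm{lim}^{\rlax}(F)$ carries $j_1^*$-equivalences to $j_2^*$-equivalences and $i_1^*$-equivalences to $i_2^*$-equivalences, and so is a morphism of recollements whose associated functors on open and closed parts (in the sense of \cref{obs:MorphismOfRecollements}) are $F_U$ and $F_Z$. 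Full faithfulness of $\mathrm{lim}^{\rlax}\colon \Ar^{\rlax}_{\lex}(\Cat_\infty) \to \Recoll$ is then \cite[Prop.~A.8.8]{HA}; essential surjectivity is \cite[Prop.~A.8.11]{HA}, applied to the cartesian unstraightening of the gluing functor $\phi = i^* j_*$ of a given recollement, which is left exact since $i^*$ is left exact and $j_*$ is a right adjoint.

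Finally, for the three restricted equivalences: as $\mathrm{lim}^{\rlax}$ is already an equivalence, it suffices to check that a morphism on the source lies in the designated wide subcategory precisely when its image does. By \cref{obs:MorphismOfRecollements}, $\mathrm{lim}^{\rlax}(F)$ is left exact iff $F_U$ and $F_Z$ are, which matches $\Ar^{\rlax}(\Cat^{\lex}_\infty)$ with $\Recoll^{\lex}$. For strictness, the transformation $\nu''$ attached to $\mathrm{lim}^{\rlax}(F)$ in \cref{LaxVsStrictMorphismsOfRecollements} is a map $F_Z(i_1^* j_{1*}) \Rightarrow (i_2^* j_{2*}) F_U$, and under the identification $i_k^* j_{k*} \simeq \phi_k$ a short computation in triple coordinates of the unit $\mathrm{lim}^{\rlax}(F) j_{1*} \Rightarrow j_{2*} F_U$ shows that $\nu''$ is exactly $\lambda$; on the other hand $F$ preserves cartesian edges iff $\lambda_u$ is an equivalence for every $u$, since the image of a cartesian edge factors as $\lambda_u$ followed by a cartesian edge and is therefore cartesian iff $\lambda_u$ is invertible. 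Thus $\mathrm{lim}^{\rlax}$ matches $\Ar_{\lex}(\Cat_\infty)$ with $\Recoll_{\str}$, and intersecting with the previous condition gives $\Ar(\Cat^{\lex}_\infty) \simeq \Recoll^{\lex}_{\str}$. All the substantive content is imported from \cite[\S A.8]{HA}; within the present argument the step demanding the most care is the identification of $\nu''$ with the right-lax structure map of $F$, as that is precisely what turns ``strict'' into ``preserves cartesian edges'' and thereby licenses the passage to the restricted subcategories.
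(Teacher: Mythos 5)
Your proposal is correct and follows essentially the same route as the paper: the recollement structure, full faithfulness, and essential surjectivity are imported from \cite[Props.~A.8.7, A.8.8, A.8.11]{HA}, and the restricted equivalences are obtained by matching left-exactness of $F_U, F_Z$ and preservation of cartesian edges against the dictionary of \cref{obs:MorphismOfRecollements} and \cref{LaxVsStrictMorphismsOfRecollements}, exactly as the paper intends. Your explicit triple-coordinate computation (in particular the identification of $\nu''$ with the right-lax structure map $\lambda$) is a correct spelling-out of what the paper leaves implicit there and records later in \cref{lm:sectionsPullbackSquare} and \cref{cor:RecollementAsPullbackSquare}.
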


We next explain how to identify the $\infty$-category of sections of a cartesian fibration classified by the functor $\phi: \sU \to \sZ$ with the pullback $\Ar(\sZ) \times_{\ev_1,\sZ,\phi} \sU$.

\begin{cnstr} Let $\pi: \cM \to \Delta^1$ be a cartesian fibration. By the dual of \cite[Lem.~2.23]{Exp2}, we have a trivial fibration $\Ar^{\cart}(\cM) \to \Ar(\Delta^1) \times_{\ev_1, \Delta^1, \pi} \cM$, which restricts to a trivial fibration $\ev_1: \Fun^{\cart}_{/\Delta^1}(\Delta^1,\cM) \to \cM_1$. Let $\chi$ be a section of $\ev_1$. 

Because $i: \rightnat{\Lambda^2_2} \to \rightnat{\Delta^2}$ is right marked anodyne, with the structure map $\sigma^0: \Delta^2 \to \Delta^1$, $(\sigma^0)^{-1}(0)= \{0, 1\}$ and $(\sigma^0)^{-1}(1)= \{2\}$, we have a trivial fibration
\[ i^\ast: \Fun_{/\Delta^1}(\rightnat{\Delta^2},\rightnat{\cM}) \to \Fun_{/\Delta^1}(\rightnat{\Lambda_2^2},\rightnat{\cM}) \cong \Fun_{/\Delta^1}(\Delta^1,\cM) \times_{\ev_1, X_1, \ev_1} \Fun^{\cart}_{/\Delta^1}(\Delta^1,\cM). \]
Let $\kappa$ be a section of $i^\ast$. The section $\chi$ yields a functor
\[ f = (\id,\chi \circ \ev_1): \Fun_{/\Delta^1}(\Delta^1,\cM) \to \Fun_{/\Delta^1}(\Delta^1,\cM) \times_{X_1} \Fun^{\cart}_{/\Delta^1}(\Delta^1,\cM).\]
Let $g = \kappa \circ f$. Then the various maps fit into the commutative diagram
\[ \begin{tikzcd}[row sep=4ex, column sep=4ex, text height=1.5ex, text depth=0.25ex]
\Fun_{/\Delta^1}(\Delta^1,\cM) \ar{r}{g} \ar{d}{\ev_1} & \Fun_{/\Delta^1}(\rightnat{\Delta^2},\rightnat{\cM}) \ar{r}{\ev_{01}} \ar{d}{\ev_{12}} & \Fun(\Delta^1,\cM_0) \ar{d}{\ev_1} \\
\cM_1 \ar{r}{\chi} &  \Fun^{\cart}_{/\Delta^1}(\Delta^1,\cM) \ar{r}{\ev_0} & \cM_0.
\end{tikzcd} \]
\end{cnstr}

\begin{lem} \label{lm:sectionsPullbackSquare} The natural map $\Fun_{/\Delta^1}(\Delta^1,\cM) \to \Ar(\cM_0) \times_{\ev_1,\cM_0} \cM_1$ is an equivalence, so the outer square is a homotopy pullback square of $\infty$-categories.
\end{lem}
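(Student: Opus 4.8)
The plan is to unwind the construction that precedes the lemma and observe that the outer rectangle is glued from two squares that are each (homotopy) pullbacks, so that the rectangle is a pullback, and then to check that the resulting comparison map is exactly the natural map in the statement.

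First I would recall the basic facts being used. The map $\ev_1 \colon \Fun^{\cart}_{/\Delta^1}(\Delta^1,\cM) \to \cM_1$ is a trivial fibration, by (the dual of) \cite[Lem.~2.23]{Exp2}; this already identifies $\cM_1 \simeq \sU$ with the $\infty$-category of cartesian sections. Likewise $i^\ast \colon \Fun_{/\Delta^1}(\rightnat{\Delta^2},\rightnat{\cM}) \to \Fun_{/\Delta^1}(\rightnat{\Lambda^2_2},\rightnat{\cM})$ is a trivial fibration since $\rightnat{\Lambda^2_2}\hookrightarrow\rightnat{\Delta^2}$ is right-marked anodyne over $\Delta^1$ (with the indicated marking, where the edge $\{0,1\}$ lies over $\{0\}$ and is marked, while $\{1,2\}$ and $\{0,2\}$ lie over the nondegenerate edge of $\Delta^1$). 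The target of $i^\ast$ is canonically $\Fun_{/\Delta^1}(\Delta^1,\cM)\times_{\cM_1}\Fun^{\cart}_{/\Delta^1}(\Delta^1,\cM)$, where the first factor records the edge $\{1,2\}$ (an arbitrary section), the second the edge $\{0,2\}$ (a cartesian section), and they are required to agree at the vertex $2$, i.e. on $\cM_1$.

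Next I would analyze the two squares in the diagram. For the right-hand square: the map $\ev_{01}$ records the edge $\{0,1\}$ of a functor $\Delta^2\to\cM$ over $\Delta^1$, which lies entirely in the fiber $\cM_0$, hence lands in $\Fun(\Delta^1,\cM_0)=\Ar(\cM_0)$; the map $\ev_{12}$ records the edge $\{1,2\}$ together with (via the identification above) the cartesian edge $\{0,2\}$. Restricting $i^\ast$ to the full subcategories where the edge $\{1,2\}$ is cartesian (equivalently, where all of $\{0,1\},\{1,2\},\{0,2\}$ are, since $\{0,2\}$ already is and cartesian edges compose): more to the point, I claim the right-hand square is a homotopy pullback. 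Indeed $i^\ast$ is a trivial fibration, and one checks that the right-hand square is obtained by pulling back $i^\ast$ along the inclusion $\Ar(\cM_0)\times_{\ev_1,\cM_0,\ev_0}\Fun^{\cart}_{/\Delta^1}(\Delta^1,\cM)\hookrightarrow \Fun_{/\Delta^1}(\Delta^1,\cM)\times_{\cM_1}\Fun^{\cart}_{/\Delta^1}(\Delta^1,\cM)$ picking out those $\Lambda^2_2$-horns whose $\{1,2\}$-leg is degenerate at the vertex $1$ — wait, that is not quite it; rather, the legs $\{0,1\}$ and $\{1,2\}$ of a $2$-simplex over $\Delta^1$ are constrained by the face $\{0,1\}$ lying over $0$ and $\{1,2\}$ lying over the nondegenerate edge, so $\ev_{01}$ is automatically valued in $\Ar(\cM_0)$, and the square
\[ \begin{tikzcd}[column sep=5em]
\Fun_{/\Delta^1}(\rightnat{\Delta^2},\rightnat{\cM}) \ar{r}{\ev_{01}} \ar{d}{\ev_{12}} & \Ar(\cM_0) \ar{d}{\ev_1} \\
\Fun_{/\Delta^1}(\Delta^1,\cM)\times_{\cM_1}\Fun^{\cart}_{/\Delta^1}(\Delta^1,\cM) \ar{r}{\ev_0\circ\mathrm{pr}_2} & \cM_0
\end{tikzcd} \]
is cartesian essentially by definition of the horn (a $\Delta^2$ over $\Delta^1$ is the same as its two legs $\{0,1\}$, $\{1,2\}$ meeting at vertex $1$), so, precomposing with the trivial fibration $i^\ast$ and restricting along $\chi$, the right-hand square in the construction is a homotopy pullback. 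The left-hand square is a homotopy pullback because its horizontal maps $g$ and $\chi$ are sections of trivial fibrations — more precisely, $g$ identifies $\Fun_{/\Delta^1}(\Delta^1,\cM)$ with the full subcategory of $\Fun_{/\Delta^1}(\rightnat{\Delta^2},\rightnat{\cM})$ on the essential image of $\kappa$, namely those $2$-simplices whose $\{0,2\}$-leg is $\chi$-cartesian and whose $\{1,2\}$-leg is the $\ev_1$-image reconstructed by $\chi$; over this subcategory $\ev_{12}$ becomes, via $\chi$, the identity up to equivalence, so the left square is a pullback along the equivalence $\ev_1$ versus $\chi\circ\ev_1$ — concretely, both vertical maps are equivalent after base change. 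Pasting the two homotopy pullback squares gives that the outer rectangle is a homotopy pullback.

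Finally I would identify the comparison map with the natural one. Chasing the definitions, the composite $\ev_{01}\circ g \colon \Fun_{/\Delta^1}(\Delta^1,\cM)\to\Ar(\cM_0)$ sends a section $\sigma$ to the edge in $\cM_0$ from $\phi(\sigma(1))$ — the source of a $\pi$-cartesian lift of $\sigma$'s nondegenerate edge with target $\sigma(1)$, i.e.\ the value of the pushforward/pullback functor $\phi=i^\ast j_\ast$ — to $\sigma(0)$, which is precisely the edge classifying $\sigma$ as a point of $\Ar(\cM_0)\times_{\ev_1,\cM_0,\phi}\cM_1$; the map to $\cM_1$ is $\ev_1$. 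Since the outer rectangle is a homotopy pullback and its bottom row composes to $\ev_0\circ\chi\simeq\phi$ (again by the cartesian-section description of $\chi$), the induced map $\Fun_{/\Delta^1}(\Delta^1,\cM)\to \Ar(\cM_0)\times_{\ev_1,\cM_0,\phi}\cM_1$ is an equivalence, and it agrees with the natural map in the statement. The main obstacle, and the only place real care is needed, is the bookkeeping in the paragraph above: verifying that the left-hand square is a homotopy pullback (i.e.\ that base-changing $\ev_{12}$ along $\chi$ really does recover $\ev_1$ up to equivalence, using that $\kappa$ is a section of the trivial fibration $i^\ast$) and that $\ev_{01}\circ g$ computes the gluing edge; both reduce to the compatibility of the chosen sections $\chi$ and $\kappa$ with the markings, which is exactly what makes $i^\ast$ and $\ev_1$ trivial fibrations in the first place.
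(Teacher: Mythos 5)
Your route is the paper's: factor the outer rectangle into the two displayed squares, observe that the left square is a homotopy pullback because its horizontal maps $g$ and $\chi$ are equivalences (being sections of trivial fibrations), obtain the right square from the decomposition of a $2$-simplex over $\Delta^1$ into its two legs, and paste. The left-square and pasting steps are fine, but the right-square step is wrong as written. The square you display has lower-left corner $\Fun_{/\Delta^1}(\Delta^1,\cM)\times_{\cM_1}\Fun^{\cart}_{/\Delta^1}(\Delta^1,\cM)$, i.e.\ its left vertical records the whole $\Lambda^2_2$-horn, while the lower-right corner $\cM_0$ only imposes agreement at the vertex $1$; the pullback of that cospan consists of an arrow $e$ in $\cM_0$, a section $\sigma$, and a cartesian section $\rho$ with $\sigma(1)\simeq\rho(1)$ and $\ev_1(e)\simeq\rho(0)$, with no compatibility between $e$ and $\sigma$ (in particular the source of $e$ need not be $\sigma(0)$), so the comparison map from $\Fun_{/\Delta^1}(\rightnat{\Delta^2},\rightnat{\cM})$ is not essentially surjective and that square is not a homotopy pullback. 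The square you actually need is the one in the paper's diagram: lower-left corner $\Fun^{\cart}_{/\Delta^1}(\Delta^1,\cM)$, left vertical $\ev_{12}$, bottom $\ev_0$. It is a homotopy pullback because restriction along the inner anodyne $\Lambda^2_1\subset\Delta^2$ gives a trivial fibration $\Fun_{/\Delta^1}(\Delta^2,\cM)\to\Ar(\cM_0)\times_{\ev_1,\cM_0,\ev_0}\Fun_{/\Delta^1}(\Delta^1,\cM)$, and it remains one after passing on both sides to the full subcategories where the $\{1,2\}$-leg is cartesian; your parenthetical (``a $\Delta^2$ over $\Delta^1$ is the same as its $\{0,1\}$- and $\{1,2\}$-legs'') is exactly this argument, but it is an equivalence rather than a definition, it must be applied to the corrected square, and ``precomposing with $i^\ast$ and restricting along $\chi$'' plays no role in the right square.

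Two further slips. The $\{0,2\}$-edge of an object of $\Fun_{/\Delta^1}(\rightnat{\Delta^2},\rightnat{\cM})$ is not cartesian in general: only the $\{1,2\}$-edge is marked in $\rightnat{\Delta^2}$, and the $\{0,2\}$-leg is precisely the arbitrary section, so requiring it to be cartesian would cut the construction down to cartesian sections. And in your last paragraph the arrow $\ev_{01}(g(\sigma))$ runs from $\sigma(0)$ to $\phi(\sigma(1))$ (vertex $0$ of the filled $2$-simplex is $\sigma(0)$, vertex $1$ is the source $\phi(\sigma(1))$ of the chosen cartesian lift, vertex $2$ is $\sigma(1)$), not from $\phi(\sigma(1))$ to $\sigma(0)$; with your orientation the $\ev_1$-compatibility defining $\Ar(\cM_0)\times_{\ev_1,\cM_0,\phi}\cM_1$ would fail. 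Neither slip affects the truth of the lemma once the right-hand square is repaired, but the orientation matters for the identification used in \cref{cor:RecollementAsPullbackSquare}.
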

\begin{proof} Because the sections $\chi$ and $\kappa$ are equivalences, the map $g$ is an equivalence. Moreover, because the map $\Lambda^2_1 \to \Delta^2$ is inner anodyne, the rightmost square is a homotopy pullback square. The claim follows.
\end{proof}

\begin{cor} \label{cor:RecollementAsPullbackSquare} Suppose that $(\sU,\sZ)$ is a recollement of $\sX$ and consider the commutative\footnote{We can obtain a commutative diagram of simplicial sets using standard techniques in quasi-category theory.} diagram
\[ \begin{tikzcd}[row sep=4ex, column sep=6ex, text height=1.5ex, text depth=0.25ex]
\sX \ar{r}{i^{\ast} \eta_j} \ar{d}[swap]{j^{\ast}} & \Ar(\sZ) \ar{d}{\ev_1} \\
\sU \ar{r}{\phi = i^{\ast} j_{\ast}} & \sZ
\end{tikzcd} \]
where $\eta_j: \sX \to \Ar(\sX)$ is the functor that sends $x$ to the unit map $x \to j_{\ast} j^{\ast} x$. Then the induced map
\[ \sX \xto{\simeq} \Ar(\sZ) \times_{\ev_1, \sZ, \phi} \sU \]
is an equivalence of $\infty$-categories.
\end{cor}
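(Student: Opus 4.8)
The plan is to realize the corollary as the composite of two equivalences already available in the excerpt, so that the only genuine content is matching that composite against the functor induced by the displayed commutative square.

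First I would apply \cref{recollEquivalenceToOplaxLim}. Since $\mathrm{lim}^{\rlax}$ is an equivalence and, by \cite[Prop.~A.8.11]{HA}, it carries the recollement $(\sU, \sZ)$ on $\sX$ to the recollement on the $\infty$-category of sections $\Fun_{/\Delta^1}(\Delta^1, \cM)$ described in \cref{recollEquivalenceToOplaxLim}, where $\pi \colon \cM \to \Delta^1$ is the left-exact correspondence with $\cM_0 = \sZ$ and $\cM_1 = \sU$ classifying the gluing functor $\phi = i^* j_* \colon \sU \to \sZ$, we obtain an equivalence of recollements $\sX \xrightarrow{\simeq} \Fun_{/\Delta^1}(\Delta^1, \cM)$. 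Composing this with \cref{lm:sectionsPullbackSquare} applied to $\cM$ produces an equivalence $\sX \xrightarrow{\simeq} \Ar(\sZ) \times_{\ev_1, \sZ, \phi} \sU$, and it remains only to check that this composite is the map of the statement.

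For the identification I would trace the two equivalences on objects. Because the first equivalence respects the recollement structures, an object $x \in \sX$ corresponds to a section $\sigma_x$ of $\pi$ with $\sigma_x(1) = j^* x \in \cM_1$ and $\sigma_x(0) = i^* x \in \cM_0$, and the essential input of \cite[Prop.~A.8.11]{HA} is that the structure edge of $\sigma_x$ over $0 \to 1$ is the one which, via the $\pi$-cartesian edge $\phi(j^* x) \to j^* x$, is classified by the image under $i^*$ of the unit $\eta_j \colon x \to j_* j^* x$. Now \cref{lm:sectionsPullbackSquare} realizes the right-lax limit as the fiber product precisely by performing that cartesian factorization of a section, so it sends $\sigma_x$ to the pair consisting of the arrow $i^* \eta_j(x) \colon i^* x \to i^* j_* j^* x = \phi(j^* x)$ in $\Ar(\sZ)$ together with $j^* x \in \sU$. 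That pair is exactly the value at $x$ of the functor $(i^* \eta_j, j^*) \colon \sX \to \Ar(\sZ) \times_{\ev_1, \sZ, \phi} \sU$ determined by the commutative square; and since the zig-zags of trivial fibrations in the construction preceding \cref{lm:sectionsPullbackSquare} match up the functors themselves rather than merely their effect on objects, the two functors agree, so the composite is the claimed equivalence.

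The step I expect to be the main obstacle is exactly this last bookkeeping: one must be careful that Lurie's equivalence genuinely produces the section whose structure edge is $i^* \eta_j(x)$ with the correct variance, and that the cartesian-edge factorization of \cref{lm:sectionsPullbackSquare} returns that same edge as its $\Ar(\sZ)$-coordinate (one cannot sidestep this by projecting onto $\sU$ and $\sZ$ separately, since the essential information is carried by the $\Ar(\sZ)$-component). Granting this, the corollary is a formal consequence of the two cited results.
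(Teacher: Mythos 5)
Your proposal is correct and matches the paper's own argument, which proves the corollary exactly by combining \cref{lm:sectionsPullbackSquare} with the equivalence $\mathrm{lim}^{\rlax}: \Ar^{\rlax}_{\lex}(\Cat_{\infty}) \xto{\simeq} \Recoll$ of \cref{recollEquivalenceToOplaxLim} (resting on \cite[Prop.~A.8.11]{HA}). The additional bookkeeping you carry out, identifying the composite with the functor $(i^{\ast}\eta_j, j^{\ast})$ induced by the commutative square, is precisely the verification the paper leaves implicit.
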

\begin{proof} Combine \cref{lm:sectionsPullbackSquare} with the equivalence $\mathrm{lim}^{\rlax}: \Ar^{\rlax}_{\lex}(\Cat_{\infty}) \xto{\simeq} \Recoll$ of \cref{recollEquivalenceToOplaxLim}.
\end{proof}

\begin{rem} In view of \cref{cor:RecollementAsPullbackSquare}, given a recollement $(\sU,\sZ)$ of $\sX$ with gluing functor $\phi = i^{\ast} j_{\ast}$ we will often write objects $x \in \sX$ as $[u,\alpha:z \rightarrow \phi(u)]$ or $[u,z,\alpha]$.
\end{rem}

Given a left-exact functor $\phi: \sU \to \sZ$, we may also extract the resulting recollement from the \emph{cocartesian} fibration classified by $\phi$, even though it is difficult to encode the right-lax functoriality when working with cocartesian fibrations.

\begin{obs} \label{dualizingOneSimplex} Let $S$ be an $\infty$-category and $C \to S$ a cocartesian fibration. Recall from \cite{BGN} or \cite[Rec.~5.17]{Exp2} that the \emph{dual cartesian fibration} $C^\vee \to S^{\op}$ is defined to have $n$-simplices\footnote{Here, $\TwAr(-)$ is the \emph{twisted arrow $\infty$-category}. We use the directionality convention of \cite{M1} instead of \cite[\S 5.2.1]{HA}, so twisted arrows are contravariant in the source and covariant in the target.}
\[ \begin{tikzcd}[row sep=4ex, column sep=4ex, text height=1.5ex, text depth=0.25ex]
\leftnat{\TwAr((\Delta^n)^{\op})} \ar{r} \ar{d}[swap]{\ev_1}  & \leftnat{C} \ar{d} \\
((\Delta^n)^{\op})^\sharp \ar{r} & S^\sharp,
\end{tikzcd} \]
where we mark the cocartesian edges in $C$ and $\TwAr((\Delta^n)^{\op})$. In fact, because the functor $\TwAr'(-): s\Set^+_{/S} \to s\Set^+_{/S}$ of \cite[Prop.~5.18]{Exp2} preserves colimits, it follows that for all simplicial sets $A$ over $S^{\op}$
\[ \Hom_{/S^{\op}}(A,C^{\vee}) \cong \Hom_{/S}(\TwAr'(A^{\op}),\leftnat{C}). \]
Consequently, we obtain an equivalence
\[ \Fun_{/S^{\op}}(S^{\op},C^{\vee}) \simeq \Fun^{\cocart}_{/S}(\TwAr(S),C). \]
Now note that the barycentric subdivision $\sd(\Delta^1) = [0 \rightarrow 01 \leftarrow 1]$ is isomorphic to the twisted arrow category $\TwAr(\Delta^1)$. Therefore, for a cocartesian fibration $C \to \Delta^1$, we deduce that
\[ \Fun^{\cocart}_{/\Delta^1}(\sd(\Delta^1),C) \simeq \Fun_{/\Delta^1}(\Delta^1,C^{\vee}) \]
and hence by \cref{lm:sectionsPullbackSquare} we can decompose $\Fun^{\cocart}_{/\Delta^1}(\sd(\Delta^1),C)$ as a pullback square $\Ar(\sZ) \times_{\ev_1, \sZ, \phi} \sU$ for a choice of pushforward functor $\phi: \sU \to \sZ$ (where $\sU \simeq C_0$ and $\sZ \simeq C_1$). This observation will be important for us when we discuss recollements on right-lax limits in the next section.
\end{obs}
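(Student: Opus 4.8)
The plan is to chain three identifications. \emph{Step one} is a general ``adjunction'' for the dual fibration construction: for any cocartesian fibration $C \to S$ and any simplicial set $A$ over $S^{\op}$, I would establish
\[ \Hom_{/S^{\op}}(A, C^{\vee}) \cong \Hom_{/S}(\TwAr'(A^{\op}), \leftnat{C}). \]
This is immediate on simplices straight from the definition of $C^{\vee}$ recalled above (an $n$-simplex of $C^{\vee}$ over $S^{\op}$ is by fiat a marked map $\leftnat{\TwAr((\Delta^n)^{\op})} \to \leftnat{C}$ over $S$), and it propagates to arbitrary $A$ because $(-)^{\op}$ is an automorphism of $s\Set^+$, $\TwAr'(-)$ preserves colimits by \cite[Prop.~5.18]{Exp2}, every marked simplicial set is the colimit of its simplices, and $\Hom_{/S}(-, \leftnat{C})$ carries colimits to limits. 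Taking $A = S^{\op}$ with every edge marked, and recalling that $\TwAr'$ marks exactly the $\ev_1$-cocartesian edges of $\TwAr(S)$, this specializes to an isomorphism of simplicial sets $\Fun_{/S^{\op}}(S^{\op}, C^{\vee}) \cong \Fun^{\cocart}_{/S}(\TwAr(S), C)$; since both sides are quasi-categories, it is in particular an equivalence.

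\emph{Step two} is the case $S = \Delta^1$. I would first check by hand that $\sd(\Delta^1) \cong \TwAr(\Delta^1)$ over $\Delta^1$: the objects of $\TwAr(\Delta^1)$ are the arrows $\id_0, \id_1, (0 \to 1)$ of $\Delta^1$, the two nondegenerate morphisms exhibit this as the cospan $\id_0 \to (0\to1) \leftarrow \id_1$, which is precisely $\sd(\Delta^1) = [0 \to 01 \leftarrow 1]$, and both $\ev_1$ and $\max$ send the two ``vertex'' objects to $0, 1$ and the ``edge'' object to $1$, so the identification is over $\Delta^1$ and matches up the markings. Combined with step one and $(\Delta^1)^{\op} \cong \Delta^1$, this gives $\Fun^{\cocart}_{/\Delta^1}(\sd(\Delta^1), C) \simeq \Fun_{/\Delta^1}(\Delta^1, C^{\vee})$. \emph{Step three} is to feed this into \cref{lm:sectionsPullbackSquare}: $C^{\vee} \to (\Delta^1)^{\op}$ is a cartesian fibration whose fiber over each object $s$ is canonically $C_s$, so after transporting along the order-reversing isomorphism $(\Delta^1)^{\op} \cong \Delta^1$ it becomes a cartesian fibration with fibers $C_1 \simeq \sZ$ and $C_0 \simeq \sU$ and classifying functor some pushforward $\phi \colon \sU \to \sZ$ of $C \to \Delta^1$ (directly unwinding step one for $S = \Delta^1$ shows that a cocartesian-edge-preserving functor $\TwAr(\Delta^1) \to C$ amounts to a $u \in \sU$, a $z \in \sZ$, and a map $z \to \phi(u)$, which is exactly the data one wants). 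Applying \cref{lm:sectionsPullbackSquare} to this cartesian fibration then yields $\Fun_{/\Delta^1}(\Delta^1, C^{\vee}) \simeq \Ar(\sZ) \times_{\ev_1, \sZ, \phi} \sU$, completing the chain.

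The main obstacle will be the marking-and-op bookkeeping rather than any real mathematics: one must pin down that $\TwAr'$ marks exactly the edges making sections of $C^{\vee}$ correspond to functors out of $\TwAr(S)$ that preserve \emph{cocartesian} edges, and verify that the dual construction repackages $C \to \Delta^1$ as a cartesian fibration classified by $\phi$ itself and not by $\phi^{\op}$ or an adjoint of it. Both facts are essentially recorded in \cite{BGN} and \cite[\S 5]{Exp2}; granting them, what remains is the bare-hands identification $\sd(\Delta^1) \cong \TwAr(\Delta^1)$ and a direct appeal to \cref{lm:sectionsPullbackSquare}, both routine.
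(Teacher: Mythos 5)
Your proposal is correct and follows essentially the same route as the paper's observation: the colimit-preservation of $\TwAr'$ gives the Hom identification for arbitrary $A$ over $S^{\op}$, specializing to the equivalence $\Fun_{/S^{\op}}(S^{\op},C^{\vee}) \simeq \Fun^{\cocart}_{/S}(\TwAr(S),C)$, then the hands-on identification $\sd(\Delta^1) \cong \TwAr(\Delta^1)$ and an application of \cref{lm:sectionsPullbackSquare}. Your extra care with the markings and with identifying the classifying functor of $C^{\vee} \to (\Delta^1)^{\op}$ as $\phi$ itself is exactly the bookkeeping the paper leaves implicit, and it checks out.
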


\subsection{Stable recollements}

\begin{dfn} Let $\sX$ be a stable $\infty$-category and let ($\sU$, $\sZ$) be a recollement of $\sX$. Then this recollement is \emph{stable} if $\sU$ and $\sZ$ are stable subcategories. Let $\Recoll^{\stab}$, resp. $\Recoll^{\stab}_{\str}$ be the full subcategory of $\Recoll^{\lex}$, resp. $\Recoll_{\str}^{\lex}$ whose objects are the stable recollements.
\end{dfn}

\begin{dfn} If $\cM \to \Delta^1$ is a left-exact correspondence, then $\cM$ is \emph{exact} if the functor $\phi: \cM_1 \to \cM_0$ is an exact functor of stable $\infty$-categories. Let $\Ar^{\rlax}(\Cat^{\stab}_{\infty})$, resp. $\Ar(\Cat^{\stab}_{\infty})$ be the full subcategory of $\Ar^{\rlax}(\Cat^{\lex}_{\infty})$, resp. $\Ar(\Cat^{\lex}_{\infty})$ on the exact correspondences.
\end{dfn}

\begin{rem} The functor $\mathrm{lim}^{\rlax}$ of \cref{recollEquivalenceToOplaxLim} restricts to equivalences
\begin{align*} \Ar^{\rlax}(\Cat^{\stab}_{\infty}) \xto{\simeq} \Recoll^{\stab}, \quad \Ar(\Cat^{\stab}_{\infty}) \xto{\simeq} \Recoll^{\stab}_{\str}.
\end{align*}
\end{rem}

\begin{obs} Let $(\sU,\sZ)$ be a stable recollement of $\sX$. Then $j^{\ast}: \sX \to \sU$ admits a fully faithful left adjoint\footnote{For the existence of $j_!$, we only need that $\sZ$ admits an initial object $\emptyset$ \cite[Cor.~A.8.13]{HA}. Then $j_!$ is defined by the formula $j_!(u) = [u, \emptyset \rightarrow \phi(u)]$.} $j_!$, $i_{\ast}$ admits a right adjoint $i^!$, and we have norm maps $\Nm: j_! \to j_{\ast}$ and $\Nm': i^! \to i^{\ast}$ that fit into fiber sequences
\begin{align*} j_! \to j_{\ast} \to i_{\ast} i^{\ast} j_{\ast}  \quad \text{and} \quad
i^! \to i^{\ast} \to i^{\ast} j_{\ast} j^{\ast} \:,
\end{align*}
where the other maps are induced by the unit transformations for $j^{\ast} \dashv j_{\ast}$ and $i^{\ast} \dashv i_{\ast}$. On objects $x=[u,z,\alpha] \in \sX$, these amount to the fiber sequences
\begin{align*} [u,0,0] \to [u,\phi u, \id] \to [0,\phi u, 0] \quad \text{and} \quad \fib(\alpha) \to z \xto{\alpha} \phi u \: .
\end{align*}
Considering the various unit and counit transformations and the norm maps, we may extend the pullback square of \cref{recollementFractureSquare} to a commutative diagram 
\[ \begin{tikzcd}[row sep=4ex, column sep=4ex, text height=1.5ex, text depth=0.25ex]
 & i_{\ast} i^! \ar{r}{\simeq} \ar{d} & i_{\ast} i^! \ar{d}{i_{\ast} \Nm'} \\
j_! j^{\ast} \ar{r} \ar{d}{\simeq} & \id \ar{r} \ar{d} & i_{\ast} i^{\ast} \ar{d} \\
j_! j^{\ast} \ar{r}[swap]{\Nm j^{\ast}} & j_{\ast} j^{\ast} \ar{r} & i_{\ast} i^{\ast} j_{\ast} j^{\ast}
\end{tikzcd} \]
in which every row and column is a fiber sequence.
\end{obs}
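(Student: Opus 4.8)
The plan is to reduce everything to explicit computations in the model $\sX\simeq\Ar(\sZ)\times_{\ev_1,\sZ,\phi}\sU$ of \cref{cor:RecollementAsPullbackSquare}, in which objects are triples $[u,z,\alpha\colon z\to\phi u]$ with $j^*[u,z,\alpha]\simeq u$, $i^*[u,z,\alpha]\simeq z$, $j_*(u)\simeq[u,\phi u,\id]$, and $i_*(z)\simeq[0,z,z\to 0]$ (using $\phi(\ast)\simeq\ast$ since $\phi$ is left exact, and that $\sU$, $\sZ$ inherit zero objects from their stability). The existence of a fully faithful $j_!$, with $j_!(u)\simeq[u,0,0\to\phi u]$, is \cite[Cor.~A.8.13]{HA}. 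For $i^!$ I would instead \emph{define} $i^!\coloneq\fib(i^*\xrightarrow{i^*\eta_j}i^*j_*j^*)$, a functor landing in the stable category $\sZ$, and then verify $i_*\dashv i^!$. The key lemma is that any $y\in\sX$ with $j^*y\simeq 0$ lies in the essential image of $i_*$: the unit $y\to i_*i^*y$ becomes an equivalence after $i^*$ (as for any reflective subcategory) and after $j^*$ (both sides vanish, by axiom (1) of \cref{dfn:recollement} together with stability of $\sU$), hence is an equivalence by joint conservativity. Since $j^*\eta_j$ is an equivalence, this applies to $\fib(\id_{\sX}\xrightarrow{\eta_j}j_*j^*)$; applying $i^*$ then identifies it with $i_*i^!$. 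Given this, the adjunction drops out of the natural equivalence $\mathrm{Map}_{\sZ}(w,i^!x)\simeq\mathrm{Map}_{\sX}(i_*w,\fib(x\to j_*j^*x))\simeq\mathrm{Map}_{\sX}(i_*w,x)$, the second step because $\mathrm{Map}_{\sX}(i_*w,j_*j^*x)\simeq\mathrm{Map}_{\sU}(j^*i_*w,j^*x)\simeq\mathrm{Map}_{\sU}(0,-)\simeq\ast$.

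For the norm maps I would set $\Nm\colon j_!\to j_*$ to be $\eta_j\ast j_!$ (legitimate since $j^*j_!\simeq\id$) and $\Nm'\colon i^!\to i^*$ to be the canonical map out of the fiber $i^!=\fib(i^*\to i^*j_*j^*)$, so the second asserted fiber sequence $i^!\to i^*\to i^*j_*j^*$ is its definition; evaluating $i^*\eta_j$ at $[u,z,\alpha]$ recovers $\alpha$, yielding the object-level formula $i^![u,z,\alpha]\simeq\fib(\alpha)$. The first fiber sequence $j_!\to j_*\xrightarrow{\eta_i\ast j_*}i_*i^*j_*$, and separately the fiber sequence $j_!j^*\xrightarrow{\varepsilon}\id_{\sX}\xrightarrow{\eta_i}i_*i^*$ (with $\varepsilon$ the counit of $j_!\dashv j^*$), I would verify coordinatewise: in the model of \cref{cor:RecollementAsPullbackSquare}, fibers and cofibers are computed componentwise, and, e.g., $\eta_i$ at $j_*(u)$ is the map $[u,\phi u,\id]\to[0,\phi u,\phi u\to 0]$, whose fiber is visibly $[u,0,0\to\phi u]\simeq j_!(u)$ with connecting map $\Nm$.

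Finally, I would obtain the full $3\times3$ diagram as the standard iterated-(co)fiber diagram of the composable pair of natural transformations $j_!j^*\xrightarrow{\varepsilon}\id_{\sX}\xrightarrow{\eta_j}j_*j^*$, whose composite is $\Nm\ast j^*$: its middle row and middle column are the fiber sequences above (with $\fib(\eta_j)\simeq i_*i^!$ at the top of the column), its bottom row is $j_!j^*\xrightarrow{\Nm j^*}j_*j^*\to i_*i^*j_*j^*$ (the first fiber sequence evaluated after $j^*$), its left column degenerates to $j_!j^*\xrightarrow{\id}j_!j^*$, and its remaining column comes from the octahedral fiber sequence $\mathrm{cofib}(\varepsilon)\to\mathrm{cofib}(\eta_j\varepsilon)\to\mathrm{cofib}(\eta_j)$ together with $\mathrm{cofib}(\eta_j)\simeq\Sigma\,i_*i^!$, which rotates to $i_*i^!\xrightarrow{i_*\Nm'}i_*i^*\to i_*i^*j_*j^*$, i.e.\ $i_*$ applied to $i^!\to i^*\to i^*j_*j^*$; the top-row equivalence $i_*i^!\xrightarrow{\simeq}i_*i^!$ is the restriction of $\eta_i$ to the essential image of $i_*$, which is an equivalence since $i_*$ is fully faithful. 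By construction the lower-right $2\times2$ square of this diagram is the fracture square of \cref{recollementFractureSquare}. The only genuinely non-formal point is the bookkeeping that matches the various fibers and cofibers ($i_*i^*$, $i_*i^!$, $i_*i^*j_*j^*$) and the connecting maps with the named functors and the named unit, counit, and norm transformations; this is where I expect the main (if modest) effort to go, and where the coordinatewise descriptions of \cref{cor:RecollementAsPullbackSquare} do the work.
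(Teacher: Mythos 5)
Your proposal is correct and follows essentially the same route the paper intends for this observation: working in the model $\Ar(\sZ)\times_{\ev_1,\sZ,\phi}\sU$ of \cref{cor:RecollementAsPullbackSquare}, where $j_!(u)\simeq[u,0,0\to\phi u]$ and $i^![u,z,\alpha]\simeq\fib(\alpha)$, and checking the fiber sequences componentwise. Your construction of $i^!$ as $\fib(i^*\eta_j)$ together with the key lemma that $j^*$-null objects lie in the essential image of $i_*$ (cf. \cref{stableRecollementComment}), and the octahedral assembly of the $3\times 3$ diagram, are exactly the details the paper leaves implicit.
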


\begin{obs} \label{stableRecollementComment} In the stable case, the datum of the closed part of a recollement determines the entire recollement. More precisely, if $\sZ \subset \sX$ is a stable reflective and coreflective subcategory of $\sX$ and we define $\sU$ to be the full subcategory on those objects $u \in \sX$ such that $\Map_{\sX}(z,u) \simeq \ast$ for all $z \in \sZ$, then ($\sU$, $\sZ$) is a stable recollement of $\sX$ \cite[Prop.~A.8.20]{HA}, and conversely, if $(\sU,\sZ)$ is a stable recollement of $\sX$ then $j_{\ast}: \sU \subset \sX$ is defined as above from $\sZ$. We may also identify $j_!(\sU)$ as given by those objects $u \in \sX$ such that $\Map_{\sX}(u,z) \simeq \ast$ for all $z \in \sZ$. 

Moreover, $F: \sX_1 \to \sX_2$ is a morphism of stable recollements $(\sU_1, \sZ_1) \to (\sU_2, \sZ_2)$ if and only if $F|_{\sZ_1} \subset \sZ_2$ and $F|_{j_!(\sU_1)} \subset j_!(\sU_2)$ (in particular, we then have ${j_2}_! F_U \simeq F {j_1}_!$). This is because $\sZ$ coincides with the $j^{\ast}$-null objects and $j_!(\sU)$ with the $i^{\ast}$-null objects. Given this, $F$ is then a strict morphism of stable recollements if and only if we also have that $F|_{j_{\ast}(\sU_1)} \subset j_{\ast}(\sU_2)$.
\end{obs}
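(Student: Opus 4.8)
The first assertion---that the pair $(\sU,\sZ)$ built from a stable reflective and coreflective subcategory $\sZ\subset\sX$ by taking $\sU$ to be its right-orthogonal complement is a stable recollement---is \cite[Prop.~A.8.20]{HA}, which I would simply invoke. For the converse, let $(\sU,\sZ)$ be a stable recollement; I claim the subcategory $\sU$ (the essential image of $j_*$) is exactly $\sZ^{\perp}\coloneq\{u\in\sX:\Map_{\sX}(z,u)\simeq\ast \text{ for all } z\in\sZ\}$. The inclusion $\sU\subseteq\sZ^{\perp}$ is immediate from $j^*\dashv j_*$ together with axiom (1): for $u'\in\sU$ and $z\in\sZ$ one has $\Map_{\sX}(z,j_*u')\simeq\Map_{\sU}(j^*z,u')\simeq\Map_{\sU}(\ast,u')\simeq\ast$, since the terminal object of the stable category $\sU$ is zero. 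For the reverse inclusion I would use the fiber sequence $i_*i^!\to\id\to j_*j^*$ (the middle column of the $3\times 3$ diagram in the preceding observation): if $u\in\sZ^{\perp}$ then $i_*\dashv i^!$ gives $\Map_{\sZ}(z,i^!u)\simeq\Map_{\sX}(i_*z,u)\simeq\ast$ for all $z\in\sZ$, and taking $z=i^!u$ forces $i^!u\simeq 0$, whence $u\xto{\simeq}j_*j^*u$ lies in $\sU$. The strictly dual argument---replacing $i_*i^!\to\id\to j_*j^*$ by $j_!j^*\to\id\to i_*i^*$ and $i_*\dashv i^!$ by $i^*\dashv i_*$---identifies $j_!(\sU)$ with the left-orthogonal complement ${}^{\perp}\sZ=\{u:\Map_{\sX}(u,z)\simeq\ast\text{ for all } z\in\sZ\}$.

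For the ``moreover'' part, the first step is to record that $\sZ$ is precisely the class $\ker(j^*)$ of $j^*$-null objects and $j_!(\sU)$ is precisely $\ker(i^*)$. Both follow from the two fiber sequences of the preceding observation: $j^*i_*\simeq\ast=0$ gives $\sZ\subseteq\ker(j^*)$, while if $j^*x\simeq 0$ then $j_!j^*x\simeq 0$ and $j_!j^*x\to x\to i_*i^*x$ shows $x\simeq i_*i^*x\in\sZ$; dually, the counit $j_!j^*j_!\to j_!$ is an equivalence (triangle identity plus full faithfulness of $j_!$), so evaluating $j_!j^*\to\id\to i_*i^*$ at $j_!u'$ forces $i_*i^*j_!u'\simeq 0$, hence $i^*j_!u'\simeq 0$, giving $j_!(\sU)\subseteq\ker(i^*)$; and if $i^*x\simeq 0$ then $i_*i^*x\simeq 0$ forces $j_!j^*x\xto{\simeq}x$, so $x\in j_!(\sU)$.

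Given this, the morphism criterion is formal. A left-exact functor between stable $\infty$-categories is exact, so a morphism of stable recollements $F$ satisfies $\fib(Ff)\simeq F(\fib f)$ and $F(0)\simeq 0$; since $f$ is a $j^*$-equivalence iff $\fib(f)\in\ker(j^*)=\sZ$ and, e.g., $w\to 0$ is a $j^*$-equivalence for every $w\in\sZ$, we get that $F$ preserves $j^*$-equivalences iff $F(\sZ_1)\subseteq\sZ_2$, and likewise $F$ preserves $i^*$-equivalences iff $F(j_!(\sU_1))\subseteq j_!(\sU_2)$---this is the stated characterization. For the parenthetical identity ${j_2}_!F_U\simeq F{j_1}_!$: since ${j_1}_!$ lands in $j_!(\sU_1)$, the functor $F{j_1}_!$ lands in $j_!(\sU_2)=\ker(i_2^*)$, so $F{j_1}_!\simeq{j_2}_!j_2^*F{j_1}_!$ by the counit; and applying $j_2^*F$ to the fiber sequence ${j_1}_!\to{j_1}_*\to{i_1}_*i_1^*{j_1}_*$ kills the last term (it lies in $\sZ_1$, which $F$ carries into $\sZ_2=\ker(j_2^*)$), so $j_2^*F{j_1}_!\simeq j_2^*F{j_1}_*=F_U$, whence $F{j_1}_!\simeq{j_2}_!F_U$.

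Finally, by the analysis of $\nu$ in \cref{LaxVsStrictMorphismsOfRecollements}, $F$ is strict iff $\nu\colon F{j_1}_*\Rightarrow{j_2}_*F_U$ is an equivalence; but $\nu_{u'}$ is exactly the unit of $j_2^*\dashv{j_2}_*$ evaluated at $F{j_1}_*u'$, which is an equivalence precisely when $F{j_1}_*u'$ lies in the essential image of ${j_2}_*$, i.e.\ when $F(j_*(\sU_1))\subseteq j_*(\sU_2)$. I expect the only genuine obstacle here to be organizational rather than conceptual: one must keep the two fiber sequences and the chains of adjunctions $j_!\dashv j^*\dashv j_*$ and $i^*\dashv i_*\dashv i^!$ carefully coordinated, after which every clause reduces to a one-line computation with the orthogonality classes $\sZ=\ker(j^*)$, $j_!(\sU)=\ker(i^*)$, and $j_*(\sU)=\sZ^{\perp}$.
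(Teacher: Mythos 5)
Your proposal is correct and follows the same route the paper sketches: invoke \cite[Prop.~A.8.20]{HA}, identify $j_*(\sU)$ and $j_!(\sU)$ as the right and left orthogonal complements of $\sZ$, characterize $\sZ$ and $j_!(\sU)$ as the $j^*$-null and $i^*$-null objects via the two fiber sequences, and then deduce the morphism and strictness criteria formally from the adjunctions (with strictness reduced to the unit $\nu$ as in \cref{LaxVsStrictMorphismsOfRecollements}). The details you supply, including the derivation of ${j_2}_!F_U \simeq F{j_1}_!$ from the norm fiber sequence, are accurate fillings-in of the paper's brief justification.
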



\subsection{Symmetric monoidal recollements}

We now extend the theory of recollements to the situation where $\sX$ admits a symmetric monoidal structure $(\sX, \otimes, \mathds{1})$. In what follows, we will call an adjunction $\adjunct{F}{C}{D}{G}$ between symmetric monoidal $\infty$-categories \emph{symmetric monoidal} if $F$ is (strong) symmetric monoidal.

\begin{dfn} \label{dfn:monoidalRecollement} Let $\sX$ be a symmetric monoidal $\infty$-category that admits finite limits. Then a recollement $(\sU,\sZ)$ of $\sX$ is \emph{symmetric monoidal} if the localization functors $j_{\ast} j^{\ast}$ and $i_{\ast} i^{\ast}$ are compatible with the symmetric monoidal structure in the sense of \cite[Def.~2.2.1.6]{HA}, i.e., for every $j^{\ast}$, resp. $i^{\ast}$-equivalence $f: x \to x'$ and any $y \in \sX$, $f \otimes \id: x \otimes y \to x' \otimes y$ is a $j^{\ast}$, resp. $i^{\ast}$-equivalence.

A morphism $F:(\sU, \sZ) \to (\sU',\sZ')$ of recollements on $\sX$ and $\sX'$ is \emph{symmetric monoidal} if the functor $F: \sX \to \sX'$ is symmetric monoidal. Let $\Recoll^{\otimes}$ denote the $\infty$-category of symmetric monoidal recollements and morphisms thereof.
\end{dfn}

\begin{obs} In the situation of \cref{dfn:monoidalRecollement}, by \cite[Prop.~2.2.1.9]{HA} $\sU$ and $\sZ$ obtain symmetric monoidal structures such that the adjunctions $j^{\ast} \dashv j_{\ast}$ and $i^{\ast} \dashv i_{\ast}$ are symmetric monoidal. In particular, the gluing functor $i^{\ast} j_{\ast}$ is lax symmetric monoidal. Furthermore, if $F$ is a morphism of symmetric monoidal recollements, then the induced functors $F_U$ and $F_Z$ of \cref{LaxVsStrictMorphismsOfRecollements} are also symmetric monoidal.
\end{obs}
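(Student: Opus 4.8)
The statement to prove is that in the situation of Definition~\ref{dfn:monoidalRecollement}, the subcategories $\sU$ and $\sZ$ inherit symmetric monoidal structures making the adjunctions $j^*\dashv j_*$ and $i^*\dashv i_*$ symmetric monoidal, that the gluing functor $i^*j_*$ is then lax symmetric monoidal, and that a symmetric monoidal morphism of recollements induces symmetric monoidal functors $F_U$ and $F_Z$. The first two clauses are an essentially immediate application of Lurie's machinery for compatible localizations, so the work of the observation is bookkeeping; let me lay out the steps.

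First I would invoke \cite[Prop.~2.2.1.9]{HA} directly. By hypothesis the localization $L_U\coloneq j_*j^*$ (resp.\ $L_Z\coloneq i_*i^*$) is a localization functor whose class of $L$-equivalences is closed under tensoring with an arbitrary object; this is precisely the compatibility condition of \cite[Def.~2.2.1.6]{HA}. Hence \cite[Prop.~2.2.1.9]{HA} endows the local subcategory --- which is $\sU$ for $L_U$ and $\sZ$ for $L_Z$, since these are by definition the full subcategories of local objects (note $j_*$, resp.\ $i_*$, is fully faithful with essential image the local objects) --- with a symmetric monoidal structure, characterized by the property that the localization functor $j^*\colon\sX\to\sU$ (resp.\ $i^*\colon\sX\to\sZ$) refines to a strong symmetric monoidal functor and the inclusion $j_*$ (resp.\ $i_*$) refines to a lax symmetric monoidal right adjoint. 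Uniqueness of these refinements is part of the cited proposition. That a right adjoint to a strong symmetric monoidal functor is canonically lax symmetric monoidal, and that the composite of lax with strong symmetric monoidal is lax symmetric monoidal, then gives that $i^*j_*$ is lax symmetric monoidal; the tensor structure on $\sU$ is computed by $u\otimes_\sU u' = j^*(j_*u\otimes j_* u')$ and likewise for $\sZ$.

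For the last clause, suppose $F\colon\sX\to\sX'$ is a symmetric monoidal morphism of recollements. Recall $F_U = j'^*\,F\,j_*$ and $F_Z = i'^*\,F\,i_*$. Since $F$ is strong symmetric monoidal, $j'^*$ is strong symmetric monoidal, and $j_*$ is lax symmetric monoidal, the composite $F_U$ is a priori only lax symmetric monoidal; I must upgrade this to strong. The cleanest route is to use that $F$, being a morphism of recollements, sends $j^*$-equivalences to $j'^*$-equivalences, so that $F$ descends to a functor between localizations compatible with the localization functors. Concretely, I would check that the lax structure map $F_U(u)\otimes_{\sU'} F_U(u') \to F_U(u\otimes_\sU u')$ is an equivalence: unwinding, $u\otimes_\sU u' = j^*(j_*u\otimes j_*u')$, and $F$ strong monoidal gives $F(j_*u\otimes j_*u')\simeq Fj_*u\otimes' Fj_*u'$; one then uses that $j_*u\otimes j_*u' \to j_*j^*(j_*u\otimes j_*u')$ is a $j^*$-equivalence (it is the unit of the localization), hence carried by $F$ to a $j'^*$-equivalence, and that applying $j'^*$ to $Fj_*u\otimes' Fj_*u'$ and to $Fj_*(u\otimes_\sU u')$ yields $F_U u\otimes_{\sU'}F_U u'$ and $F_U(u\otimes_\sU u')$ respectively, using that $j'^*$ is strong monoidal and $j'^*Fj_* = F_U$. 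The same argument with $i$ in place of $j$ handles $F_Z$. Compatibility with units is similar and easier.

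The main obstacle is purely organizational rather than mathematical: one must be careful that the symmetric monoidal structures produced by \cite[Prop.~2.2.1.9]{HA} are exactly the ones through which $j^*$ and $i^*$ become strong monoidal (not merely lax), since it is this that makes the coherence diagram for $F_U$ strong monoidal collapse as above; and one must track the interaction of the two localizations $L_U$ and $L_Z$ with $F$ simultaneously. No genuinely new $\infty$-operadic input is needed beyond \cite[Prop.~2.2.1.9]{HA} and the formal fact that adjoints of symmetric monoidal functors are lax symmetric monoidal.
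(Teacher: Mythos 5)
Your proposal is correct and follows essentially the same route as the paper, which simply invokes \cite[Prop.~2.2.1.9]{HA} for the induced structures on $\sU$ and $\sZ$ (with $j^*, i^*$ strong and $j_*, i_*$ lax monoidal, whence $i^*j_*$ is lax monoidal) and leaves the statement about $F_U, F_Z$ as a routine consequence of $F$ preserving the respective local equivalences. Your direct verification that the lax structure maps for $F_U$ and $F_Z$ are equivalences is exactly the standard argument one would supply here (equivalently packaged via the universal property of the symmetric monoidal localization $j^*$), so nothing is missing.
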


\begin{rem}
Most of the results of this subsection will extend verbatim to an arbitrary reduced $\infty$-operad $\sO^{\otimes}$. We leave the details to the reader.
\end{rem}

We first show that given a lax symmetric monoidal functor $\phi: \sU \to \sZ$, the recollement $\mathrm{lim}^{\rlax} \phi$ is symmetric monoidal. We first recall the pointwise symmetric monoidal structure on a functor $\infty$-category.

\begin{cnstr} \label{pointwiseMonoidalStructure} Let $p: C^\otimes \to \Fin_{\ast}$ be an $\infty$-operad, and let $K$ be a simplicial set. We have the cotensor $p^K: (C^\otimes)^K \to \Fin_{\ast}$ defined by $$\Hom_{/\Fin_{\ast}}(A,(C^\otimes)^K) \cong \Hom_{/\Fin_{\ast}}(A \times K,C^\otimes).$$ Then $p^K$ is again an $\infty$-operad: this follows from the observation that for any $\mathfrak{O}$-anodyne morphism $A \to B$ of preoperads (with $\mathfrak{O}$ the defining categorical pattern for the model structure on preoperads), $A \times K \to B \times K$ is again $\mathfrak{O}$-anodyne \cite[Prop.~B.1.9]{HA}. Moreover, if $p$ is in addition a cocartesian fibration, then $p^K$ is also a cocartesian fibration. The fiber of $p^K$ over $\angs{n}$ is $\Fun(K,C^{\times n}) \simeq \prod_{i=1}^n\Fun(K,C)$, and for the unique active map $\angs{n} \to \angs{1}$, if $\phi: C^{\times n} \to C$ is a choice of pushforward functor encoded by $p$, then the postcomposition by $\phi$ functor $\phi_{\ast}: \Fun(K,C^{\times n}) \to \Fun(K,C)$ is a choice of pushforward functor encoded by $p^K$. In other words, $p^K$ is the pointwise symmetric monoidal structure on $\Fun(K,C)$.
\end{cnstr}

We will also need the following lemma.

\begin{lem} \label{lm:evaluationCocartesianMonoidal} Let $C^\otimes$ be a symmetric monoidal $\infty$-category. Then the functor
\[ e_L: (C^\otimes)^{K \star L} \to (C^\otimes)^L \]
induced by $L \subset K \star L$ is a cocartesian fibration of $\infty$-operads.
\end{lem}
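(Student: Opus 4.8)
The plan is to reduce the claim to an analysis of cocartesian edges via the defining property of the cotensor $\infty$-operad $(C^\otimes)^{K\star L}$ from \cref{pointwiseMonoidalStructure}, applied to the inclusion $\iota: L \subset K \star L$. First I would observe that $e_L$ is the map of $\infty$-operads induced by pullback along $\iota$: concretely, for a simplicial set $A$ over $\Fin_\ast$, a map $A \to (C^\otimes)^{K\star L}$ is a map $A \times (K\star L) \to C^\otimes$ over $\Fin_\ast$, and $e_L$ restricts this along $A \times L \hookrightarrow A \times (K\star L)$. One checks $e_L$ is a fibration of $\infty$-operads (in particular an inner fibration) by the usual argument: lifting against an inner anodyne $A \to B$ of simplicial sets over $\Fin_\ast$ amounts to lifting $C^\otimes \to \Fin_\ast$ against $(A \times (K\star L)) \cup_{A\times L}(B\times L) \to B \times (K\star L)$, and this map is inner anodyne by the pushout-product property (the relevant map $L \to K\star L$ is a cofibration, and inner anodyne maps are closed under pushout-product with arbitrary cofibrations); that it carries inert-to-inert edges over inerts follows fiberwise from the description in \cref{pointwiseMonoidalStructure}, since over $\angs n$ the fiber of $p^{K\star L}$ is $\Fun(K\star L, C)^{\times n}$ and $e_L$ is restriction along $L \subset K\star L$, which preserves products and is compatible with the Segal maps.

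The heart of the proof is exhibiting enough $e_L$-cocartesian edges. The key point is the well-known fact (essentially \cite[Prop.~4.3.2.15, 4.3.3.10]{HTT}, or the join/cone fibration lemmas of \cite{HTT}) that for any inner fibration $q: C^\otimes \to \Fin_\ast$ that is moreover a \emph{cocartesian} fibration, restriction along $L \subset K \star L$ yields a cocartesian fibration $(C^\otimes)^{K\star L} \to (C^\otimes)^L$; more precisely, given a point $\widetilde{x}: \Delta^0 \to (C^\otimes)^{K\star L}$ lying over an edge $e: \Delta^1 \to (C^\otimes)^L$, one produces the cocartesian lift by solving a lifting problem against $\Delta^1 \times (K\star L)$ relative to the union of $\Delta^1 \times L$ and $\{0\}\times(K\star L)$, which one does using that $C^\otimes \to \Fin_\ast$ is a cocartesian fibration together with a cofinality/left-anodyne argument for the inclusion $(\{0\}\times (K\star L)) \cup_{\{0\}\times L}(\Delta^1 \times L) \hookrightarrow \Delta^1\times(K\star L)$ — this inclusion is a trivial cofibration in the covariant model structure on $s\Set_{/\Delta^1}$, hence one can fill. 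I would then note that it suffices to handle this for $q$ the structure map $C^\otimes \to \Fin_\ast$ directly, since the required lifts over $\Fin_\ast$ are built pointwise over $K \star L$ out of $q$-cocartesian edges.

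Finally, having produced $e_L$-cocartesian lifts of every edge of $(C^\otimes)^L$, I would invoke the fact that an inner fibration admitting cocartesian lifts of all edges is a cocartesian fibration, and combine this with the already-checked statement that $e_L$ is a map of $\infty$-operads to conclude that $e_L$ is a cocartesian fibration of $\infty$-operads in the sense required. I would also remark that this is the special case $K = \Delta^0$ of the evaluation-at-an-endpoint map reappearing, so the reader may alternatively deduce it from iterating the case of $\ev_1: (C^\otimes)^{\Delta^1} \to C^\otimes$ together with the decomposition $K \star L = (K \star \partial\Delta^0)\star\cdots$; but the direct argument above is cleanest. The main obstacle I anticipate is purely bookkeeping: setting up the lifting problem over $\Fin_\ast$ correctly so that the cocartesian edges of $C^\otimes$ (not merely arbitrary edges) get used, i.e.\ verifying that the lift produced by the covariant-anodyne filling argument is genuinely $e_L$-cocartesian and not just an $e_L$-lift; this is handled by checking the relevant relative lifting property against $\Lambda^n_0 \star L \hookrightarrow \Delta^n \star L$-type horns, which again reduces to the cocartesian-edge recognition criterion for $C^\otimes \to \Fin_\ast$.
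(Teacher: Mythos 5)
The central step of your argument has a genuine gap. You are right that the inclusion $(\{0\}\times(K\star L))\cup_{\{0\}\times L}(\Delta^1\times L)\hookrightarrow\Delta^1\times(K\star L)$ is left anodyne (pushout-product of $\{0\}\subset\Delta^1$ with the cofibration $L\subset K\star L$), but left anodyne maps only have the lifting property against \emph{left} fibrations, and $C^\otimes\to\Fin_\ast$ (or its pullback to $\Delta^1$) is merely a cocartesian fibration. The marked refinement does not rescue this: to lift a marked anodyne map against $C^\otimes$ with its cocartesian edges marked, the prescribed data would have to send the marked edges of the domain to cocartesian edges, but in your lifting problem the data on $\Delta^1\times L$ is the given edge $e$ of $(C^\otimes)^L$, whose components $\Delta^1\times\{l\}$ are \emph{arbitrary} edges of $C^\otimes$ (e.g.\ over an active $\angs{n}\to\angs{1}$ they are arbitrary maps out of a tensor product). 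So ``covariant trivial cofibration, hence one can fill'' does not produce the extension; and even granting some filler, nothing in your sketch shows it is $e_L$-cocartesian. Your proposed fix via ``$\Lambda^n_0\star L$-type horns'' is also not the right shape: the transposed problems are of the form $(\Lambda^n_0\times(K\star L))\cup(\Delta^n\times L)\subset\Delta^n\times(K\star L)$, and they hit the same marking obstruction on the $L$-factor. (The HTT citations you lean on are about relative Kan extensions and do not supply the ``well-known fact'' you invoke.)

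What makes the lemma true is that an $e_L$-cocartesian lift is a composite of two different kinds of edges: first a $p^{K\star L}$-cocartesian edge over the underlying map in $\Fin_\ast$ (available by \cref{pointwiseMonoidalStructure}), then an edge lying in a single fiber over $\Fin_\ast$ which is cocartesian for the fiberwise restriction $(e_L)_{\angs{m}}\colon\Fun(K\star L,C^{\times m})\to\Fun(L,C^{\times m})$. This is exactly how the paper argues: by the inert--active factorization it suffices to check (1) that each $(e_L)_{\angs{n}}$ is a cocartesian fibration, with cocartesian edges exactly those transformations whose restriction to $K$ is an equivalence (this is \cite[Lem.~4.8]{Exp2}), and (2) that pushforward along an active $\angs{n}\to\angs{1}$ carries such edges to such edges, which is immediate because a tensor product of equivalences is an equivalence. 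To repair your write-up you would either need to quote a criterion of this two-step form, or carry out the composite construction by hand and verify its universal property, using marked-anodyne arguments in which the \emph{$K$-direction} edges are the marked ones (where the cocartesianness hypothesis actually holds), rather than the single left-anodyne filling you propose.
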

\begin{proof} Because $e_L$ is induced by the monomorphism $L \subset K \star L$, $e_L$ is a fibration of $\infty$-operads. Using the inert-active factorization system on an $\infty$-operad, it then suffices to prove the following two properties of $e_L$: 
\begin{enumerate} \item For every object $\angs{n} \in \Fin_{\ast}$, $(e_L)_{\angs{n}}$ is a cocartesian fibration;
\item For every active edge $\alpha: \angs{n} \to \angs{1}$ and commutative square in $(C^\otimes)^{K \star L}$
\[ \begin{tikzcd}[row sep=4ex, column sep=4ex, text height=1.5ex, text depth=0.25ex]
f = (f_1,...,f_n) \ar{r} \ar{d}{\theta} & f' = \otimes_{i=1}^n f_i \ar{d}{\theta'} \\
g = (g_1,...,g_n) \ar{r} & g' = \otimes_{i=1}^n g_i
\end{tikzcd} \]

with the horizontal edges as $p^{K \star L}$-cocartesian edges covering $\alpha$, if $\theta$ is $(e_L)_{\angs{n}}$-cocartesian then $\theta'$ is $(e_L)_{\angs{1}}$-cocartesian.
\end{enumerate} 
For (1), by \cite[Lem.~4.8]{Exp2} we have that $(e_L)_{\angs{n}}: \Fun(K \star L,C^{\times n}) \to \Fun(L, C^{\times n})$ is a cocartesian fibration. Moreover, $\theta: f \to g$ is a $(e_L)_{\angs{n}}$-cocartesian edge if and only if its image in $\Fun(K,C^{\times n})$ is an equivalence. This proves (2), since the $n$-fold tensor product of equivalences is always an equivalence.
\end{proof}

We are now ready to define the symmetric monoidal structure on a right-lax limit.

\begin{dfn} \label{dfn:canonicalSMConOplaxLimit} Suppose $\phi^\otimes: \sU^\otimes \to \sZ^\otimes$ is a lax symmetric monoidal functor of symmetric monoidal $\infty$-categories (i.e., a map of $\infty$-operads). Consider the pullback square of $\infty$-operads
\[ \begin{tikzcd}[row sep=4ex, column sep=4ex, text height=1.5ex, text depth=0.25ex]
(\sZ^\otimes)^{\Delta^1} \times_{\sZ^\otimes} \sU^\otimes \ar{r} \ar{d} & (\sZ^\otimes)^{\Delta^1} \ar{d}{\ev_1} \\ 
\sU^\otimes \ar{r}{\phi^\otimes} & \sZ^\otimes. 
\end{tikzcd} \]
By \cref{lm:evaluationCocartesianMonoidal}, $\ev_1$ is a cocartesian fibration, so $(\sZ^\otimes)^{\Delta^1} \times_{\sZ^\otimes} \sU^\otimes \to \sU^\otimes \to \Fin_{\ast}$ is a cocartesian fibration and therefore a symmetric monoidal $\infty$-category. This defines the \emph{canonical} symmetric monoidal structure on the right-lax limit of $\phi$.
\end{dfn}

\begin{rem} In \cref{dfn:canonicalSMConOplaxLimit}, at the level of objects the tensor product on $\Ar(\sZ) \times_{\sZ} \sU$ is defined in the following way: suppose given two objects $x=[u,z,\alpha]$ and $x' = [u',z',\alpha']$. Then $x \otimes x' = [u \otimes u', z \otimes z', \gamma]$, where $\gamma$ is given by the composite map
\[ z \otimes z' \xtolong{\alpha \otimes \alpha'}{1.2} \phi(u) \otimes \phi(u') \to \phi(u \otimes u') \]
using the lax symmetric monoidality of $\phi$ for the second map. 
\end{rem}

\begin{prp} If $\phi: \sU \to \sZ$ is a lax symmetric monoidal left-exact functor, then $\mathrm{lim}^{\rlax} \phi$ is a symmetric monoidal recollement with respect to the canonical symmetric monoidal structure on $\Ar(\sZ) \times_{\sZ} \sU$.
\end{prp}
\begin{proof} We only need to observe that in \cref{dfn:canonicalSMConOplaxLimit}, the two evaluation maps $j^{\ast}: \Ar(\sZ) \times_{\sZ} \sU \to \sU$ and $i^{\ast}: \Ar(\sZ) \times_{\sZ} \sU \to \Ar(\sZ) \xto{\ev_0} \sZ$ are symmetric monoidal.
\end{proof}

We next wish to show that given a symmetric monoidal recollement $(\sU, \sZ)$ of $\sX$, the symmetric monoidal structure on $\sX$ is the canonical one of \cref{dfn:canonicalSMConOplaxLimit}. We first observe that the unit transformation of a symmetric monoidal adjunction is itself a lax symmetric monoidal functor.

\begin{lem} \label{lem:unitLaxMonoidal} Let $C^\otimes$ and $D^\otimes$ be symmetric monoidal $\infty$-categories and let $\adjunct{F}{C}{D}{G}$ be a symmetric monoidal adjunction. Then the unit transformation $\eta: C \to \Ar(C)$ lifts to a lax symmetric monoidal functor $\eta^{\otimes}: C^{\otimes} \to (C^{\otimes})^{\Delta^1}$ such that $\ev_1 \eta^{\otimes} \simeq G^{\otimes} F^{\otimes}$ and $\ev_0 \eta^{\otimes} \simeq \id$.
 \end{lem}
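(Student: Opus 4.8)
The plan is to build $\eta^\otimes$ directly from the adjunction data using the characterization of the unit of a symmetric monoidal adjunction as a morphism in the $\infty$-category of $\infty$-operads over $C^\otimes$. First, recall that a symmetric monoidal adjunction $\adjunct{F}{C}{D}{G}$ amounts to the data of a cocartesian fibration $\cM^\otimes \to \Delta^1 \times \Fin_*$ encoding the lax symmetric monoidal functor $G^\otimes$ together with the fact that $\cM^\otimes \to \Delta^1$ is also a cocartesian fibration encoding $F^\otimes$ (a "symmetric monoidal correspondence"); equivalently, one has both $F^\otimes \dashv G^\otimes$ at the level of $\infty$-operad maps. The composite $G^\otimes F^\otimes : C^\otimes \to C^\otimes$ is then a lax symmetric monoidal monad, and the unit $\eta : \id \Rightarrow G F$ is its unit; the content of the lemma is that this natural transformation itself lifts to a map of $\infty$-operads $C^\otimes \to (C^\otimes)^{\Delta^1}$, i.e. that $\eta$ is lax symmetric monoidal as a functor $C^\otimes \to \Fun(\Delta^1, C)^{\otimes}$ with the pointwise operad structure of \cref{pointwiseMonoidalStructure}.

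The cleanest route is to obtain $\eta^\otimes$ as a section. Consider the cotensor $(C^\otimes)^{\Delta^1} \to C^\otimes$ given by $\ev_1$; by \cref{lm:evaluationCocartesianMonoidal} (with $K = \Delta^0$, $L = \Delta^0$, so $K \star L = \Delta^1$) this is a cocartesian fibration of $\infty$-operads, and the natural transformation $\eta$ from $\id$ to $GF$ corresponds, via the universal property of $(C^\otimes)^{\Delta^1}$ as a cotensor, to a map of $\infty$-operads over $C^\otimes$ out of $C^\otimes$ precisely when the underlying functor $\eta$ is such — but that is circular. Instead I would argue as follows. Form the pullback of $\infty$-operads $(C^\otimes)^{\Delta^1} \times_{\ev_1, C^\otimes, G^\otimes F^\otimes} C^\otimes$; call it $\cE^\otimes$. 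A section of $\cE^\otimes \to C^\otimes$ refining $\id_{C^\otimes}$ (i.e.\ a lax symmetric monoidal functor $C^\otimes \to \cE^\otimes$ over $C^\otimes$ whose composite to $C^\otimes$ via $\ev_0$ is the identity) is exactly the datum we want, since projecting it along $(C^\otimes)^{\Delta^1}$ gives $\eta^\otimes$ with $\ev_0 \eta^\otimes \simeq \id$ and $\ev_1 \eta^\otimes \simeq G^\otimes F^\otimes$ built in. To produce such a section, note that $\cE^\otimes \to C^\otimes$ is a cocartesian fibration of $\infty$-operads (being a base change of $\ev_1$ along $G^\otimes F^\otimes$, using \cref{lm:evaluationCocartesianMonoidal}), so by \cite[Prop.~2.4.1.11]{HA}-style reasoning it suffices to exhibit a lax symmetric monoidal \emph{right} adjoint section, or equivalently to recognize $\cE^\otimes$ as the cotensor by $\Delta^1$ relative to a suitable correspondence and invoke that the unit of an $\infty$-operadic adjunction is always a map of operads. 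Concretely: the adjunction $F^\otimes \dashv G^\otimes$ of $\infty$-operads gives, by \cite[Prop.~7.3.2.6]{HA} or the discussion of relative adjunctions of operads in \cite[\S 7.3.2]{HA}, a lax symmetric monoidal unit transformation, which unwinds to exactly the asserted lift $\eta^\otimes$; the compatibilities $\ev_0 \eta^\otimes \simeq \id$, $\ev_1 \eta^\otimes \simeq G^\otimes F^\otimes$ are then immediate from the construction of the unit of an adjunction.

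The main obstacle is purely bookkeeping: one must be careful that the symmetric monoidal structure on $\Ar(C)$ appearing in the statement is precisely the pointwise one of \cref{pointwiseMonoidalStructure} (equivalently, the cotensor $(C^\otimes)^{\Delta^1}$ used in \cref{lm:evaluationCocartesianMonoidal} and \cref{dfn:canonicalSMConOplaxLimit}), and that "the unit of a symmetric monoidal adjunction is lax symmetric monoidal" is being applied to $F^\otimes \dashv G^\otimes$ in the $\infty$-operadic sense, not merely to the underlying adjunction. Once those identifications are pinned down, the statement is a formal consequence of the theory of relative adjunctions of $\infty$-operads in \cite[\S 7.3.2]{HA}, with the two evaluation identities coming for free from the standard description of the unit $\id \Rightarrow G F$ as the arrow obtained by whiskering the counit-unit data. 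I do not expect any genuinely hard step; the only place requiring care is matching conventions between the cotensor $(-)^{\Delta^1}$ and the pointwise monoidal structure, which is handled by \cref{pointwiseMonoidalStructure} together with the fact that $\Delta^1 \cong \Delta^0 \star \Delta^0$.
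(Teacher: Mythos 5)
Your proposal is correct in substance, but it follows a genuinely different route from the paper. You reduce the lemma to the theory of relative adjunctions over $\Fin_{\ast}$ from \cite[\S 7.3.2]{HA}: since $F$ is strong symmetric monoidal and its underlying functor admits a right adjoint, $F^{\otimes}$ admits a right adjoint $G^{\otimes}$ relative to $\Fin_{\ast}$, and the unit of this relative adjunction is a natural transformation $C^{\otimes}\times\Delta^1\to C^{\otimes}$ over $\Fin_{\ast}$, which transposes to a functor $\eta^{\otimes}\colon C^{\otimes}\to (C^{\otimes})^{\Delta^1}$ with $\ev_0\eta^{\otimes}\simeq\id$ and $\ev_1\eta^{\otimes}\simeq G^{\otimes}F^{\otimes}$ built in. The one step you leave implicit and should spell out is that this transpose is in fact a map of $\infty$-operads: this is not a citable black box (``the unit of an $\infty$-operadic adjunction is always a map of operads'' is not literally a statement in \cite{HA}) but it is an easy check, because inert edges of the cotensor $(C^{\otimes})^{\Delta^1}$ of \cref{pointwiseMonoidalStructure} are detected by evaluation at the two vertices of $\Delta^1$, and both $\id$ and $G^{\otimes}F^{\otimes}$ preserve inert edges; once this is said, your intermediate detour through the pullback $\cE^{\otimes}$ and a hypothetical section is unnecessary. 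The paper argues differently, without invoking relative adjunctions at this point: it takes the bicartesian fibration $\cM\to\Delta^1$ classified by the adjunction, identifies $\Fun_{/\Delta^1}(\Delta^1,\cM)\simeq \Ar(C)\times_C D$ via \cref{lm:sectionsPullbackSquare} and equips it with the canonical symmetric monoidal structure of \cref{dfn:canonicalSMConOplaxLimit} applied to the lax monoidal $G^{\otimes}$, observes that strong monoidality of $F$ makes the cocartesian sections closed under the tensor product so that $\ev_0$ identifies them symmetric monoidally with $C^{\otimes}$, and then composes with the (by construction lax monoidal) projection to $(C^{\otimes})^{\Delta^1}$. Your route is somewhat more direct and isolates the formal inputs (the operadic adjunction plus the pointwise description of inerts in the cotensor), whereas the paper's route stays within the machinery of canonical monoidal structures on right-lax limits that it has already set up and immediately reuses in \cref{thm:CanonicalMonoidalStructureOnMonoidalRecollement}; both yield the same lift, and the bookkeeping you flag (that the monoidal structure on $\Ar(C)$ is precisely the cotensor one) is handled identically in either approach.
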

\begin{proof} Let $\cM \to \Delta^1$ be the bicartesian fibration classified by the adjunction. We may factor (or define) $\eta$ as the composition
\[ C \simeq \Fun^{\cocart}_{/ \Delta^1}(\Delta^1,\cM) \subset \Fun_{/\Delta^1}(\Delta^1,\cM) \simeq \Ar(C) \times_{C} D  \to \Ar(C) \]
where we use \cref{lm:sectionsPullbackSquare} for the identification of the sections of $\cM$. Let $\Fun_{/\Delta^1}(\Delta^1,\cM)$ be equipped with its canonical symmetric monoidal structure. Because $F$ is symmetric monoidal, the inclusion $\Fun^{\cocart}_{/ \Delta^1}(\Delta^1,\cM) \subset \Fun_{/\Delta^1}(\Delta^1,\cM)$ defines a symmetric monoidal structure on $\Fun^{\cocart}_{/ \Delta^1}(\Delta^1,\cM)$ by restriction such that the equivalence $\ev_0: \Fun^{\cocart}_{/ \Delta^1}(\Delta^1,\cM) \xto{\simeq} C$ is an equivalence of symmetric monoidal $\infty$-categories. Also, the projection $\Fun_{/\Delta^1}(\Delta^1,\cM) \to \Ar(C)$ is lax symmetric monoidal by definition. We deduce that $\eta$ lifts to a lax symmetric monoidal functor $\eta^{\otimes}$ with the indicated properties.
\end{proof}

\begin{prp} Let ($\sU$, $\sZ$) be a symmetric monoidal recollement of $\sX$. Then the functor $\sX \to \Fun(\Delta^1 \times \Delta^1,\sX)$ realizing the pullback square of functors
\[ \begin{tikzcd}[row sep=4ex, column sep=4ex, text height=1.5ex, text depth=0.25ex]
\id \ar{r} \ar{d} & i_\ast i^\ast \ar{d} \\
j_\ast j^\ast \ar{r} & i_\ast i^\ast j_\ast j^\ast
\end{tikzcd} \]
lifts to a lax symmetric monoidal functor $\sX^{\otimes} \to (\sX^{\otimes})^{\Delta^1 \times \Delta^1}$. Consequently, if $A \in \sX$ is an algebra object, then we have an equivalence of algebras
\[ A \simeq (j_\ast j^\ast)(A) \times_{(i_\ast i^\ast j_\ast j^\ast)(A)} (i_\ast i^\ast)(A). \]
\end{prp}
\begin{proof} By \cref{lem:unitLaxMonoidal}, the symmetric monoidal adjunction $j^{\ast} \dashv j_{\ast}$ yields a lax symmetric monoidal functor $$(\eta_j)^{\otimes}: \sX^{\otimes} \to (\sX^{\otimes})^{\Delta^1}.$$ We also have the induced symmetric monoidal adjunction $\adjunct{\widehat{i}^\ast}{\Ar(\sX)}{\Ar(\sZ)}{\widehat{i}_{\ast}}$ which yields a lax symmetric monoidal functor $$(\eta_{\widehat{i}})^{\otimes}: (\sX^{\otimes})^{\Delta^1} \to (\sX^{\otimes})^{\Delta^1 \times \Delta^1}.$$ The composite $(\eta_{\widehat{i}})^{\otimes} \circ (\eta_j)^{\otimes}$ then defines the desired functor.
\end{proof}

\begin{thm}\label{thm:CanonicalMonoidalStructureOnMonoidalRecollement} Suppose $(\sU, \sZ)$ is a symmetric monoidal recollement of $\sX$. Then the equivalence $$\sX \xto{\simeq} \Ar(\sZ) \times_{\sZ} \sU$$ of \cref{cor:RecollementAsPullbackSquare} refines to an equivalence of symmetric monoidal $\infty$-categories, where we equip $\Ar(\sZ) \times_{\sZ} \sU$ with the canonical symmetric monoidal structure of \cref{dfn:canonicalSMConOplaxLimit}.
\end{thm}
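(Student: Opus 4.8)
The plan is to promote the equivalence $\sX\xto{\simeq}\Ar(\sZ)\times_{\sZ}\sU$ of \cref{cor:RecollementAsPullbackSquare} to the underlying functor of a strong symmetric monoidal functor valued in the $\infty$-operad $(\sZ^{\otimes})^{\Delta^1}\times_{\sZ^{\otimes}}\sU^{\otimes}$ equipped with the canonical symmetric monoidal structure of \cref{dfn:canonicalSMConOplaxLimit}. To construct the comparison functor, apply \cref{lem:unitLaxMonoidal} to the symmetric monoidal adjunction $j^{\ast}\dashv j_{\ast}$, obtaining a lax symmetric monoidal functor $\eta_j^{\otimes}\colon\sX^{\otimes}\to(\sX^{\otimes})^{\Delta^1}$ with $\ev_0\,\eta_j^{\otimes}\simeq\id$ and $\ev_1\,\eta_j^{\otimes}\simeq j_{\ast}^{\otimes}(j^{\ast})^{\otimes}$. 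Postcomposing with the (strong) symmetric monoidal functor $(\widehat{i}^{\ast})^{\otimes}\colon(\sX^{\otimes})^{\Delta^1}\to(\sZ^{\otimes})^{\Delta^1}$ induced pointwise by $i^{\ast}$ gives a lax symmetric monoidal functor $(i^{\ast}\eta_j)^{\otimes}\colon\sX^{\otimes}\to(\sZ^{\otimes})^{\Delta^1}$ with $\ev_1\circ(i^{\ast}\eta_j)^{\otimes}\simeq(i^{\ast})^{\otimes}j_{\ast}^{\otimes}(j^{\ast})^{\otimes}\simeq\phi^{\otimes}\circ(j^{\ast})^{\otimes}$. Pairing this with the strong symmetric monoidal $(j^{\ast})^{\otimes}\colon\sX^{\otimes}\to\sU^{\otimes}$ produces a map of $\infty$-operads $G^{\otimes}\colon\sX^{\otimes}\to(\sZ^{\otimes})^{\Delta^1}\times_{\sZ^{\otimes}}\sU^{\otimes}$ over $\Fin_{\ast}$ (a map into a pullback of $\infty$-operads is an operad map once both legs are). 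Unwinding the construction of $\eta_j^{\otimes}$ in \cref{lem:unitLaxMonoidal} — whose proof factors $\eta_j$ through the canonical symmetric monoidal structure on $\Fun_{/\Delta^1}(\Delta^1,\cM)$ — the underlying functor of $G^{\otimes}$ is $(i^{\ast}\eta_j,\,j^{\ast})\colon\sX\to\Ar(\sZ)\times_{\sZ}\sU$, i.e.\ the equivalence of \cref{cor:RecollementAsPullbackSquare}.

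It then remains to check that $G^{\otimes}$ is strong symmetric monoidal, i.e.\ carries $\Fin_{\ast}$-cocartesian edges to $\Fin_{\ast}$-cocartesian edges. As $G^{\otimes}$ is a map of $\infty$-operads it preserves inert edges, so by the inert--active factorization, together with the fact that every active morphism of $\Fin_{\ast}$ is a composite of (identities and) the active maps $\angs{2}\to\angs{1}$ and $\angs{0}\to\angs{1}$, it suffices to verify that the lax structure maps $\mathds{1}\to G(\mathds{1}_{\sX})$ and $G(x)\otimes G(y)\to G(x\otimes_{\sX}y)$ are equivalences in $\Ar(\sZ)\times_{\sZ}\sU$ for all $x,y\in\sX$. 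By the proposition following \cref{dfn:canonicalSMConOplaxLimit} and its proof, the projections $q_{\sU}\colon(\sZ^{\otimes})^{\Delta^1}\times_{\sZ^{\otimes}}\sU^{\otimes}\to\sU^{\otimes}$ and $q_{\sZ}\colon(\sZ^{\otimes})^{\Delta^1}\times_{\sZ^{\otimes}}\sU^{\otimes}\to(\sZ^{\otimes})^{\Delta^1}\xto{\ev_0}\sZ^{\otimes}$ are both strong symmetric monoidal, and they are jointly conservative on the underlying $\infty$-category $\Ar(\sZ)\times_{\sZ}\sU$: for a morphism there, the $\ev_1$-component of its $\Ar(\sZ)$-part equals $\phi$ applied to its $\sU$-part, hence is an equivalence as soon as the images under $q_{\sU}$ and $q_{\sZ}$ are. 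So it is enough to check the two displayed maps after applying $q_{\sU}$ and $q_{\sZ}$. But $q_{\sU}\circ G^{\otimes}\simeq(j^{\ast})^{\otimes}$ and $q_{\sZ}\circ G^{\otimes}\simeq\ev_0\circ(i^{\ast}\eta_j)^{\otimes}\simeq(i^{\ast})^{\otimes}$ (using $\ev_0\,\eta_j^{\otimes}\simeq\id$), both strong symmetric monoidal; since $q_{\sU}$ and $q_{\sZ}$ are also strong symmetric monoidal, applying $q_{\sU}$ resp.\ $q_{\sZ}$ carries the structure map $G(x)\otimes G(y)\to G(x\otimes y)$ to the coherence maps $j^{\ast}x\otimes j^{\ast}y\to j^{\ast}(x\otimes y)$ resp.\ $i^{\ast}x\otimes i^{\ast}y\to i^{\ast}(x\otimes y)$, which are equivalences, and similarly on units. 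Hence $G^{\otimes}$ is strong symmetric monoidal; being an equivalence on underlying $\infty$-categories, it is an equivalence on every fiber over $\Fin_{\ast}$, hence an equivalence of symmetric monoidal $\infty$-categories.

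The argument is essentially bookkeeping once $G^{\otimes}$ has been set up. The two points that warrant care are: (i) identifying the underlying functor of $G^{\otimes}$ with the equivalence of \cref{cor:RecollementAsPullbackSquare}, i.e.\ checking that the lax refinement $\eta_j^{\otimes}$ of \cref{lem:unitLaxMonoidal} genuinely lies over $\eta_j$, which is built into its construction; and (ii) the reduction of ``strong symmetric monoidal'' to the structure maps over $\angs{0}\to\angs{1}$ and $\angs{2}\to\angs{1}$, after which joint conservativity of the two strong symmetric monoidal projections off of $(\sZ^{\otimes})^{\Delta^1}\times_{\sZ^{\otimes}}\sU^{\otimes}$ finishes the job.
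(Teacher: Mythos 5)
Your proposal is correct and essentially reproduces the paper's own proof: you build the lax symmetric monoidal refinement of $x \mapsto (i^{\ast}\eta_j(x), j^{\ast}x)$ from \cref{lem:unitLaxMonoidal}, observe that it covers the equivalence of \cref{cor:RecollementAsPullbackSquare}, and then deduce strong monoidality (hence the equivalence) from the symmetric monoidality and joint conservativity of $j^{\ast}$ and $i^{\ast}$, exactly as in the paper. The one point you elide is that your two legs into $(\sZ^{\otimes})^{\Delta^1}$ and $\sU^{\otimes}$ agree over $\sZ^{\otimes}$ only up to a specified homotopy, so to obtain an honest map into the strict fiber product of \cref{dfn:canonicalSMConOplaxLimit} one must first rectify the square to a strictly commutative one — the paper does this via \cref{lem:strictificationOperadSquare}, using that $\ev_1$ is a fibration of $\infty$-operads; with that step supplied, your argument is the paper's.
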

\begin{proof} By \cref{lem:unitLaxMonoidal} and \cref{lem:strictificationOperadSquare}, we have a commutative diagram of $\infty$-operads
\[ \begin{tikzcd}[row sep=4ex, column sep=8ex, text height=1.5ex, text depth=0.5ex]
\sX^{\otimes} \ar{r}{(i^{\ast})^{\otimes} (\eta_j)^{\otimes}} \ar{d}[swap]{(j^{\ast})^{\otimes}} & (\sZ^{\otimes})^{\Delta^1} \ar{d}{\ev_1} \\
\sU^{\otimes} \ar{r}{(i^{\ast})^{\otimes} (j_{\ast})^{\otimes} } & Z^{\otimes}
\end{tikzcd} \]
such that the induced functor $\theta^{\otimes}: \sX^{\otimes} \to (\sZ^{\otimes})^{\Delta^1} \times_{\sZ^{\otimes}} \sU^{\otimes}$ covers the map $\theta$ of \cref{cor:RecollementAsPullbackSquare}. Since $\theta$ is an equivalence, to show that $\theta^{\otimes}$ is an equivalence it suffices to check that $\theta^{\otimes}$ is strongly symmetric monoidal. But this follows from the symmetric monoidality of the jointly conservative functors $j^*, i^*$.
\end{proof}

We include the following simple strictification result for completeness.

\begin{lem} \label{lem:strictificationOperadSquare} Suppose we have a homotopy commutative square of $\infty$-operads
\[ \begin{tikzcd}[row sep=4ex, column sep=4ex, text height=1.5ex, text depth=0.25ex]
A^\otimes \ar{r}{F'} \ar{d}{G'} & B^\otimes \ar{d}{G} \\
C^\otimes \ar{r}{F} & D^\otimes
\end{tikzcd} \]
in the sense that there is the data of a homotopy $\theta: G \circ F' \overset{\simeq}{\Rightarrow} F \circ G'$ over $\Fin_{\ast}$
\[ \begin{tikzcd}[row sep=4ex, column sep=4ex, text height=1.5ex, text depth=0.25ex]
A^\otimes \times \{0\} \ar{r}{F'} \ar{d} & B^\otimes \ar{d}{G} \\
A^\otimes \times \Delta^1 \ar{r}{\theta} & D^\otimes \\
A^\otimes \times \{1\} \ar{r}{G'} \ar{u} & C^\otimes \ar{u}[swap]{F}
\end{tikzcd} \]
such that $\theta$ sends every edge $(a,0) \to (a,1)$ to an equivalence. Suppose also that $G$ is a fibration of $\infty$-operads, i.e., a categorical fibration \cite[2.1.2.10]{HA}. Then there exists a functor $F'': A^\otimes \to B^\otimes$ homotopic to $F'$ as a map of $\infty$-operads such that the square
\[ \begin{tikzcd}[row sep=4ex, column sep=4ex, text height=1.5ex, text depth=0.25ex]
A^\otimes \ar{r}{F''} \ar{d}{G'} & B^\otimes \ar{d}{G} \\
C^\otimes \ar{r}{F} & D^\otimes
\end{tikzcd} \]
strictly commutes.
\end{lem}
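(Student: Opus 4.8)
The plan is to deduce this from the fact that a categorical fibration $G\colon B^\otimes \to D^\otimes$ is an \emph{isofibration}, i.e.\ that it supports transport of structure along natural equivalences. The combinatorial input is the ``free-living equivalence'' $\mathcal{E} \coloneq N(\{0 \xrightarrow{\ \sim\ } 1\})$, the nerve of the contractible groupoid on two objects. Since $\mathcal{E}$ is a contractible Kan complex, the inclusion $\{0\} \hookrightarrow \mathcal{E}$ is a trivial cofibration in the Joyal model structure, and as the Joyal model structure is cartesian, $A^\otimes \times \{0\} \hookrightarrow A^\otimes \times \mathcal{E}$ is again a trivial cofibration. All of this passes to the slice model structure $(s\Set_{\mathrm{Joyal}})_{/\Fin_{\ast}}$: there a map is a (trivial) cofibration, resp.\ a fibration, exactly when it is one in $s\Set_{\mathrm{Joyal}}$, so $G$ — being a categorical fibration over $\Fin_{\ast}$ — is a fibration in the slice, and the displayed inclusion is a trivial cofibration there.

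The first step, and the heart of the argument, is to promote the homotopy $\theta\colon A^\otimes \times \Delta^1 \to D^\otimes$ to a functor $\theta'\colon A^\otimes \times \mathcal{E} \to D^\otimes$ over $\Fin_{\ast}$ restricting to $\theta$ along $A^\otimes \times \Delta^1 \hookrightarrow A^\otimes \times \mathcal{E}$. This is where the hypothesis that $\theta$ carries each edge $(a,0) \to (a,1)$ to an equivalence enters: the map $\Delta^1 \to \mathcal{E}$ exhibits $\mathcal{E}$ as the localization of $\Delta^1$ at its nondegenerate edge, and taking the product with the fixed $\infty$-category $A^\otimes$ shows that $A^\otimes \times \Delta^1 \to A^\otimes \times \mathcal{E}$ is the localization at the set of edges $\{(\id_a, 0 \to 1) : a \in A^\otimes_0\}$. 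Since $\theta$ inverts these edges, it factors essentially uniquely through this localization, and the factorization may be taken over $\Fin_{\ast}$ because those edges all lie over identity edges of $\Fin_{\ast}$.

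With $\theta'$ in hand, I would solve the lifting problem
\[ \begin{tikzcd}[row sep=4ex, column sep=4ex]
A^\otimes \times \{0\} \ar{r}{F'} \ar[hook]{d} & B^\otimes \ar{d}{G} \\
A^\otimes \times \mathcal{E} \ar{r}{\theta'} \ar[dashed]{ur}{\widetilde{\theta}} & D^\otimes
\end{tikzcd} \]
in $(s\Set_{\mathrm{Joyal}})_{/\Fin_{\ast}}$: the left vertical map is a trivial cofibration and $G$ is a fibration, so a lift $\widetilde{\theta}$ exists. Set $F'' \coloneq \widetilde{\theta}|_{A^\otimes \times \{1\}}$. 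Then $G \circ F'' = \theta'|_{A^\otimes \times \{1\}} = \theta|_{A^\otimes \times \{1\}} = F \circ G'$, so the square commutes strictly, while $\widetilde{\theta}|_{A^\otimes \times \Delta^1}$ is a natural transformation $F' \Rightarrow F''$ whose components $\widetilde{\theta}(a, 0 \to 1)$ are equivalences lying over identities of $\Fin_{\ast}$. Since $F'$ is a map of $\infty$-operads and inert edges are stable under equivalences of this ``vertical'' kind, $F''$ again preserves inert edges; hence $F''$ is a map of $\infty$-operads and $\widetilde{\theta}|_{A^\otimes \times \Delta^1}$ witnesses a homotopy from $F'$ to $F''$ in the $\infty$-category of $\infty$-operad maps, as required.

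The one genuinely delicate point is the first step: the inclusion $A^\otimes \times \Delta^1 \hookrightarrow A^\otimes \times \mathcal{E}$ is \emph{not} a trivial cofibration, so the extension of $\theta$ cannot be produced by a bare lifting argument and must instead invoke the universal property of $\mathcal{E}$ as a localization, together with the check that this universal property is compatible with products and with slicing over $\Fin_{\ast}$. Everything after that is routine model-categorical bookkeeping.
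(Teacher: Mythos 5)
The heart of your argument — promoting $\theta$ to a functor $\theta'\colon A^\otimes \times \mathcal{E} \to D^\otimes$ with $\theta'|_{A^\otimes \times \Delta^1} = \theta$ — is exactly where the proof breaks as written. The universal property of the localization $A^\otimes \times \Delta^1 \to A^\otimes \times \mathcal{E}$ only produces a factorization \emph{up to natural equivalence}: you get some $\theta'$ together with an equivalence $\theta'|_{A^\otimes \times \Delta^1} \simeq \theta$, not an equality. But your endgame needs the equality on the nose twice: once so that the lifting square against $G$ commutes strictly ($\theta'|_{A^\otimes \times \{0\}} = G\circ F'$), and once in the chain $G\circ F'' = \theta'|_{A^\otimes\times\{1\}} = \theta|_{A^\otimes\times\{1\}} = F\circ G'$ that is supposed to deliver strict commutativity. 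With only an equivalence in hand, the conclusion degrades back to a homotopy-commutative square, which is what you started with. Since the inclusion $A^\otimes\times\Delta^1 \hookrightarrow A^\otimes\times\mathcal{E}$ is not a trivial cofibration in the Joyal (or sliced Joyal) structure — as you note yourself — neither lifting nor the bare localization property supplies the strict extension, so this step is a genuine gap rather than a routine check.

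The gap is repairable, but only by changing tools at precisely this point, and then the detour through $\mathcal{E}$ becomes unnecessary. One fix: work with marked simplicial sets, marking the edge of $\Delta^1$ and the equivalences (resp.\ inert edges) of the targets; there $(\Delta^1)^\sharp \to \mathcal{E}^\sharp$ \emph{is} a trivial cofibration, which is what actually yields a strict extension of $\theta$ — but once you admit markings you may as well do what the paper does: in the model structure on $\infty$-preoperads the inclusion $A^{\otimes,\natural}\times\{0\} \to A^{\otimes,\natural}\times(\Delta^1)^\sharp$ is already anodyne (here the hypothesis that $\theta$ inverts the edges $(a,0)\to(a,1)$ is used to see that $\theta$ is a map of marked simplicial sets, equivalences being inert), and $G$ is a fibration of preoperads, so a single lift of $\theta$ itself produces $F''$ strictly over $F\circ G'$, with the restriction to $A^\otimes\times\Delta^1$ giving the operadic homotopy $F'\simeq F''$. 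Equivalently, the statement you are implicitly using is that a categorical fibration such as $\Fun(A^\otimes,B^\otimes)\to\Fun(A^\otimes,D^\otimes)$ is an isofibration, so the equivalence $\theta\colon G F' \Rightarrow F G'$ lifts to an equivalence $F'\Rightarrow F''$ with $GF''=FG'$ on the nose; but that isofibration property is itself proved by the marked-anodyne lifting argument, not by the Joyal-plus-localization route you propose. Your closing observations (that $F''$ lies over $\Fin_*$, preserves inert edges because inertness is invariant under levelwise equivalence, and is operadically homotopic to $F'$) are fine once the strict lift exists.
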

\begin{proof} Given an $\infty$-operad $O^\otimes$, let $O^{\otimes,\natural}$ denote the marked simplicial set $(O^\otimes,\cE)$ where $\cE$ is the collection of inert morphisms in $O^\otimes$ \cite[2.1.4.5]{HA}. Consider the lifting problem in marked simplicial sets
\[ \begin{tikzcd}[row sep=4ex, column sep=4ex, text height=1.5ex, text depth=0.25ex]
A^{\otimes,\natural} \times \{0\} \ar{r}{F'} \ar{d} & B^{\otimes,\natural} \ar{d}{G} \\
A^{\otimes,\natural} \times (\Delta^1)^{\sharp} \ar{r}{\theta} \ar[dotted]{ru}{\overline{\theta}} & D^{\otimes,\natural}.
\end{tikzcd} \]
Because $G$ is assumed to be a fibration of $\infty$-operads, $G$ is a fibration in the model structure on $\infty$-preoperads \cite[2.1.4.6]{HA}. Therefore, the dotted lift $\overline{\theta}$ exists. If we then let $F'' = \overline{\theta}|_{A^\otimes \times \{1\}}$, the claim follows.
\end{proof}

We next turn to morphisms of symmetric monoidal recollements.

\begin{obs} Suppose we have a commutative diagram of symmetric monoidal $\infty$-categories and lax symmetric monoidal functors
\[ \begin{tikzcd}[row sep=4ex, column sep=4ex, text height=1.5ex, text depth=0.25ex]
{\sU}^{\otimes} \ar{r}{{\phi}^{\otimes}} \ar{d}[swap]{{F_U}^{\otimes}} & {\sZ}^{\otimes} \ar{d}{{F_Z}^{\otimes}} \\
{\sU'}^{\otimes} \ar{r}{{\phi'}^{\otimes}} & {\sZ'}^{\otimes}.
\end{tikzcd} \]
Then by way of the commutative diagram
\[ \begin{tikzcd}[row sep=4ex, column sep=4ex, text height=1.5ex, text depth=0.25ex]
(\sZ^\otimes)^{\Delta^1} \times_{\sZ^{\otimes}} \sU^{\otimes} \ar{r} \ar{d} & (\sZ^\otimes)^{\Delta^1} \ar{r}{F_Z^{\otimes}} \ar{d}{\ev_1} & (\sZ'^{\otimes})^{\Delta^1} \ar{d}{\ev_1} \\
\sU^{\otimes} \ar{r}{\phi^{\otimes}} \ar{rd}[swap]{F_U^{\otimes}} & \sZ^{\otimes} \ar{r}{F_Z^{\otimes}} & \sZ'^{\otimes} \\
& \sU'^{\otimes} \ar{ru}[swap]{\phi'^{\otimes}}
\end{tikzcd} \]
we obtain a lax symmetric monoidal functor $F^{\otimes}: (\sZ^\otimes)^{\Delta^1} \times_{\sZ^{\otimes}} \sU^{\otimes} \to (\sZ'^\otimes)^{\Delta^1} \times_{\sZ'^{\otimes}} \sU'^{\otimes}$, which is symmetric monoidal if $F_U^{\otimes}$ and $F_Z^{\otimes}$ are symmetric monoidal.

Let $\Ar_{\lex}(\Cat^{\otimes, \lax}_{\infty}) \subset \Ar(\Cat^{\otimes,\lax}_{\infty})$ be the subcategory whose objects are left-exact lax symmetric monoidal functors and whose morphisms are through symmetric monoidal functors. Then by the above construction\footnote{Technically, to make a rigorous construction we may work at the level of preoperads and then pass to the underlying $\infty$-categories.} we may lift the functor $\mathrm{lim}^{\rlax}: \Ar_{\lex}(\Cat_{\infty}) \to \Recoll_{\str}$ to
\[ ({\mathrm{lim}^{\rlax}})^{\otimes}: \Ar_{\lex}(\Cat^{\otimes, \lax}_{\infty}) \to \Recoll_{\str}^{\otimes}. \]
An elaboration of \cref{thm:CanonicalMonoidalStructureOnMonoidalRecollement} shows that $({\mathrm{lim}^{\rlax}})^{\otimes}$ is an equivalence -- we leave the details to the reader.

One also has a lift of $\mathrm{lim}^{\rlax}: \Ar^{\rlax}_{\lex}(\Cat_{\infty}) \to \Recoll$ if one considers right-lax commutative squares of $\infty$-operads. Since the details in this case are more involved, we leave a precise formulation to the reader.
\end{obs}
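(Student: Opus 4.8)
The plan is to upgrade the equivalence $\mathrm{lim}^{\rlax}\colon \Ar_{\lex}(\Cat_{\infty}) \xto{\simeq} \Recoll_{\str}$ of \cref{recollEquivalenceToOplaxLim} to the symmetric monoidal setting by exhibiting an explicit inverse to $({\mathrm{lim}^{\rlax}})^{\otimes}$. Namely, I would define a functor $\Psi\colon \Recoll_{\str}^{\otimes} \to \Ar_{\lex}(\Cat^{\otimes,\lax}_{\infty})$ carrying a symmetric monoidal recollement $(\sU, \sZ)$ of $\sX$ to its gluing functor $\phi^{\otimes} = (i^{\ast} j_{\ast})^{\otimes}\colon \sU^{\otimes} \to \sZ^{\otimes}$ — a left-exact lax symmetric monoidal functor, the lax structure being the one furnished by the compatible localizations $j_{\ast} j^{\ast}$ and $i_{\ast} i^{\ast}$ via \cite[Prop.~2.2.1.9]{HA} — and carrying a strict symmetric monoidal morphism $F\colon \sX \to \sX'$ to the pair $(F_U^{\otimes}, F_Z^{\otimes})$; here $F_U$ and $F_Z$ are automatically strong symmetric monoidal by the observation following \cref{dfn:monoidalRecollement}, and the square of lax symmetric monoidal functors they span commutes since $\nu''$ is an equivalence for a strict morphism. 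As the paper remarks for the construction of $({\mathrm{lim}^{\rlax}})^{\otimes}$ itself, one makes this precise by working at the level of $\infty$-preoperads, and functoriality of $\Psi$ then comes down to the functoriality of the operadic localization of \cite[\S 2.2.1]{HA} together with the compositionality of the structure transformations $\nu, \nu', \nu''$.

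It then suffices to check that $\Psi \circ ({\mathrm{lim}^{\rlax}})^{\otimes}$ and $({\mathrm{lim}^{\rlax}})^{\otimes} \circ \Psi$ are naturally equivalent to the identity. The objectwise content reduces to two inputs. First, $({\mathrm{lim}^{\rlax}})^{\otimes}(\Psi(\sU,\sZ))$ is $\Ar(\sZ) \times_{\sZ} \sU$ with its canonical symmetric monoidal structure, and this is $\sX^{\otimes}$ by \cref{thm:CanonicalMonoidalStructureOnMonoidalRecollement} — indeed, that theorem is exactly the objectwise instance of $({\mathrm{lim}^{\rlax}})^{\otimes} \circ \Psi \simeq \id$. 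Second, the underlying functor of $\Psi(({\mathrm{lim}^{\rlax}})^{\otimes}\phi^{\otimes})$ is the gluing functor $i^{\ast} j_{\ast}$ of the recollement on $\Ar(\sZ) \times_{\sZ} \sU$, which is $\phi$; and inspecting the tensor product on $\Ar(\sZ) \times_{\sZ} \sU$ as spelled out in the remark after \cref{dfn:canonicalSMConOplaxLimit}, the lax structure maps of this gluing functor are precisely those of $\phi^{\otimes}$. It remains to see that the resulting natural transformations are equivalences; since the forgetful functors $\Ar_{\lex}(\Cat^{\otimes,\lax}_{\infty}) \to \Ar_{\lex}(\Cat_{\infty})$ and $\Recoll_{\str}^{\otimes} \to \Recoll_{\str}$ are conservative — a strong symmetric monoidal functor whose underlying functor is an equivalence is an equivalence of symmetric monoidal $\infty$-categories — it is enough to know that the underlying transformations are the unit and counit of the equivalence $\mathrm{lim}^{\rlax}$, which they are.

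One can equivalently argue by essential surjectivity plus full faithfulness of $({\mathrm{lim}^{\rlax}})^{\otimes}$. Essential surjectivity is again \cref{thm:CanonicalMonoidalStructureOnMonoidalRecollement} (together with the fact that $i^{\ast}j_{\ast}$ is left-exact and lax symmetric monoidal, so that its domain lies in $\Ar_{\lex}(\Cat^{\otimes,\lax}_{\infty})$). For full faithfulness, one uses that $({\mathrm{lim}^{\rlax}})^{\otimes}$ covers $\mathrm{lim}^{\rlax}$: the induced map on mapping spaces fibers over the equivalence $\Map_{\Ar_{\lex}(\Cat_{\infty})}(\phi_1, \phi_2) \simeq \Map_{\Recoll_{\str}}(\mathrm{lim}^{\rlax}\phi_1, \mathrm{lim}^{\rlax}\phi_2)$, and over a fixed underlying square $(F_U, F_Z)$ one must identify the space of strong symmetric monoidal refinements of $(F_U, F_Z)$ with the space of strong symmetric monoidal refinements of the corresponding functor $\sX_1 \to \sX_2$ — which follows from the joint conservativity of the symmetric monoidal functors $j^{\ast}, i^{\ast}$, exactly as in the proof of \cref{thm:CanonicalMonoidalStructureOnMonoidalRecollement}.

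I expect the only genuine obstacle to be operadic bookkeeping rather than anything conceptual: one must arrange $\Psi$ (or, in the second approach, the comparison of refinement spaces) so that the coherences actually assemble, which in practice means carrying everything out with the model structure on $\infty$-preoperads and applying a strictification argument in the style of \cref{lem:strictificationOperadSquare}. The genuinely more delicate statement alluded to at the end of the observation — lifting $\mathrm{lim}^{\rlax}\colon \Ar^{\rlax}_{\lex}(\Cat_{\infty}) \to \Recoll$ along right-lax commutative squares of $\infty$-operads — requires a workable theory of right-lax natural transformations, equivalently right-lax limits, of $\infty$-operads, which is why a precise formulation is deferred; cf.\ \cite[\S 4.4]{AMGRb}.
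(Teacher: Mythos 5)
Your proposal is correct and follows the paper's intended route: the paper constructs $({\mathrm{lim}^{\rlax}})^{\otimes}$ at the level of preoperads and explicitly leaves the equivalence as an ``elaboration of \cref{thm:CanonicalMonoidalStructureOnMonoidalRecollement}'' for the reader, and your essential-surjectivity-plus-full-faithfulness argument (essential surjectivity from that theorem, full faithfulness by fibering over the underlying equivalence $\mathrm{lim}^{\rlax}: \Ar_{\lex}(\Cat_{\infty}) \xto{\simeq} \Recoll_{\str}$ and using joint conservativity of the symmetric monoidal $j^{\ast}, i^{\ast}$) is exactly that elaboration. Your explicit inverse $\Psi$ via the lax symmetric monoidal gluing functor is a mild repackaging of the same idea, with the residual work being precisely the preoperad-level bookkeeping (e.g.\ upgrading $\nu''$ to an equivalence of lax symmetric monoidal transformations) that you already flag.
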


Our next goal is to establish certain \emph{projection formulas} satisfied by a (stable) symmetric monoidal recollement. First, we note the following about the situation in which the symmetric monoidal $\infty$-category $\sX$ is in addition closed.

\begin{obs} Let $\sX$ be a closed symmetric monoidal $\infty$-category and let $F(-,-)$ denote its internal hom. If $(\sU,\sZ)$ is a symmetric monoidal recollement of $\sX$, then we define
$$F_{\sU}(u,u') = j^{\ast} F(j_{\ast} u, j_{\ast} u') \: \text{ and } \: F_{\sZ}(z,z') = i^{\ast} F(i_{\ast} z, i_{\ast} z')$$
to be internal homs for $\sU$ and $\sZ$, so that $\sU$ and $\sZ$ are closed symmetric monoidal. Indeed, since $j^{\ast} \dashv j_{\ast}$ is monoidal, we have
\begin{align*} \Map_{\sU}(w,j^{\ast} F(j_{\ast} u, j_{\ast} v)) &\simeq \Map_{\sX}(j_{\ast} w,  F(j_{\ast} u, j_{\ast} v)) \simeq \Map_{\sX}(j_{\ast} w \otimes j_{\ast}v, j_{\ast}v) \\
& \Map_{\sU}(j^{\ast}(j_{\ast} w \otimes j_{\ast} u), v) \simeq \Map_{\sU}(w \otimes u,v),
\end{align*}
and similarly for $F_{\sZ}(-,-)$. Moreover we have natural equivalences
\begin{align*} F(x,j_{\ast} u) \simeq j_{\ast} F_{\sU}(j^{\ast} x,u), \quad F(x,i_{\ast} z) \simeq i_{\ast} F_{\sZ}(i^{\ast} x,z).
\end{align*}
For example, we may check
\begin{align*}
\Map_{\sX}(x,F(y,j_{\ast}u)) & \simeq \Map_{\sX}(x \otimes y, j_{\ast} u) \simeq \Map_{\sU}(j^{\ast} x \otimes j^{\ast} y,u) \\
& \simeq \Map_{\sU}(j^{\ast} x, F_{\sU}(j^{\ast} y ,u)) \simeq \Map_{\sX}(x,j_{\ast} F_{\sU}(j^{\ast} y ,u)).
\end{align*}
This implies that the unit maps
\begin{align*} F(j_{\ast} u , j_{\ast} u') & \to j_{\ast} j^{\ast} F(j_{\ast} u, j_{\ast} u') = j_{\ast} F_{\sU}(u,u') \\
F(i_{\ast} z, i_{\ast} z') & \to i_{\ast} i^{\ast} F(i_{\ast} z, i_{\ast} z') = i_{\ast} F_{\sZ}(z,z')
\end{align*}
are equivalences.
\end{obs}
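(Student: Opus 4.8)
The plan is to reduce every assertion to standard facts about symmetric monoidal Bousfield localizations. Since $(\sU,\sZ)$ is a symmetric monoidal recollement, the adjunctions $j^{\ast}\dashv j_{\ast}$ and $i^{\ast}\dashv i_{\ast}$ are symmetric monoidal adjunctions (see \cite[Prop.~2.2.1.9]{HA}); in particular $j^{\ast}$ and $i^{\ast}$ are strong symmetric monoidal, $j_{\ast}$ and $i_{\ast}$ are fully faithful lax symmetric monoidal functors, $j^{\ast}j_{\ast}\simeq\id_{\sU}$, $i^{\ast}i_{\ast}\simeq\id_{\sZ}$, and the essential image of $j_{\ast}$ (resp.\ $i_{\ast}$) is precisely the class of objects local for the localization $L=j_{\ast}j^{\ast}$ (resp.\ $i_{\ast}i^{\ast}$). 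I would first verify the universal property of $F_{\sU}$ and $F_{\sZ}$ directly; the ``external'' formulas $F(x,j_{\ast}u')\simeq j_{\ast}F_{\sU}(j^{\ast}x,u')$ and $F(x,i_{\ast}z')\simeq i_{\ast}F_{\sZ}(i^{\ast}x,z')$, together with the statement that the unit maps are equivalences, then follow formally.

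\emph{Step 1: $F_{\sU}$ and $F_{\sZ}$ are internal homs.} Fix $u,u'\in\sU$. The one preliminary I need is that for any $Y\in\sX$ and any $w\in\sU$ there is an equivalence $\Map_{\sU}(w,j^{\ast}Y)\simeq\Map_{\sX}(j_{\ast}w,Y)$: since $j_{\ast}$ is fully faithful, $\Map_{\sU}(w,j^{\ast}Y)\simeq\Map_{\sX}(j_{\ast}w,j_{\ast}j^{\ast}Y)$, and since $j_{\ast}w$ is $L$-local the $L$-equivalence $Y\to j_{\ast}j^{\ast}Y$ induces an equivalence on $\Map_{\sX}(j_{\ast}w,-)$. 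Applying this with $Y=F(j_{\ast}u,j_{\ast}u')$ and then using, in order, the $\otimes$--$F$ adjunction in $\sX$, the adjunction $j^{\ast}\dashv j_{\ast}$, strong monoidality of $j^{\ast}$, and $j^{\ast}j_{\ast}\simeq\id$:
\begin{align*}
\Map_{\sU}(w,F_{\sU}(u,u')) &\simeq \Map_{\sX}(j_{\ast}w, F(j_{\ast}u,j_{\ast}u')) \simeq \Map_{\sX}(j_{\ast}w\otimes j_{\ast}u, j_{\ast}u') \\
&\simeq \Map_{\sU}\!\big(j^{\ast}(j_{\ast}w\otimes j_{\ast}u), u'\big) \simeq \Map_{\sU}(j^{\ast}j_{\ast}w\otimes_{\sU}j^{\ast}j_{\ast}u, u') \simeq \Map_{\sU}(w\otimes_{\sU}u, u').
\end{align*}
Hence $F_{\sU}(u,-)$ is right adjoint to $(-)\otimes_{\sU}u$, so $\sU$ is closed with this internal hom; the identical computation with $(i^{\ast},i_{\ast})$ handles $F_{\sZ}$ and $\sZ$.

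\emph{Step 2: external formulas and unit maps.} For $x\in\sX$, $u'\in\sU$, and any test object $x'\in\sX$, chaining the $\otimes$--$F$ adjunction, $j^{\ast}\dashv j_{\ast}$, strong monoidality of $j^{\ast}$, Step 1, and $j^{\ast}\dashv j_{\ast}$ once more:
\begin{align*}
\Map_{\sX}(x', F(x,j_{\ast}u')) &\simeq \Map_{\sX}(x'\otimes x, j_{\ast}u') \simeq \Map_{\sU}(j^{\ast}(x'\otimes x), u') \\
&\simeq \Map_{\sU}(j^{\ast}x'\otimes_{\sU}j^{\ast}x, u') \simeq \Map_{\sU}(j^{\ast}x', F_{\sU}(j^{\ast}x,u')) \simeq \Map_{\sX}(x', j_{\ast}F_{\sU}(j^{\ast}x,u')),
\end{align*}
so the $\infty$-categorical Yoneda lemma yields $F(x,j_{\ast}u')\simeq j_{\ast}F_{\sU}(j^{\ast}x,u')$, and symmetrically $F(x,i_{\ast}z')\simeq i_{\ast}F_{\sZ}(i^{\ast}x,z')$. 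Specializing to $x=j_{\ast}u$ and using $j^{\ast}j_{\ast}\simeq\id$ gives $F(j_{\ast}u,j_{\ast}u')\simeq j_{\ast}F_{\sU}(u,u')$; hence $F(j_{\ast}u,j_{\ast}u')$ lies in the essential image of $j_{\ast}$, so it is $L$-local, and therefore its unit map $F(j_{\ast}u,j_{\ast}u')\to j_{\ast}j^{\ast}F(j_{\ast}u,j_{\ast}u')=j_{\ast}F_{\sU}(u,u')$ is an equivalence. The same argument with $i_{\ast}$ gives the other unit map.

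\emph{Main obstacle.} There is no conceptual difficulty; the only point needing care is that each ``$\simeq$'' above must be natural in all the variables in sight (in $w$, and in $u,u'$, for Step 1; in $x'$, and in $x,u'$, for Step 2), so that the objectwise equivalences assemble into equivalences of the bifunctor $F_{\sU}\colon\sU^{\op}\times\sU\to\sU$ and of the functor $F\big((-),j_{\ast}(?)\big)$. In practice one runs the whole computation with the representable presheaves regarded as functors of all variables and then applies Yoneda; this is cleanest if one works with the strong symmetric monoidal functor $j^{\ast}$ and the (co)cartesian fibrations encoding the two adjunctions at the level of $\infty$-operads rather than on objects. Everything else is formal, using only that $\sX$ is closed, that $j^{\ast}$ (resp.\ $i^{\ast}$) is strong monoidal, and that $j_{\ast}$ (resp.\ $i_{\ast}$) is fully faithful.
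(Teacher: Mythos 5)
The crux of your Step 1 --- the ``preliminary'' claim that $\Map_{\sU}(w,j^{\ast}Y)\simeq\Map_{\sX}(j_{\ast}w,Y)$ for \emph{every} $Y\in\sX$ --- is false, and its justification has the variance backwards. Locality of $j_{\ast}w$ means that the \emph{contravariant} functor $\Map_{\sX}(-,j_{\ast}w)$ inverts $L$-equivalences; it says nothing about the covariant functor $\Map_{\sX}(j_{\ast}w,-)$, so the unit $Y\to j_{\ast}j^{\ast}Y$ need not induce an equivalence after applying $\Map_{\sX}(j_{\ast}w,-)$. Taken at face value your lemma would assert that $j_{\ast}$ is also a \emph{left} adjoint of $j^{\ast}$, i.e.\ $j_!\simeq j_{\ast}$. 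Concretely, take $Y=i_{\ast}z$: then $j^{\ast}i_{\ast}z\simeq\ast$, so $\Map_{\sU}(w,j^{\ast}Y)\simeq\ast$, while $\Map_{\sX}(j_{\ast}w,i_{\ast}z)\simeq\Map_{\sZ}(i^{\ast}j_{\ast}w,z)=\Map_{\sZ}(\phi(w),z)$, which need not be contractible unless the gluing functor is trivial.

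What the first equivalence in your chain (and in the paper's) actually requires is that $F(j_{\ast}u,j_{\ast}u')$ is $L$-local, i.e.\ lies in the essential image of $j_{\ast}$; once that is known, full faithfulness of $j_{\ast}$ gives $\Map_{\sU}(w,j^{\ast}F(j_{\ast}u,j_{\ast}u'))\simeq\Map_{\sX}(j_{\ast}w,j_{\ast}j^{\ast}F(j_{\ast}u,j_{\ast}u'))\simeq\Map_{\sX}(j_{\ast}w,F(j_{\ast}u,j_{\ast}u'))$. This locality is exactly where the compatibility of the localization with the tensor product (equivalently, strong monoidality of $j^{\ast}$) must be used, and you never invoke it: if $a\to b$ is a $j^{\ast}$-equivalence then so is $a\otimes j_{\ast}u\to b\otimes j_{\ast}u$, hence $\Map_{\sX}(b,F(j_{\ast}u,j_{\ast}u'))\simeq\Map_{\sX}(b\otimes j_{\ast}u,j_{\ast}u')\to\Map_{\sX}(a\otimes j_{\ast}u,j_{\ast}u')\simeq\Map_{\sX}(a,F(j_{\ast}u,j_{\ast}u'))$ is an equivalence because $j_{\ast}u'$ is local. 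Note that this locality is essentially the observation's final assertion (the unit maps being equivalences), so it has to be established directly from the compatibility rather than extracted at the end as a corollary, as your writeup does; with this repair, the rest of your argument (Step 2, the specialization $x=j_{\ast}u$, and the parallel statements for $i^{\ast}\dashv i_{\ast}$) is correct and coincides with the paper's.
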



\begin{prp}[Projection formulas] \label{prp:ProjectionFormulasMonoidalRecollement} Let $(\sU, \sZ)$ be a stable\footnote{We do not require stability for the $i^{\ast} \dashv i_{\ast}$ projection formula. For the assertions that only involve $j_!$, we only need that $\sX$ be pointed.} symmetric monoidal recollement of $\sX$.
\begin{enumerate} 
\item The natural maps $\alpha: i_{\ast}(z) \otimes x \to i_{\ast}(z \otimes i^\ast x)$ and $\beta: j_!(u \otimes j^\ast x) \to j_!(u) \otimes x$ are equivalences.
\item The fiber sequence $j_! j^{\ast} x \to x \to i_{\ast} i^{\ast} x$ is equivalent to
\[ j_!(1_U) \otimes x \to x \to i_{\ast}(1_Z) \otimes x. \]
\end{enumerate}
Now suppose also that $\sX$ is closed symmetric monoidal.
\begin{enumerate}
    \setcounter{enumi}{2}
    \item We have natural equivalences $F(j_! u, x) \simeq j_{\ast} F_{\sU}(u,j^{\ast} x)$ and $F(i_{\ast} z,x) \simeq i_{\ast} F_{\sZ}(z,i^! x)$.
    \item The fiber sequence $i_{\ast} i^! x \to x \to j_{\ast} j^{\ast} x$ is equivalent to
    \[ F(i_{\ast} 1_Z,x) \to x \to F(j_! 1_U,x). \]
    \item  We have natural equivalences $j^{\ast} F(x,y) \simeq F_{\sU}(j^{\ast} x, j^{\ast} y)$ and $F_{\sZ}(i^{\ast} x, i^! y) \simeq i^! F(x,y)$.    
\end{enumerate}
\end{prp}
\begin{proof} For (1), it's easily checked that $i^{\ast} \alpha$, $j^{\ast} \alpha$ and $i^{\ast} \beta$, $j^{\ast} \beta$ are equivalences, hence $\alpha$ and $\beta$ are equivalences. (2) then follows as a corollary. For (3), we have sequences of equivalences
\begin{align*} \Map_{\sX}(y,F(j_!u,x)) & \simeq \Map_{\sX}(y \otimes j_! u, x) \simeq \Map_{\sX}(j_!(j^{\ast} y \otimes u),x) \simeq \Map_{\sU}(j^{\ast} y \otimes u,j^{\ast} x) \\
& \simeq \Map_{\sU}(j^{\ast} y , F_{\sU}(u,j^{\ast} x)) \simeq \Map_{\sX}(y,j_{\ast} F_{\sU}(u,j^{\ast} x)), \: \text{and} \\
\Map_{\sX}(y, F(i_{\ast} z, x)) & \simeq \Map_{\sX}(y \otimes i_{\ast} z, x) \simeq \Map_{\sX}(i_{\ast}(i^{\ast} y \otimes z), x) \simeq \Map_{\sZ}(i^{\ast} y \otimes z, i^! x) \\
& \simeq \Map_{\sZ}(i^{\ast} y, F_{\sZ}(z,i^! x)) \simeq \Map_{\sZ}(y, i_{\ast} F_{\sZ}(z,i^! x)).
\end{align*}
If we let $u = 1_U$, then $F_{\sU}(1_U, v) \simeq v$, hence $F(j_! 1_U, x) \simeq j_{\ast} F_{\sU}(1_U,j^{\ast} x) \simeq j_{\ast} j^{\ast} x$. (4) then follows as a corollary. For (5), we have sequences of equivalences
\begin{align*} \Map_{\sU}(u,j^{\ast} F(x,y)) & \simeq \Map_{\sX}(j_! u, F(x,y)) \simeq \Map_{\sX}(j_! u \otimes x,y) \simeq \Map_{\sX}(j_!(u \otimes j^{\ast} x),y) \\
& \simeq \Map_{\sU}(u \otimes j^{\ast} x, j^{\ast} y) \simeq \Map_{\sU}(u,F_{\sU}(j^{\ast} x, j^{\ast} y)), \: \text{and} \\
\Map_{\sZ}(z,F_{\sZ}(i^{\ast} x, i^! y)) & \simeq \Map_{\sZ}(z \otimes i^{\ast} x, i^! y) \simeq \Map_{\sX}(i_{\ast}(z \otimes i^{\ast} x),y) \simeq \Map_{\sX}(i_{\ast} z \otimes x, y) \\
 & \simeq \Map_{\sX}(i_{\ast} z, F(x,y)) \simeq \Map_{\sZ}(z,i^! F(x,y)).
\end{align*}
\end{proof}

\begin{cor} \label{cor:FractureSquareMonoidal} Suppose that $(\sU, \sZ)$ is a stable symmetric monoidal recollement of a closed symmetric monoidal stable $\infty$-category $\sX$. Then for all $x \in \sX$, we have a commutative diagram
\[ \begin{tikzcd}[row sep=4ex, column sep=4ex, text height=1.5ex, text depth=0.25ex]
x \otimes j_! (1_U) \ar{r} \ar{d}{\simeq} & x \ar{r} \ar{d} & x \otimes i_{\ast}(1_Z) \ar{d} \\
F(j_!(1_U), x) \otimes j_!(1_U) \ar{r} & F(j_!(1_U), x) \ar{r} & F(j_!(1_U), x) \otimes i_{\ast}(1_Z)
\end{tikzcd} \]
in which the righthand square is a pullback square.
\end{cor}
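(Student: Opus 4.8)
The plan is to realize the displayed diagram as the naturality square of a single natural fiber sequence evaluated at one morphism, and then to deduce both assertions from the projection formulas of \cref{prp:ProjectionFormulasMonoidalRecollement} together with standard manipulations in the stable $\infty$-category $\sX$. Throughout write $f \coloneq j_!(1_U)$ and $e \coloneq i_{\ast}(1_Z)$.

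First I would set up the diagram. By \cref{prp:ProjectionFormulasMonoidalRecollement}(2), for every $y \in \sX$ the sequence $y \otimes f \to y \to y \otimes e$ is a fiber sequence, naturally in $y$, and is naturally equivalent to $j_! j^{\ast} y \to y \to i_{\ast} i^{\ast} y$. Let $\eta_x \colon x \to F(f, x)$ be the map obtained by applying $F(-, x)$ to the canonical morphism $f = j_!(1_U) \to 1_{\sX}$ --- adjoint under $j_! \dashv j^{\ast}$ to $\id_{1_U}$, using $j^{\ast} 1_{\sX} \simeq 1_U$ --- together with the identification $x \simeq F(1_{\sX}, x)$. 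Evaluating the natural fiber sequence above on the morphism $\eta_x$ produces a commutative diagram whose two rows are fiber sequences and which, after rewriting $y \otimes f$ as $f \otimes y$ via the symmetry, is exactly the diagram in the statement (with $F(f,x)$ in place of $y$ in the bottom row).

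Next I would check that the left vertical map $x \otimes f \to F(f,x) \otimes f$ is an equivalence. Under the natural equivalence $(-) \otimes f \simeq j_! j^{\ast}(-)$ this map is carried to $j_! j^{\ast}(\eta_x)$, so it suffices to show $j^{\ast}(\eta_x)$ is an equivalence. By \cref{prp:ProjectionFormulasMonoidalRecollement}(3) we have $F(f, x) = F(j_! 1_U, x) \simeq j_{\ast} j^{\ast} x$, and by \cref{prp:ProjectionFormulasMonoidalRecollement}(4) the morphism $\eta_x$ is identified, under this equivalence, with the recollement unit $x \to j_{\ast} j^{\ast} x$. Hence $j^{\ast}(\eta_x)$ is $j^{\ast}$ applied to the unit of the adjunction $j^{\ast} \dashv j_{\ast}$, which is an equivalence by the triangle identities, since $j_{\ast}$ is fully faithful so that the counit $j^{\ast} j_{\ast} \to \id$ is an equivalence.

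Finally, for the pullback claim I would use the standard fact that a square in a stable $\infty$-category is cartesian if and only if the map induced on the fibers of its two horizontal arrows is an equivalence. For the right square the fiber of the top arrow $x \to x \otimes e$ is $x \otimes f$, the fiber of the bottom arrow $F(f,x) \to F(f,x) \otimes e$ is $F(f,x) \otimes f$ (both rows being fiber sequences), and the induced map between these fibers is precisely the left vertical arrow, which was just shown to be an equivalence; hence the right square is cartesian. The only step demanding care is pinning down $\eta_x$ as the recollement unit via \cref{prp:ProjectionFormulasMonoidalRecollement}(3) and (4); I do not expect a genuine obstacle, as everything else is formal.
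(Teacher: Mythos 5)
Your proposal is correct. The paper leaves this corollary without an explicit argument, the intended route being to transport the pullback square of \cref{recollementFractureSquare} (the fracture square $x \to i_*i^*x \times_{i_*i^*j_*j^*x} j_*j^*x$) along the identifications supplied by \cref{prp:ProjectionFormulasMonoidalRecollement}: item (2) rewrites $j_!j^*(-)$ and $i_*i^*(-)$ as $(-)\otimes j_!(1_U)$ and $(-)\otimes i_*(1_Z)$, and items (3)--(4) rewrite $j_*j^*x$ as $F(j_!(1_U),x)$ compatibly with the unit, so cartesianness of the right-hand square is inherited directly from \cref{recollementFractureSquare}. You instead rebuild the diagram by evaluating the natural fiber sequence of item (2) on the map $x \simeq F(1,x) \to F(j_!(1_U),x)$, identify that map with the unit $x \to j_*j^*x$ via items (3)--(4), check the left vertical is an equivalence by applying $j^*$ (using full faithfulness of $j_*$), and then conclude cartesianness from the stable criterion that a square is a pullback if and only if the induced map on fibers of the horizontal arrows is an equivalence. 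The essential content --- the compatible identification $F(j_!(1_U),x) \simeq j_*j^*x$ and the fiber sequences of \cref{prp:ProjectionFormulasMonoidalRecollement} --- is the same in both; your version trades the citation of \cref{recollementFractureSquare} for the fiber-comparison criterion, which is equally valid in the stable setting and has the small merit of making the equivalence of the left vertical arrow (asserted in the statement) an explicit input rather than a byproduct.
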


Finally, we record the following relation between stable symmetric monoidal recollements and smashing localizations.

\begin{obs} \label{SmashingLocalizationsAreStableMonoidalRecollements} Suppose $\sX$ is a symmetric monoidal stable $\infty$-category and $\sZ \subset \sX$ is a reflective and coreflective subcategory that determines a stable recollement $(\sU, \sZ)$ on $\sX$. Then this recollement is symmetric monoidal if and only if $i_{\ast} i^{\ast}$ is compatible with the symmetric monoidal structure on $\sX$ and the resulting projection formula for $i^{\ast} \dashv i_{\ast}$ holds, i.e., the natural map $i_{\ast} z \otimes x \to i_{\ast} ( z \otimes i^{\ast} x)$ is an equivalence for all $x \in \sX$ and $z \in \sZ$. Indeed, the `only if' direction hold by \cref{prp:ProjectionFormulasMonoidalRecollement}, and for the `if' direction, we only need to show that for every $x \in \sX$ such that $j^{\ast} x \simeq 0$, $j^{\ast} (x \otimes y) \simeq 0$ for every $y \in \sX$. But $j^{\ast} x \simeq 0$ if and only if $x \simeq i_{\ast} i^{\ast} x$, and then
\[ j^{\ast}( x \otimes y) \simeq j^{\ast}( i_{\ast} i^{\ast} x \otimes y) \simeq j^{\ast} (i_{\ast} (i^{\ast} x \otimes i^{\ast} y)) \simeq 0. \]

Suppose further that $\sX$ and $\sZ$ are presentable. In view of \cite[Prop.~5.29]{MATHEW2017994}, $\sZ$ is a \emph{smashing localization} of $\sX$ in the sense that $\sZ \simeq \Mod_{\sX}(A)$ for $A = i_{\ast}i^{\ast}1$ an idempotent $E_{\infty}$-algebra in $\sX$. We deduce that smashing localizations of $\sX$ are in bijective correspondence with stable symmetric monoidal recollements of $\sX$. Moreover, if $F: \sX \to \sX'$ is a morphism of symmetric monoidal recollements $(\sU,\sZ) \to (\sU', \sZ')$, then
\[ F i_{\ast} i^{\ast} 1 \simeq i'_{\ast} i'^{\ast} F(1) \simeq i'_{\ast} i'^{\ast} 1, \]
so $F$ preserves the defining idempotent $E_{\infty}$-algebras.
\end{obs}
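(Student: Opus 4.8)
The statement has two halves, and I would address them in turn. For the first half---the characterization of when the stable recollement $(\sU,\sZ)$ is symmetric monoidal---the plan is to recall via \cite[Prop.~2.2.1.9]{HA} that it is symmetric monoidal exactly when \emph{both} localizations $j_*j^*$ and $i_*i^*$ are compatible with $\otimes$, and then to reduce the compatibility of $j_*j^*$ to the hypothesized projection formula for $i^*\dashv i_*$. The ``only if'' direction is immediate: compatibility of $i_*i^*$ with $\otimes$ is built into the definition of a symmetric monoidal recollement, and the projection formula $i_*(z)\otimes x\xto{\simeq}i_*(z\otimes i^*x)$ is \cref{prp:ProjectionFormulasMonoidalRecollement}(1).

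For the ``if'' direction---where $i_*i^*$-compatibility is one of the hypotheses, so only $j_*j^*$-compatibility remains to be checked---I would argue as follows. Since $j^*$ is exact and $-\otimes y$ preserves cofiber sequences, a $j^*$-equivalence $f$ has cofiber $w$ with $j^*w\simeq0$ and $\mathrm{cofib}(f\otimes\id_y)\simeq w\otimes y$; thus it suffices to show $j^*w\simeq0$ implies $j^*(w\otimes y)\simeq0$ for every $y$. But $j^*w\simeq0$ forces $j_!j^*w\simeq0$ (as $j_!$ preserves the zero object), so the fiber sequence $j_!j^*w\to w\to i_*i^*w$ exhibits $w\simeq i_*i^*w$; then the assumed projection formula gives $w\otimes y\simeq i_*(i^*w)\otimes y\simeq i_*(i^*w\otimes i^*y)$, and $j^*i_*\simeq\ast$ by axiom (1) of \cref{dfn:recollement}, so $j^*(w\otimes y)\simeq0$. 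By \cite[Prop.~2.2.1.9]{HA} the recollement is then symmetric monoidal.

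For the presentable refinement, the plan is to invoke \cite[Prop.~5.29]{MATHEW2017994}: a reflective localization of a presentable symmetric monoidal $\infty$-category that is $\otimes$-compatible and satisfies the projection formula is precisely a smashing localization, so $\sZ\simeq\Mod_{\sX}(A)$ for the idempotent $E_{\infty}$-algebra $A=i_*i^*1$; conversely every smashing localization supplies exactly this data, and since a stable recollement is determined by its closed part (\cref{stableRecollementComment}) this yields the asserted bijection with stable symmetric monoidal recollements of $\sX$. For the final clause, I would use that a morphism $F$ of symmetric monoidal recollements satisfies $Fi_*\simeq i'_*F_Z$ (the transformation $\nu'$ of \cref{LaxVsStrictMorphismsOfRecollements} is an equivalence once $F$ preserves the terminal object) and $F_Zi^*\simeq i'^*F$ (commutativity of the square in \cref{obs:MorphismOfRecollements}), whence $Fi_*i^*1\simeq i'_*i'^*F(1)\simeq i'_*i'^*1$; i.e.\ $F$ carries the defining idempotent algebra of $\sZ$ onto that of $\sZ'$.

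The one genuinely delicate point is that last identity---one needs a morphism of symmetric monoidal recollements to commute with the closed-localization functor $i_*i^*$. In the stable setting this is harmless, because $i_*(\sZ)$ is exactly the subcategory of $j^*$-null objects (\cref{stableRecollementComment}), which a recollement morphism preserves, and the projection formula repackages $i_*i^*(-)$ as $A\otimes(-)$; but it is worth isolating, since it is precisely what makes the correspondence functorial. Everything else is formal bookkeeping with the fiber sequences and adjunctions already recorded in \S\ref{sec:recoll}.
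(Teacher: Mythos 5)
Your proposal follows the paper's argument essentially verbatim: the `only if' direction via \cref{prp:ProjectionFormulasMonoidalRecollement}, the `if' direction by reducing (using exactness of $j^*$ and of $-\otimes y$) to showing $j^*$-null objects form a $\otimes$-ideal via $w\simeq i_*i^*w$, the projection formula, and $j^*i_*\simeq 0$, and the presentable refinement via \cite[Prop.~5.29]{MATHEW2017994} together with the fact that the closed part determines the stable recollement. Your extra bookkeeping on the cofiber reduction and on $Fi_*\simeq i'_*F_Z$ only makes explicit what the paper leaves implicit, so this is the same proof.
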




\subsection{Families of recollements}

We conclude this section with a few extensions of recollement theory to the parametrized setting. Let $S$ be an $\infty$-category, let $\sX_{\bullet}: S \to \Recoll_{\str}^{\lex}$ be a functor, and let $\sX, \sU, \sZ \to S$ be the cocartesian fibrations obtained via the Grothendieck construction. Then in view of \cref{LaxVsStrictMorphismsOfRecollements} and the strictness assumption, we have $S$-adjunctions \cite[Def.~8.3]{Exp2}\footnote{Recall given two cocartesian fibrations $C, D \to S$ that a relative adjunction $\adjunct{F}{C}{D}{G}$ with respect to $S$ in the sense of Lurie \cite[Def.~7.3.2.2]{HA} is said to be an \emph{$S$-adjunction} if $F$ and $G$ both preserve cocartesian edges.}
\[ \begin{tikzcd}[row sep=4ex, column sep=4ex, text height=1.5ex, text depth=0.25ex]
\sU \ar[shift right=1,right hook->]{r}[swap]{j_{\ast}} & \sX \ar[shift right=2]{l}[swap]{j^{\ast}} \ar[shift left=2]{r}{i^{\ast}} & \sZ \ar[shift left=1,left hook->]{l}{i_{\ast}}.
\end{tikzcd} \]
In what follows, we use the following terminology from \cite{Exp2}:
\begin{enumerate}
\item An \emph{$S$-$\infty$-category} is a cocartesian fibration $C \to S$.
\item Given two $S$-$\infty$-categories $C, D \to S$, the \emph{$\infty$-category of $S$-functors} $\Fun_S(C,D)$ is notation for $\Fun^{\cocart}_{/S}(C,D)$.
\end{enumerate}

We first show that the procedure of taking $S$-functor categories yields a recollement.

\begin{lem} \label{lem:FunctorCategoryRecollement} For any $S$-$\infty$-category $K$, $(\Fun_S(K,\sU), \Fun_S(K, \sZ))$ is a recollement of $\Fun_S(K,\sX)$.
\end{lem}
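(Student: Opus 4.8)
The plan is to reduce every clause of \cref{dfn:recollement} to a pointwise statement over $K$ and fiberwise over $S$, where it becomes the corresponding property of the recollement $\sX_s \coloneq \sX_{\bullet}(s)$ in the fiber. I would organize the argument around two structural facts about $S$-$\infty$-categories of cocartesian sections, and then dispatch the conditions pointwise.

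First I would record the needed structure. Since $j^{\ast} \dashv j_{\ast}$ and $i^{\ast} \dashv i_{\ast}$ are $S$-adjunctions (all four functors preserve cocartesian edges), postcomposition carries cocartesian sections to cocartesian sections and yields adjunctions
\[ \adjunct{j^{\ast}\circ(-)}{\Fun_S(K,\sX)}{\Fun_S(K,\sU)}{j_{\ast}\circ(-)} \qquad \adjunct{i^{\ast}\circ(-)}{\Fun_S(K,\sX)}{\Fun_S(K,\sZ)}{i_{\ast}\circ(-)} \]
by the formalism of \cite{Exp2}, in which $j_{\ast}\circ(-)$ and $i_{\ast}\circ(-)$ are fully faithful because $j_{\ast}, i_{\ast}$ are fully faithful $S$-functors; moreover $\Fun_S(K,\sU)$ and $\Fun_S(K,\sZ)$ are thereby identified with the full subcategories of $\Fun_S(K,\sX)$ on those cocartesian sections taking values in $\sU$, resp.\ $\sZ$, at every object of $K$, so in particular they are stable under equivalences. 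Second, since $\sX_{\bullet}$ lands in $\Recoll^{\lex}_{\str}$, the pushforward functors of $\sX\to S$ (and likewise of $\sU\to S$, $\sZ\to S$, by \cref{obs:MorphismOfRecollements}) are left exact; from this I would deduce that $\Fun_S(K,\sX)$ admits finite limits computed pointwise over $K$ and fiberwise over $S$ — a pointwise limit of cocartesian sections is again cocartesian precisely because left-exact pushforwards preserve it — that equivalences in $\Fun_S(K,\sX)$ are detected by evaluation at the objects of $K$, and that the terminal object of $\Fun_S(K,\sU)$ is the pointwise-terminal cocartesian section $k\mapsto\ast_s$ (this section is cocartesian because left-exact pushforwards preserve terminal objects, and it is terminal since it is so already in the larger $\infty$-category of all sections over $S$).

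With these in hand the remaining checks are immediate. Left exactness of $j^{\ast}\circ(-)$ and $i^{\ast}\circ(-)$ holds because a finite limit is computed at each $k\in K$ in the fiber $\sX_s$, where $j^{\ast}_s$ and $i^{\ast}_s$ are left exact. For condition (1) of \cref{dfn:recollement}, the fiberwise equivalences $j^{\ast}_s i_{\ast,s}\simeq \mathrm{const}_{\ast_s}$ assemble (using that $\sX_{\bullet}$ factors through $\Recoll$) into an equivalence $j^{\ast}i_{\ast}\simeq \mathrm{const}_{\ast}$ of $S$-functors $\sZ\to\sU$, and postcomposing shows $(j^{\ast}\circ(-))\circ(i_{\ast}\circ(-))$ is the constant functor at the pointwise-terminal section, i.e.\ at the terminal object of $\Fun_S(K,\sU)$. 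For condition (2), given $\alpha$ in $\Fun_S(K,\sX)$ with $j^{\ast}\alpha$ and $i^{\ast}\alpha$ equivalences, evaluation at each $k\in K$ over $s$ shows $j^{\ast}_s(\alpha_k)$ and $i^{\ast}_s(\alpha_k)$ are equivalences, so $\alpha_k$ is an equivalence by joint conservativity in $\sX_s$, and hence $\alpha$ is an equivalence.

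The only genuinely delicate points are the two facts about $\Fun^{\cocart}_{/S}(K,-)$ isolated in the first step: that postcomposition with an $S$-adjunction is an adjunction of functor $\infty$-categories, and that $\Fun^{\cocart}_{/S}(K,-)$ inherits finite limits computed pointwise when the ambient $S$-fibration has left-exact pushforwards. I expect these — both routine consequences of the machinery of \cite{Exp2} — to be the main obstacle; once granted, the lemma is a purely pointwise translation of the hypothesis that each $\sX_s$ is a recollement, and the argument is insensitive to the particular $K$.
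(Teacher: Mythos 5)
Your proposal is correct and follows essentially the same route as the paper: the induced postcomposition adjunctions with fully faithful right adjoints (via the $S$-adjunction machinery of \cite{Exp2}), finite limits in $\Fun_S(K,\sX)$ computed fiberwise/pointwise using left-exactness of the pushforward functors (the paper cites \cite[Prop.~5.4.7.11]{HTT} for this), the terminal object given by the pointwise-terminal cocartesian section, and conditions (1) and (2) of \cref{dfn:recollement} checked by evaluation at objects of $K$. No substantive differences or gaps.
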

\begin{proof} By \cite[Prop.~8.4]{Exp2}, we have induced adjunctions given by postcomposition
\[ \begin{tikzcd}[row sep=4ex, column sep=4ex, text height=1.5ex, text depth=0.25ex]
\Fun_S(K,\sU) \ar[shift right=1,right hook->]{r}[swap]{\overline{j}_{\ast}} & \Fun_S(K,\sX) \ar[shift right=2]{l}[swap]{\overline{j}^{\ast}} \ar[shift left=2]{r}{\overline{i}^{\ast}} & \Fun_S(K,\sZ) \ar[shift left=1,left hook->]{l}{\overline{i}_{\ast}},
\end{tikzcd} \]
where it is clear that $\overline{j}^{\ast} \overline{j}_{\ast} \simeq \id$ and $\overline{i}^{\ast} \overline{i}_{\ast} \simeq \id$, hence $\overline{j}_{\ast}$ and $\overline{i}_{\ast}$ are fully faithful. By \cite[Prop.~5.4.7.11]{HTT}, the hypothesis that for all $f: s \to t$ the restriction functors $f^{\ast}: \sX_t \to \sX_s$ preserve finite limits ensures that $\Fun_S(K, \sX)$ admits finite limits (which are computed fiberwise), and similarly the induced restriction functors $f_U^{\ast}$ and $f_Z^{\ast}$ preserve finite limits, so $\Fun_S(K, \sU)$, $\Fun_S(K, \sZ)$ admit finite limits and $\overline{j}^{\ast}, \overline{i}^{\ast}$ preserve finite limits. Since $j^{\ast} i_{\ast} \simeq 0$ and the terminal object $0 \in \Fun_S(K, \sU)$ is given by $K \to S \xto{\underline{0}} \sU$ for the cocartesian section $\underline{0}: S \to \sU$ that selects the terminal object in each fiber, we get that $\overline{j}^{\ast} \overline{i}_{\ast} \simeq 0$. Finally, since a morphism $f$ in $\Fun_S(K,\sX)$ is an equivalence if and only if $f(k)$ is an equivalence for all $k \in K$, we deduce that $\overline{j}^{\ast}$ and $\overline{i}^{\ast}$ are jointly conservative using the joint conservativity of $j^{\ast}$ and $i^{\ast}$.
\end{proof}

\begin{cor} \label{cor:LimitsOfRecollements} The forgetful functors $\Recoll_{\str}^{\lex} \to \Cat_{\infty}$ and $\Recoll_{\str}^{\stab} \to \Cat_{\infty}^{\stab}$ create limits.
\end{cor}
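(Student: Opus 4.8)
The plan is to deduce both assertions from \cref{lem:FunctorCategoryRecollement} together with a conservativity check, via the standard principle that a conservative functor which preserves limits and lifts limit cones creates limits. It suffices to treat the left-exact case: the stable case is formally identical, using that a limit in $\Cat_{\infty}$ of stable $\infty$-categories along exact functors is stable and that the open and closed parts produced below are then stable subcategories, so that the resulting recollement is automatically stable.

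\textbf{The recollement on the limit.} Let $I$ be an $\infty$-category and $\sX_{\bullet} \colon I \to \Recoll^{\lex}_{\str}$ a diagram. Straightening places us in the situation of the preceding subsection: we obtain cocartesian fibrations $\sX, \sU, \sZ \to I$ equipped with a fibrewise strict left-exact recollement structure, and the pushforward functors are left-exact because they are morphisms in $\Cat^{\lex}_{\infty}$. Applying \cref{lem:FunctorCategoryRecollement} to the terminal $I$-$\infty$-category $K = \id_I$ exhibits $(\Fun_I(I,\sU), \Fun_I(I,\sZ))$ as a recollement of $\Fun_I(I,\sX)$. Since $\Fun_I(I,-) = \Fun^{\cocart}_{/I}(I,-)$ computes the limit over $I$ of the straightened diagram, this endows $\sX_{\infty} \coloneq \lim_I \sX_{\bullet}$ (the limit of the underlying diagram in $\Cat_{\infty}$) with a recollement whose open part is $\lim_I \sU_{\bullet}$ and whose closed part is $\lim_I \sZ_{\bullet}$.

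\textbf{The universal property.} Next I would check that this computes the limit in $\Recoll^{\lex}_{\str}$. The evaluation functors $\ev_s \colon \sX_{\infty} = \Fun_I(I,\sX) \to \sX_s$ assemble into the limit cone in $\Cat_{\infty}$, and each is a strict left-exact morphism of recollements: it is restriction to a fibre, hence carries $\overline{j}^{\ast}$- and $\overline{i}^{\ast}$-equivalences to $j_s^{\ast}$- and $i_s^{\ast}$-equivalences; it is left-exact since finite limits in $\Fun_I(I,\sX)$ are computed fibrewise (as in the proof of \cref{lem:FunctorCategoryRecollement}); and it is strict because $\overline{j}_{\ast}, \overline{j}^{\ast}, \overline{i}_{\ast}, \overline{i}^{\ast}$ are given pointwise by postcomposition, so that $\ev_s \overline{j}_{\ast} \simeq j_{s\ast}(\ev_s)_U$ on the nose and the comparison $\nu''$ of \cref{LaxVsStrictMorphismsOfRecollements} is an equivalence. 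The key point is the converse pointwise detection: for any $\sX' \in \Recoll^{\lex}_{\str}$, a functor $G \colon \sX' \to \Fun_I(I,\sX)$ is a strict left-exact morphism of recollements if and only if every composite $\ev_s \circ G$ is --- because $\overline{j}^{\ast}$- and $\overline{i}^{\ast}$-equivalences are detected by the $\ev_s$, finite limits are fibrewise, and $\nu''$ for $G$ is computed pointwise as $\nu''$ for $\ev_s G$. Consequently the equivalence $\Map_{\Cat_{\infty}}(\sX', \Fun_I(I,\sX)) \xrightarrow{\simeq} \lim_{s \in I} \Map_{\Cat_{\infty}}(\sX', \sX_s)$ restricts to an equivalence between the unions of path components carved out by the subcategory $\Recoll^{\lex}_{\str}$ on either side; this says precisely that $\sX_{\infty}$ with its recollement structure is $\lim_I \sX_{\bullet}$ in $\Recoll^{\lex}_{\str}$, and the forgetful functor preserves it.

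\textbf{Conservativity, and the main obstacle.} Finally I would verify conservativity, which upgrades ``preserves'' to ``creates'' by the usual argument comparing a given limit cone to $\sX_{\infty}$. If $F \colon \sX_1 \to \sX_2$ is a strict left-exact morphism of recollements that is an equivalence of underlying $\infty$-categories, then strictness gives $F j_{1\ast} \simeq j_{2\ast} F_U$, and a short mapping-space computation (using that $j_2^{\ast}$-local objects are fixed by $j_{2\ast} j_2^{\ast}$) shows $F_U$ is fully faithful, hence conservative; this forces $F$ to carry $j_1^{\ast}$-equivalences bijectively onto $j_2^{\ast}$-equivalences, whence $F$ restricts to an equivalence between the subcategories of $j_1^{\ast}$- and $j_2^{\ast}$-local objects, so $F_U$ is also essentially surjective. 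The analogous argument (now using $F i_{1\ast} \simeq i_{2\ast} F_Z$, which needs only left-exactness of $F$) shows $F_Z$ is an equivalence, whereupon $F^{-1}$ is again a strict left-exact morphism of recollements and $F$ is an equivalence in $\Recoll^{\lex}_{\str}$ --- alternatively, one may simply invoke \cref{rem:TwoOutOfThreePropertyEquivalencesStrictMorphismRecoll}. I do not anticipate a genuine obstacle here: all of the substance is contained in \cref{lem:FunctorCategoryRecollement}, and what remains is the bookkeeping verifying that each of the conditions ``morphism of recollements'', ``left-exact'', and ``strict'' is detected after the evaluation functors $\ev_s$, together with the conservativity check.
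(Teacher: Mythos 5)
Your proposal is correct and takes essentially the same route as the paper: apply \cref{lem:FunctorCategoryRecollement} with $K$ equal to the base so that the $\infty$-category of cocartesian sections computes the limit in $\Cat_{\infty}$, note that the evaluation functors are strict left-exact morphisms of recollements, and deduce the stable case from the fact that limits in $\Cat_{\infty}^{\stab}$ are created in $\Cat_{\infty}$. The universal-property detection along the $\ev_s$ and the conservativity check you spell out are precisely the bookkeeping the paper's one-line proof leaves implicit.
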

\begin{proof} The first statement follows from \cref{lem:FunctorCategoryRecollement} by taking $K=S$ and using that the $\infty$-category of cocartesian sections computes the limit of a diagram of $\infty$-categories \cite[\S 3.3.3]{HTT}. We note that the proof of \cref{lem:FunctorCategoryRecollement} shows that the evaluation functors at any $s \in S$ are left-exact and strict morphisms of recollements, so the limit resides in $\Recoll_{\str}^{\lex}$. Finally, because limits in $\Cat_{\infty}^{\stab}$ are created in $\Cat_{\infty}$, the second statement follows.
\end{proof}

We can also use \cref{lem:FunctorCategoryRecollement} to compute $S$-colimits in $\sX$. For clarity, let us revert to the non-parametrized case $S = \ast$ for the next two results; the $S$-analogues will also hold by the same reasoning.

\begin{lem} \label{lem:ColimitExistenceInRecollement} Let $(\sU, \sZ)$ be a recollement of $\sX$ and suppose that $\sU$ and $\sZ$ admit $K$-indexed colimits. Then $\sX$ admits $K$-indexed colimits.
\end{lem}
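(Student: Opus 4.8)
The plan is to use the identification $\sX \simeq \Ar(\sZ) \times_{\ev_1, \sZ, \phi} \sU$ of \cref{cor:RecollementAsPullbackSquare}, where $\phi = i^\ast j_\ast$ is the gluing functor, and to exhibit $K$-indexed colimits in $\sX$ by working along the projection $j^\ast: \sX \to \sU$. The single delicate point -- and the step I expect to be the main obstacle -- is that $\phi$ need not preserve $K$-indexed colimits, being a composite of a right adjoint with a left adjoint; so one cannot deduce that $\sX$ has $K$-colimits merely from the fact that it is a pullback of $\infty$-categories that do, and the functor $\phi$ into $\sZ$ must be handled with care. The remedy is that the functor into $\sZ$ which one actually pulls back is $\ev_1$, not $\phi$, and that the cocartesian pushforwards in the resulting fibration over $\sU$ are postcompositions, which \emph{do} preserve colimits.

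Concretely, the steps are as follows. First, recall that $\ev_1: \Ar(\sZ) \to \sZ$ is a cocartesian fibration with fiber over $z$ the slice $\sZ_{/z}$ and with cocartesian pushforward along an edge $\alpha: z \to z'$ given by postcomposition $\alpha_!: \sZ_{/z} \to \sZ_{/z'}$. Since cocartesian fibrations are stable under base change, it follows that $j^\ast: \sX \simeq \Ar(\sZ) \times_{\ev_1, \sZ, \phi} \sU \to \sU$ is a cocartesian fibration with fiber $\sZ_{/\phi(u)}$ over $u$ and cocartesian pushforward $\phi(f)_!$ along $f: u \to u'$. Second, note that: $\sU$ admits $K$-indexed colimits by hypothesis; each fiber $\sZ_{/\phi(u)}$ admits $K$-indexed colimits, as $\sZ$ does and slices inherit colimits; and each pushforward $\phi(f)_!$ preserves $K$-indexed colimits, being a left adjoint (to base change $\phi(f)^\ast$, which exists since $\sZ$ admits finite limits -- it is a left-exact localization of $\sX$). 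Third, apply the standard criterion for the existence of colimits in the total space of a cocartesian fibration (cf. \cite[\S 4.3.1]{HTT}): if the base and all fibers of a cocartesian fibration admit $K$-indexed colimits and the pushforward functors preserve them, then the total space admits $K$-indexed colimits, and the fibration preserves them. Applied to $j^\ast$, this yields the claim.

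Unwinding this argument, the colimit of a diagram $p: K \to \sX$ with $p(k) = [u_k, z_k, \alpha_k: z_k \to \phi(u_k)]$ is $[\,\colim_k u_k,\ \colim_k z_k,\ \beta\,]$, where $\beta$ is the composite of $\colim_k \alpha_k: \colim_k z_k \to \colim_k \phi(u_k)$ with the canonical comparison map $\colim_k \phi(u_k) \to \phi(\colim_k u_k)$ -- so the failure of $\phi$ to preserve the colimit is exactly what is absorbed into the arrow component. One could instead take this formula as the definition of the candidate colimit and verify its universal property directly, computing $\Map_{\sX}([\colim_k u_k, \colim_k z_k, \beta], -)$ through the pullback description of mapping spaces in $\sX$ and using that $\beta$ restricts on each $z_k$ to $\alpha_k$ postcomposed with the comparison map. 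Finally, the $S$-parametrized statement (for $\sX_{\bullet}: S \to \Recoll_{\str}^{\lex}$) follows by running the same argument relative to $S$, with relative colimits in place of colimits.
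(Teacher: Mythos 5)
Your argument is correct, but it takes a genuinely different route from the paper's. You realize $\sX \simeq \Ar(\sZ) \times_{\ev_1, \sZ, \phi} \sU$ as the total space of the cocartesian fibration over $\sU$ pulled back from $\ev_1$, observe that the base and the fibers $\sZ_{/\phi(u)}$ admit $K$-indexed colimits and that the pushforwards $\phi(f)_!$ preserve them (all fine: $\sZ$ has finite limits as a reflective subcategory of $\sX$ closed under limits, and in any case colimits in slices are created by the projection to $\sZ$, with which postcomposition commutes), and then apply the standard fiberwise criterion for colimits in the total space of a cocartesian fibration (HTT~4.3.1.11 for the relative colimit, combined with HTT~4.3.1.5 over a colimit cone in $\sU$) -- this is the exact dual of what the paper itself does for limits over cartesian fibrations in the strict-diagram subsection, so the toolkit is consonant with the rest of the text. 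The paper instead works with the recollement on $\Fun(K,\sX)$ from \cref{lem:FunctorCategoryRecollement}: the constant-diagram functor $\delta\colon \sX \to \Fun(K,\sX)$ is a morphism of recollements, passing to fiberwise left adjoints yields a right-lax square and hence a morphism of recollements $\colim\colon \Fun(K,\sX) \to \sX$, and the adjunction $\colim \dashv \delta$ is checked via a relative adjunction between the cartesian fibrations over $\Delta^1$ classified by the two gluing functors, using that taking sections sends relative adjunctions to adjunctions. What each buys: your approach is more hands-on and immediately yields the pointwise formula $[\colim_k u_k,\ \colim_k z_k,\ \beta]$ together with the fact that $j^*$ (and, from the formula, $i^*$) preserve $K$-colimits, whereas the paper's packages the colimit as a single morphism of recollements left adjoint to $\delta$, which transfers verbatim to the $S$-parametrized setting it needs; your closing remark that the argument runs relative to $S$ is right, though one would then have to rerun the fibrewise criterion with relative colimits. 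Your diagnosis of the delicate point -- that $\phi$ need not preserve colimits and that this failure is absorbed into the arrow component -- is exactly the role played by the right-lax (rather than strict) square in the paper's proof.
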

\begin{proof} With respect to the recollement of $\Fun(K, \sX)$ of \cref{lem:FunctorCategoryRecollement}, the constant diagram functor $\delta: \sX \to \Fun(K, \sX)$ is obviously a morphism of recollements. Passing to left adjoints, we obtain a right-lax commutative diagram
\[ \begin{tikzcd}[row sep=4ex, column sep=4ex, text height=1.5ex, text depth=0.25ex]
\Fun(K, \sU) \ar{r}{\overline{i}^{\ast} \overline{j}_{\ast}} \ar{d}[swap]{\colim} \ar[phantom]{rd}{\SWarrow} & \Fun(K, \sZ) \ar{d}{\colim} \\
\sU \ar{r}[swap]{i^{\ast} j_{\ast}} & \sZ,
\end{tikzcd} \]
which induces a morphism of recollements $\colim: \Fun(K, \sX) \to \sX$. We claim that $\colim$ is left adjoint to $\delta$. In fact, if $\cM, \cM^K \to \Delta^1$ are the cartesian fibrations classified by $i^{\ast} j_{\ast}$ and $\overline{i}^{\ast} \overline{j}_{\ast}$ respectively, then we have a map $\delta: \cM^K \to \cM$ of cartesian fibrations and by \cite[Prop.~7.3.2.6]{HA} a relative left adjoint $\colim: \cM^K \to \cM$. The formation of sections sends relative adjunctions to adjunctions, which proves the claim. We deduce that $\sX$ admits $K$-indexed colimits.
\end{proof}

\begin{cor} Suppose $\sU$ and $\sZ$ are presentable $\infty$-categories and $\phi: \sU \to \sZ$ is a left-exact accessible functor. Then $\sX = \mathrm{\lim}^{\rlax} \phi$ is a presentable $\infty$-category.
\end{cor}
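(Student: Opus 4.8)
\emph{Proof proposal.} The plan is to use the explicit pullback description of $\mathrm{lim}^{\rlax}\phi$ and then verify the two defining properties of presentability — cocompleteness and accessibility — separately. By \cref{recollEquivalenceToOplaxLim}, the pair $(\sU,\sZ)$ is a recollement of $\sX$ with gluing functor $\phi$, and by \cref{lm:sectionsPullbackSquare} (equivalently \cref{cor:RecollementAsPullbackSquare}) we have an equivalence
\[ \sX \simeq \Ar(\sZ) \times_{\ev_1, \sZ, \phi} \sU. \]
Since $\ev_1 \colon \Fun(\Delta^1,\sZ) \to \sZ$ is induced by the inclusion $\{1\} \hookrightarrow \Delta^1$ it is a categorical fibration, so this is a genuine (homotopy) pullback of $\infty$-categories.

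For cocompleteness, I would simply invoke \cref{lem:ColimitExistenceInRecollement}: since $\sU$ and $\sZ$ are presentable they admit all small colimits, so applying that lemma to every small simplicial set $K$ shows that $\sX$ admits all small colimits. For accessibility, note that $\Ar(\sZ) = \Fun(\Delta^1,\sZ)$ is presentable (hence accessible) because $\sZ$ is presentable, that $\ev_1$ preserves all small colimits and is accessible, that $\sU$ is accessible, and that $\phi$ is accessible by hypothesis. Since accessible $\infty$-categories together with accessible functors are stable under small limits formed in $\widehat{\Cat}_{\infty}$ \cite[\S 5.4.7]{HTT}, the pullback $\sX$ is accessible. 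An $\infty$-category that is accessible and admits all small colimits is presentable \cite{HTT}, so $\sX$ is presentable, as claimed.

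The one point that needs care — rather than a genuine obstacle — is the temptation to realize $\sX$ as a limit in $\mathrm{Pr}^{\mathrm L}$: the functor $\phi$ is only assumed \emph{left}-exact and accessible, not colimit-preserving, so it need not be a morphism of $\mathrm{Pr}^{\mathrm L}$ (nor of $\mathrm{Pr}^{\mathrm R}$, as it need not preserve infinite limits). Decoupling the argument into the limit-closure property of accessibility (which only uses that $\phi$ is accessible) and the bespoke colimit statement \cref{lem:ColimitExistenceInRecollement} for recollements avoids this issue entirely, and no calculation beyond citing these facts is required.
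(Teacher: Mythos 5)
Your proof is correct and takes essentially the same route as the paper: the colimit half is literally the paper's argument (an appeal to \cref{lem:ColimitExistenceInRecollement} over all small $K$), and for accessibility the paper simply cites \cite[Cor.~5.4.7.17]{HTT} for the section category $\Fun_{/\Delta^1}(\Delta^1,\cM)$, which is the same \S 5.4.7 closure-of-accessibility machinery you invoke via the pullback presentation $\Ar(\sZ)\times_{\ev_1,\sZ,\phi}\sU$. The only difference is cosmetic—which model of $\mathrm{lim}^{\rlax}\phi$ carries the accessibility citation—and your remark that $\phi$ need not lie in $\mathrm{Pr}^{\mathrm{L}}$ or $\mathrm{Pr}^{\mathrm{R}}$ correctly identifies why the argument must be split this way.
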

\begin{proof} By \cref{lem:ColimitExistenceInRecollement}, $\sX$ admits all small colimits. By \cite[Cor.~5.4.7.17]{HTT}, $\sX$ is accessible. We conclude that $\sX$ is presentable.
\end{proof}


Finally, we describe how recollements interact with an ambidextrous adjunction (e.g., the adjunction between restriction and induction for equivariant spectra). 

\begin{lem} \label{lem:AmbidexterityRecollement} Let $(\sU,\sZ)$ and $(\sU',\sZ')$ be stable recollements on $\sX$ and $\sX'$ and let $f^{\ast}: \sX \to \sX'$ be an exact functor such that $f^{\ast}|_{i_{\ast}(\sZ)} \subset i_{\ast}(\sZ')$ (so $f^{\ast}$ is not necessarily a morphism of recollements, but we still may define ${f_U}^{\ast} \coloneq j'^{\ast} f^{\ast} j_{\ast}$, ${f_Z}^{\ast} \coloneq i'^{\ast} f^{\ast} i_{\ast}$, and have ${f_U}^{\ast} j^{\ast} \simeq j'^{\ast} {f_U}^{\ast}$).
\begin{enumerate}
    \item Suppose that $f^{\ast}|_{j_!( \sU)} \subset j'_!(\sU')$ and $f^{\ast}$ admits a right adjoint $f_{\ast}$. Then
    \begin{enumerate}
        \item The essential image of $f_{\ast} j'_{\ast}$ lies in $j_{\ast}(\sU)$, so $f^{\ast} \dashv f_{\ast}$ restricts to an adjunction
        \[ \adjunct{{f_U}^{\ast}}{\sU}{\sU'}{{f_U}_{\ast}} \]
        with $j_{\ast} {f_U}_{\ast} \simeq f_{\ast} j'_{\ast}$.
        \item The natural map $j^{\ast} f_{\ast} \to {f_U}_{\ast} j'^{\ast}$ is an equivalence. 
        \item The essential image of $f_{\ast} i'_{\ast}$ lies in $i_{\ast}(\sZ)$, so $f^{\ast} \dashv f_{\ast}$ restricts to an adjunction
        \[ \adjunct{{f_Z}^{\ast}}{\sZ}{\sZ'}{{f_Z}_{\ast}} \]
        with $i_{\ast} {f_Z}_{\ast} \simeq f_{\ast} i'_{\ast}$.
    \end{enumerate}
    \item Suppose that $f^{\ast}|_{j_{\ast}( \sU)} \subset j'_{\ast}(\sU')$ and $f^{\ast}$ admits a left adjoint $f_!$. Then
    \begin{enumerate}
        \item The essential image of $f_{\ast} j'_!$ lies in $j_!(\sU)$, so $f_! \dashv f^{\ast}$ restricts to an adjunction
        \[ \adjunct{{f_U}_!}{\sU'}{\sU}{{f_U}^{\ast}} \]
        with $j_! {f_U}_! \simeq f_! j'_!$.
        \item The natural map ${f_U}_! j^{\ast} \to j'^{\ast} f_!$ is an equivalence. 
        \item The essential image of $f_! i'_{\ast}$ lies in $i_{\ast}(\sZ)$, so $f_! \dashv f^{\ast}$ restricts to an adjunction
        \[ \adjunct{{f_Z}_!}{\sZ'}{\sZ}{{f_Z}^{\ast}} \]
        with $i_{\ast} {f_Z}_! \simeq f_! i'_{\ast}$.
        \item The natural map $i^{\ast} {f_Z}_! \to {f_Z}_! i'^{\ast}$ is an equivalence.
    \end{enumerate}
    \item Suppose that $f^{\ast} \in \Recoll^{\stab}_{\str}$, $f^{\ast}$ admits left and right adjoints $f_!$ and $f_{\ast}$, and we have the ambidexterity equivalence $f_! \simeq f_{\ast}$. Then $f_{\ast} \in \Recoll^{\stab}_{\str}$ and we additionally have ambidexterity equivalences ${f_U}_! \simeq {f_U}_{\ast}$ and ${f_Z}_! \simeq {f_Z}_{\ast}$.
\end{enumerate}
\end{lem}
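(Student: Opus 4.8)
The plan is to bootstrap off parts (1) and (2), both of which apply here. Since $f^*$ is a strict morphism of stable recollements, \cref{stableRecollementComment} supplies $f^*|_{j_!(\sU)} \subset j_!'(\sU')$ and $f^*|_{j_*(\sU)} \subset j_*'(\sU')$; together with the assumed existence of $f_!$ and $f_*$, these are precisely the standing hypotheses of (2) and (1). Thus the restricted adjunctions $f_{U!} \dashv f_U^* \dashv f_{U*}$ and $f_{Z!} \dashv f_Z^* \dashv f_{Z*}$ all exist, and we may freely use the identifications $j_* f_{U*} \simeq f_* j_*'$ and $i_* f_{Z*} \simeq f_* i_*'$ from (1a) and (1c), and $j_! f_{U!} \simeq f_! j_!'$ and $i_* f_{Z!} \simeq f_! i_*'$ from (2a) and (2c).

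First I would check that $f_*$ is a strict morphism of stable recollements $(\sU', \sZ') \to (\sU, \sZ)$, using the criterion of \cref{stableRecollementComment}: one needs that $f_*$ carries the closed part into the closed part and carries $j_!'(\sU')$, $j_*'(\sU')$ into $j_!(\sU)$, $j_*(\sU)$. The first and third are immediate from $i_* f_{Z*} \simeq f_* i_*'$ and $j_* f_{U*} \simeq f_* j_*'$; the second is where the ambidexterity equivalence enters, since $f_* j_!' \simeq f_! j_!' \simeq j_! f_{U!}$ visibly lands in $j_!(\sU)$. Being a right adjoint, $f_*$ is left-exact, so this places it in $\Recoll^{\stab}_{\str}$.

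Next, for the ambidexterity equivalences, the idea is to show that the already-constructed left adjoint $f_{U!}$ is \emph{also} right adjoint to $f_U^*$, so that $f_{U!} \simeq f_{U*}$ by uniqueness of adjoints, and likewise for $f_Z$. Concretely, for $u \in \sU$ and $u' \in \sU'$ one strings together the natural equivalences
\begin{align*}
\Map_{\sU}(u, f_{U!} u') &\simeq \Map_{\sX}(j_! u,\, j_! f_{U!} u') \simeq \Map_{\sX}(j_! u,\, f_! j_!' u') \simeq \Map_{\sX}(j_! u,\, f_* j_!' u') \\
&\simeq \Map_{\sX'}(f^* j_! u,\, j_!' u') \simeq \Map_{\sX'}(j_!' f_U^* u,\, j_!' u') \simeq \Map_{\sU'}(f_U^* u,\, u'),
\end{align*}
using, in order, full faithfulness of $j_!$, the identification from (2a), the ambidexterity equivalence $f_! \simeq f_*$, the adjunction $f^* \dashv f_*$, the identification $f^* j_! \simeq j_!' f_U^*$ from \cref{stableRecollementComment}, and full faithfulness of $j_!'$. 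The argument for $f_Z$ is identical with $j_!, j_!'$ replaced by $i_*, i_*'$ and (2a) by (2c), now also invoking the equivalence $f^* i_* \simeq i_*' f_Z^*$, which holds because $f^*$ is exact, hence preserves the terminal object, so that the comparison transformation $\nu'$ of \cref{LaxVsStrictMorphismsOfRecollements} is invertible.

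The only genuine obstacle is the bookkeeping: one must verify that every step in the displayed chain is natural in both $u$ and $u'$ (so that Yoneda delivers the functor equivalence $f_{U!} \simeq f_{U*}$), and keep straight the several fully faithful embeddings $j_!, j_!', i_*, i_*'$ and the adjunctions attached to $f^*$. Everything else is formal.
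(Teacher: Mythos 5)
Your proposal is correct and follows essentially the same route as the paper: strictness of $f_*$ is deduced from (1.1), (1.3), and (2.1) together with $f_! \simeq f_*$, and the ambidexterity equivalence ${f_U}_! \simeq {f_U}_*$ is obtained from exactly the same chain of mapping-space equivalences exhibiting ${f_U}_!$ as right adjoint to ${f_U}^*$. The only (harmless) deviation is for the closed part, where the paper simply notes that $i_* {f_Z}_! \simeq f_! i'_* \simeq f_* i'_* \simeq i_* {f_Z}_*$ and invokes full faithfulness of $i_*$ rather than rerunning the adjunction argument.
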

\begin{proof} We first prove the assertions of (1). For (1.1), for any $u' \in \sU'$ because we have for all $z \in \sZ$ that
\begin{align*} \Map_{\sX}(i_{\ast} z,f_{\ast} j'_{\ast} u') \simeq \Map_{\sU'}(j'^{\ast} f^{\ast} i_{\ast} z, u') \simeq \Map_{\sU'}(f_U^{\ast} j'^{\ast} i_{\ast} z, u') \simeq \ast,
\end{align*}
we get $f_{\ast} j'_{\ast} u' \in j_{\ast}(\sU)$. For (1.2), the assertion holds because the map is adjoint to the equivalence $f^{\ast} j_! \to j'_! {f_U}^{\ast}$. For (1.3), for any $z' \in \sZ'$ we have
\[ j^{\ast} f_{\ast} i'_{\ast} z' \simeq {f_U}_{\ast} j^{\ast} i'_{\ast} z' \simeq {f_U}_{\ast} 0 \simeq 0, \]
hence $f_{\ast} i'_{\ast} z' \in i_{\ast}(\sZ)$. Next, the assertions of (2) hold by a dual argument; we note that the extra assertion (2.4) holds because $f_!$ now commutes with $j_!$ instead of $j_{\ast}$. Finally, for (3) the functor $f_! \simeq f_{\ast}$ is in $\Recoll^{\stab}_{\str}$ by combining (1.1), (1.3), and (2.1). For the ambidexterity assertions, the equivalence ${f_Z}_! \simeq {f_Z}_{\ast}$ is clear because the embedding $i_{\ast}: \sZ \subset \sX$ is unambiguous, whereas for ${f_U}_! \simeq {f_U}_{\ast}$ we note that the sequence of equivalences
\begin{align*} \Map_{\sU}(u, {f_U}_! u') & \simeq \Map_{\sX}(j_! u, f_! j'_! u') \simeq \Map_{\sX} (j_! u, f_{\ast} j'_! u') \simeq \Map_{\sX'}(f^{\ast} j_! u, j'_! u') \\
& \simeq \Map_{\sX'} (j'_! {f_U}^{\ast} u, j'_! u') \simeq \Map_{\sU'} ({f_U}^{\ast} u, u')
\end{align*}
demonstrates that ${f_U}_!$ is right adjoint to ${f_U}^{\ast}$ and hence ${f_U}_! \simeq {f_U}_{\ast}$.
\end{proof}


\begin{cor} \label{cor:RecollementGivesParamStableSubcategories} Let $G$ be a finite group. Suppose that $\sX_{\bullet}: \sO^{\op}_G \to \Recoll^{\stab}_{\str}$ is a functor such that the underlying $G$-$\infty$-category $\sX$ is $G$-stable \cite[Def.~7.1]{Exp4}. Then $\sU$ and $\sZ$ are $G$-stable and all of the functors appearing in the diagram of $G$-adjunctions
\[ \begin{tikzcd}[row sep=4ex, column sep=4ex, text height=1.5ex, text depth=0.25ex]
\sU \ar[shift right=1,right hook->]{r}[swap]{j_{\ast}} & \sX \ar[shift right=2]{l}[swap]{j^{\ast}} \ar[shift left=2]{r}{i^{\ast}} & \sZ \ar[shift left=1,left hook->]{l}{i_{\ast}}
\end{tikzcd} \]
are $G$-exact.
\end{cor}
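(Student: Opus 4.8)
The plan is to reduce the corollary to the fiberwise ambidexterity result \cref{lem:AmbidexterityRecollement}(3), combined with a recognition criterion for $G$-stable $G$-$\infty$-categories. Recall from \cite{Exp4} that a $G$-$\infty$-category $C \to \sO_G^{\op}$ is $G$-stable exactly when every fiber $C_{G/H}$ is a stable $\infty$-category and, for each morphism $f \colon G/K \to G/H$ in $\sO_G$, the restriction functor $f^\ast \colon C_{G/H} \to C_{G/K}$ admits a left adjoint $f_!$ that is simultaneously a right adjoint (so that the norm map $f_! \to f_\ast$ is an equivalence), with the attendant base-change compatibilities assembling this data into the structure of finite $G$-limits and $G$-colimits on $C$. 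Since $\sX_\bullet$ takes values in $\Recoll^{\stab}_{\str} \subset \Recoll^{\lex}_{\str}$, the Grothendieck construction already yields cocartesian fibrations $\sU, \sZ \to \sO_G^{\op}$, so only the fiberwise stability and the ambidexterity conditions remain to be verified.

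First I would handle the fiberwise condition: for every $H \leq G$ the pair $(\sU_{G/H}, \sZ_{G/H})$ is a stable recollement of $\sX_{G/H}$, since $\sX_\bullet$ lands in $\Recoll^{\stab}_{\str}$, and so $\sU_{G/H}$ and $\sZ_{G/H}$ are stable. For the ambidexterity condition, fix $f \colon G/K \to G/H$ in $\sO_G$ and apply \cref{lem:AmbidexterityRecollement}(3) to the functor $f^\ast \colon \sX_{G/H} \to \sX_{G/K}$: its hypotheses are met because $f^\ast$ is the value of $\sX_\bullet$ on the edge $[f]$ and hence lies in $\Recoll^{\stab}_{\str}$, while the $G$-stability --- hence $G$-semiadditivity --- of $\sX$ provides both adjoints to $f^\ast$ together with the equivalence $f_! \simeq f_\ast$. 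The lemma then shows that $f_\ast \in \Recoll^{\stab}_{\str}$, so $f_\ast$ too respects the open and closed parts, and that we have ambidexterity equivalences ${f_U}_! \simeq {f_U}_\ast$ and ${f_Z}_! \simeq {f_Z}_\ast$ for the induced functors on strata. To upgrade these pointwise statements to the base-change-compatible data required by the criterion, I would invoke \cref{stableRecollementComment} (and the commutation formulas in \cref{lem:AmbidexterityRecollement}), which present the functors induced on the open and closed parts, together with their adjoints, as restrictions of $f^\ast$, $f_!$, and $f_\ast$ along the $G$-functors $j_!$, $j^\ast$, $i_\ast$, $i^\ast$; the base-change compatibilities for $\sU$ and $\sZ$ then descend from those already present on the $G$-stable $\sX$. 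This establishes that $\sU$ and $\sZ$ are $G$-stable.

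It then remains to see that $j^\ast, j_\ast, i^\ast, i_\ast$ are $G$-exact, which should be formal. By the discussion opening this subsection, $j^\ast \dashv j_\ast$ and $i^\ast \dashv i_\ast$ are $G$-adjunctions, so all four functors preserve cocartesian edges, with $j^\ast, i^\ast$ left $G$-adjoints and $j_\ast, i_\ast$ right $G$-adjoints; hence $j^\ast$ and $i^\ast$ preserve finite $G$-colimits while $j_\ast$ and $i_\ast$ preserve finite $G$-limits. Since $\sX$, $\sU$, and $\sZ$ are all $G$-stable, finite $G$-limits agree with finite $G$-colimits in each of them, so every one of the four functors preserves both, i.e. is $G$-exact. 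The one genuinely delicate point, I expect, is the middle step, where the pointwise conclusions of \cref{lem:AmbidexterityRecollement}(3) must be organized into relative adjoints and base-change data for the fibrations $\sU, \sZ \to \sO_G^{\op}$; this is precisely where one leans on \cref{stableRecollementComment} and the assumed $G$-stability of $\sX$, and where a full write-up would have to make the coherences precise.
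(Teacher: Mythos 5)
Your proposal is correct and follows essentially the same route as the paper: both reduce the corollary to \cref{lem:AmbidexterityRecollement}(3) applied fiberwise to the restriction functors, and then obtain the remaining Beck--Chevalley/base-change condition for finite $G$-products on $\sU$ and $\sZ$ from the corresponding condition on $\sX$, using that restriction and induction commute with the embeddings $j_*$, $j_!$, $i_*$. The paper's proof is just a terser version of the same argument, likewise leaving the coherence bookkeeping you flag at the end implicit.
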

\begin{proof} By \cref{lem:AmbidexterityRecollement}, it only remains to check the Beck-Chevalley condition for $\sU$ and $\sZ$ to show the existence of finite $G$-products. But this follows from the same condition on $\sX$, since the restriction and induction functors $(f_{-})^{\ast}, (f_{-})_{\ast}$ commute with the inclusion functors $(j_{\bullet})_{\ast}$, $(j_{\bullet})_!$, and $(i_{\bullet})_{\ast}$.
\end{proof}

\begin{dfn} \label{dfn:ParamStableRecollement} In the situation of \cref{cor:RecollementGivesParamStableSubcategories}, we say that $(\sU,\sZ)$ is a \emph{$G$-stable $G$-recollement} of $\sX$.
\end{dfn}

\section{Recollements on lax limits of \texorpdfstring{$\infty$}{infinity}-categories}

Let $S$ be an $\infty$-category throughout this section. Suppose $p: C \to S$ is a locally cocartesian fibration classified by a $2$-functor $f: \fC[S] \to \Cat_{\infty}$ (\cite[Def.~1.1.5.1]{HTT} and \cite[\S 3]{G}), so for every $2$-simplex $\Delta^2 \to S$, we have a lax commutative diagram of $\infty$-categories
\[ \begin{tikzcd}[row sep=4ex, column sep=4ex, text height=1.5ex, text depth=0.25ex]
C_{0} \ar{rd}[swap]{F_{01}} \ar{rr}{F_{02}} & \ar[phantom]{d}{\Downarrow} & C_{2}, \\
& C_{1} \ar{ru}[swap]{F_{12}} &
\end{tikzcd} \]
and the higher-dimensional simplices of $S$ supply coherence data. Then the $2$-functoriality of $f$ yields two notions of lax limit corresponding to choosing two possible orientations for morphisms -- informally, the \emph{left-lax} limit of $f$ has objects given by tuples $(x_i \in C_i, \alpha_{ij}: F_{ij}(x_i) \to x_j)$, whereas the \emph{right-lax} limit of $f$ has objects given by tuples $(x_i \in C_i, \alpha_{ij}: x_j \to F_{ij}(x_i))$. To give rigorous meaning to these notions, we may circumvent giving a precise formulation of the lax universal property (for instance, as carried out in \cite{GHN}) and instead \emph{define} the left-lax limit to be the $\infty$-category of sections
\[ \mathrm{\lim}^{\llax}f = \mathrm{\lim}^{\llax} C \coloneq \Fun_{/S}(S,C) \]
and the right-lax limit to be the $\infty$-category
\[ \mathrm{\lim}^{\rlax}f = \mathrm{\lim}^{\rlax} C \coloneq \Fun^{\cocart}_{/S}(\sd(S),C), \]
where $\sd(S)$ is the \emph{barycentric subdivision} of $S$ (\cref{dfn:barycentricSubdivision}) that is locally cocartesian over $S$ via the $\mathit{max}$ functor (\cref{cnstr:MaxFunctorSubdivision}), and we let $\Fun^{\cocart}_{/S}(-,-)$ be the full subcategory on those functors over $S$ that preserve \emph{locally cocartesian} edges. Viewing $f$ itself as a \emph{left-lax diagram} in $\Cat_{\infty}$, we may thereby speak of left-lax and right-lax limits of left-lax diagrams of $\infty$-categories; dually, we may also speak of left-lax and right-lax limits of right-lax diagrams of $\infty$-categories encoded as locally cartesian fibrations. We refer to \cite[\S 1]{AMGR-NaiveApproach} or \cite[\S A]{AMGRb} for a more detailed discussion.\footnote{We follow \cite[\S 1]{AMGR-NaiveApproach} in referring to these two types of lax limits as `left' and `right', even though lax and oplax are more standard nomenclature. The terminology is consistent with the usage of left for cocartesian-type constructions and right for cartesian-type constructions (e.g., left and right fibrations).}

\begin{dfn} Let $S' \subset S$ be a full subcategory. Then $S'$ is a \emph{sieve} if for every morphism $x \to y$ in $S$, if $y \in S'$, then $x \in S'$. Dually, $S'$ is a \emph{cosieve} if $(S')^{\op}$ is a sieve in $S^{\op}$.

Given a sieve $S_0 \subset S$ and cosieve $S_1 \subset S$, we say that $S_0$ and $S_1$ form a \emph{sieve-cosieve decomposition} of $S$ if $S_0$ and $S_1$ are disjoint and any object $x \in S$ lies either in $S_0$ or $S_1$.
\end{dfn}

\begin{rem} Note that sieves and cosieves are necessarily stable under equivalences. Given a sieve-cosieve decomposition $(S_0,S_1)$ of $S$, we may define a functor $\pi: S \to \Delta^1$ that sends each object $x \in S$ to the integer $i \in \{0,1\}$ such that $x \in S_i$. Conversely, any functor $\pi: S \to \Delta^1$ determines a sieve-cosieve decomposition of $S$ by taking its fibers over $0$ and $1$.
\end{rem}

Our main goal in this section is to describe how sieve-cosieve decompositions of $S$ produces recollements on right-lax limits of locally cocartesian fibrations $p:C \to S$ (\cref{thm:RecollementRlaxLimitOfLlaxFunctor}).

\begin{rem} As we saw in \cref{recollEquivalenceToOplaxLim}, a recollement itself is an example of a right-lax limit over $\Delta^1$. Given a working theory of (pointwise) right-lax Kan extensions, our results should follow from the usual transitivity property of Kan extensions applied to the factorization $S \xto{\pi} \Delta^1 \to \ast$. However, we are not aware of such a theory that also affords the explicit description of the gluing functor given in \cref{thm:ExistenceLaxRightKanExtension}; indeed, \cref{thm:ExistenceLaxRightKanExtension} should precisely amount to a pointwise formula for the right-lax Kan extension along $\pi$. We refer the interested reader to the discussion in \cite[\S 2.2]{kondyrev2021dualizable} for more on this question.
\end{rem}

\subsection{Recollements on right-lax limits of strict diagrams}

Before entering into our study of left-lax diagrams, let us consider the simpler case of strict diagrams $f: S \to \Cat_{\infty}$. For this case, right-lax limits are modeled by sections of the \emph{cartesian} fibration that classifies $f$. Thus suppose that $p: C \to S$ is a cartesian fibration, $\pi: S \to \Delta^1$ is a functor, and let $p_0: C_0 \to S_0$, $p_1: C_1 \to S_1$ denote the pullbacks of $p$ to the fibers $S_0$, $S_1$ of $\pi$. Given a section $F: S \to C$ of $p$, let $j^{\ast} F: S_1 \to C_1$ be its restriction over $S_1$ and let $i^{\ast} F: S_0 \to C_0$ be its restriction over $S_0$. We obtain functors
\[ j^{\ast}: \Fun_{/S}(S,C) \to \Fun_{/S_1}(S_1,C_1), \quad i^{\ast}: \Fun_{/S}(S,C) \to \Fun_{/S_0}(S_0, C_0). \]
We first explain when $j^{\ast}$ and $i^{\ast}$ admit right adjoints. Suppose $G: S_1 \to C_1$ is a section of $p_1$. For every $x \in S$, let
\[ G_x: (S_1)_{x/} \coloneq S_1 \times_S S_{x/} \to S_1 \xto{G} C_1 \subset C \] 
be the composite functor and consider the commutative diagram
\[ \begin{tikzcd}[row sep=4ex, column sep=6ex, text height=1.5ex, text depth=0.25ex]
(S_1)_{x/} \ar{r}{G_x} \ar{d} & C \ar{d}{p} \\
(S_1)_{x/}^{\lhd} \ar{r} \ar[dotted]{ru}[swap]{\overline{G_x}} & S
\end{tikzcd} \]
where the cone point is sent to $x$. By \cite[Cor.~4.3.1.11]{HTT}, if for every $s \in S$, $C_s$ admits $(S_1)_{x/}$-indexed limits, and for every $f: s \to t$, the pullback functor $f^{\ast}: C_t \to C_s$ preserves $(S_1)_{x/}$-indexed limits, then there exists a dotted lift $\overline{G_x}$ which is a $p$-limit of $G_x$. If this holds for all $x \in S$,  then by the dual of \cite[Lem.~4.3.2.13]{HTT}, the $p$-right Kan extension $j_{\ast} G$ exists and is computed pointwise by these $p$-limits. Moreover, by \cite[Prop.~4.3.2.17]{HTT}, the right adjoint $j_{\ast}$ then exists and is computed objectwise by $j_{\ast} G$.

Now let $H: S_0 \to C_0$ be a section of $p_0$. The same results hold for computing $i_{\ast} H$. However, the slice $\infty$-categories $(S_0)_{x/}$ are empty when $x \in S_1$. Therefore, the hypotheses above amount to supposing that for all $s \in S$, $C_s$ admits a terminal object, and for all $f: s \to t$, the pullback functor $f^{\ast}$ preserves this terminal object.

Finally, let $\cK = \{K_{\alpha}\}_{\alpha \in A}$ be a class of simplicial sets and suppose that for all $K \in \cK$ and $s \in S$, the fiber $C_s$ admits $K$-indexed limits, and for all $f: s \to t$, the pullback functor $f^{\ast}$ preserves $K$-indexed limits. Then by the dual of \cite[Prop.~5.4.7.11]{HTT} and \cite[Rmk.~5.4.7.13]{HTT}, $\Fun_{/S}(S,C)$ admits $K$-indexed limits such that the evaluation functors $\ev_s: \Fun_{/S}(S,C) \to C_s$ preserve $K$-indexed limits -- in other words, the $K$-indexed limits in $\Fun_{/S}(S,C)$ are computed fiberwise. 

Let us now suppose that $p$ satisfies this condition for $\cK$ the class of finite simplicial sets and also satisfies the existence hypotheses for $j_{\ast}$.

\begin{prp} \label{prp:RecollementSectionCategoryCartesianFibration} The adjunctions 
\[ \begin{tikzcd}[row sep=4ex, column sep=4ex, text height=1.5ex, text depth=0.25ex]
\Fun_{/S_1}(S_1,C_1) \ar[shift right=1,right hook->]{r}[swap]{j_{\ast}} & \Fun_{/S}(S,C) \ar[shift right=2]{l}[swap]{j^{\ast}} \ar[shift left=2]{r}{i^{\ast}} & \Fun_{/S_0}(S_0,C_0) \ar[shift left=1,left hook->]{l}{i_{\ast}}
\end{tikzcd} \]
together exhibit $\Fun_{/S}(S,C)$ as a recollement of $\Fun_{/S_1}(S_1,C_1)$ and $\Fun_{/S_0}(S_0,C_0)$.
\end{prp}
\begin{proof} Note the functors $j^{\ast}$ and $i^{\ast}$ are left exact by the fiberwise computation of limits in section $\infty$-categories. Because $(S_0)_{x/} = \emptyset$ for all $x \in S_1$, we get that $j^{\ast} i_{\ast}$ is the constant functor at the terminal object of $\Fun_{/S_1}(S_1,C_1)$. Finally, $i^{\ast}$ and $j^{\ast}$ are jointly conservative because equivalences are detected objectwise in $\Fun_{/S}(S,C)$.
\end{proof}

\begin{rem} If the fibers of $p$ are moreover stable $\infty$-categories, then the left-exact pullback functors $f^{\ast}$ are necessarily exact and the recollement of \cref{prp:RecollementSectionCategoryCartesianFibration} is stable.
\end{rem}

\begin{exm} \label{exm:SieveCosieveRecollementOnFunctorCategory} Let $C \simeq D \times S$ and $p$ be the projection to $S$. Then the recollement of \cref{prp:RecollementSectionCategoryCartesianFibration} simplifies to 
\[ \begin{tikzcd}[row sep=4ex, column sep=4ex, text height=1.5ex, text depth=0.25ex]
\Fun(S_1,D) \ar[shift right=1,right hook->]{r}[swap]{j_{\ast}} & \Fun(S,D) \ar[shift right=2]{l}[swap]{j^{\ast}} \ar[shift left=2]{r}{i^{\ast}} & \Fun(S_0,D) \ar[shift left=1,left hook->]{l}{i_{\ast}}
\end{tikzcd} \]
where $j: S_1 \to S$ and $i: S_0 \to S$ now denote the inclusions. Recollement theory then gives a calculational technique for computing the right Kan extension $\phi_{\ast} F$ of a functor $F: S \to D$ along $\phi: S \to T$. Namely, if we let $\phi_0 = \phi \circ i$, $\phi_1 = \phi \circ j$, $F_0 = F|_{S_0}$, and $F_1 = F|_{S_1}$, the pullback square \cref{recollementFractureSquare} yields a pullback square
\[ \begin{tikzcd}[row sep=4ex, column sep=4ex, text height=1.5ex, text depth=0.25ex]
\phi_{\ast} F \ar{r} \ar{d} & (\phi_0)_{\ast} F_0 \ar{d} \\
(\phi_1)_{\ast} F_1 \ar{r} & (\phi_0)_{\ast} \left( \left(j_{\ast} F_1 \right)|_{S_0} \right).
\end{tikzcd} \]
\end{exm}

\subsection{Recollements on right-lax limits of left-lax diagrams} \label{subsection:recollRLaxLLax}

We now seek to establish the analogue of \cref{prp:RecollementSectionCategoryCartesianFibration} for right-lax limits of locally cocartesian fibrations. Although the ideas are straightforward, the categorical details turn out to be considerably more involved. We begin by proving some needed extensions to the theory of relative right Kan extensions initiated in \cite[\S 4.1-3]{HTT}, which play a technical role in our construction of the recollement adjunctions. We then construct the barycentric subdivision $\sd(S)$ (\cref{dfn:barycentricSubdivision}, but also see \cref{rem:barycentricSubdivisionOrdinaryCategory}), and extend the cocartesian pushforward of \cite[Lem.~2.23]{Exp2} to the locally cocartesian situation (\cref{prp:LocallyCocartesianPushforward} and \cref{thm:subdivisionExtension}). Finally, given a sieve-cosieve decomposition of $S$ and suitable hypotheses on the locally cocartesian fibration $p: C \to S$, we establish localizations in \cref{thm:ExistenceLaxRightKanExtension}, \cref{cor:openPartOfRecollement}, and \cref{prp:closedPartOfRecollement}, and show that these together constitute a recollement of the right-lax limit of $p$ in \cref{thm:RecollementRlaxLimitOfLlaxFunctor}.

\subsubsection{Relative right Kan extension}

In \cite[Prop.~4.3.1.10]{HTT}, Lurie gives a criterion for when a colimit diagram in a fiber of a locally cocartesian fibration is a relative colimit. In contrast, we will also need a separate understanding of when a \emph{limit} diagram in a fiber is a relative limit. As indicated in \cref{lem:LimitIsRelativeLimit}, in this situation we can give an unconditional statement.

\begin{lem} \label{lem:LimitIsRelativeLimit} Let $S$ be an $\infty$-category and let $f: C \to S$ be a locally cocartesian fibration. Let $s \in S$ be an object and $\overline{p}: K^{\lhd} \to C_s$ a limit diagram that extends $p$. Then, viewed as a diagram in $C$, $\overline{p}$ is a $f$-limit diagram \cite[4.3.1.1]{HTT}, i.e., the commutative square
\[ \begin{tikzcd}[row sep=4ex, column sep=4ex, text height=1.5ex, text depth=0.25ex]
C_{/\overline{p}} \ar{r} \ar{d} & C_{/p} \ar{d} \\
S_{/f \overline{p}} \ar{r} & S_{/f p}
\end{tikzcd} \]
is a homotopy pullback square.
\end{lem}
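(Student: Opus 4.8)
The plan is to check the homotopy-pullback assertion directly, via the slice-category description of relative limits. Recall that $\overline{p}$ is an $f$-limit diagram precisely when the comparison functor
\[ \theta\colon C_{/\overline{p}} \longrightarrow C_{/p} \times_{S_{/fp}} S_{/f\overline{p}} \]
is a trivial fibration, and that, by standard properties of slice $\infty$-categories (cf. \cite[\S 4.3.1]{HTT}), $\theta$ is a right fibration. Since a right fibration is a trivial fibration as soon as all of its fibres are contractible, it therefore suffices to prove that the fibre of $\theta$ over each object is a contractible Kan complex.

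I would then fix an object of $C_{/p} \times_{S_{/fp}} S_{/f\overline{p}}$ and unwind it: such an object amounts to an object $c \in C$ with image $t = f(c)$, a cone $\sigma \colon \underline{c} \Rightarrow p$ in $C$, and an extension $\overline{\tau}$ of the $S$-cone $f\sigma$ to a cone over $f\overline{p}$ — the latter being the constant diagram $\underline{s}$ on $K^{\lhd}$, since $\overline{p}$ factors through $C_s$. The fibre of $\theta$ over this datum is then the space of cones $\underline{c} \Rightarrow \overline{p}$ in $C$ that restrict to $\sigma$ and lie over $\overline{\tau}$, and the goal is to show that this space is contractible. The key observation is that the apex leg of $\overline{\tau}$ is a single morphism $u \colon t \to s$ in $S$, and that $\overline{\tau}$ coherently identifies every leg of $f\sigma$ with $u$. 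I would choose a locally cocartesian lift $\beta \colon c \to u_! c$ of $u$; it lands in $C_s$ because $u$ terminates at $s$. Invoking the universal property of $\beta$, I would transport $\sigma$ along $\beta$ — uniquely up to contractible choice — to a cone $\sigma' \colon \underline{u_! c} \Rightarrow p$ lying entirely inside $C_s$, and at the same time identify the space of cone-extensions above with the space of cones $\underline{u_! c} \Rightarrow \overline{p}$ in $C_s$ restricting to $\sigma'$. Since $\overline{p}$ is by hypothesis a limit diagram in $C_s$, this last space is contractible, which finishes the argument.

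The step I expect to be the main obstacle is this transport: upgrading the one-morphism universal property of the locally cocartesian edge $\beta$ to a statement about cones over the whole shape $K^{\lhd}$, with all of the coherence data in place. I would handle it either by regarding $\beta$ as a cocartesian edge over the restriction $u \colon \Delta^1 \to S$ — so that $u_! c$ is exhibited as a relative colimit of $c$ — and propagating this along $\Delta^1 \star K \to S$ using the join-and-slice manipulations of \cite[\S 4.3.2]{HTT}, or else by packaging the entire fibre computation as a relative right Kan extension along the inclusion of the apex into $K^{\lhd}$ and invoking the corresponding existence and pointwise-computation results. In either form, the remaining verifications amount to routine diagram chasing. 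I note that the argument uses only that a locally cocartesian edge absorbs the base change from $t$ to $s$ with no condition imposed, which is exactly the reason why — in contrast with the colimit statement \cite[Prop.~4.3.1.10]{HTT} — no hypothesis on the pushforward functors is required here.
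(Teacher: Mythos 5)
Your global framing is fine, and it is organized genuinely differently from the paper's proof: the reduction to contractibility of the fibers of $\theta$ is correct (the relevant pushout-joins of right horns with $K \subset K^{\lhd}$ are inner anodyne, so $\theta$ is a right fibration, and a right fibration with contractible fibers is a trivial fibration), your unwinding of an object of $C_{/p} \times_{S_{/fp}} S_{/f\overline{p}}$ is accurate, and the intended mechanism --- push the apex into $C_s$ along a locally cocartesian lift of $u$ and then quote the limit property of $\overline{p}$ inside $C_s$ --- is also the idea underlying the paper's argument, which instead runs a single global lifting problem. Your closing remark about why no hypothesis on the pushforward functors is needed is also correct.

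The gap is in the step you defer as routine, which is in fact the entire content of the lemma. The universal property of the locally cocartesian edge $\beta$ is available only after pulling back along $u \colon \Delta^1 \to S$; but the datum $(\sigma, \overline{\tau})$ lives over $\overline{\tau} \colon \Delta^0 \star K^{\lhd} \cong \Delta^1 \star K \to S$, and this map does \emph{not} factor through $\Delta^1$: the legs of $f\sigma$ are morphisms $t \to s$ that are only coherently identified with $u$ by the triangles of $\overline{\tau}$, not equal to it. For a merely locally cocartesian fibration a locally cocartesian edge is not cocartesian after arbitrary base change, so ``transport $\sigma$ along $\beta$, uniquely up to contractible choice, and identify the two extension spaces'' is precisely the assertion that $\beta$ remains cocartesian after pullback along $\overline{\tau}$ --- true here because everything past the apex of $\overline{\tau}$ is constant at $s$, but this is what has to be proved, and neither of your two devices supplies it: ``propagating along $\Delta^1 \star K \to S$'' presupposes exactly this propagation, and invoking the existence/pointwise results for relative right Kan extensions is circular, since those are formulated in terms of the very $f$-limits the lemma is about. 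The step can be closed, for instance by first straightening $\overline{\tau}$: since $K^{\lhd}$ is weakly contractible, $\overline{\tau}$ is equivalent in $S_{/f\overline{p}}$ to the cone pulled back along $\Delta^1 \xrightarrow{u} S$, and after lifting this equivalence along $C_{/p} \to S_{/fp}$ (using that $f$ and hence this map is a categorical fibration) the fiber of the right fibration $\theta$ is unchanged, so one may assume the whole problem lies over $C \times_S \Delta^1$, where $\beta$ is honestly cocartesian and your argument goes through; alternatively one argues as the paper does, lifting a contraction of the base onto $\{s\}$ via a marked-anodyne argument (\cite[Lem.~B.1.10]{HA}, with the locally cocartesian edges of $C$ marked) that pushes the entire cone into $C_s$ at once. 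Either way, this is a substantive argument and not diagram chasing, so as written the proposal has a genuine gap at its central step.
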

\begin{proof} It suffices to show that $C_{/\overline{p}} \to C_{/p} \times_{S_{/f p}} S_{/f \overline{p}}$ is a trivial Kan fibration. To this end, let $A \to B$ be a monomorphism of simplicial sets and consider the lifting problem
\[ \begin{tikzcd}[row sep=4ex, column sep=4ex, text height=1.5ex, text depth=0.25ex]
A \ar{r} \ar{d} & C_{/\overline{p}} \ar{d} \\
B \ar{r} \ar[dotted]{ru} &  C_{/p} \times_{S_{/f p}} S_{/f \overline{p}}.
\end{tikzcd} \]
This transposes to the lifting problem
\[ \begin{tikzcd}[row sep=4ex, column sep=4ex, text height=1.5ex, text depth=0.25ex]
A \star K^{\lhd} \bigcup_{A \star K} B \star K \ar{r}{\beta} \ar{d} & C \ar{d}{f} \\
B \star K^{\lhd} \ar{r}[swap]{\alpha} \ar[dotted]{ru}[swap]{\gamma} & S.
\end{tikzcd} \]
Our approach will be to first pushforward to the fiber $C_s$ using that $f$ is a locally cocartesian fibration and then solve the lifting problem in $C_s$ using that $\overline{p}$ is a limit diagram.

To begin, because $\overline{p}$ is a diagram in the fiber $C_s$, the map $\alpha$ factors as $B \star K^\rhd \to B \star \Delta^0 \xto{\alpha'} S$ with $\alpha'|_{\Delta^0} = \{s\}$. We may define a map $r: (B \star \Delta^0) \times \Delta^1 \to B \star \Delta^0$ such that $r_0 = \id$ and $r_1$ is constant at $\Delta^0$ in the following way: let $\pi: B \star \Delta^0 \to \Delta^1$ be the structure map of the join which sends $B$ to $\{0\}$ and $\Delta^0$ to $\{1\}$, and let $\rho$ be the composite $(B \star \Delta^0) \times \Delta^1 \xto{\pi \times \id} \Delta^1 \times \Delta^1 \xto{\text{max}} \Delta^1$, so the fiber of $\rho$ over $\{0\}$ is $B \times \{0\}$. Then, recalling that maps $L \to X \star Y$ of simplicial sets over $\Delta^1$ are equivalently specified by pairs of maps $(f_0:L_0 \to X, f_1: L_1 \to Y)$, $r$ is the map over $\Delta^1$ with respect to $\rho$ and $\pi$ given by $B \subset B \star \Delta^0$ and the constant map to $\Delta^0$. Now let
\[ h^\alpha: (B \star K^{\lhd}) \times \Delta^1 \to (B \star \Delta^0) \times \Delta^1 \xto{r} B \star \Delta^0 \xto{\alpha'} S,  \]
so $h^\alpha_0 = \alpha$ and $h^\alpha_1$ is constant at $\{s\}$. Also denote by $h^\alpha$ the restrictions of $h^\alpha$ to $(B \star K) \times \Delta^1$, $(A \star K^{\lhd}) \times \Delta^1$, and $(A \star K) \times \Delta^1$.

Let $\mathfrak{P} = (M_S,T,\emptyset)$ be the categorical pattern on $s\Set^+_{/S}$ that yields the locally cocartesian model structure, so $M_S$ consists of all the edges in $S$, $T$ consists of all the degenerate $2$-simplices in $S$, and the fibrant objects are the locally cocartesian fibrations. By the criterion of \cite[Lem.~B.1.10]{HA} applied to $K \to B \star K$ (with the degenerate edges marked) and $\{0\} \to (\Delta^1)^\sharp$, the inclusion map of marked simplicial sets
\[ (B \star K) \times \{0\} \cup_{(K \times \{0\})} K \times (\Delta^1)^\sharp \to (B \star K) \times (\Delta^1)^\sharp \]
is $\mathfrak{P}$-anodyne, and likewise replacing $K \to B \star K$ with $K^\lhd \to A \star K^\lhd$ and $K \to A \star K$. Using left properness of the locally cocartesian model structure, we deduce that the morphism
\[ \begin{tikzcd}[row sep=4ex, column sep=4ex, text height=1.5ex, text depth=0.25ex]
(A \star K^\lhd \cup_{A \star K} B \star K) \times \{0\} \cup_{K^\lhd \times \{0\}} K^\lhd \times (\Delta^1)^\sharp \ar{d} \\
(A \star K^\lhd \cup_{A \star K} B \star K) \times (\Delta^1)^\sharp
\end{tikzcd} \] 
is $\mathfrak{P}$-anodyne. Consider the commutative square
\[ \begin{tikzcd}[row sep=4ex, column sep=4ex, text height=1.5ex, text depth=0.25ex]
(A \star K^\lhd \cup_{A \star K} B \star K) \times \{0\} \cup_{K^\lhd \times \{0\}} K^\lhd \times (\Delta^1)^\sharp \ar{d} \ar{r} & \leftnat{C} \ar{d}{f} \\
(A \star K^\lhd \cup_{A \star K} B \star K) \times (\Delta^1)^\sharp \ar{r}[swap]{h^{\alpha}} \ar[dotted]{ru}[swap]{h^\beta} & S^\sharp
\end{tikzcd} \]
where $\leftnat{C}$ denotes the marking on $C$ given by the $f$-locally cocartesian edges and the top horizontal map restricted to the first factor is $\beta$ and to the second factor $K^\lhd \times (\Delta^1)^\sharp$ is the constant homotopy $K^{\lhd} \times \Delta^1 \xto{\pr} K^{\lhd} \xto{\overline{p}} C$. Then the dotted lift $h^\beta$ exists, and the image of $h^\beta_1$ is contained in the fiber $C_s$.

Now consider the commutative triangle
\[ \begin{tikzcd}[row sep=4ex, column sep=4ex, text height=1.5ex, text depth=0.25ex]
A \star K^\lhd \cup_{A \star K} B \star K \ar{r}{h^\beta_1} \ar{d} & C_s \\
B \star K^{\lhd} \ar[dotted]{ru}[swap]{\gamma_1}
\end{tikzcd} \]
Because $\overline{p}: K^\lhd \to C_s$ is a limit diagram, the map $(C_s)_{/\overline{p}} \to (C_s)_{/p}$ is a trivial Kan fibration. Therefore, the dotted lift $\gamma_1$ exists. 

Next, define a map
\[ \theta = (\theta', \theta''): (B \times \Delta^1) \star K^{\lhd} \to (B \star K^{\lhd}) \times \Delta^1 \]
by its factors
\begin{align*} \theta' &: (B \times \Delta^1) \star K^{\lhd} \xto{\pr \star \id} B \star K^{\lhd} \\
\theta'' &: (B \times \Delta^1) \star K^{\lhd} \xto{\pr \star \id} \Delta^1 \star K^\lhd \to \Delta^1 \star \Delta^0 \cong \Delta^2 \xto{\sigma^1} \Delta^1.
\end{align*}
Here $\sigma^1: \Delta^2 \to \Delta^1$ is the standard degeneracy map, so $\sigma^1(0) = 0$, $\sigma^1(1) = 1$, and $\sigma^1(2) = 1$. Also denote by $\theta$ the restriction to $(A \times \Delta^1) \star K^{\lhd}$, etc. Let
 \[ X = (A \times \Delta^1) \star K^{\lhd} \cup_{(A \times \Delta^1) \star K} (B \times \Delta^1) \star K  \bigcup\limits_{ (A \times \{1\}) \star K^{\lhd} \cup_{(A \times \{1\}) \star K} (B \times \{1 \}) \star K} B \star K^{\lhd} \]
 and consider the commutative diagram
\[ \begin{tikzcd}[row sep=4ex, column sep=4ex, text height=1.5ex, text depth=0.25ex]
 X \ar{r}{(h^\beta \circ \theta) \cup \gamma_1} \ar{d}[swap]{\lambda} & C \ar{d}{f} \\
 (B \times \Delta^1) \star K^{\lhd} \ar{r}[swap]{h^{\alpha} \circ \theta} \ar[dotted]{ru}[swap]{h^\gamma} & S
\end{tikzcd} \]
(where for commutativity, we use that $\theta_1: (B \times \{1\}) \star K^{\lhd} \to (B \star K^{\lhd}) \times \{1\}$ is an isomorphism). By the dual of \cite[Lem.~2.1.2.4]{HTT} applied to $A \to B$ and the right anodyne map $\{1\} \to \Delta^1$, the map
\[ \lambda': A \times \Delta^1 \cup_{A \times \{1\}} B \times \{1\} \to B \times \Delta^1 \]
is right anodyne. Then by \cite[Lem.~2.1.2.3]{HTT} applied to $\lambda'$ and the map $K \to K^{\lhd}$, $\lambda$ is inner anodyne. Thus the dotted lift $h^\gamma$ exists. Finally, let $\gamma = h^\gamma_0$ and observe that $\gamma$ is a solution to the original lifting problem of interest.
\end{proof}

We briefly digress to complete the theory of Kan extensions by constructing relative Kan extensions along general functors (cf. Lurie's remark at the beginning of \cite[\S 4.3.3]{HTT}). Recall the relative join construction $- \star_{-} -$ of \cite[Def.~4.1]{Exp2} along with its bifibration property \cite[Lem.~4.8]{Exp2}.

\begin{dfn} \label{Dfn:RelativeKanExtension} Consider the commutative diagram of $\infty$-categories
\[ \begin{tikzcd}[row sep=4ex, column sep=4ex, text height=1.5ex, text depth=0.25ex]
X \ar{r}{F} \ar{d}{\phi} & C \ar{d}{p} \\
Y \ar{r}{\alpha} & S
\end{tikzcd} \]
where $p: C \to S$ is a categorical fibration. Suppose given the data of a functor $G: Y \to C$ over $S$ and a homotopy $h: X \times \Delta^1 \to C$ over $S$ with $h_0 = G \circ \phi$ and $h_1 = F$. Let $\pi: Y \star_Y X \to Y$ be the structure map and let $\overline{G}: Y \star_Y X \xto{\pi} Y \xto{G} C$. Since $\Fun(Y \star_Y X, C) \to \Fun(Y, C) \times \Fun(X, C)$ is a bifibration, we may select an edge $\overline{G} \to \overline{F}$ that is cocartesian over $h: G \circ \phi \to F$ in $\Fun(X,C)$ with degenerate image $\id_{G}$ in $\Fun(Y,C)$. Then we say that $G$ is a \emph{$p$-right Kan extension of $F$} along $\phi$ (exhibited via $h$) if the commutative diagram
\[ \begin{tikzcd}[row sep=4ex, column sep=4ex, text height=1.5ex, text depth=0.25ex]
X \ar{r}{F} \ar[hookrightarrow]{d}{\iota_X} & C \ar{d}{p} \\
Y \star_Y X \ar{r}{\alpha \circ \pi} \ar{ru}{\overline{F}} & S
\end{tikzcd} \]
exhibits $\overline{F}$ as a $p$-right Kan extension of $F$ in the sense of \cite[Def.~4.3.2.2]{HTT}.
\end{dfn}

\begin{rem} \label{rem:ExistenceOfRelativeRKE} In the initial setup of \cref{Dfn:RelativeKanExtension}, given $\overline{F}: Y \star_Y X \to C$ a map over $S$ extending $F: X \to C$, let $G = \overline{F}|_Y: Y \to C$ and let $h: X \times \Delta^1 \xto{h'} Y \star_Y X \xto{\overline{F}} C$ with $h'$ specified by the pair $(\phi, \id_Y)$ (cf. the definition \cite[Def.~4.1]{Exp2} of $- \star_Y -$ as $j_\ast: s\Set_{/Y \times \partial \Delta^1} \to s\Set_{/Y \times \Delta^1}$ for the inclusion $j: Y \times \partial \Delta^1 \to Y \times \Delta^1$). Then $\overline{F}$ is a $p$-right Kan extension in the sense of \cite[Def.~4.3.2.2]{HTT} if and only if $G$ is a $p$-right Kan extension along $\phi$ in the sense of \cref{Dfn:RelativeKanExtension}. Moreover, we have an equivalence of $\infty$-categories $X \times_{Y \star_Y X} (Y \star_Y X)_{y/} \simeq X \times_Y Y_{y/}$ implemented by pulling back the functors $\iota_Y: Y \subset Y \star_Y X$ and $\pi: Y \star_Y X \to Y$ and the respective induced functors on the slice categories via $X \subset Y \star_Y X$. Because of this, Lurie's existence and uniqueness theorem \cite[Prop.~4.3.2.15]{HTT} for $p$-right Kan extensions applies to show that the $p$-right Kan extension $G$ of $F$ along $\phi$ exists if and only if for every $y \in Y$, the diagram $X \times_Y Y_{y/} \to X \xto{F} C$ extends to a $p$-limit diagram (which then computes the value of $G$ on $y$). Moreover, there is then a contractible space of choices for $G$.
\end{rem}

\begin{rem} \label{rem:AdjunctionForRKE} The situation of \cref{Dfn:RelativeKanExtension} globalizes in the following manner. Suppose every functor $F: X \to C$ admits a $p$-right Kan extension to $\overline{F}: Y \star_Y X \to C$. By \cite[Prop.~4.3.2.17]{HTT}, the restriction functor $(\iota_X)^\ast: \Fun_{/S}(Y \star_Y X, C) \to \Fun_{/S}(X,C)$ then admits a right adjoint $(\iota_X)_\ast$ which is computed on objects as $F \mapsto \overline{F}$. We also have a relative adjunction (\cite[Def.~7.3.2.2]{HA}) $$\adjunct{\iota_Y}{Y}{Y \star_Y X}{\pi}$$ over $Y$ (hence over $S$) where $\iota_Y$ is left adjoint to $\pi$. From this, we obtain an adjunction $$\adjunct{\pi^\ast}{\Fun_{/S}(Y,C)}{\Fun_{/S}(Y \star_Y X,C)}{(\iota_Y)^\ast}$$ where $\pi^\ast$ is left adjoint to $(\iota_Y)^\ast$. Composing these two adjunctions, we obtain the adjunction
\[ \adjunct{\phi^\ast}{\Fun_{/S}(Y,C)}{\Fun_{/S}(X,C)}{\phi_\ast} \]
where $\phi_{\ast}$ is given on objects by sending $F$ to its $p$-right Kan extension along $\phi$.
\end{rem}

\begin{cor} \label{cor:RightKanExtensionComputedInFiber} Suppose we have a commutative diagram of $\infty$-categories
\[ \begin{tikzcd}[row sep=4ex, column sep=4ex, text height=1.5ex, text depth=0.25ex]
X \ar{r}{F} \ar{d}{\phi} & C \ar{d}{p} \\
Y \ar{r}{\alpha} & S
\end{tikzcd} \]
where $p$ is a locally cocartesian fibration and $\phi$ is a cartesian fibration. Suppose that for every $y \in Y$, the limit of $F|_{X_y}: X_y \to C_{\alpha(y)}$ exists. Then the $p$-right Kan extension $G: Y \to C$ of $F$ along $\phi$ exists and $G(y) \simeq \lim\limits_{\ot} F|_{X_y}$. If $G$ exists for all $F$, then we have an adjunction
\[ \adjunct{\phi^\ast}{\Fun_{/S}(Y,C)}{\Fun_{/S}(X,C)}{\phi_\ast} \]
where $\phi_{\ast}(F) \simeq G$.
\end{cor}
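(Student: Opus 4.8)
The plan is to reduce, by means of \cref{rem:ExistenceOfRelativeRKE}, to a pointwise statement about limits in the fibers of $p$, and then to invoke \cref{lem:LimitIsRelativeLimit}. Recall from \cref{rem:ExistenceOfRelativeRKE} that the $p$-right Kan extension $G$ of $F$ along $\phi$ exists (with a contractible space of choices) precisely when, for every $y \in Y$, the composite $X \times_Y Y_{y/} \to X \xto{F} C$ extends to a $p$-limit diagram whose cone point lies over $\alpha(y) \in S$, in which case $G(y)$ is this cone point. So I would fix $y \in Y$ and consider the inclusion $\iota_y: X_y \hookrightarrow X \times_Y Y_{y/}$, $x \mapsto (x, \mathrm{id}_y)$. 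Because $\phi$ is a cartesian fibration, $\iota_y$ admits a right adjoint $r_y$ --- the ``cartesian pushforward'' sending $(x, e: y \to \phi x)$ to the source $e^{\ast} x$ of a $\phi$-cartesian lift of $e$ with target $x$ (the unit is an equivalence since $(\mathrm{id}_y)^{\ast} x \simeq x$, and the counit at $(x,e)$ is the morphism $(e^{\ast} x, \mathrm{id}_y) \to (x, e)$ provided by the cartesian lift). In particular $\iota_y$ is coinitial. I would also note that $F \circ \iota_y$ lands in the fiber $C_{\alpha(y)}$, since $p \circ F = \alpha \circ \phi$ is constant at $\alpha(y)$ on $X_y$, and that $F \circ \iota_y = F|_{X_y}$.

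By hypothesis $\lim F|_{X_y}$ exists in $C_{\alpha(y)}$; I would choose a limit cone $\overline{q}: X_y^{\lhd} \to C_{\alpha(y)}$. By \cref{lem:LimitIsRelativeLimit}, viewed in $C$ the diagram $\overline{q}$ is a $p$-limit diagram (its image under $p$ is constant at $\alpha(y)$). Since $\iota_y$ is coinitial, precomposition with $\iota_y$ is compatible with the formation of $p$-limit diagrams and with their cone points (the relative analogue of the cofinality theorem, cf. \cite[\S 4.1.3 and \S 4.3.1]{HTT}); hence $F|_{X \times_Y Y_{y/}}$ extends to a $p$-limit diagram whose cone point is (the image in $C$ of) $\lim F|_{X_y}$, which lies over $\alpha(y)$. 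By the criterion recalled above, $G$ exists and $G(y) \simeq \lim F|_{X_y}$, which is the first assertion. For the second assertion, if $G$ exists for every $F$, then --- since by \cref{rem:ExistenceOfRelativeRKE} this is equivalent to the existence of the $p$-right Kan extension $\overline{F}: Y \star_Y X \to C$ for every $F$ --- \cref{rem:AdjunctionForRKE} yields the adjunction $\adjunct{\phi^{\ast}}{\Fun_{/S}(Y,C)}{\Fun_{/S}(X,C)}{\phi_{\ast}}$ whose right adjoint is computed on objects by $p$-right Kan extension, so $\phi_{\ast}(F) \simeq G$.

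The hard part will be making the transport in the second paragraph precise: namely, promoting the fiberwise limit cone $\overline{q}$ over $X_y$ to a genuine $p$-limit cone over the comma $\infty$-category $X \times_Y Y_{y/}$, which requires knowing that coinitiality of $\iota_y$ interacts well with \emph{relative} (rather than absolute) limits. A related point deserving care is the claim that $\iota_y$ admits the right adjoint $r_y$, equivalently that $\iota_y$ is coinitial: concretely, for each $(x', e') \in X \times_Y Y_{y/}$ one checks that a $\phi$-cartesian lift of $e'$ provides a terminal object of the comma $\infty$-category $X_y \times_{X \times_Y Y_{y/}} \bigl( X \times_Y Y_{y/}\bigr)_{/(x', e')}$, whence the latter is weakly contractible --- this is really the assertion that right Kan extension along a cartesian fibration is computed fiberwise.
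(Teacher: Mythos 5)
Your outline is the paper's own: the same reduction to the pointwise criterion of \cref{rem:ExistenceOfRelativeRKE}, the same adjunction between the fiber inclusion $\iota_y: X_y \to X \times_Y Y_{y/}$ and the cartesian pushforward $r$, the same use of \cref{lem:LimitIsRelativeLimit} to see a fiberwise limit cone as a $p$-limit cone, and the same appeal to \cref{rem:AdjunctionForRKE} for the adjunction statement. But the step you yourself flag as ``the hard part'' is a genuine gap as written. The relative-cofinality result you would cite from \cite[\S 4.3.1]{HTT} (the dual of Prop.~4.3.1.7) says that a \emph{given} cone $(X \times_Y Y_{y/})^{\lhd} \to C$ extending $F^y$ is a $p$-limit diagram if and only if its restriction along $\overline{\iota_y}$ is one; it detects relative limits but does not by itself \emph{produce} a cone over the comma category from the fiberwise one, and the absolute existence-transfer along initial functors (via the slice equivalences of \cite[Prop.~4.1.1.8]{HTT}) does not formally carry over to $p$-limits. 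So your ``hence $F|_{X \times_Y Y_{y/}}$ extends to a $p$-limit diagram'' is exactly the unproved assertion.

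The paper closes this hole with one extra observation that your argument never uses: since $r \circ \iota_y = \id$ and $\iota_y$ is right cofinal in the paper's terminology (your ``coinitial''), the retraction $r$ is itself right cofinal by the cancellation property \cite[Prop.~4.1.1.3(2)]{HTT}. The relative-cofinality proposition is then applied to $r$ rather than to $\iota_y$: precomposing the fiberwise $p$-limit cone $(X_y)^{\lhd} \to C$ with $\overline{r}: (X \times_Y Y_{y/})^{\lhd} \to (X_y)^{\lhd}$ \emph{constructs} a $p$-limit cone over the comma category with cone point $\lim F|_{X_y}$ lying over $\alpha(y)$, which is precisely the existence mechanism you were missing. (Matching the resulting diagram against $F^y$ uses the counit of $\iota_y \dashv r$, whose components are images of $\phi$-cartesian edges and which restricts to equivalences over $X_y$, so that the detection statement applied to $\iota_y$ finishes the comparison; the paper treats this as routine.) To complete your write-up, either import this right-cofinality-of-$r$ device, or prove directly an existence-transfer statement for relative limits along coinitial functors; as it stands, the key existence claim is asserted rather than proved.
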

\begin{proof} We need to show that for every $y \in Y$, the $p$-limit of $F^y: X \times_Y Y_{y/} \to X \xto{F} C$ exists. By \cref{lem:LimitIsRelativeLimit}, the $p$-limit of $F|_{X_y}$ exists and is computed as the limit of $F|_{X_y}$ viewed as a diagram in $C_{\alpha(y)}$. Because $\phi$ is a cartesian fibration, we have a retraction $r: X \times_Y Y_{y/} \to X_y$ to the inclusion $i: X_y \to X \times_Y Y_{y/}$ such that $r$ is right adjoint to $i$ (on objects, $r$ is given by the formula $r(x,y \xto{e} \phi(x)) = e^\ast(x)$, where $e^\ast: X_{\phi(x)} \to X_y$ is the pullback functor encoded by the lifting property of the cartesian fibration $\phi$). As a left adjoint, $i$ is right cofinal.\footnote{We adopt Lurie's terminology in \cite{HA}: recall that a map $q: K \to L$ is right cofinal if and only if $q^{\op}$ is cofinal.} However, since $r \circ i = \id$, we moreover have that $r$ is right cofinal by the right cancellative property of right cofinal maps \cite[Prop.~4.1.1.3(2)]{HTT}. Hence, by \cite[Prop.~4.3.1.7]{HTT} applied to $r$ and a $p$-limit diagram $(X_y)^{\lhd} \to C$, the $p$-limit of $F^y$ exists and is computed as the limit of $F|_{X_y}$ in $C_{\alpha(y)}$. The claim now follows from \cref{rem:ExistenceOfRelativeRKE}.
\end{proof}

\subsubsection{Barycentric subdivision and locally cocartesian pushforward}

Our main goal in this subsection is to first define the barycentric subdivision $\sd(S)$ (\cref{dfn:barycentricSubdivision}) consisting of conservative functors $\sigma: [n] \to S$ (i.e., \emph{strings} in $S$) along with its \emph{maximum} functor $\max_S: \sd(S) \to S$, $[\sigma:[n] \to S] \mapsto \sigma(n)$, which is a locally cocartesian fibration (\cref{lm:subdivisionLocallyCocartesianByMaxFunctor}). This allows us to define the right-lax limit of a locally cocartesian fibration $p: C \to S$ as
$$\mathrm{lim}^{\rlax} C \coloneq \Fun^{\cocart}_{/S}(\sd(S), C).$$
We will then show that for any sieve $S_0 \subset S$, if we let $\sd(S)_0 \subset \sd(S)$ denote the full subcategory of strings that originate in $S_0$, then the inclusion $\into{\sd(S_0)}{\sd(S)_0}$ is a locally cocartesian equivalence over $S$,\footnote{Here we mark those edges that are locally cocartesian with respect to $\max_{S_0}$ resp. $\max_S$.} or equivalently, for any locally cocartesian fibration $p: C \to S$, the restriction functor
\[ \Fun^{\cocart}_{/S}(\sd(S)_0, C) \to \Fun^{\cocart}_{/S_0}(\sd(S_0), C|_{S_0}) \]
is a trivial fibration (\cref{thm:subdivisionExtension}(2)). A choice of inverse then amounts to a choice of \emph{locally cocartesian pushforward}. This will be the formal half of extending a object in $\mathrm{lim}^{\rlax} C|_{S_0}$ to one in $\mathrm{lim}^{\rlax} C$ itself, which we take up in the next subsection.

To set the stage for our work, we first introduce a few combinatorial constructions. Let $\Delta$ be the category with objects the finite ordinals $\{[n] = \{0<1< ... < n\} : n \in \NN \}$ and morphisms the order-preserving maps. Let $\xi: \cE \Delta \to \Delta$ denote the relative nerve \cite[Def.~3.2.5.2]{HTT} of the canonical inclusion $i:\into{\Delta}{s\Set}$. Then $\xi$ is a cocartesian fibration classified by $i$, which is an explicit model for the tautological cocartesian fibration over $\Delta$. Explicitly, an $n$-simplex $\Delta^n \to \cE \Delta$ is given by a sequence $[a_0] \xto{\alpha_0} [a_1] \xto{\alpha_1} ... \xto{g_{n-1}} [\alpha_n]$ of order-preserving maps in $\Delta$ together with morphisms $\kappa_i: \Delta^{\{0,...,i\}} \cong \Delta^i \to \Delta^{a_i}$ which fit into a commutative diagram
\[ \begin{tikzcd}[row sep=4ex, column sep=4ex, text height=1.5ex, text depth=0.25ex]
\Delta^{\{0\}} \ar[hookrightarrow]{r} \ar{d}{\kappa_0} & \Delta^{\{0,1\}} \ar[hookrightarrow]{r} \ar{d}{\kappa_1} & \cdots \ar[hookrightarrow]{r} & \Delta^{\{0,...,n-1\}} \ar[hookrightarrow]{r} \ar{d}{\kappa_{n-1}} & \Delta^n \ar{d}{\kappa_n} \\
\Delta^{a_0} \ar{r}{\alpha_0} & \Delta^{a_1} \ar{r}{\alpha_1} & \cdots \ar{r} & \Delta^{a_{n-1}} \ar{r}{\alpha_{n-1}} & \Delta^{a_n}.
\end{tikzcd} \]
Let $\cE \Delta^{\inj} \subset \cE \Delta$ denote the pullback over the subcategory $\Delta^{\inj} \subset \Delta$ of injective order-preserving maps and also denote the structure map of $\cE \Delta^{\inj}$ by $\xi$. Consider the span of marked simplicial sets
\[ \begin{tikzcd}[row sep=4ex, column sep=4ex, text height=1.5ex, text depth=0.25ex]
(\Delta^{\inj})^{\sharp} &  \leftnat{(\cE \Delta^{\inj})} \ar{r}{\xi} \ar{l}[swap]{\xi} & (\Delta^{\inj})^{\sharp}
\end{tikzcd} \]
where we mark the $\xi$-cocartesian edges in $\cE \Delta^{\inj}$. Similar to the definition in \cite[Exm.~2.25]{Exp2} (which considers the source input to be instead a cartesian fibration), let $$\widetilde{\Fun}_{\Delta^{\inj}}(\cE \Delta^{\inj},-) \coloneq \xi_{\ast} \xi^{\ast}(-): s\Set^+_{/\Delta^{\inj}} \to s\Set^+_{/\Delta^{\inj}}.$$

Note that with $\xi$ a cocartesian fibration, $\xi_{\ast} \xi^{\ast}$ is right Quillen with respect to the \emph{cartesian} model structure on $s\Set^+_{/ \Delta^{\inj}}$ by the dual of \cite[Thm.~2.24]{Exp2}.

\begin{dfn} The $\infty$-category of \emph{paths}\footnote{For us, a path in $C$ is any $n$-simplex $\Delta^n \to C$. In contrast, we reserve the term `string' for objects of the barycentric subdivision $\sd(C)$ (cf. \cref{dfn:barycentricSubdivision}).} in an $\infty$-category $C$ is
$$\widehat{\Ar}(C) \coloneq \widetilde{\Fun}_{\Delta^{\inj}}(\cE \Delta^{\inj}, C \times \Delta^{\inj}).$$
Let $\xi_C: \widehat{\Ar}(C) \to \Delta^{\inj}$ denote the structure map of the cartesian fibration and note that its fiber over $[n] \in \Delta^{\inj}$ is $\Fun(\Delta^n,C)$ and the functoriality is that of restriction in the source variable.

In addition, let $\widehat{\Ar}^{\simeq}(S) \subset \widehat{\Ar}(S)$ be the maximal sub-right fibration, i.e., the wide subcategory on the $\xi_S$-cartesian edges over $\Delta^{\inj}$ (so the fiber of $\widehat{\Ar}^{\simeq}(S)$ over $[n]$ is $\Map(\Delta^n,S)$), and for a functor $p: C \to S$, let
$$\widehat{\Ar}^{\simeq}_S(C) \coloneq \widehat{\Ar}^{\simeq}(S) \times_{\widehat{\Ar}(S)} \widehat{\Ar}(C).$$
\end{dfn}

\begin{rem} \label{rem:classifyingID}
By \cite[Prop.~7.3]{GHN}, the cartesian fibration $\xi_C: \widehat{\Ar}(C) \to \Delta^{\inj}$ is classified by the functor $(\Delta^{\inj})^{\op} \to \Cat_{\infty}$ that sends $[n]$ to $\Fun(\Delta^n,C)$ and is functorial with respect to precomposition in the first variable. It follows that we have an equivalence
$$\widehat{\Ar}^{\simeq}(C) \simeq \Delta^{\inj} \times_{\Cat_{\infty}} \Cat_{\infty}^{/C}$$
of right fibrations over $\Delta^{\inj}$.
\end{rem}

\begin{rem} If $C \to S$ is a categorical fibration, then $\widehat{\Ar}(C) \to \widehat{\Ar}(S)$ is also a categorical fibration by \cite[Prop.~B.2.7]{HA}.
\end{rem}

\begin{cnstr}[Variant associated to a sieve] \label{cnstr:sieveVariantsPathCategories} Let $\pi: S \to \Delta^1$ be a functor and $S_0$ the fiber over $0$. Let $\widehat{\Ar}(S)_0 \subset \widehat{\Ar}(S)$ be the full subcategory on those objects $\sigma: \Delta^n \to S$ such that $\pi \sigma(0) = 0$ (i.e., \emph{on those paths originating in $S_0$}), and let $\widehat{\Ar}^\simeq(S)_0 \coloneq \widehat{\Ar}(S)_0 \cap \widehat{\Ar}^{\simeq}(S)$. Define the `initial segment' functor $$\lambda_S: \widehat{\Ar}(S)_0 \to \widehat{\Ar}(S_0)$$ by the following rule: 

\begin{itemize}[leftmargin=2em]
\item[($\ast$)] Suppose $\sigma: \Delta^n \to \widehat{\Ar}(S)_0$ is a $n$-simplex, which corresponds to a sequence of inclusions
 \[ \begin{tikzcd}[row sep=4ex, column sep=4ex, text height=1.5ex, text depth=0.25ex]
\Delta^{a_0} \ar[hookrightarrow]{r}{\alpha_1} & \Delta^{a_1} \ar[hookrightarrow]{r}{\alpha_2} & \cdots \ar[hookrightarrow]{r}{\alpha_n} & \Delta^{a_n}
\end{tikzcd} \]
determining a map $a: \Delta^n \to \Delta^{\inj}$ and a functor $f: \Delta^n \times_{a, \Delta^{\inj}} \cE \Delta^{\inj} \to S$ such that for every $0 \leq i \leq n$, the restriction $f_i: \Delta^{a_i} \to S$ has $f_i(0) \in S_0$. Let $b_i \in \Delta^{a_i}$ be the maximum element such that $f_i(b_i) \in S_0$, and note that $a$ restricts to yield a sequence of inclusions  
\[ \begin{tikzcd}[row sep=4ex, column sep=4ex, text height=1.5ex, text depth=0.25ex]
\Delta^{b_0} \ar[hookrightarrow]{r}{\beta_1} \ar[hookrightarrow]{d} & \Delta^{b_1} \ar[hookrightarrow]{r}{\beta_2} \ar[hookrightarrow]{d} & \cdots \ar[hookrightarrow]{r}{\beta_n} & \Delta^{b_n} \ar[hookrightarrow]{d} \\
\Delta^{a_0} \ar[hookrightarrow]{r}{\alpha_1} & \Delta^{a_1} \ar[hookrightarrow]{r}{\alpha_2} & \cdots \ar[hookrightarrow]{r}{\alpha_n} & \Delta^{a_n}
\end{tikzcd} \]
because we always have that $\alpha_i(b_{i-1}) \leq b_i$ as $S_0$ is a sieve in $S$ stable under equivalences. Let $b: \Delta^n \to \Delta^{\inj}$ be the map determined by the sequence of upper horizontal inclusions. $f$ then restricts to yield a map $f_0$:
\[ \begin{tikzcd}[row sep=4ex, column sep=4ex, text height=1.5ex, text depth=0.25ex]
\Delta^n \times_{b, \Delta^{\inj}} \cE \Delta^{\inj} \ar{r}{f_0} \ar[hookrightarrow]{d} & C_0 \ar[hookrightarrow]{d} \\
\Delta^n \times_{a, \Delta^{\inj}} \cE \Delta^{\inj} \ar{r}{f} & C.
\end{tikzcd} \]
Define $\lambda_S(\sigma): \Delta^n \to \widehat{\Ar}(S_0)$ to be the $n$-simplex determined by $f_0$. Now observe that this assignment is natural in $\Delta^n$, hence defines a map of simplicial sets.
\end{itemize}

Observe that $\lambda_S$ is a retraction of the inclusion $\widehat{\Ar}(S_0) \to \widehat{\Ar}(S)_0$ induced by $S_0 \to S$.

An edge $e: \Delta^1 \to \widehat{\Ar}(S)_0$ is $\xi_S$-cartesian if and only if the corresponding functor $f: \Delta^1 \times_{a, \Delta^{\inj}} \cE \Delta^{\inj} \to S$ sends every edge $(i \in [a_0]) \to (\alpha_1(i) \in [a_1])$ to an equivalence, and similarly for $\xi_{S_0}$-cartesian edges in $\widehat{\Ar}(S_0)$. Therefore, $\lambda_S$ preserves cartesian edges and restricts to a map $$\lambda_S: \widehat{\Ar}^{\simeq}(S)_0 \to \widehat{\Ar}^{\simeq}(S_0).$$
\end{cnstr}

\begin{cnstr}[Variant associated to a sieve, relative version] \label{cnstr:sieveVariantPathCategoriesTwo}
Let $p: C \to S$ be a locally cocartesian fibration and let $p_0: C_0 \to S_0$ be its fiber over $0$. Note that $\widehat{\Ar}(C)_0 \cong \widehat{\Ar}(S)_0 \times_{\widehat{\Ar}(S)} \widehat{\Ar}(C)$. Let $$\widehat{\Ar}^{\simeq}_S(C)_0 \coloneq \widehat{\Ar}^{\simeq}(S)_0 \times_{\widehat{\Ar}(S)_0} \widehat{\Ar}(C)_0 \cong  \widehat{\Ar}^{\simeq}(S)_0 \times_{\widehat{\Ar}^{\simeq}(S)} \widehat{\Ar}^{\simeq}_S(C),$$
so $\widehat{\Ar}^{\simeq}_S(C)_0 \subset \widehat{\Ar}^{\simeq}_S(C)$ is the full subcategory on objects $c: \Delta^n \to C$ with $c(0) \in C_0$.  The initial segment functor $\lambda_{(-)}$ fits into a commutative diagram
\[ \begin{tikzcd}[row sep=4ex, column sep=4ex, text height=1.5ex, text depth=0.25ex]
\widehat{\Ar}^{\simeq}(S)_0 \ar[hookrightarrow]{r} \ar{d}{\lambda_S} & \widehat{\Ar}(S)_0 \ar{d}{\lambda_S} & \widehat{\Ar}(C)_0 \ar{l}[swap]{p} \ar{d}{\lambda_C} \\
\widehat{\Ar}^{\simeq}(S_0) \ar[hookrightarrow]{r} & \widehat{\Ar}(S_0) & \widehat{\Ar}(C_0) \ar{l}[swap]{p_0}
\end{tikzcd} \]

and therefore defines a functor $\lambda_{p}: \widehat{\Ar}_S^{\simeq}(C)_0 \to \widehat{\Ar}_{S_0}^{\simeq}(C_0)$.

Finally, let $\widehat{\Ar}^{\simeq}_S(C)_0^{\cocart} \subset \widehat{\Ar}^{\simeq}_S(C)_0$ be the full subcategory on those objects $c: \Delta^n \to C$ such that if $i \in \Delta^n$ is the maximum element with $c(i) \in C_0$, then $c$ sends every edge $\{j < j+1 \}$, $j \geq i$ to a locally-$p$ cocartesian edge (i.e., a cocartesian edge over $\Delta^1$ in the pullback $\Delta^1 \times_S C$).
\end{cnstr}

The next theorem implies that we can construct a \emph{locally cocartesian pushforward} extending from $C_0$ to $C$ along paths in the base $S$ that originate in $S_0$. This will amount to a section of the trivial fibration considered therein.

\begin{thm} \label{prp:LocallyCocartesianPushforward} The map $(\lambda_{p}, p): \widehat{\Ar}^{\simeq}_S(C)_0^{\cocart} \to \widehat{\Ar}^{\simeq}_{S_0}(C_0) \times_{p_0, \widehat{\Ar}^{\simeq}(S_0), \lambda_S} \widehat{\Ar}^{\simeq}(S)_0$ is a trivial fibration of simplicial sets.
\end{thm}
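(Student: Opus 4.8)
\textbf{Proof strategy for \cref{prp:LocallyCocartesianPushforward}.} The plan is to realize the map $(\lambda_p,p)$ as a composite (or a suitable limit) of maps that are manifestly trivial fibrations, using the right-marked-anodyne calculus for locally cocartesian fibrations together with the relative right Kan extension theory developed in the preceding subsection. First I would reduce to the lifting-extension problem: we must show that for every monomorphism $A \hookrightarrow B$ of simplicial sets, every square
\[ \begin{tikzcd}[row sep=4ex, column sep=4ex, text height=1.5ex, text depth=0.25ex]
A \ar{r} \ar{d} & \widehat{\Ar}^{\simeq}_S(C)_0^{\cocart} \ar{d}{(\lambda_p,p)} \\
B \ar{r} \ar[dotted]{ru} & \widehat{\Ar}^{\simeq}_{S_0}(C_0) \times_{p_0, \widehat{\Ar}^{\simeq}(S_0), \lambda_S} \widehat{\Ar}^{\simeq}(S)_0
\end{tikzcd} \]
admits a diagonal filler. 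Unwinding the definitions of the path $\infty$-categories via $\xi_\ast \xi^\ast$ and the $\TwAr'$-style adjunctions, this transposes to a lifting problem over $S$ of marked simplicial sets, where the datum over the base is a family of strings in $S$ originating in $S_0$, the datum on $A$ is a choice of locally cocartesian lift over the ``tail'' (the portion of each string lying in $S_1$, past the last vertex in $S_0$), and we must extend this choice over $B$ while keeping the value over $S_0$ fixed (this is what $\lambda_p$ records) and keeping the tail edges locally cocartesian (this is the superscript $\cocart$).

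The key step is to factor the extension through two moves. The first move forgets the $\cocart$ constraint: one shows that the forgetful map $\widehat{\Ar}^{\simeq}_S(C)_0^{\cocart} \to \widehat{\Ar}^{\simeq}_S(C)_0 \times_{\widehat{\Ar}^{\simeq}(S)_0} \widehat{\Ar}^{\simeq}(S)_0$, i.e.\ the inclusion that only remembers that the tail has \emph{some} lift, admits the required lifts against $A \hookrightarrow B$ because choosing a locally cocartesian lift of a chain of composable edges, compatibly over a boundary, is governed by a right-marked-anodyne inclusion of the form $\rightnat{\Lambda} \hookrightarrow \rightnat{\Delta^n}$ in each ``tail direction'' — this is exactly the locally cocartesian analogue of \cite[Lem.~2.23]{Exp2}, extracted edge by edge (since local cocartesianness is a condition on $1$-simplices, not on higher simplices, one does not need a global cocartesian fibration). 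The second move handles $\lambda_p$ itself: after fixing the locally cocartesian tail, the remaining data is a $p_0$-relative right Kan extension problem — the value of the string on its $S_0$-part is already specified (that is $\lambda_p$), and what is left free is precisely a filler that, because $S_0$ is a sieve and the cut is taken at the \emph{maximum} $S_0$-vertex, interacts with the fiberwise structure in the way governed by \cref{cor:RightKanExtensionComputedInFiber} and \cref{lem:LimitIsRelativeLimit}; here one uses that a limit diagram in a fiber $C_s$ is automatically a $p$-limit diagram (no hypotheses needed), so the extension exists and is unique up to contractible choice. Combining: both factors are trivial fibrations, so their composite is, which is the claim.

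The main obstacle I anticipate is the bookkeeping of the ``cut at the maximum $S_0$-vertex'' functor $\lambda_S$ in families: one must verify that as the string varies (in particular as simplices of $B \setminus A$ are filled), the position of the last $S_0$-vertex $b_i$ varies in a way compatible with the sieve inclusions $\beta_i \colon [b_{i-1}] \hookrightarrow [b_i]$ lifting the $\alpha_i$, so that $\lambda_S$ and $\lambda_p$ are honest maps of simplicial sets and the retraction property (noted after \cref{cnstr:sieveVariantsPathCategories}) is genuinely natural. Concretely this is the statement that the assignment $\sigma \mapsto (b_i)$ is order-compatible because $\alpha_i(b_{i-1}) \leq b_i$ always holds — a point already flagged in \cref{cnstr:sieveVariantsPathCategories} — but checking that the induced decomposition of each $\Delta^{a_i}$ into an ``$S_0$-part'' $\Delta^{b_i}$ and an ``$S_1$-tail'' $\Delta^{\{b_i \leq \cdots \leq a_i\}}$ glues correctly along the face and degeneracy maps of $\Delta^n$, and that the anodyne inputs to \cite[Lem.~B.1.10]{HA} are assembled correctly over the categorical pattern $\mathfrak{P}=(M_S,T,\emptyset)$, is where the genuine care is required. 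Once this combinatorial scaffolding is in place, the two lifting moves above dispatch the result; I would organize the write-up so that the $\cocart$-forgetting move is stated as a standalone lemma (a locally cocartesian pushforward along a single path), proved directly by iterated right-marked-anodyne lifts, and then the sieve version \cref{prp:LocallyCocartesianPushforward} is obtained by threading that lemma through the relative right Kan extension adjunction of \cref{rem:AdjunctionForRKE}.
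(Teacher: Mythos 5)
There is a genuine gap, and it sits exactly where you locate your ``main obstacle.'' First, the proposed factorization does not exist as written: the intermediate object $\widehat{\Ar}^{\simeq}_S(C)_0 \times_{\widehat{\Ar}^{\simeq}(S)_0} \widehat{\Ar}^{\simeq}(S)_0$ is just $\widehat{\Ar}^{\simeq}_S(C)_0$, so your first ``factor'' is the full-subcategory inclusion $\widehat{\Ar}^{\simeq}_S(C)_0^{\cocart} \subset \widehat{\Ar}^{\simeq}_S(C)_0$, which is not a trivial fibration (it is not even essentially surjective), and the remaining map $\widehat{\Ar}^{\simeq}_S(C)_0 \to \widehat{\Ar}^{\simeq}_{S_0}(C_0) \times_{\widehat{\Ar}^{\simeq}(S_0)} \widehat{\Ar}^{\simeq}(S)_0$ is not one either: its fibers parametrize \emph{arbitrary} extensions of the $C_0$-path over the tail, which are not contractible. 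The contractibility of the fibers of $(\lambda_p,p)$ is created precisely by the interplay of the $\cocart$ condition with the forgetting of the tail; the two cannot be split into two trivial fibrations, so ``both factors are trivial fibrations, hence the composite is'' does not get off the ground. Second, the appeal to relative right Kan extensions (\cref{cor:RightKanExtensionComputedInFiber}, \cref{lem:LimitIsRelativeLimit}) is misplaced: \cref{prp:LocallyCocartesianPushforward} imposes no hypotheses whatsoever on the fibers of $p$ --- no limits exist or are needed --- and the tail extension is a locally cocartesian pushforward whose essential uniqueness is a lifting (marked-anodyne) phenomenon, not a limit universal property. Those relative-limit results enter only later, in the construction of $j_*$ in \cref{thm:ExistenceLaxRightKanExtension}.

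What actually has to be done --- and what your ``edge by edge, right-marked-anodyne'' sketch defers --- is the control of higher simplices of the mapping objects, i.e.\ the coherence of the pushforward in families. After transposing a lifting problem against $\partial\Delta^n \subset \Delta^n$, one faces a single inclusion of marked simplicial sets over $S$, namely
\[
\Delta^n \times_{b,\Delta^{\inj}} \cE\Delta^{\inj} \;\bigcup_{\partial\Delta^n \times_{b,\Delta^{\inj}} \cE\Delta^{\inj}}\; \partial\Delta^n \times_{a,\Delta^{\inj}} \cE\Delta^{\inj} \;\longrightarrow\; \Delta^n \times_{a,\Delta^{\inj}} \cE\Delta^{\inj},
\]
with only the fiberwise tail edges marked, and one must show it is a trivial cofibration for the categorical pattern $\mathfrak{P}=(M_S,T,\emptyset)$ defining the locally cocartesian model structure; note that $T$ is \emph{not} all $2$-simplices, which is exactly where ``locally'' differs from ``globally'' cocartesian and where the composability of your edgewise lifts would need justification (your parenthetical that one ``does not need a global cocartesian fibration'' because cocartesianness is a condition on $1$-simplices elides this). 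The paper handles it by reducing (induction on $n$ plus two-out-of-three) to the inclusion $\Delta^n\times_b\cE\Delta^{\inj}\subset \Delta^n\times_a\cE\Delta^{\inj}$ and filtering the target by subposets $P_0\subset P_1\subset\cdots\subset P_t$ according to how far past the last $S_0$-vertex a vertex lies, showing each step is $\mathfrak{P}$-anodyne as a pushout along $(\Delta^{\{k,\dots,n\}})^{\flat}\times\{0\}\to(\Delta^{\{k,\dots,n\}})^{\flat}\times(\Delta^1)^{\sharp}$ (via \cite[Lem.~B.1.10]{HA}), the square being a homotopy pushout by the sieve--cosieve flatness criterion of \cref{lem:posetPushoutViaFlatness}. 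Some argument of this type is unavoidable and is the actual content of the theorem; your write-up names the difficulty but does not resolve it, and the two ``moves'' you propose in its place do not compose to the map in question.
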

\begin{proof} We need to solve the lifting problem
\[ \begin{tikzcd}[row sep=4ex, column sep=4ex, text height=1.5ex, text depth=0.25ex]
\partial \Delta^n \ar[hookrightarrow]{d} \ar{r} & \widehat{\Ar}^{\simeq}_S(C)_0^{\cocart} \ar{d}{(\lambda_p,p)} \\
\Delta^n \ar{r} \ar[dotted]{ru} &  \widehat{\Ar}^{\simeq}_{S_0}(C_0) \times_{\widehat{\Ar}^{\simeq}(S_0)} \widehat{\Ar}^{\simeq}(S)_0.
\end{tikzcd} \]
Let $a: \Delta^n \to \widehat{\Ar}^{\simeq}(S)_0 \to \Delta^{\inj}$ and $b: \Delta^n \to \widehat{\Ar}^{\simeq}_{S_0}(C_0) \to \Delta^{\inj}$ be as discussed in the definition of $\lambda$. This lifting problem transposes to
\[ \begin{tikzcd}[row sep=4ex, column sep=4ex, text height=1.5ex, text depth=1ex]
\Delta^n \times_{b, \Delta^{\inj}} \cE \Delta^{\inj} \bigcup_{\partial \Delta^n \times_{b, \Delta^{\inj}} \cE \Delta^{\inj}} \partial \Delta^n \times_{a, \Delta^{\inj}} \cE \Delta^{\inj} \ar{r} \ar[hookrightarrow]{d}{f} & C \ar{d}{p} \\
\Delta^n \times_{a, \Delta^{\inj}} \cE \Delta^{\inj} \ar{r} \ar[dotted]{ru} & S.
\end{tikzcd} \]
Consider $\Delta^n \times_{a, \Delta^{\inj}} \cE \Delta^{\inj}$ as a marked simplicial set where an edge $(i \in \Delta^{a_k}) \to (j \in \Delta^{a_l})$, $\alpha: \Delta^{a_k} \to \Delta^{a_l}$, $\alpha(i) \leq j$ is marked if and only if $k = l$ (so $\alpha = \id$), $b_k \leq i$ and $j = i+1$, and let the domain of $f$ also inherit this marking. Then it suffices to show that $f$ is a trivial cofibration in the locally cocartesian model structure on $s\Set^+_{/S}$, defined by the categorical pattern $\mathfrak{P} = (M_S,T,\emptyset)$ with $M_S$ all of the edges in $S$ and $T$ consisting of the $2$-simplices $\tau$ in $S$ with the edge $\tau(\{1<2\})$ an equivalence. Proceeding by induction on $n$, by a two-out-of-three argument it suffices to show that the inclusion $f': \Delta^n \times_{b, \Delta^{\inj}} \cE \Delta^{\inj} \to \Delta^n \times_{a, \Delta^{\inj}} \cE \Delta^{\inj}$ is a trivial cofibration. We define a filtration of the poset inclusion $f'$ as follows:
\begin{itemize}
    \item[($\ast$)] Let $a_n - b_n = t$. For $0 \leq k \leq n$, let $\alpha_k: \Delta^{a_k} \to \Delta^{a_n}$ denote the inclusion. Let $P_r \subset \Delta^n \times_{a, \Delta^{\inj}} \cE \Delta^{\inj}$ be the subposet on those objects $(i \in \Delta^{a_k})$ such that $\alpha_k(i)-b_n \leq r$. Note that $P_0 = \Delta^n \times_{b, \Delta^{\inj}} \cE \Delta^{\inj}$, because if $(i \in \Delta^{a_k})$ is such that $i > b_k$, then necessarily $\alpha_k(i) > b_n$, and likewise if $i \leq b_k$, then $\alpha_k(i) \leq b_n$ (this follows from the definitions of the $b_i$ and that $S_0$ is a sieve stable under equivalences). Then we have that $f'$ factors as a sequence of poset sieve inclusions $\Delta^n \times_{b, \Delta^{\inj}} \cE \Delta^{\inj} = P_0 \subset P_1 \subset \cdots \subset P_t = \Delta^n \times_{a, \Delta^{\inj}} \cE \Delta^{\inj}$.
\end{itemize}
It now suffices to show that $P_i \subset P_{i+1}$ is a trivial cofibration for all $0 \leq i < t$. For simplicity, let us suppose $i=0$ (and $t>0$ for non-triviality), the other cases being proved similarly. Let $k \in [n]$ be the smallest element such that $b_n + 1 \in \Delta^{a_n}$ is in the image of $\alpha_k: \Delta^{a_k} \to \Delta^{a_n}$. Note then that for all $k \leq l \leq n$, $\alpha_l(b_l+1) = b_n+1$. View the poset $\Delta^{\{k,...,n\}} \times \Delta^1$ as a cosieve $U$ in $P_1$ via the inclusion which sends $(l,0)$ to $(b_l \in \Delta^{a_l})$ and $(l,1)$ to $(b_l + 1 \in \Delta^{a_l})$. Then as a marked simplicial set, we have $U = (\Delta^{\{k,...,n\}})^{\flat} \times (\Delta^1)^{\sharp}$. By \cite[B.1.10]{HA}, the inclusion
\[ U \cap P_0 = (\Delta^{\{k,...,n\}})^{\flat} \times \{0\} \to U = (\Delta^{\{k,...,n\}})^{\flat} \times (\Delta^1)^{\sharp} \]
is $\mathfrak{P}$-anodyne. Noting that $P_0$ and $U$ together cover $P_1$, it thus suffices to show that we have a homotopy pushout square of $\infty$-categories
\[ \begin{tikzcd}[row sep=4ex, column sep=4ex, text height=1.5ex, text depth=0.25ex]
U \cap P_0 \ar{r} \ar{d} & U \ar{d} \\
P_0 \ar{r} & P_1
\end{tikzcd} \]
as we would then deduce the lower horizontal map to be $\mathfrak{P}$-anodyne. For this, the criterion of \cref{lem:posetPushoutViaFlatness} is easily verified.
\end{proof}

\begin{lem} \label{lem:posetPushoutViaFlatness} Suppose $P$ is a poset, $Z \subset P$ is a sieve and $U \subset P$ is a cosieve such that $P = Z \cup U$. Then the commutative square
\[ \begin{tikzcd}[row sep=4ex, column sep=4ex, text height=1.5ex, text depth=0.25ex]
U \cap Z \ar{r} \ar{d} & U \ar{d} \\
Z \ar{r} & P
\end{tikzcd} \]
is a homotopy pushout square of $\infty$-categories if and only if for every $a \notin U$ and $c \notin Z$ such that $a \leq c$, the subposet $P_{a//c} = \{ b \in U \cap Z : a \leq b \leq c\}$ is weakly contractible.
\end{lem}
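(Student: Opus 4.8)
The plan is to reduce the assertion to a recognition statement for a single cofibration and then compute mapping spaces. Identifying $Z$, $U$, and $U\cap Z$ with their nerves, the square in question is a genuine pushout of simplicial sets: the pushout of $Z\leftarrow U\cap Z\to U$ is the subcomplex $N(Z)\cup N(U)\subseteq N(P)$. All the maps are monomorphisms and the Joyal model structure is left proper, so the square is a homotopy pushout square of $\infty$-categories exactly when the inclusion $\iota\colon N(Z)\cup N(U)\hookrightarrow N(P)$ is a categorical equivalence. It is convenient to note that the partition of $P$ into $Z\setminus U$, $U\cap Z$, $U\setminus Z$ defines an order-preserving map $\pi\colon P\to[2]$ — order-preservation is precisely the sieve/cosieve hypothesis — with $N(Z)\cup N(U)=N(P)\times_{\Delta^2}\Lambda^2_1$, so that $\iota$ is the base change of the inner-horn inclusion $\Lambda^2_1\hookrightarrow\Delta^2$ along $\pi$; concretely, a nondegenerate simplex of $N(P)$, written as a strictly increasing chain, is missing from $N(Z)\cup N(U)$ exactly when it meets both $Z\setminus U$ and $U\setminus Z$, and since $Z$ is a sieve and $U$ a cosieve such a chain decomposes uniquely as a join $\sigma_-\star\sigma_0\star\sigma_+$ with $\emptyset\ne\sigma_-\subseteq Z\setminus U$, $\sigma_0\subseteq U\cap Z$, $\emptyset\ne\sigma_+\subseteq U\setminus Z$, where $\sigma_0$ runs over the chains of $P_{a//c}$ for $a=\max\sigma_-$ and $c=\min\sigma_+$.

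Since $\iota$ is a bijection on objects, it is a categorical equivalence iff it is fully faithful, so the whole statement comes down to comparing mapping spaces. In $N(P)$ one has $\Map(x,y)\simeq\ast$ if $x\le y$ and $\emptyset$ otherwise. For $N(Z)\cup N(U)=N(Z)\sqcup^{h}_{N(U\cap Z)}N(U)$ — a homotopy pushout along the fully faithful functors $N(U\cap Z)\hookrightarrow N(Z)$ and $N(U\cap Z)\hookrightarrow N(U)$ — the three inclusions into $N(Z)\cup N(U)$ are themselves fully faithful, so whenever $x\le y$ with both $x,y\in Z$ or both $x,y\in U$ (the only remaining possibility being $x\in Z\setminus U$, $y\in U\setminus Z$) the mapping space agrees with the one computed in $N(Z)$ or $N(U)$, hence with the one in $N(P)$. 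In the remaining case, with $a=x\in Z\setminus U$ and $c=y\in U\setminus Z$, the van Kampen formula for mapping spaces in an $\infty$-categorical pushout along fully faithful functors gives
\[ \Map_{N(Z)\cup N(U)}(a,c)\simeq\int^{w\in U\cap Z}\Map_{Z}(a,w)\times\Map_{U}(w,c)\simeq|N(P_{a//c})|, \]
because each factor is $\ast$ or $\emptyset$ and the coend is the classifying space of the full subposet $\{w\in U\cap Z: a\le w\le c\}=P_{a//c}$. Thus $\iota$ is fully faithful — equivalently, the square is a homotopy pushout — if and only if $|N(P_{a//c})|\simeq\ast$ for all $a\in Z\setminus U$, $c\in U\setminus Z$ with $a\le c$; since $P=Z\cup U$, this index set is exactly $\{(a,c): a\notin U,\ c\notin Z,\ a\le c\}$, which is the asserted contractibility condition.

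The main obstacle is making this mapping-space computation airtight, since $N(Z)\cup N(U)$ is not fibrant: one must either invoke a clean form of the van Kampen principle (full faithfulness of the legs of an $\infty$-categorical pushout along fully faithful functors, together with the coend description of the surviving mapping spaces) with a precise reference, or else replace the argument by an explicit filtration. In the latter route one builds $N(P)$ from $N(Z)\cup N(U)$ by attaching the bad simplices in batches indexed by the pairs $(\sigma_-,\sigma_+)$, processed in order of increasing $|\sigma_-|+|\sigma_+|$; a routine face analysis shows that the batch of $(\sigma_-,\sigma_+)$, with $m+1=|\sigma_-|$, $k+1=|\sigma_+|$, $a=\max\sigma_-$, $c=\min\sigma_+$ and $Y=N(P_{a//c})$, is a pushout along the monomorphism $(\partial\Delta^{m}\star Y\star\Delta^{k})\cup(\Delta^{m}\star Y\star\partial\Delta^{k})\hookrightarrow\Delta^{m}\star Y\star\Delta^{k}$, so that the ``if'' direction reduces to the sublemma that this map is a trivial (one expects even inner anodyne) cofibration in the Joyal model structure whenever $Y$ is weakly contractible — the ``only if'' direction then following from the mapping-space computation above. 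I expect this sublemma to be the genuinely hard point of the proof: its verification must arrange the order of the cell attachments so that no $\partial\Delta^{n}$ is ever completed before a filler for it is available.
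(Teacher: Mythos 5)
Your first paragraph is exactly the reduction the paper itself makes: the partition of $P$ into $Z\setminus U$, $U\cap Z$, $U\setminus Z$ defines the order-preserving map $\pi\colon P\to\Delta^2$, the union $N(Z)\cup N(U)$ is identified with $N(P)\times_{\Delta^2}\Lambda^2_1$, and (monomorphisms plus left properness of the Joyal model structure) the square is a homotopy pushout if and only if the inclusion $N(Z)\cup N(U)\hookrightarrow N(P)$ is a categorical equivalence. At that point the paper simply observes that this condition is, by definition, the flatness of the inner fibration $\pi$ over $\Delta^2$ and invokes the flatness criterion \cite[B.3.2]{HA}, whose factorization condition specializes verbatim to the weak contractibility of $P_{a//c}$: in a poset there is at most one edge $a\to c$, and the objects of the relevant slice category lying over the vertex $1$ are precisely the $b\in U\cap Z$ with $a\le b\le c$. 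So what you go on to attempt by hand is exactly the content of that cited criterion.

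That is where the genuine gap sits: neither of your two proposed completions is actually carried out. The ``van Kampen'' description of mapping spaces in a pushout of $\infty$-categories along fully faithful functors is asserted without proof or reference, and as written --- a coend of the sets $\mathrm{Map}_Z(a,w)\times\mathrm{Map}_U(w,c)$ --- it is not even quite the right formula: a strict coend only recovers $\pi_0$ of $N(P_{a//c})$, and what is needed is the homotopy coend (two-sided bar construction), whose validity for a non-fibrant pushout is a theorem of essentially the same depth as the statement being proved rather than a routine citation. Your fallback filtration is the correct hands-on substitute (it is in effect how the flatness criterion is proved), but it hinges on the sublemma that $(\partial\Delta^{m}\star Y\star\Delta^{k})\cup(\Delta^{m}\star Y\star\partial\Delta^{k})\hookrightarrow\Delta^{m}\star Y\star\Delta^{k}$ is a trivial cofibration in the Joyal model structure whenever $Y$ is weakly contractible, which you explicitly leave unproved and single out as the hard point; this is precisely one of the join lemmas established in \cite[\S B.3]{HA} en route to the criterion (and one should only expect a categorical equivalence there, not an inner anodyne map). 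So the approach is not wrong, but the proof stops short exactly where the paper's citation does the work; appending the citation of \cite[B.3.2]{HA} to your first paragraph closes the argument, and reproving that criterion from scratch would require supplying the missing sublemma in full.
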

\begin{proof}  Define a map $\pi: P \to \Delta^2$ by
\begin{equation*} \pi(x) = \begin{cases} 0 & x \notin U \\
2 & x \notin Z \\
1 & x \in U \cap Z 
\end{cases}
\end{equation*}
Observe that $P \times_{\Delta^2} \Delta^{\{0,1\}} = Z$,  $P \times_{\Delta^2} \Delta^{\{1,2\}} = U$, and $P \times_{\Delta^2} \{1\} = U \cap Z$. We may therefore apply the flatness criterion of \cite[B.3.2]{HA} to $\pi$ in order to deduce the criterion in question.
\end{proof}

We now introduce the barycentric subdivision $\sd(S)$.

\begin{dfn} \label{dfn:barycentricSubdivision} An $n$-simplex $\sigma: \Delta^n \to S$ is a \emph{string} if $\sigma$ is a conservative functor, i.e., if for every $0 \leq i< j \leq n$, $\sigma(\{i<j\})$ is not an equivalence.\footnote{If every retract in $S$ is an equivalence, then it suffices to check that for every $0 \leq i < n$, $\sigma(\{i<i+1\})$ is not an equivalence.} The \emph{barycentric subdivision} (or \emph{subdivision})
\[ \sd(S) \subset \widehat{\Ar}^{\simeq}(S) \]
is the full subcategory of $\widehat{\Ar}^{\simeq}(S)$ on the strings in $S$. Note that the structure map $\xi_S: \widehat{\Ar}^{\simeq}(S) \to \Delta^{\inj}$ restricts to define a right fibration $\xi_S: \sd(S) \to \Delta^{\inj}$.

Given a functor $C \to S$, the \emph{$S$-relative subdivision} $\sd_S(C)$ is the pullback $$\sd_S(C) \coloneq \sd(S) \times_{\widehat{\Ar}^{\simeq}(S)} \widehat{\Ar}^{\simeq}_S(C) \cong \sd(S) \times_{\widehat{\Ar}(S)} \widehat{\Ar}(C).$$ Similarly, parallel to \cref{cnstr:sieveVariantsPathCategories} and \ref{cnstr:sieveVariantPathCategoriesTwo} we may define $\sd(S)_0$, $\sd_S(C)_0$, and $\sd_S(C)_0^{\cocart}$ for a locally cocartesian fibration $C \to S$ and a functor $S \to \Delta^1$. To be specific, let $\sd(S)_0 \subset \sd(S)$ be the full subcategory on those strings originating in the sieve $S_0$, let $\sd_S(C)_0 \coloneq \sd(S)_0 \times_{\sd(S)} \sd_S(C)$, and let $\sd_S(C)_0^{\cocart} \coloneq \sd_S(C)_0 \times_{\widehat{\Ar}_S^{\simeq}(C)_0} \widehat{\Ar}_S^{\simeq}(C)^{\cocart}_0$.
\end{dfn}

\begin{obs} \label{rem:barycentricSubdivisionOrdinaryCategory} Suppose that $S$ is the nerve of a category, which we also denote as $S$. Then $\sd(S)$ is the nerve of the category whose objects are conservative functors $\sigma: \Delta^n \to S$, and where a morphism $[\sigma: \Delta^n \to S] \to [\tau: \Delta^m \to S]$ is given by the data of a map $\alpha: \into{[n]}{[m]}$ in $\Delta^{\inj}$ and a natural transformation $\sigma \Rightarrow \alpha^{\ast} \tau$ through equivalences. In particular, if $S$ is the nerve of a poset $P$, then $\sd(P)$ is the nerve of the usual barycentric subdivision of $P$.

On the other hand, the usual definition of the subdivision of an $\infty$-category \cite[Def.~1.15]{AMGR-NaiveApproach} is as the left Kan extension of the functor $\sd: \Delta \to \Cat_{\infty}$ along the fully faithful inclusion $\Delta \subset \Cat_{\infty}$. By \cite[Lem.~A.3.7]{AMGRb}, this recovers $\sd(P)$ for $P$ a poset. In fact, we may transcribe over the proof there to show that $\sd(S) \xot{\simeq} \colim_{[n] \in \Delta_{/S}} \sd[n]$ for any $\infty$-category $S$. Here $\Delta_{/S} \coloneq \Delta \times_{\Cat_{\infty}} (\Cat_{\infty})^{/S}$ is the maximal sub-right fibration in $\widetilde{\Fun}_{\Delta}(\cE \Delta, S \times \Delta)$ (cf. \cref{rem:classifyingID}).\footnote{Beware that here $\Delta_{/S}$ does \emph{not} denote the nerve of the category of simplices of $S$ regarded as a simplicial set.} We sketch the argument, leaving routine details to the reader:
\begin{enumerate}
\item First note that for any two strings $\sigma, \tau \in \sd(S)$, every map $[\sigma \Rightarrow \tau] \in \Delta_{/S}$ necessarily lies over $\Delta^{\inj}$. Therefore, the inclusion $i: \sd(S) \subset \Delta_{/S}$ is full. Moreover, in view of the factorization system on $\Cat_{\infty}$ whose right class of maps is given by the conservative functors \cite[11.29]{Joyal}, $i$ admits a left adjoint. In particular, $i$ is cofinal, so
\[ \colim_{[n] \in \Delta_{/S}} \sd[n] \simeq \colim_{[n] \in \sd(S)} \sd[n]. \]
\item We next observe that the cocartesian fibration $\ev_1: \Ar(\sd(S)) \to \sd(S)$ is classified by the functor $\sd(S) \to \Delta^{\inj} \subset \Delta \xto{\sd} \Cat_{\infty}$. Therefore, $\colim_{[n] \in \sd(S)} \sd[n]$ identifies with the localization of $\Ar(\sd(S))$ at the class of $\ev_1$-cocartesian edges. But this localization also identifies with the source functor $\ev_0: \Ar(\sd(S)) \to \sd(S)$, yielding the desired equivalence $\colim_{[n] \in \sd(S)} \sd[n] \to \sd(S)$.
\end{enumerate}
\end{obs}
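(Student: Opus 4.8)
The plan is to treat the three assertions of the Observation in turn, with essentially all of the work going into the colimit formula. The two identifications in the first paragraph are routine unwindings of the relative-nerve model for $\widehat{\Ar}^{\simeq}(S)$: when $S$ is the nerve of a category, an $n$-simplex of $\widehat{\Ar}^{\simeq}(S)$ is a functor $\Delta^n\to S$ and a morphism $\sigma\to\tau$ over $\alpha\colon[n]\to[m]$ in $\Delta^{\inj}$ is a natural isomorphism $\sigma\Rightarrow\alpha^{\ast}\tau$ (the $\xi_S$-cartesian edges over $\Delta^{\inj}$ being exactly the pointwise equivalences), and passing to the full subcategory on conservative functors gives the stated description of $\sd(S)$; when $S$ is a poset these natural isomorphisms are identities and conservative functors $\Delta^n\to S$ are precisely strictly increasing chains, so one recovers the classical barycentric subdivision. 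The remaining task is to flesh out the two-step argument indicated for the equivalence $\sd(S)\simeq\colim_{[n]\in\Delta_{/S}}\sd[n]$.

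For Step 1, I would first record that the inclusion $i\colon\sd(S)\subset\Delta_{/S}$ is a full subcategory inclusion: any morphism of $\Delta_{/S}$ between two strings must lie over $\Delta^{\inj}$, since collapsing an edge would force some $1$-simplex of a string to be an equivalence, and over $\Delta^{\inj}$ the category $\Delta_{/S}$ agrees with $\widehat{\Ar}^{\simeq}(S)$. Then one shows $i$ admits a left adjoint by invoking the orthogonal factorization system on $\Cat_{\infty}$ whose right class is the conservative functors \cite[11.29]{Joyal}: given $([n],\sigma)\in\Delta_{/S}$ one factors $\sigma$ and checks that this produces an object of $\sd(S)$ equipped with an initial map to it out of $([n],\sigma)$. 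This is the point we transcribe from \cite[Lem.~A.3.7]{AMGRb}, and it is the technical heart of the argument, since one must carefully handle strings that fail conservativity only through composites of noninvertible morphisms. Granting the left adjoint, $i$ is the inclusion of a reflective subcategory, hence cofinal, so
\[ \colim_{[n]\in\Delta_{/S}}\sd[n]\;\simeq\;\colim_{[n]\in\sd(S)}\sd[n], \]
where on the right the diagram is the composite $\sd(S)\xrightarrow{\xi_S}\Delta^{\inj}\hookrightarrow\Delta\xrightarrow{\sd}\Cat_{\infty}$.

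For Step 2, I would identify this last colimit with $\sd(S)$. First, the codomain cocartesian fibration $\ev_1\colon\Ar(\sd(S))\to\sd(S)$ is classified by exactly $\sigma\mapsto\sd[n]$: for a string $\sigma\colon[n]\to S$, every nonempty subset of $[n]$ still restricts $\sigma$ to a string (because $\sigma$ is conservative on all of $[n]$), so the slice $\sd(S)_{/\sigma}$ is naturally the poset of nonempty faces of $[n]$, and the pushforward along a morphism $\sigma\to\sigma'$ lying over $\delta$ is visibly $\sd[\delta]$. Since the colimit of a $\Cat_{\infty}$-valued diagram is the localization of its unstraightening at the cocartesian edges (see, e.g., \cite{GHN}), $\colim_{\sigma}\sd[n]\simeq\Ar(\sd(S))[W^{-1}]$ with $W$ the class of $\ev_1$-cocartesian morphisms, i.e.\ the squares whose $\ev_0$-component is an equivalence. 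Finally $\ev_0\colon\Ar(\sd(S))\to\sd(S)$ sends $W$ to equivalences, admits the section $c\mapsto\id_c$, and the transformation $\id_c\to f$ in $\Ar(\sd(S))$ is pointwise in $W$; this exhibits $\ev_0$ as the localization $\Ar(\sd(S))[W^{-1}]\xrightarrow{\simeq}\sd(S)$, finishing the proof.

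The main obstacle is Step 1: establishing the left adjoint to $i$ (equivalently, the cofinality of $\sd(S)\hookrightarrow\Delta_{/S}$) from the conservative factorization system. Step 2 is a formal manipulation with the arrow category and localizations at cocartesian edges, and the $1$-categorical and poset statements are immediate from the definitions.
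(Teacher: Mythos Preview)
Your proposal is correct and follows essentially the same approach as the paper's own sketch: both arguments establish fullness of $i\colon\sd(S)\hookrightarrow\Delta_{/S}$, invoke the (localization, conservative) factorization system to produce a left adjoint and hence cofinality, then identify $\colim_{\sd(S)}\sd[n]$ with the localization of $\Ar(\sd(S))$ at $\ev_1$-cocartesian edges and compute this via $\ev_0$. Your version adds a bit more detail in Step~2 (the explicit verification that $\ev_0$ exhibits the localization via the natural $W$-morphism $\id_c\to f$), but the strategy is identical.
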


We now work towards constructing the `maximum' functor $\sd(S) \to S$. We first define this over $\widehat{\Ar}(S)$:

\begin{cnstr} \label{cnstr:MaxFunctorSubdivision} Define a \emph{last vertex} map $\max_S: \widehat{\Ar}(S) \to S$ by the following rule:
\begin{itemize} \item[($\ast$)] Suppose $\sigma: \Delta^n \to \widehat{\Ar}(S)$ is a $n$-simplex, which corresponds to a sequence of inclusions \[ \begin{tikzcd}[row sep=4ex, column sep=4ex, text height=1.5ex, text depth=0.25ex]
\Delta^{a_0} \ar[hookrightarrow]{r}{\alpha_1} & \Delta^{a_1} \ar[hookrightarrow]{r}{\alpha_2} & \cdots \ar[hookrightarrow]{r}{\alpha_n} & \Delta^{a_n}
\end{tikzcd} \]
determining a map $a: \Delta^n \to \Delta^{\inj}$ and a functor $f: \Delta^n \times_{a, \Delta^{\inj}} \cE \Delta^{\inj} \to S$. Define a functor $\chi: \Delta^n \to \Delta^n \times_{a, \Delta^{\inj}} \cE \Delta^{\inj}$ to be the identity on the first component and the unique $n$-simplex of $\cE \Delta^{\inj}$
\[ \begin{tikzcd}[row sep=4ex, column sep=4ex, text height=1.5ex, text depth=0.25ex]
\Delta^{\{0\}} \ar[hookrightarrow]{r} \ar{d}{\kappa_0} & \Delta^{\{0,1\}} \ar[hookrightarrow]{r} \ar{d}{\kappa_1} & \cdots \ar[hookrightarrow]{r} & \Delta^n \ar{d}{\kappa_n} \\
\Delta^{a_0} \ar[hookrightarrow]{r}{\alpha_1} & \Delta^{a_1} \ar[hookrightarrow]{r}{\alpha_2} & \cdots \ar[hookrightarrow]{r}{\alpha_n} & \Delta^{a_n}.
\end{tikzcd} \]
specified by $\kappa_i(i) = a_i$ on the second component. Then $\max_S(\sigma) = f \circ \chi: \Delta^n \to S$.
\end{itemize}

In other words, $\max_S$ is the functor induced by precomposing by the section $\Delta^{\inj} \to \cE \Delta^{\inj}$ which selects the maximal vertex in every fiber.
\end{cnstr}

The next lemma is obvious when $S$ is a poset, so the reader only interested in that case should feel free to skip its proof.

\begin{lem} \label{lm:subdivisionLocallyCocartesianByMaxFunctor} \begin{enumerate}[leftmargin=*] \item The functor $\max_S: \widehat{\Ar}(S) \to S$ is a categorical fibration.
\end{enumerate}
\begin{enumerate}
\setcounter{enumi}{1}
\item The restricted functor $\max_S: \widehat{\Ar}^{\simeq}(S) \to S$ is a locally cocartesian fibration.
\item The restricted functor $\max_S: \sd(S) \to S$ is a locally cocartesian fibration.
\end{enumerate}
\end{lem}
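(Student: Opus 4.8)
The plan is to prove the three assertions in the order $(2)\Rightarrow(1)$, then $(3)$, with the combinatorial content concentrated in $(2)$; for $(2)$ I would construct the locally cocartesian lifts explicitly, modeled on the trivial poset picture ``append the new arrow to a chain''.

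For $(2)$: since $\widehat{\Ar}^{\simeq}(S)$ is an $\infty$-category ($\xi_S$ being a right fibration over $\Delta^{\inj}$), it suffices to produce, for every edge $e\colon x\to y$ of $S$ and every object $\sigma$ of $\widehat{\Ar}^{\simeq}(S)$ over $x$ — i.e.\ a path $\sigma\colon\Delta^n\to S$ with $\sigma(n)=x$ — an edge $\sigma\to\sigma'$ of $\widehat{\Ar}^{\simeq}(S)$ lying over $e$ which is cocartesian for $\Delta^1\times_{e,S}\widehat{\Ar}^{\simeq}(S)\to\Delta^1$. I would split into cases. If $e$ is not an equivalence, choose $\sigma'\colon\Delta^{n+1}\to S$ with $\sigma'|_{\Delta^{\{0,\dots,n\}}}=\sigma$ and $\sigma'|_{\Delta^{\{n,n+1\}}}=e$ (possible since $\Delta^{\{0,\dots,n\}}\cup_{\Delta^{\{n\}}}\Delta^{\{n,n+1\}}\hookrightarrow\Delta^{n+1}$ is inner anodyne), and let $\sigma\to\sigma'$ be the edge of $\widehat{\Ar}^{\simeq}(S)$ over the coface $d^{n+1}\colon[n]\hookrightarrow[n+1]$ corresponding to the identification $\sigma=(d^{n+1})^{\ast}\sigma'$. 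If $e$ is an equivalence, take instead the length-preserving edge over $\id_{[n]}$ obtained by transporting the terminal vertex of $\sigma$ along $e$ (a marked-anodyne lifting of $\Delta^{\{0,\dots,n\}}\times\{0\}\cup\Delta^{\{0,\dots,n-1\}}\times\Delta^1\cup\Delta^{\{n\}}\times\Delta^1\to S$ into $\Delta^n\times\Delta^1$, restricted to $\Delta^n\times\{1\}$); this edge is an equivalence of $\widehat{\Ar}^{\simeq}(S)$ and hence automatically $\max_S$-cocartesian. In both cases one checks that $\max_S$ carries the constructed edge to $e$ by inspecting the last-vertex functor of \cref{cnstr:MaxFunctorSubdivision}.

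The main obstacle is verifying that the ``append'' edge $\sigma\to\sigma'$ of the non-equivalence case really is $\max_S$-locally cocartesian, i.e.\ that $\Delta^1\times_{e,S}\widehat{\Ar}^{\simeq}(S)\to\Delta^1$ is a cocartesian fibration with these as cocartesian lifts. I would reduce this to a lifting problem over $S$ of marked simplicial sets via the description $\widehat{\Ar}^{\simeq}(-)\simeq\Delta^{\inj}\times_{\Cat_{\infty}}\Cat_{\infty}^{/-}$ of \cref{rem:classifyingID} and the relative nerve $\xi\colon\cE\Delta^{\inj}\to\Delta^{\inj}$, and solve it using the locally cocartesian model structure and its anodyne maps, much as in the proof of \cref{prp:LocallyCocartesianPushforward}; the essential point is that the appended edge $\Delta^{\{n,n+1\}}$ is the unique $\xi_S$-non-cartesian direction present, so the obstruction to the universal property is concentrated in a single $\cE\Delta^{\inj}$-fiber. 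This is precisely the coherence bookkeeping that is invisible when $S$ is a poset.

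For $(1)$: $\max_S\colon\widehat{\Ar}(S)\to S$ is automatically an inner fibration (a functor of $\infty$-categories), so only the isofibration property needs checking; but the locally cocartesian lift of an equivalence constructed in $(2)$ is an equivalence of $\widehat{\Ar}^{\simeq}(S)$, hence of the ambient $\widehat{\Ar}(S)$, which supplies the required lift. For $(3)$: $\sd(S)\subset\widehat{\Ar}^{\simeq}(S)$ is a full subcategory closed under the locally cocartesian edges of $(2)$ — appending a non-equivalence to a conservative $\Delta^n\to S$ yields a conservative $\Delta^{n+1}\to S$, and transporting the terminal vertex of a string along an equivalence yields a string (post-composition with an equivalence preserves non-invertibility) — both elementary, with the coherent meaning of ``string'' supplied by the reflection of $\Delta_{/S}$ onto $\sd(S)$ recorded in \cref{rem:barycentricSubdivisionOrdinaryCategory}. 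Since a full subcategory closed under the locally cocartesian lifts of a locally cocartesian fibration inherits a locally cocartesian fibration with the same lifts, $(3)$ follows from $(2)$.
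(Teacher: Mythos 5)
There is a genuine gap, and it sits exactly where the paper does its real work. Your proof of (1) asserts that $\max_S\colon\widehat{\Ar}(S)\to S$ is ``automatically an inner fibration (a functor of $\infty$-categories)''. This is false as a general principle: a functor between quasi-categories need not be an inner fibration of simplicial sets (inner fibrancy is a strict, model-level condition, not invariant under equivalence, and it is needed here at the model level since $\max_S$ must be a fibrant object of $s\Set^+_{/S}$ to define $\Fun^{\cocart}_{/S}(\sd(S),C)$ and to take strict pullbacks). The paper's proof of (1) is precisely the verification of this point: it transposes the inner-horn lifting problem through the defining adjunction for $\widehat{\Ar}(S)$, reducing it to showing that $\Delta^n\cup_{\Lambda^n_k}\bigl(\Lambda^n_k\times_{\Delta^{\inj}}\cE\Delta^{\inj}\bigr)\to\Delta^n\times_{\Delta^{\inj}}\cE\Delta^{\inj}$ is inner anodyne, which uses flatness of $\cE\Delta^{\inj}\to\Delta^{\inj}$. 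Your omission is not cosmetic: your proof of (2) quietly presupposes it as well, since ``the pullback $\Delta^1\times_{e,S}\widehat{\Ar}^{\simeq}(S)\to\Delta^1$ is a cocartesian fibration'' is only well-posed once that pullback is an inner fibration, and producing candidate cocartesian edges does not by itself give that. (In the paper, the restricted $\max_S$ on $\widehat{\Ar}^{\simeq}(S)$ inherits the categorical fibration property from (1) because $\widehat{\Ar}^{\simeq}(S)\subset\widehat{\Ar}(S)$ is a subcategory stable under equivalences; your ordering $(2)\Rightarrow(1)$ cannot recover this.)

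The second weak point is the verification that the ``append'' edge is locally cocartesian, which you defer to ``a lifting problem \ldots solved using the locally cocartesian model structure \ldots much as in the proof of \cref{prp:LocallyCocartesianPushforward}''. That machinery produces marked-anodyne \emph{extensions}; it does not directly verify the universal property of a \emph{specified} edge, which is a right-lifting/mapping-space condition (lifting outer horns $\Lambda^n_0\subset\Delta^n$ whose initial edge is the given one), and the claim that ``the obstruction is concentrated in a single $\cE\Delta^{\inj}$-fiber'' is an assertion, not an argument. The paper instead checks cocartesianness by a direct mapping-space computation: using that $\xi_S$ is a right fibration one decomposes $\Map_{\widehat{\Ar}^{\simeq}(S)}(x,z)\simeq\bigsqcup_{\gamma\colon[a_0]\hookrightarrow[a_2]}\Map_{\Map(\Delta^{a_0},S)}(x,\gamma^{\ast}z)$, and for each fixed $\gamma$ reduces the required pullback square to the fact that $\ev_{a_0+1}\colon\Fun(\Delta^{a_0+1},S)\to S$ is a cocartesian fibration with the appended edge cocartesian. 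Some such argument (or a genuinely worked-out transposition) is needed; as written your step is a plan rather than a proof. By contrast, your treatment of equivalence edges (an explicit equivalence lift, automatically cocartesian in the pullback) and your deduction of (3) from (2) (fullness of $\sd(S)$, stability under equivalences and under appending a non-equivalence) are correct and agree in substance with the paper.
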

\begin{proof} (1): We first verify that $\max_S$ is an inner fibration. For this, let $n \geq 2$, $0 < k < n$, and consider the lifting problem
\[ \begin{tikzcd}[row sep=4ex, column sep=4ex, text height=1.5ex, text depth=0.25ex]
\Lambda^n_k \ar{r} \ar{d} & \widehat{\Ar}(S) \ar{d}{\max_S} \\
\Delta^n \ar{r} \ar[dotted]{ru} & S. 
\end{tikzcd} \]
Let $a: \Delta^n \to \Delta^{\inj}$ be the unique extension of the given $\Lambda^n_k \to \Delta^{\inj}$. The lifting problem then transposes to
\[ \begin{tikzcd}[row sep=4ex, column sep=4ex, text height=1.5ex, text depth=0.25ex]
\Delta^n \bigcup_{\Lambda^n_k} \Lambda^n_k \times_{\Delta^{\inj}} \cE \Delta^{\inj} \ar{r} \ar{d} & S \\
\Delta^n \times_{\Delta^{\inj}} \cE \Delta^{\inj} \ar[dotted]{ru}
\end{tikzcd} \]
and it suffices to show the vertical arrow is inner anodyne. Since $\cE \Delta^{\inj} \to \Delta^{\inj}$ is a cocartesian fibration, it is in particular a flat inner fibration, and the desired result follows. 

We next show that $\max_S$ is a categorical fibration by lifting equivalences from the base. So suppose $e: \Delta^1 \to S$ is an equivalence and $\sigma: \Delta^n \to S$ is an object of $\widehat{\Ar}(S)$ such that $\max_S(\sigma) = \sigma(n) = e(0)$. The restriction of $\max_S$ to $\Fun(\Delta^n,S) \subset \widehat{\Ar}(S)$ is evaluation at $\{n\}$, which is a categorical fibration, so $e$ lifts to an equivalence in $\Fun(\Delta^n,S)$ and hence in $\widehat{\Ar}(S)$.

(2): First observe that since $\widehat{\Ar}^{\simeq}(S) \subset \widehat{\Ar}(S)$ is a subcategory stable under equivalences, the restricted $\max_S$ functor is a categorical fibration by (1). To prove that $\max_S$ is a locally cocartesian fibration, it then suffices to prove that for any edge $e: s \to t$ in $S$ that is \emph{not} an equivalence, the pullback $\max_S(e): \widehat{\Ar}^{\simeq}(S) \times_{S} \Delta^1 \to \Delta^1$ is a cocartesian fibration. To this end, we claim that an edge $\widetilde{e}: x \to y$ lifting $e$ is $\max_S(e)$-cocartesian if and only if the corresponding data of an inclusion $\alpha: \Delta^{a_0} \to \Delta^{a_1}$ and a functor $f: \Delta^1 \times_{\Delta^{\inj}} \cE \Delta^{\inj} \to S$ is such that in addition $a_1 = a_0 +1$ and $\alpha$ is the inclusion of the initial segment. Note that given an object $x: \Delta^{a_0} \to S$ with $s= x(a_0)$, such a lift $\widetilde{e}$ of $e$ may be defined by `appending' $e$ to $x$: indeed, let $y: \Delta^{a_0+1} \to S$ be an extension of $x \cup e: \Delta^{a_0} \cup_{a_0,\Delta^0,0} \Delta^1 \to S$, let
\[ r: \Delta^1 \times_{\alpha, \Delta^{\inj}} \cE \Delta^{\inj} \to \Delta^{a_{0}+1} \]
be the retraction functor which fixes $\Delta^{a_{0} + 1}$ and is given by $\alpha$ on $\Delta^{a_0}$, and define $\widetilde{e}$ as $y \circ r$. Hence, establishing the claim will complete the proof.

The `only if' direction will follow from the `if' direction together with the stability of cocartesian edges under equivalence. For the `if' direction, fix such an edge $\widetilde{e}$. Recall from the definition that $\widetilde{e}: x \to y$ is $\max_S(e)$-cocartesian if and only if for all objects $z \in \widehat{\Ar}^{\simeq}(S)$ with $\max_S(z) = t$, the commutative square
\[ \begin{tikzcd}[row sep=4ex, column sep=4ex, text height=1.5ex, text depth=0.5ex]
\Map_{\widehat{\Ar}^{\simeq}(S)_{\max_S = t}}(y,z) \ar{r}{(\widetilde{e})^\ast} \ar{d} & \Map_{\widehat{\Ar}^{\simeq}(S)}(x,z) \ar{d}{\max_S} \\
\{ e \} \ar{r} & \Map_S(s,t)
\end{tikzcd} \]
is a homotopy pullback square.  Viewing $x$ as $x: \Delta^{a_0} \to S$, $y$ as $y: \Delta^{a_0+1} \to S$, and $z$ as $z: \Delta^{a_2} \to S$, and computing the mapping spaces in $\widehat{\Ar}^{\simeq}(S)$ as a cartesian fibration over $\Delta^{\inj}$, we see that
\[ \Map_{\widehat{\Ar}^{\simeq}(S)}(x,z) \simeq \bigsqcup_{\gamma: [a_0] \subset [a_2]} \Map_{\Map(\Delta^{a_0},S)}(x,\gamma^{\ast}z). \]
Therefore, it suffices to show that for any \emph{fixed} inclusion $\gamma: \into{ \Delta^{a_0}} {\Delta^{a_2}}$ with $\gamma(a_0) < a_2$, letting $\beta: \Delta^{a_0+1} \to \Delta^{a_2}$ be the unique extension of $\gamma$ with $\beta(a_0+1) = a_2$, we have that the square of mapping spaces
\[ \begin{tikzcd}[row sep=4ex, column sep=4ex, text height=1.5ex, text depth=0.5ex]
\Map_{\Map(\Delta^{a_{0}+1},S)}(y,\beta^{\ast}z) \ar{r}{\alpha^{\ast}} \ar{d} & \Map_{\Map(\Delta^{a_{0}},S)}(x,\gamma^{\ast}z) \ar{d} \\
\{e\} \ar{r} & \Map_{\iota S}(x(a_0),z(a_2))
\end{tikzcd} \]
is a homotopy pullback square (where the right vertical map sends $x \rightarrow \gamma^{\ast}z$ to the composite $x(a_0) \rightarrow z(\gamma(a_0)) \rightarrow z(a_2)$). (Here we implicitly use that maps in $\widehat{\Ar}^{\simeq}(S)$ are natural transformations through equivalences to account for the $\max_S = t$ condition for the upper-left mapping space.) But this follows since $\ev_{a_0+1}: \Fun(\Delta^{a_0+1},S) \to S$ is a cocartesian fibration with $\overline{x} \to y$ a cocartesian edge lifting $e$, where $\overline{x}$ is the degeneracy $s_{a_0}$ applied to $x$ (we note that $\Map_{\Map(\Delta^{a_{0}+1},S)}(\overline{x},\beta^{\ast}z) \simeq \Map_{\Map(\Delta^{a_{0}},S)}(x,\gamma^{\ast}z)$).

(3): This is clear from the description of the locally $\max_S$-cocartesian edges given in (2).
\end{proof}

Finally, we arrive at the main result of this subsection. \cref{lm:subdivisionLocallyCocartesianByMaxFunctor} ensures that the following theorem is well-formulated; also note that $\sd(S)_0 \subset \sd(S)$ is a sub-locally cocartesian fibration via $\max_S$ as it is the inclusion of a cosieve stable under equivalences.

\begin{thm} \label{thm:subdivisionExtension} Let $p: C \to S$ be a locally cocartesian fibration and $\pi: S \to \Delta^1$ a functor. Let $p_0: C_0 \to S_0$ be the fiber of $p$ over $0$. 
\begin{enumerate}
    \item Restricting the domain and codomain of the map of \cref{prp:LocallyCocartesianPushforward} yields the map
    \[ \sd_S(C)_0^{\cocart} \to \sd_{S_0}(C_0) \times_{\sd(S_0)} \sd(S)_0 \]
     which is also a trivial fibration of simplicial sets.
    \item Precomposition by the inclusion $\into{\sd(S_0)}{\sd(S)_0}$ defines a trivial fibration of simplicial sets
\[ \Fun^{\cocart}_{/S}(\sd(S)_0,C) \to \Fun^{\cocart}_{/S_0}(\sd(S_0),C_0). \]
\end{enumerate}
\end{thm}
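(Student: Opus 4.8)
The plan is to deduce both statements from \cref{prp:LocallyCocartesianPushforward}.

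\emph{Part (1)} is a base change. \cref{prp:LocallyCocartesianPushforward} provides a trivial fibration
\[ (\lambda_p, p)\colon \widehat{\Ar}^{\simeq}_S(C)_0^{\cocart} \longrightarrow \widehat{\Ar}^{\simeq}_{S_0}(C_0) \times_{\widehat{\Ar}^{\simeq}(S_0)} \widehat{\Ar}^{\simeq}(S)_0, \]
and trivial fibrations are stable under pullback, so it suffices to exhibit the domain and codomain of the asserted map as the pullbacks of this one along the full inclusion $\sd(S)_0 \subset \widehat{\Ar}^{\simeq}(S)_0$. On the codomain side, $\lambda_S$ carries a string originating in $S_0$ to its initial $S_0$-segment, which is again a string because the initial segment of a conservative functor is conservative; unwinding the definitions then gives $\sd_{S_0}(C_0) \times_{\sd(S_0)} \sd(S)_0 \cong \widehat{\Ar}^{\simeq}_{S_0}(C_0) \times_{\widehat{\Ar}^{\simeq}(S_0)} \sd(S)_0$. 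On the domain side, $\sd_S(C)_0^{\cocart}$ is by definition the full subcategory of $\widehat{\Ar}^{\simeq}_S(C)_0^{\cocart}$ on those $c$ with $p\circ c$ a string, and since the structure map of the codomain to $\widehat{\Ar}^{\simeq}(S)_0$ is covered by that of the domain, this is precisely the pullback.

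\emph{Part (2)} is the global form of (1). First I would set up the identification
\[ \Fun^{\cocart}_{/S}(\sd(S)_0, C) \;\simeq\; \Gamma\bigl(\sd_S(C)_0^{\cocart} \to \sd(S)_0\bigr), \]
where $\Gamma$ denotes the $\infty$-category of those sections carrying each locally $\max_S$-cocartesian (``append a vertex'') edge of $\sd(S)_0$ to a cocartesian edge (these exist, as $\sd_S(C)_0^{\cocart} \to \sd(S)_0$ is cocartesian over such edges); similarly $\Fun^{\cocart}_{/S_0}(\sd(S_0), C_0) \simeq \Gamma(\sd_{S_0}(C_0) \to \sd(S_0))$. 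Concretely a functor $F$ goes to the \emph{diagonal} section $\sigma \mapsto \bigl(i \mapsto F(\sigma|_{\{0,\dots,i\}})\bigr)$ and is recovered by evaluating a section at its maximal vertex, and one uses the description of the locally cocartesian edges in \cref{lm:subdivisionLocallyCocartesianByMaxFunctor} to check that $F$ preserves them iff its diagonal section is cocartesian in the stated sense. Under these identifications the restriction functor factors as $\Gamma(\sd_S(C)_0^{\cocart}/\sd(S)_0) \xrightarrow{q_*} \Gamma\bigl((\sd_{S_0}(C_0)\times_{\sd(S_0)}\sd(S)_0)/\sd(S)_0\bigr) \xrightarrow{\iota^*} \Gamma(\sd_{S_0}(C_0)/\sd(S_0))$, with $q_*$ post-composition with the trivial fibration $q$ of part (1) and $\iota^*$ restriction along $\iota\colon \sd(S_0)\hookrightarrow\sd(S)_0$. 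Now $q_*$ is a trivial fibration because $q$ is a trivial fibration of simplicial sets over $\sd(S)_0$, hence an equivalence over $\sd(S)_0$ preserving and reflecting cocartesian edges, and passing to such sections sends it to a trivial fibration by a direct lifting argument against $\partial\Delta^n \hookrightarrow \Delta^n$; and $\iota^*$ is a trivial fibration because $\iota$ is a cosieve inclusion whose retraction $\lambda_S$ is right adjoint to it and collapses to an identity every ``append'' edge out of a string not lying entirely in $S_0$, so that a section on $\sd(S)_0$ of the pullback of $\sd_{S_0}(C_0)$ is contractibly determined by its restriction along $\iota$, the inverse being post-composition with $\lambda_S$.

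The main obstacle I foresee is making the diagonal identification in part (2) rigorous at the level of marked simplicial sets and verifying that restriction intertwines with $q$ and $\lambda_S$ exactly as above. An alternative route that bypasses this is to deduce (2) from the assertion that the inclusion $\leftnat{\sd(S_0)} \to \leftnat{\sd(S)_0}$ of marked simplicial sets over $S$ (marking locally $\max_{S_0}$- resp.\ $\max_S$-cocartesian edges) is a trivial cofibration in the locally cocartesian model structure: it is plainly a cofibration, and one shows it is a weak equivalence by a ``tail-length past the sieve $S_0$'' filtration together with the flatness criterion \cref{lem:posetPushoutViaFlatness}, parallel to the proof of \cref{prp:LocallyCocartesianPushforward} but with the fibre direction $\cE\Delta^{\inj}$ suppressed; then part (2) follows for every locally cocartesian $C \to S$ simultaneously from the definition of the model structure.
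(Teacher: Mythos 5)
Your part (1) is correct and is exactly the paper's argument: the map $\sd_S(C)_0^{\cocart} \to \sd_{S_0}(C_0)\times_{\sd(S_0)}\sd(S)_0$ is the base change of the trivial fibration of \cref{prp:LocallyCocartesianPushforward} along the (full) inclusion of strings into paths, and trivial fibrations are stable under pullback.

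For part (2) there is a genuine gap, and you flag it yourself. Your main route rests on the identification $\Fun^{\cocart}_{/S}(\sd(S)_0,C)\simeq\Gamma\bigl(\sd_S(C)_0^{\cocart}\to\sd(S)_0\bigr)$ together with several auxiliary claims: that $\sd_S(C)_0^{\cocart}\to\sd(S)_0$ is cocartesian over the append edges, that $F$ preserves locally cocartesian edges iff its diagonal section sends append edges to cocartesian edges, that restriction factors as $\iota^*\circ q_*$, and that $q_*$ and $\iota^*$ are trivial fibrations of these marked section categories (for $\iota^*$, ``inverse given by composition with $\lambda_S$'' is secretly a localization statement for $\sd(S)_0$ at the append-an-$S_1$-vertex edges, with a compatibility of markings that is not automatic). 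This chain of identifications is the whole content of the theorem, and none of it is established. The paper's proof is designed to avoid ever proving such identifications: it transposes the lifting problem defining ``trivial fibration'', applies the explicitly constructed simplicial map $\delta$ (the initial-segment parametrization, i.e.\ your diagonal, but used only as a map of simplicial sets applied to the given data $F$ and $G$), observes that $\sd_S(G)\circ(\id\times\delta)$ lands in $\sd_S(C)_0^{\cocart}$ precisely because $G$ preserves locally cocartesian edges and strings are conservative, lifts against the trivial fibration of part (1), and postcomposes with $\max_C$. Your alternative route---showing $\leftnat{\sd(S_0)}\to\leftnat{\sd(S)_0}$ is a trivial cofibration in $s\Set^+_{/S}$---is a correct reformulation (the paper states the equivalence of the two formulations), but the sketch ``parallel to \cref{prp:LocallyCocartesianPushforward} with the fibre direction suppressed'' is not a routine adaptation: \cref{lem:posetPushoutViaFlatness} is a statement about posets, whereas $\sd(S)_0$ is generally not a poset; and in the locally cocartesian model structure marked edges may only be composed over the anodyne $2$-simplices of the categorical pattern, so a tail-length cell-attachment argument must be arranged to avoid left-horn fillings over nondegenerate simplices of $S$. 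That is exactly the difficulty the pushforward machinery of \cref{prp:LocallyCocartesianPushforward} absorbs, and it does not disappear when the fibre direction is suppressed; note also that checking ``weak equivalence'' by mapping into all fibrant objects would be circular, since that is statement (2) itself.
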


For the proof, it will be convenient to introduce an auxiliary construction. Define a functor
$$\delta: \widehat{\Ar}(S) \to \widehat{\Ar}(\widehat{\Ar}(S))$$
by the following rule:
\begin{itemize}[leftmargin=*]
\item[($\ast$)] Suppose $\sigma: \Delta^n \to \widehat{\Ar}(S)$ is a $n$-simplex, which corresponds to a sequence of inclusions
 \[ \begin{tikzcd}[row sep=4ex, column sep=4ex, text height=1.5ex, text depth=0.25ex]
\Delta^{a_0} \ar[hookrightarrow]{r}{\alpha_1} & \Delta^{a_1} \ar[hookrightarrow]{r}{\alpha_2} & \cdots \ar[hookrightarrow]{r}{\alpha_n} & \Delta^{a_n}
\end{tikzcd} \]
determining a map $a: \Delta^n \to \Delta^{\inj}$ and a functor $f: \Delta^n \times_{a, \Delta^{\inj}} \cE \Delta^{\inj} \to S$. Define a map
$$\overline{a}: \Delta^n \times_{a, \Delta^{\inj}} \cE \Delta^{\inj} \to \Delta^{\inj}$$
on objects by $\overline{a}(i \in \Delta^{a_k}) = \Delta^{\{0,...,i\}}$ and on morphisms $(i \in \Delta^{a_k}) \to (j \in \Delta^{a_l})$, $\alpha_{k l}: \Delta^{a_k} \to \Delta^{a_l}$, $\alpha_{k l}(i) \leq j$ by restriction of $\alpha_{k l}$ to $\Delta^{\{0,...,i\}} \subset \Delta^{a_k}$ (which then is valued in $\Delta^{\{0,...,j\}} \subset \Delta^{a_l}$). Then define a functor of categories
\[ \phi: (\Delta^n \times_{a, \Delta^{\inj}} \cE \Delta^{\inj}) \times_{\overline{a}, \Delta^{\inj}} \cE \Delta^{\inj} \to \Delta^n \times_{a, \Delta^{\inj}} \cE \Delta^{\inj} \]
by sending objects $(i \in \Delta^{a_k}, i' \leq i)$ to $(i' \in \Delta^{a_k})$ and morphisms $(i \in \Delta^{a_k}, i' \leq i) \to (j \in \Delta^{a_l}, j' \leq j)$ (specified by the data of a map $\alpha_{k l}: \Delta^{a_k} \to \Delta^{a_l}$ such that $\alpha_{k l}(i) \leq j$ and $\alpha_{k l}(i') \leq j'$) to the morphism $(i' \in \Delta^{a_k}) \to (j' \in \Delta^{a_l})$ specified by the same data.

We may then specify a map
$$g: \Delta^n \times_{a, \Delta^{\inj}} \cE \Delta^{\inj} \to \widehat{\Ar}(S)$$
defined over $\Delta^{\inj}$ via $\overline{a}$ and the structure map $\xi_S$ as adjoint to the map
$$f \circ \phi: (\Delta^n \times_{a, \Delta^{\inj}} \cE \Delta^{\inj}) \times_{\overline{a}, \Delta^{\inj}} \cE \Delta^{\inj} \to S.$$
$g$ in turn defines the desired $n$-simplex $\delta(\sigma): \Delta^n \to \widehat{\Ar}(\widehat{\Ar}(S))$.
\end{itemize}
Informally, $\delta$ sends paths $s_0 \to s_1 \to ... \to s_n$ to their `initial segment parametrization'
\[ [s_0] \to [s_0 \to s_1] \to ... \to [s_0 \to s_1 \to ... \to s_n]. \]
Next, using the functor $\max_S$ to make sense of the next statement, we may use $\delta$ to define functors
\begin{align*}
 \delta &: \widehat{\Ar}^{\simeq}(S) \to \widehat{\Ar}^{\simeq}_S(\widehat{\Ar}^{\simeq}(S)) = \widehat{\Ar}^{\simeq}(S) \times_{\widehat{\Ar}(S)} \widehat{\Ar}(\widehat{\Ar}^{\simeq}(S)) \\
 \delta &: \sd(S) \to \sd_S(\sd(S)) = \sd(S) \times_{\widehat{\Ar}(S)} \widehat{\Ar}(\sd(S))
\end{align*}
as the identity on the first factor and a restriction of $\delta$ on the second factor.

\begin{proof}[Proof of \cref{thm:subdivisionExtension}] (1) follows from \cref{prp:LocallyCocartesianPushforward} in view of the pullback square
\[ \begin{tikzcd}[row sep=4ex, column sep=4ex, text height=1.5ex, text depth=0.25ex]
\sd_S(C)^{\cocart}_0 \ar{r} \ar{d} & \widehat{\Ar}^{\simeq}_S(C)_0^{\cocart} \ar{d} \\
\sd_{S_0}(C_0) \times_{\sd(S_0)} \sd(S)_0 \ar{r} & \widehat{\Ar}^{\simeq}_{S_0} (C_0) \times_{\widehat{\Ar}^{\simeq}(S_0)} \widehat{\Ar}^{\simeq}(S)_0.
\end{tikzcd} \]
For (2), we need to solve the lifting problem
\[ \begin{tikzcd}[row sep=4ex, column sep=4ex, text height=1.5ex, text depth=0.25ex]
A \ar[hookrightarrow]{d} \ar{r} & \Fun^{\cocart}_{/S}(\sd(S)_0,C) \ar{d} \\
B \ar{r} \ar[dotted]{ru} & \Fun^{\cocart}_{/S_0}(\sd(S_0),C_0).
\end{tikzcd} \]
This transposes to
\[ \begin{tikzcd}[row sep=4ex, column sep=4ex, text height=1.5ex, text depth=0.25ex]
A \times \sd(S)_0 \bigcup_{A \times \sd(S_0)} B \times \sd(S_0) \ar{r}{G \cup F} \ar[hookrightarrow]{d} & C \ar{d}{p} \\
B \times \sd(S)_0 \ar{r}{\max_S} \ar[dotted]{ru} & S.
\end{tikzcd} \]
The functoriality of $\sd_{S_0}(-)$ in its argument results in a functor
\[ \sd_{S_0}: \Fun_{/S_0}(\sd(S_0),C_0) \to \Fun_{/S_0}(\sd_{S_0}(\sd(S_0)),\sd_{S_0}(C_0)).\]
Given $F: B \times \sd(S_0) \to C_0$, let $\sd_{S_0}(F): B \times \sd_{S_0}(\sd(S_0)) \to \sd_{S_0}(C_0)$ denote the image. We then define $\overline{F}$ as the composite
\[ \begin{tikzcd}[row sep=4ex, column sep=6ex, text height=1.5ex, text depth=0.5ex]
B \times \sd(S_0) \ar{r}{\id \times \delta} & B \times \sd_{S_0}(\sd(S_0)) \ar{r}{\sd_{S_0}(F)} & \sd_{S_0}(C_0).
\end{tikzcd} \]
Also let $\overline{F}'$ denote $\overline{F}$ with codomain $\sd_S(C)^{\cocart}_0$ via the inclusion $\sd_{S_0}(C_0) \subset \sd_S(C)^{\cocart}_0$.

Similarly, given $G: A \times \sd(S)_0 \to C$, we may define $\overline{G}$ as the composite
\[ \begin{tikzcd}[row sep=4ex, column sep=6ex, text height=1.5ex, text depth=0.5ex]
A \times \sd(S)_0 \ar{r}{\id \times \delta} & A \times \sd_S(\sd(S)_0) \ar{r}{\sd_S(G)} & \sd_S(C)^{\cocart}_0
\end{tikzcd} \]
where we note that the codomain of $\sd_S(G)$ necessarily lies in $\sd_S(C)^{\cocart}_0$ by definition of the locally $\max_S$-cocartesian edges in $\sd(S)_0$ (here it is essential that we use $\sd(S)$ rather than $\widehat{\Ar}^{\simeq}(S)$). Clearly, $\overline{G}$ and $\overline{F}'$ are compatible on their common domain $A \times \sd(S_0)$ since $G$ and $F$ are. We thereby may factor the square above as
\[ \begin{tikzcd}[row sep=4ex, column sep=4ex, text height=1.5ex, text depth=0.25ex]
A \times \sd(S)_0 \bigcup_{A \times \sd(S_0)} B \times \sd(S_0) \ar{r}{\overline{G} \cup \overline{F}'} \ar[hookrightarrow]{d} & \sd_S(C)^{\cocart}_0 \ar{r}{\max_C} \ar[->>]{d}{\simeq} & C \ar{d}{p} \\
B \times \sd(S)_0 \ar{r}{(\overline{F} \lambda,\pr)} \ar[dotted]{ru} & \sd_{S_0}(C_0) \times_{\sd(S_0)} \sd(S)_0 \ar{r}{\max_S} & S
\end{tikzcd} \]
The dotted lift exists by (1), and postcomposition of such a lift by $\max_C$ defines the desired lift.
\end{proof}


\subsubsection{Main results}

We begin by constructing a factorization system \cite[Def.~5.2.8.8]{HTT} on $\sd(S)$ associated to a sieve-cosieve decomposition of $S$. To do this, we need a few preparatory lemmas.

\begin{lem} \label{lem:cartesianSlice} Let $p: X \to S$ be a cartesian fibration. Given a functor $\phi: K \to X$, let
\[ \overline{p}: X^{\phi/} = \Fun(K^{\rhd},X) \times_{\Fun(K, X)} \{\phi\} \to S^{p \phi/} = \Fun(K^{\rhd},X) \times_{\Fun(K,X)} \{ p \phi\} \]
be the functor induced by $p$. Then $\overline{p}$ is a cartesian fibration, and an edge $\overline{e}: \overline{x} \to \overline{y} \in X^{\phi/}$ is $\overline{p}$-cartesian if and only if the underlying edge $e: x \to y \in X$ is $p$-cartesian.
\end{lem}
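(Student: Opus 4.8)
The plan is to verify two things: that $\overline{p}$ is an inner fibration, and that an edge $\overline{e}$ of $X^{\phi/}$ is $\overline{p}$-cartesian precisely when its image $e\colon x\to y$ in $X$ under evaluation at the cone vertex $\infty$ of $K^{\rhd}$ is $p$-cartesian. Granting these, a cartesian lift to $X^{\phi/}$ of an edge of $S^{p\phi/}$ with prescribed target is produced by lifting its cone-vertex component to a $p$-cartesian edge of $X$ and extending over $K^{\rhd}$, so $\overline{p}$ is a cartesian fibration; and the ``only if'' direction then follows because two $\overline{p}$-cartesian lifts of an edge with the same target are equivalent and $p$-cartesian edges of $X$ are stable under equivalence.

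To see that $\overline{p}$ is an inner fibration, I would use the pullback squares defining $X^{\phi/}$ and $S^{p\phi/}$ together with the adjunction $(-\times K^{\rhd})\dashv\Fun(K^{\rhd},-)$ to transpose a lifting problem for $\overline{p}$ against an inner horn $\Lambda^n_k\hookrightarrow\Delta^n$ ($0<k<n$) into a lifting problem for $p\colon X\to S$ against $(\Lambda^n_k\times K^{\rhd})\cup_{\Lambda^n_k\times K}(\Delta^n\times K)\hookrightarrow\Delta^n\times K^{\rhd}$; the latter is the pushout--product of an inner anodyne map with a monomorphism, hence inner anodyne \cite[Cor.~2.3.2.4]{HTT}, so it admits the required lift. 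For the characterization of cartesian edges, write $\overline{e}$ as a map $\widetilde{e}\colon K^{\rhd}\times\Delta^1\to X$ that is constant at $\phi$ on $K\times\Delta^1$, with $e=\widetilde{e}|_{\{\infty\}\times\Delta^1}$, and suppose $e$ is $p$-cartesian. Since $\overline{p}$ is an inner fibration, it suffices (cf.\ \cite[\S2.4.1]{HTT}) to solve, for each $n\ge2$, every lifting problem $\Lambda^n_n\to X^{\phi/}$ carrying the terminal edge $\Delta^{\{n-1,n\}}$ to $\overline{e}$ over a given $\Delta^n\to S^{p\phi/}$; transposing as above, this becomes a lifting problem for $p$ against $(\Lambda^n_n\times K^{\rhd})\cup_{\Lambda^n_n\times K}(\Delta^n\times K)\hookrightarrow\Delta^n\times K^{\rhd}$ in which the cell $\Delta^{\{n-1,n\}}\times\{\infty\}$ already maps to $e$. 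I would filter this inclusion by pushouts of horn inclusions: the missing nondegenerate cells are the products of a nondegenerate simplex of $K^{\rhd}$ meeting the cone vertex with a nondegenerate simplex of $\Delta^n$ containing the vertex $n$ and not contained in $\Delta^{\{0,\dots,n-1\}}$, and, ordered suitably, each attaching inclusion is either an inner horn (filled because $p$ is an inner fibration) or an outer horn $\Lambda^m_m\hookrightarrow\Delta^m$ whose terminal edge maps to a degeneracy of $e$, hence to a $p$-cartesian edge (filled by the defining property of $e$).

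For the remaining claims, given $\overline{y}\in X^{\phi/}$ and an edge $\overline{g}$ of $S^{p\phi/}$ with target $\overline{p}(\overline{y})$, I would choose a $p$-cartesian lift $e\colon x\to y$ of the cone-vertex component of $\overline{g}$ with $y=\overline{y}(\infty)$ and then extend the partial lift determined by $\phi$, $\overline{y}$, and $e$ over the inclusion $(K\times\Delta^1)\cup(K^{\rhd}\times\{1\})\cup(\{\infty\}\times\Delta^1)\hookrightarrow K^{\rhd}\times\Delta^1$, again by a filtration into inner and outer ($\Lambda^m_m$) horns, to obtain an edge $\overline{e}\colon\overline{x}\to\overline{y}$ of $X^{\phi/}$ lifting $\overline{g}$ with $\ev_\infty(\overline{e})=e$; by the previous paragraph $\overline{e}$ is $\overline{p}$-cartesian, so $\overline{p}$ is a cartesian fibration, and the ``only if'' direction follows as indicated. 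The main obstacle is the combinatorial bookkeeping in these two filtrations --- one must check that the prism-type inclusions really do decompose into pushouts of horns of the two advertised types, with every outer horn $\Lambda^m_m$ having its terminal edge sent to a (degeneracy of a) $p$-cartesian edge of $X$. This is a standard ``shuffle'' computation and is precisely the mechanism behind \cite[Prop.~2.4.3.2]{HTT}; an alternative that avoids reproving it is to invoke that proposition for the undercategory of a cartesian fibration (the fat slice $X_{\phi/}\to S_{p\phi/}$) and transport its conclusion along the categorical equivalence over $S^{p\phi/}$ between the fat and thin slice constructions, using that $\overline{p}$ has already been shown to be an inner fibration.
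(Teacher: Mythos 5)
Your main route is essentially the paper's argument with the marked-simplicial-set packaging unwound by hand. The paper's proof transposes the (marked) lifting problems exactly as you do, but then finishes in one stroke: it marks the edge $\{n-1,n\}$ in $\Lambda^n_n\subset\Delta^n$, marks the $p$-cartesian edges in $X$, checks that the transposed map is a map of marked simplicial sets (the cone-point edge $\{n-1,n\}\times\{v\}$ lands on a $p$-cartesian edge by the definition of the class $E$, and the marked edges over $\Delta^n\times K$ land on degenerate edges since the map there is $\phi\circ\pr_K$), and then cites the pushout--product lemma for marked anodyne maps \cite[Prop.~3.1.2.3]{HTT}, exactly as in the proof of \cite[Prop.~3.1.2.1]{HTT}. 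Your ``shuffle'' filtration into inner horns and right outer horns $\Lambda^m_m$ with cartesian terminal edge is precisely the combinatorial content of that lemma, so your plan is sound; the trade-off is that the paper's route avoids the bookkeeping you defer, and that bookkeeping is the crux of your write-up, so in practice you should either carry it out or simply cite \cite[Prop.~3.1.2.3]{HTT}. Two smaller corrections: the cells of $\Delta^n\times K^{\rhd}$ missing from the subcomplex are shuffles, not products of nondegenerate simplices (and the relevant condition on the $\Delta^n$-component is that its image contain $\{0,\dots,n-1\}$, not merely the vertex $n$), so your description of the filtration needs to be restated before the horn-by-horn claim can even be checked.

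The weak link is your proposed alternative. Invoking a slice/coslice statement such as \cite[Prop.~2.4.3.2]{HTT} for the join model $X_{\phi/}\to S_{p\phi/}$ and then ``transporting'' along the comparison with $X^{\phi/}$ does not come for free: the paper's $X^{\phi/}=\Fun(K^{\rhd},X)\times_{\Fun(K,X)}\{\phi\}$ is the cotensor model (neither the join slice nor Lurie's fat slice on the nose), the comparison equivalences live over \emph{equivalent but different} bases ($S_{p\phi/}$ versus $S^{p\phi/}$), and being a cartesian fibration, as well as which edges are cartesian, is not manifestly invariant under a categorical equivalence of total space and base; knowing that $\overline{p}$ is an inner fibration is the right sort of ingredient but does not by itself close this gap. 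If you want a citation-based shortcut, a cleaner one is to observe that $\overline{p}$ is literally a pullback of the relative exponential $\Fun(K^{\rhd},X)\to\Fun(K,X)\times_{\Fun(K,S)}\Fun(K^{\rhd},S)$ along the map classifying $(\phi,-)$, and apply the relative form of \cite[Prop.~3.1.2.1]{HTT} together with stability of cartesian fibrations under pullback; the cartesian edges are then those which are pointwise cartesian, and the components at vertices of $K$ are degenerate, so only the cone point matters, recovering the stated criterion.
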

\begin{proof} We may duplicate the proof of \cite[3.1.2.1]{HTT} to prove the lemma, the essential tool being \cite[3.1.2.3]{HTT}. In more detail, let $E$ be the described collection of edges in $X^{\phi/}$ and suppose given a lifting problem in marked simplicial sets of the form
\[ \begin{tikzcd}[row sep=4ex, column sep=4ex, text height=1.5ex, text depth=0.25ex]
\rightnat{\Lambda^n_n} \ar{r} \ar{d} & (X^{\phi/}, E) \ar{d}{\overline{p}} \\
\rightnat{\Delta^n} \ar{r} \ar[dotted]{ru} & (S^{p \phi/})^{\sharp} 
\end{tikzcd} \]
where we mark the edge $\{n-1,n\}$ of $\Lambda^n_n$ (if $n>1$) and of $\Delta^n$. This transposes to a lifting problem of the form
\[ \begin{tikzcd}[row sep=4ex, column sep=4ex, text height=1.5ex, text depth=0.25ex]
\rightnat{\Lambda^n_n} \times K^{\rhd} \bigcup_{\rightnat{\Lambda^n_n} \times K} \rightnat{\Delta^n} \times K \ar{r}{f} \ar{d}{i} & \rightnat{X} \ar{d}{p} \\
\rightnat{\Delta^n} \times K^{\rhd} \ar[dotted]{ru} \ar{r} & S^{\sharp}
\end{tikzcd} \]
where we mark the $p$-cartesian edges in $X$. Note that $f$ is indeed a map of marked simplicial sets: this is by definition of $E$ for $f$ on the edge $\{n-1,n\} \times \{v\}$ ($v \in K^{\rhd}$ the cone point), and by definition of $f$ on $\Delta^n \times K$ as given by $\phi \circ \pr_K$ for the other marked edges. Applying \cite[3.1.2.3]{HTT}, we deduce that $i$ is marked right anodyne, so the dotted lift exists.
\end{proof}

\begin{lem} \label{lem:mappingSpacesCartesianFibration} Let $p: X \to S$ be a cartesian fibration. Suppose we have a commutative square in $X$
\[ \begin{tikzcd}[row sep=4ex, column sep=4ex, text height=1.5ex, text depth=0.25ex]
x \ar{r}{h} \ar{d}{f} & z \ar{d}{g} \\
y \ar{r}{k}  & w.
\end{tikzcd} \]
If the edge $g$ is $p$-cartesian, then we have an equivalence
\[ \Map_{x//w}(y,z) \xto{\simeq} \Map_{px//pw}(py,pz). \]
\end{lem}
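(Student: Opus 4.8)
The plan is to reduce the statement to an elementary fact about a single slice of a cartesian fibration, using \cref{lem:cartesianSlice} to produce that slice, and then to deduce the elementary fact from the mapping-space characterization of cartesian edges.

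First I would apply \cref{lem:cartesianSlice} with $K=\Delta^0$ and $\phi$ the object $x$, which yields a cartesian fibration $\overline{p}\colon X_{x/}\to S_{px/}$ with the property that an edge of $X_{x/}$ is $\overline{p}$-cartesian exactly when its underlying edge in $X$ is $p$-cartesian. Writing $\widetilde{y},\widetilde{z}$ for the objects $(x\xto{f}y),(x\xto{h}z)$ of $X_{x/}$ and $\widetilde{w}$ for the object $(x\to w)$ obtained from either composite (the $2$-cell of the given square identifies $k\circ f$ with $g\circ h$), the square supplies morphisms $\widetilde{k}\colon\widetilde{y}\to\widetilde{w}$ and $\widetilde{g}\colon\widetilde{z}\to\widetilde{w}$ in $X_{x/}$ whose underlying edges in $X$ are $k$ and $g$; in particular $\widetilde{g}$ is $\overline{p}$-cartesian. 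Unwinding the definitions, $\Map_{x//w}(y,z)$ is naturally identified with the mapping space $\Map_{(X_{x/})_{/\widetilde{w}}}(\widetilde{k},\widetilde{g})$ in the slice of $X_{x/}$ over $\widetilde{w}$, and likewise $\Map_{px//pw}(py,pz)\simeq\Map_{(S_{px/})_{/\overline{p}\widetilde{w}}}(\overline{p}\widetilde{k},\overline{p}\widetilde{g})$, with the map in question induced by $\overline{p}$. So it suffices to prove the following: for any cartesian fibration $q\colon Y\to T$, any $q$-cartesian edge $\overline{g}\colon\overline{z}\to\overline{w}$, and any edge $\overline{k}\colon\overline{y}\to\overline{w}$, the canonical map
\[ \Map_{Y_{/\overline{w}}}(\overline{k},\overline{g})\longrightarrow\Map_{T_{/q\overline{w}}}(q\overline{k},q\overline{g}) \]
is an equivalence.

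For this, recall that mapping spaces in an over-category are fibers of postcomposition maps: $\Map_{Y_{/\overline{w}}}(\overline{k},\overline{g})\simeq\fib_{\overline{k}}\bigl(\Map_Y(\overline{y},\overline{z})\xto{\overline{g}\circ-}\Map_Y(\overline{y},\overline{w})\bigr)$, and similarly over $T$. By the characterization of $q$-cartesian edges in terms of mapping spaces (\cite[\S 2.4.4]{HTT}), the natural map $\Map_Y(\overline{y},\overline{z})\to\Map_Y(\overline{y},\overline{w})\times_{\Map_T(q\overline{y},q\overline{w})}\Map_T(q\overline{y},q\overline{z})$ is an equivalence, and it carries $\overline{g}\circ-$ to projection onto the first factor. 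Taking the fiber of that projection over $\overline{k}$ therefore gives $\{q\overline{k}\}\times_{\Map_T(q\overline{y},q\overline{w})}\Map_T(q\overline{y},q\overline{z})\simeq\fib_{q\overline{k}}\bigl(\Map_T(q\overline{y},q\overline{z})\to\Map_T(q\overline{y},q\overline{w})\bigr)\simeq\Map_{T_{/q\overline{w}}}(q\overline{k},q\overline{g})$, and a direct check confirms that the resulting composite equivalence is the map induced by $q$.

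The conceptual content is supplied entirely by \cref{lem:cartesianSlice} and the mapping-space characterization of cartesian edges. The main obstacle will be the bookkeeping in the first step: pinning down the precise sense in which $\Map_{x//w}(y,z)$ is the slice mapping space $\Map_{(X_{x/})_{/\widetilde{w}}}(\widetilde{k},\widetilde{g})$ --- this genuinely uses the full data of the commuting square (in particular its $2$-cell, which supplies the identification of the two targets) and requires keeping careful track of the various slice constructions --- and verifying that the identification is compatible with $p$. This is routine but somewhat delicate.
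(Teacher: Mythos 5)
Your proposal is correct and is essentially the paper's own argument: the paper likewise applies \cref{lem:cartesianSlice} to see that $g$, viewed as an edge $h \to kf$ in $X^{x/}$, is cartesian over $S^{px/}$, then uses the pullback-square characterization of cartesian edges for the mapping spaces $\Map_{x/}(y,z)$, $\Map_{x/}(y,w)$ and their images, and takes fibers over $k$ and $pk$. Your extra step of isolating a general statement about slice mapping spaces in a cartesian fibration is only a repackaging of that same fiber-of-pullback computation.
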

\begin{proof} By \cref{lem:cartesianSlice}, $\overline{p}: X^{x/} \to S^{px/}$ is a cartesian fibration and $g$, viewed as an edge $h \to kf$, is a $\overline{p}$-cartesian edge. Therefore, we have a homotopy pullback square of spaces
\[ \begin{tikzcd}[row sep=4ex, column sep=4ex, text height=1.5ex, text depth=0.25ex]
\Map_{x/}(y,z) \ar{r}{g_{\ast}} \ar{d}{p} & \Map_{x/}(y,w) \ar{d}{p} \\
\Map_{px/}(py,pz) \ar{r}{pg_{\ast}} & \Map_{px/}(py,pw)
\end{tikzcd} \]
Taking fibers over $k \in \Map_{x/}(y,w)$ and $pk \in \Map_{px/}(py,pw)$ yields the claimed equivalence.
\end{proof}

Fix a functor $\pi: S \to \Delta^1$ and let $S_i$ denote the fiber over $i \in \{0,1\}$. We now define a factorization system on $\widehat{\Ar}^{\simeq}(S)$ that will restrict to a factorization system on the full subcategory $\sd(S)$. Recall that the data of a morphism $e: x \to y$ in $\widehat{\Ar}^{\simeq}(S)$ is given by an inclusion $\alpha: \into{\Delta^{a_0}}{\Delta^{a_1}}$ and a map $f: \Delta^1 \times_{\Delta^{\inj}} \cE \Delta^{\inj} \to S$ that restricts to $x: \Delta^{a_0} \to S$ and $y: \Delta^{a_1} \to S$, such that $f$ sends morphisms $(i \in \Delta^{a_0}) \to (\alpha(i) \in \Delta^{a_1})$ to equivalences in $S$.

\begin{dfn} Let $\cL$ be the subclass of morphisms $(\alpha,f): x \to y$ such that for every $i \notin \im \alpha$, we have that $y(i) \in S_0$, and let $\cR$ be the subclass of morphisms $(\alpha,f): x \to y$ such that for every $i \notin \im \alpha$, we have that $y(i) \in S_1$.
\end{dfn}

\begin{prp} $(\cL, \cR)$ defines a factorization system on $\widehat{\Ar}^{\simeq}(S)$ and on $\sd(S)$.
\end{prp}
\begin{proof} We will check the assertion concerning $\widehat{\Ar}^{\simeq}(S)$; the second assertion will then be an obvious consequence. We first explain how to factor morphisms. Suppose that $\gamma: \into{\Delta^{a_0}}{\Delta^{a_2}}$, $h: \Delta^1 \times_{a, \Delta^{\inj}} \cE \Delta^{\inj} \to S$ is the data of a morphism in $\widehat{\Ar}^{\simeq}(S)$ from $x$ to $z$. Let $\Delta^{a_1} \subset \Delta^{a_2}$ be the subset on those $i \in \Delta^{a_2}$ such that $i \in \im \gamma$ or $z(i) \in S_0$. We then obtain a factorization of $\gamma$ as
\[ \begin{tikzcd}[row sep=4ex, column sep=4ex, text height=1.5ex, text depth=0.25ex]
\Delta^{a_0} \ar[hookrightarrow]{r}{\alpha} & \Delta^{a_1} \ar[hookrightarrow]{r}{\beta} & \Delta^{a_2}.
\end{tikzcd} \]
defining $\overline{a}: \Delta^2 \to \Delta^{\inj}$ extending the given $a: \Delta^{\{0,2\}} \to \Delta^{\inj}$.
Let $r: \Delta^2 \times_{\overline{a}, \Delta^{\inj}} \cE \Delta^{\inj} \to \Delta^1 \times_{a, \Delta^{\inj}} \cE \Delta^{\inj}$ be the unique retraction which is the identity on $\Delta^{a_0}$ and $\Delta^{a_2}$ and is given by $\beta$ on $\Delta^{a_1}$. Let $\overline{h} = h \circ r$. Then $\overline{h}$ is the desired factorization of $h$, as it corresponds to a factorization
\[ \begin{tikzcd}[row sep=4ex, column sep=4ex, text height=1.5ex, text depth=0.25ex]
x \ar{r}{f} \ar[bend right]{rr}[swap]{h} & y \ar{r}{g} & z \\
\end{tikzcd} \]
with $y = z \circ \beta: \Delta^{a_1} \to S$ defined so that $y(i) \in S_0$ for all $i \notin \im \alpha$ and $z(j) \in S_1$ for all $j \notin \im \beta$, hence $f$ in $\cL$, and $g$ in $\cR$.

Next, observe that because $S_0$ and $S_1$ are closed under retracts, so are $\cL$ and $\cR$. It only remains to check that $\cL$ is left orthogonal to $\cR$. For this, suppose given a commutative square in $\widehat{\Ar}^{\simeq}(S)$ on the left with $f \in \cL$ and $g \in \cR$ covering the square in $\Delta^{\inj}$ on the right
\[ \begin{tikzcd}[row sep=4ex, column sep=4ex, text height=1.5ex, text depth=0.25ex]
x \ar{r}{h} \ar{d}{f} & z \ar{d}{g} \\
y \ar{r}{k} \ar[dotted]{ru} & w
\end{tikzcd} \qquad, \qquad
\begin{tikzcd}[row sep=4ex, column sep=4ex, text height=1.5ex, text depth=0.25ex]
\Delta^a \ar{r}{\delta} \ar{d}{\alpha} & \Delta^c \ar{d}{\beta} \\
\Delta^b \ar{r}{\kappa} \ar[dotted]{ru}{\gamma} & \Delta^d.
\end{tikzcd} \]
Because $\xi_S: \widehat{\Ar}^{\simeq}(S) \to \Delta^{\inj}$ is a right fibration, by \cref{lem:mappingSpacesCartesianFibration} it suffices to show that $\Map_{\Delta^a//\Delta^d}(\Delta^b,\Delta^c)$ is contractible. This holds if and only if $\Delta^b \subset \Delta^c$ when viewed as subsets of $\Delta^d$, so that the mapping space is non-empty. Our hypothesis ensures that if $i \notin \im \beta$, then $w(i) \in S_1$, and if $i \in \Delta^b$, either $i \in \im \alpha$ or $y(i) \in S_0$. Therefore, we must have that for every $i \in \Delta^b$ with $i \notin \im \alpha$ that $w(\kappa(i)) \in S_0$, and hence $\kappa(i) \in \im \beta$. We conclude that the dotted lift $\gamma$ exists, which completes the proof.
\end{proof}

Let $\Ar^L(\sd(S)) \subset \Ar(\sd(S))$ denote the full subcategory on those morphisms $x \to y$ in the class $\cL$.

\begin{lem} \label{lem:relativeAdjSubdivision} \begin{enumerate}[leftmargin=*] \item The inclusion $i: \Ar^L(\sd(S)) \subset \Ar(\sd(S))$ admits a right adjoint $r$ that on objects sends $h: x \to y$ to $f: x \to z$ where $h$ factors as $g \circ f$ according to the $(\cL, \cR)$ factorization system.
\end{enumerate}
\begin{enumerate}
\setcounter{enumi}{1}
\item $i \dashv r$ defines a relative adjunction with respect to evaluation $\ev_0$ at the source, and therefore for every $x \in \sd(S)$ we obtain an adjunction
\[ \adjunctb{\{x\} \times_{\sd(S)} \Ar^L(\sd(S)) }{ \sd(S)^{x/}  }. \]
\item The relative adjunction $i \dashv r$ restricts to a relative adjunction
\[ \adjunct{i}{\Ar^L(\sd(S)) \times_{\ev_1, \sd(S)} \sd(S)_0 }{\Ar(\sd(S)) \times_{\ev_1, \sd(S)} \sd(S)_0 }{r} \]
and therefore for every $x \in \sd(S)$ we obtain an adjunction
\[ \adjunctb{\{x\} \times_{\sd(S)} \Ar^L(\sd(S)) \times_{\sd(S)} \sd(S)_0 }{ \sd(S)_0^{x/}  }. \]
\end{enumerate}
\end{lem}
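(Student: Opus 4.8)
The plan is to reduce the statement to two standard inputs: the dictionary between a factorization system and a coreflective subcategory of the arrow category, and Lurie's formalism of relative adjunctions. The substance lies entirely in part (1); parts (2) and (3) are then formal.

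\emph{Part (1).} A factorization system $(\cL,\cR)$ on an $\infty$-category $\cC$ furnishes a trivial fibration from the full subcategory $\cC^{\cL,\cR}\subset\Fun(\Delta^2,\cC)$ on diagrams $x\to z\to y$ with $(x\to z)\in\cL$ and $(z\to y)\in\cR$ to $\Ar(\cC)=\Fun(\Delta^1,\cC)$, given by restriction along $\Delta^{\{0,2\}}\subset\Delta^2$; this is the functorial-factorization property, for which see \cite[\S 5.2.8]{HTT}. I would apply this to the $(\cL,\cR)$-factorization system on $\sd(S)$ constructed above, choose a section, and postcompose with restriction along $\Delta^{\{0,1\}}$ to obtain a functor $r\colon\Ar(\sd(S))\to\Ar^L(\sd(S))$ sending $[h\colon x\to y]$ to the $\cL$-factor $[f\colon x\to z]$ of its factorization $x\xto{f}z\xto{g}y$, together with the natural transformation $\epsilon\colon ir\Rightarrow\id_{\Ar(\sd(S))}$ whose component at $h$ is the morphism of $\Ar(\sd(S))$ with source component $\id_x$ and target component $g$. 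Since a morphism already in $\cL$ is, up to contractible choice, its own factorization, $r\circ i\simeq\id$, and I would take this equivalence as the unit $\eta$. To see that $(\eta,\epsilon)$ exhibit $i\dashv r$, it then suffices to check that for $w\in\cL$ and any morphism $\phi\colon i(w)\to h$ in $\Ar(\sd(S))$, composition with $\epsilon_h$ identifies the space of lifts of $\phi$ through $\epsilon_h$ with the mapping space $\Map_{\Ar^L(\sd(S))}(w,r(h))$ lying over $\phi$; unwinding the definitions, the former is precisely the space of diagonal fillers of a square whose left edge is $w\in\cL$ and whose right edge is $g\in\cR$, which is contractible by the orthogonality $\cL\perp\cR$.

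\emph{Part (2).} By construction $\ev_0\circ r=\ev_0$ (the $\cL$-factor has the same source), and $\ev_0$ carries every component $\epsilon_h$ of the counit to an identity. Hence, by the criterion for an adjunction to be relative \cite[Prop.~7.3.2.6]{HA}, $i\dashv r$ is an adjunction relative to $\sd(S)$ along the cartesian fibration $\ev_0\colon\Ar(\sd(S))\to\sd(S)$. Relative adjunctions are stable under base change, so pulling back along $\{x\}\hookrightarrow\sd(S)$ produces an honest adjunction on fibers; since the fiber of $\ev_0$ over $x$ is $\sd(S)^{x/}$ and that of $\ev_0|_{\Ar^L(\sd(S))}$ is $\{x\}\times_{\sd(S)}\Ar^L(\sd(S))$, this is the asserted adjunction.

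\emph{Part (3).} The one thing to verify is that $i$, $r$, $\eta$, and $\epsilon$ all restrict to the full subcategories cut out by the condition $\ev_1\in\sd(S)_0$. This is immediate for $i$ (which commutes with $\ev_1$) and for $\eta$ (an equivalence). For $r$ and $\epsilon$ I would use the explicit form of the $(\cL,\cR)$-factorization: the intermediate string $z$ of $x\to z\to y$ is the substring of $y$ on the vertices in the image of the defining inclusion $\gamma$ together with those vertices of $y$ lying in $S_0$. Thus if $y=\ev_1(h)$ originates in $S_0$, its initial vertex lies in $S_0$, hence is retained in $z$, so $z=\ev_1(r(h))$ originates in $S_0$ as well, and $\epsilon_h$ then connects two objects of $\Ar(\sd(S))\times_{\ev_1,\sd(S)}\sd(S)_0$. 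Consequently the relative adjunction of part (2) restricts to a relative adjunction over $\ev_0$ between $\Ar^L(\sd(S))\times_{\ev_1,\sd(S)}\sd(S)_0$ and $\Ar(\sd(S))\times_{\ev_1,\sd(S)}\sd(S)_0$ (note that $\sd(S)_0$ is stable under equivalences, so $\ev_0$ on the latter is again a cartesian fibration), and base change along $\{x\}\hookrightarrow\sd(S)$ gives the final adjunction, the relevant fiber of $\ev_0$ being $\sd(S)_0^{x/}=\sd(S)^{x/}\times_{\ev_1,\sd(S)}\sd(S)_0$. No deep obstruction arises: the only point meriting care is precisely this bookkeeping in part (3), and it is transparent from the construction since forming the $\cL$-factor never discards the initial vertex of the target when that vertex lies in $S_0$.
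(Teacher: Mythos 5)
Your proof is correct and takes essentially the same route as the paper: part (1) is exactly the coreflectivity statement the paper cites as the dual of \cite[5.2.8.19]{HTT} (which you reprove via the factorization trivial fibration and $\cL\perp\cR$ orthogonality), and parts (2)--(3) proceed, as in the paper, from the definition of a relative adjunction (the counit projects to identities under $\ev_0$) together with pullback/restriction. Your explicit verification in (3) that the $\cL$-factor of an arrow with target in $\sd(S)_0$ again has target in $\sd(S)_0$ just makes precise what the paper leaves implicit.
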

\begin{proof} Claim (1) is the dual formulation of \cite[5.2.8.19]{HTT}. Claims (2) and (3) then follow by the definition of relative adjunction \cite[7.3.2.1]{HA} and its pullback property \cite[7.3.2.5]{HA}.
\end{proof}

We are now prepared to construct the recollement adjunctions. Note that the hypotheses of the following theorem are satisfied if $S$ is equivalent to a finite poset and $p: C \to S$ is a locally cocartesian fibration such that the fibers admit finite limits and the pushforward functors preserve finite limits.

\begin{thm} \label{thm:ExistenceLaxRightKanExtension} Let $p: C \to S$ be a locally cocartesian fibration, let $\pi: S \to \Delta^1$ be a functor, and suppose we have a commutative diagram
\[ \begin{tikzcd}[row sep=4ex, column sep=4ex, text height=1.5ex, text depth=0.25ex]
\sd(S)_0 \ar[hookrightarrow]{d}{\phi} \ar{r}{F} & C \ar{d}{p} \\
\sd(S) \ar{r}{\max_S} & S
\end{tikzcd} \]
where $F$ preserves locally cocartesian edges. Given $x \in \sd(S_1)$, let
\[ J_x = \{x\} \times_{\sd(S)} \Ar^L(\sd(S)) \times_{\sd(S)} \sd(S)_0 .\]
Note that $(\max_S \circ \ev_1)|_{J_x}$ is constant at $\max_S(x)$.
\begin{enumerate} \item If for every $x \in \sd(S_1)$, the limit of $(F \ev_1)|_{J_x}: J_x \to C_{\max_S(x)}$ exists, then the $p$-right Kan extension $G$ of $F$ along $\phi$ exists and $G(x) \simeq \lim\limits_{\ot} F|_{J_x}$.
\item If for every $f: s \to t$ in $S$, the pushforward functor $f_!: C_s \to C_t$ preserves all limits appearing in (1), then $G$ preserves all locally cocartesian edges.
\item If the hypotheses of (1) and (2) hold for all $F$, then we have an adjunction
\[ \adjunct{\phi^{\ast}}{\Fun^{\cocart}_{/S}(\sd(S),C)}{\Fun^{\cocart}_{/S}(\sd(S)_0,C)}{\phi_{\ast}}. \]
\end{enumerate}
\end{thm}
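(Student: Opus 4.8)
The plan is to verify the pointwise criterion of \cref{rem:ExistenceOfRelativeRKE} for the existence of a $p$-right Kan extension along the fully faithful inclusion $\phi: \sd(S)_0 \hookrightarrow \sd(S)$: for every $x \in \sd(S)$, the diagram $D_x: \sd(S)_0 \times_{\sd(S)} \sd(S)_{x/} \to \sd(S)_0 \xrightarrow{F} C$ must extend to a $p$-limit diagram, and $G(x)$ is then that limit. For $x \in \sd(S)_0$ this is immediate: since $\sd(S)_0$ is a cosieve in $\sd(S)$, the undercategory $\sd(S)_0 \times_{\sd(S)} \sd(S)_{x/}$ has the initial object $\id_x$, so its $p$-limit is $F(x)$ and $G|_{\sd(S)_0} \simeq F$ (consistently with the fact that right Kan extension along a fully faithful functor restricts to the identity). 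For the remaining objects $x \in \sd(S_1)$ -- which constitute the complement of the cosieve $\sd(S)_0$ -- I would identify $\sd(S)_0 \times_{\sd(S)} \sd(S)_{x/}$ with the category $\sd(S)_0^{x/}$ appearing in \cref{lem:relativeAdjSubdivision}(3); that lemma then furnishes an inclusion $i: J_x \hookrightarrow \sd(S)_0^{x/}$ which is left adjoint to the retraction $r$ fixing $\cL$-morphisms, so $r \circ i \simeq \id$.

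Part (1) now runs exactly parallel to the proof of \cref{cor:RightKanExtensionComputedInFiber}, with \cref{lem:relativeAdjSubdivision}(3) playing the role that the cartesian-fibration hypothesis plays there. Being a left adjoint, $i$ is right cofinal, and $r$ is then right cofinal by the right-cancellation property \cite[Prop.~4.1.1.3(2)]{HTT}. Since $(\max_S \circ \ev_1)|_{J_x}$ is constant at $\max_S(x)$, the restriction $D_x \circ i = (F \ev_1)|_{J_x}$ is valued in the single fiber $C_{\max_S(x)}$, where by hypothesis it has a limit; by \cref{lem:LimitIsRelativeLimit} the corresponding cone $(J_x)^{\lhd} \to C$ is a $p$-limit diagram. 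Right cofinality of $i$ and $r$ together (through \cite[Prop.~4.3.1.7]{HTT}) then shows $D_x$ extends to a $p$-limit diagram computing the same object, giving $G(x) \simeq \lim F|_{J_x}$.

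For part (2), I would use the description of the locally $\max_S$-cocartesian edges of $\sd(S)$ from \cref{lm:subdivisionLocallyCocartesianByMaxFunctor}: such an edge $e: x \to x'$ lying over $f: s \to t$ is obtained from $x$ by appending a single new terminal vertex $t$, so $x$ and $x'$ lie in $\sd(S)_0$ together or in $\sd(S_1)$ together. In the first case there is nothing to check: $G$ restricts to $F$ on $\sd(S)_0$, $F$ preserves locally cocartesian edges, and the cosieve inclusion $\sd(S)_0 \hookrightarrow \sd(S)$ identifies the locally $\max_S$-cocartesian edges of $\sd(S)_0$ with those of $\sd(S)$ lying in $\sd(S)_0$. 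In the second case, $x, x' \in \sd(S_1)$, and the crux is that appending $t$ induces an equivalence $J_x \xrightarrow{\simeq} J_{x'}$, carrying a string $z$ (ending at $s$) to the string obtained from $z$ by appending $t$ along $f$: because $S_1$ is a cosieve and $S_0$ a sieve, no new $S_0$-vertex of a string indexing $J_{x'}$ can sit at or beyond the terminal position, so every such string arises this way. Moreover the inclusion of $z$ into its extension is itself a locally $\max_S$-cocartesian edge of $\sd(S)_0$ over $f$, so since $F$ preserves such edges we get $F$ of the extension $\simeq f_!\, F(z)$, i.e.\ $F|_{J_{x'}} \simeq f_! \circ F|_{J_x}$ under the equivalence $J_x \simeq J_{x'}$. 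Applying hypothesis (2) -- that $f_!$ commutes with the limit over $J_x$ -- then yields $G(x') \simeq \lim F|_{J_{x'}} \simeq \lim(f_! \circ F|_{J_x}) \simeq f_!\, \lim F|_{J_x} \simeq f_!\, G(x)$, and tracing through the comparison maps identifies this with the canonical map, so $G(e)$ is locally cocartesian.

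Finally, part (3) is formal. By part (1) every $F \in \Fun^{\cocart}_{/S}(\sd(S)_0, C)$ admits a $p$-right Kan extension $G$ along $\phi$, so \cref{rem:AdjunctionForRKE} produces an adjunction $\adjunct{\phi^{\ast}}{\Fun_{/S}(\sd(S), C)}{\Fun_{/S}(\sd(S)_0, C)}{\phi_{\ast}}$ with $\phi_{\ast}(F) \simeq G$. Restriction along the cosieve inclusion carries functors preserving locally cocartesian edges to such functors, so $\phi^{\ast}$ preserves the full subcategories $\Fun^{\cocart}_{/S}$; by part (2) so does $\phi_{\ast}$; hence the adjunction restricts to the stated one on $\Fun^{\cocart}_{/S}$. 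I expect the real obstacle to be part (2): establishing the combinatorial equivalence $J_x \simeq J_{x'}$ and, above all, checking that the resulting equivalence $G(x') \simeq f_!\, G(x)$ is the canonical comparison map and not merely an abstract one; parts (1) and (3) amount to bookkeeping over \cref{cor:RightKanExtensionComputedInFiber}, \cref{lem:relativeAdjSubdivision}, \cref{lem:LimitIsRelativeLimit}, and \cref{rem:AdjunctionForRKE}.
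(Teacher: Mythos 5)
Your proposal is correct and takes essentially the same route as the paper's proof: part (1) via the pointwise criterion, the adjunction $J_x \rightleftarrows \sd(S)_0^{x/}$ of \cref{lem:relativeAdjSubdivision}(3) and the cofinality argument of \cref{cor:RightKanExtensionComputedInFiber} (with \cref{lem:LimitIsRelativeLimit}), part (2) via the ``append $t$'' identification $J_x \simeq J_{x'}$ and the hypothesis that $f_!$ preserves the relevant limits, and part (3) by restricting the adjunction of \cref{rem:AdjunctionForRKE}. The only small point to add in (2) is the degenerate case where $e$ lies over an equivalence $f$, in which case $e$ is not an ``append'' edge as in \cref{lm:subdivisionLocallyCocartesianByMaxFunctor} but is itself an equivalence and hence trivially preserved---the paper disposes of this case separately before running the argument you give.
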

\begin{proof} Note that $\sd(S_1) \subset \sd(S)$ is the complementary \emph{sieve} inclusion to the \emph{cosieve} $\sd(S)_0 \subset \sd(S)$. For (1), to show existence of the $p$-right Kan extension it suffices for every $x \in \sd(S_1)$ to show that the $p$-limit of $F \circ \pr_1: \sd(S)_0^{x/} \to \sd(S)_0 \to C$ exists. But by the argument of \cref{cor:RightKanExtensionComputedInFiber} applied to the adjunction $\adjunctb{J_x}{\sd(S)_0^{x/}}$ of \cref{lem:relativeAdjSubdivision}, this follows from the given hypothesis.

For (2), first note that there are no locally $\max_S$-cocartesian edges $e: x \to y$ such that $x \in \sd(S_1)$ and $y \in \sd(S)_0$, or vice-versa, so it suffices to handle the case where $e: x \to y$ is a locally $\max_S$-cocartesian edge in $\sd(S_1)$ only. Let $f: \max_S(x) = s \to \max_S(y) = t$ be the edge in $S_1 \subset S$. If $f$ is an equivalence, then $e$ is an equivalence and $G(e)$ is an equivalence, so we may suppose $f$ is not an equivalence. Then by the description of the locally $\max_S$-cocartesian edges in \cref{lm:subdivisionLocallyCocartesianByMaxFunctor}, $y$ is obtained from $e$ by appending the edge $f$. Correspondingly, the functor $J_y \xto{\simeq} J_x$ defined via sending $y \to z$ to $x \to z$ by precomposing is an equivalence, using that such edges are constrained to only add objects in $S_0$. Examining how the functoriality of $G$ is obtained from the pointwise existence criterion for Kan extensions, we see that the comparison morphism in $C_t$
\[ \psi: f_! G(x) \simeq f_! (\lim\limits_{\ot} F \ev_1|_{J_x}) \to G(y) \simeq \lim_{\ot} F \ev_1|_{J_y} \]
is induced via the functoriality of limits (contravariant in the diagram, covariant in the target) from the commutative diagram
\[ \begin{tikzcd}[row sep=4ex, column sep=4ex, text height=1.5ex, text depth=0.25ex]
J_x \ar{r}{F \ev_1} & C_s \ar{d}{f_!} \\
J_y \ar{u}{\simeq} \ar{r}{F \ev_1} & C_t.
\end{tikzcd} \]
The hypothesis that $f_!$ preserve limits indexed by $J_x$ together with $J_y \simeq J_x$ then proves that $\psi$ is an equivalence.

Finally, for (3) it is clear that if $G: \sd(S) \to C$ preserves locally cocartesian edges, then the restriction $\phi^{\ast} G$ of $G$ to $\sd(S)_0$ does as well. (1) and (2) establish the same fact for $\phi_{\ast} F$. Hence, the characteristic adjunction
\[ \adjunct{\phi^{\ast}}{\Fun_{/S}(\sd(S),C)}{\Fun_{/S}(\sd(S)_0,C)}{\phi_{\ast}} \]
of the $p$-right Kan extension along $\phi$ restricts to the full subcategories of functors preserving locally cocartesian edges in order to yield the desired adjunction.
\end{proof}

\begin{rem} \label{rem:IdentifyCategoryForLimitWhenPoset} Suppose that $S$ is a poset and $x \in S_1 \subset \sd(S_1)$. Then the $\infty$-category $J_x$ that appears in \cref{thm:ExistenceLaxRightKanExtension} is the poset whose objects are strings $[a_0 < \cdots < a_n < x]$, $n \geq 0$ with $a_i \in S_0$ and whose morphisms are string inclusions.
\end{rem}

\begin{cor} \label{cor:openPartOfRecollement} Suppose the hypotheses of \cref{thm:ExistenceLaxRightKanExtension} are satisfied. Let $j: \sd(S_0) \to \sd(S)$ denote the inclusion. Then the functor $j^{\ast}$ of restriction along $j$ participates in an adjunction
\[\adjunct{j^{\ast}}{\Fun^{\cocart}_{/S}(\sd(S),C)}{\Fun^{\cocart}_{/S_0}(\sd(S_0),C_0)}{j_{\ast}} \]
with fully faithful right adjoint $j_{\ast}$.
\end{cor}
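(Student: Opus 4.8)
The plan is to realize $j^{\ast}$ as the composite of the restriction functor of \cref{thm:ExistenceLaxRightKanExtension}(3) with the equivalence of \cref{thm:subdivisionExtension}(2). Factor the inclusion $j$ as
\[ \sd(S_0) \overset{\iota}{\hookrightarrow} \sd(S)_0 \overset{\phi}{\hookrightarrow} \sd(S), \]
where $\phi$ is the cosieve inclusion appearing in \cref{thm:ExistenceLaxRightKanExtension} (recall $\sd(S) = \sd(S)_0 \sqcup \sd(S_1)$ is a sieve-cosieve decomposition with $\sd(S)_0$ the cosieve) and $\iota$ is the inclusion of \cref{thm:subdivisionExtension}(2); then $j^{\ast} = \iota^{\ast} \circ \phi^{\ast}$, noting that both restriction functors preserve the property of carrying locally cocartesian edges to equivalences. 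By \cref{thm:subdivisionExtension}(2), $\iota^{\ast}\colon \Fun^{\cocart}_{/S}(\sd(S)_0, C) \to \Fun^{\cocart}_{/S_0}(\sd(S_0), C_0)$ is a trivial fibration, hence a categorical equivalence; choose a section $\sigma$ with $\iota^{\ast}\sigma = \id$, so that $\sigma$ is an inverse equivalence to $\iota^{\ast}$. By \cref{thm:ExistenceLaxRightKanExtension}(3), $\phi^{\ast}$ admits a right adjoint $\phi_{\ast}$, given pointwise by $p$-right Kan extension. Consequently $j^{\ast}$ admits the right adjoint $j_{\ast} \coloneq \phi_{\ast} \circ \sigma$, being the composite of a right adjoint with an equivalence.

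It remains to check that $j_{\ast}$ is fully faithful, equivalently that the counit $j^{\ast} j_{\ast} \to \id$ of the adjunction $j^{\ast}\dashv j_{\ast}$ is an equivalence. This counit is obtained by composing the counits of $\phi^{\ast}\dashv\phi_{\ast}$ and $\iota^{\ast}\dashv\sigma$; since the latter is an adjoint equivalence, it suffices to show $\phi^{\ast}\phi_{\ast} \simeq \id$, i.e. that $\phi_{\ast}$ is fully faithful. By the pointwise formula for the $p$-right Kan extension (\cref{rem:ExistenceOfRelativeRKE}, or \cref{thm:ExistenceLaxRightKanExtension}(1)), for $F \in \Fun^{\cocart}_{/S}(\sd(S)_0, C)$ and $x \in \sd(S)_0$ the value $(\phi_{\ast}F)(x)$ is the $p$-limit of $F$ restricted to the slice $\sd(S)_0 \times_{\sd(S)} \sd(S)_{x/}$. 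Because $\phi$ is the inclusion of a full subcategory stable under equivalences and $x$ lies in it, this slice has the initial object $\id_x$, so the relative limit is computed as $F(x)$ (using right cofinality of $\{\id_x\}\hookrightarrow \sd(S)_0\times_{\sd(S)}\sd(S)_{x/}$ and \cite[Prop.~4.3.1.7]{HTT}) and the comparison map $(\phi_{\ast}F)(x) \to F(x)$ is an equivalence. As equivalences in the functor category are detected objectwise on $\sd(S)_0$, the counit $\phi^{\ast}\phi_{\ast} \to \id$ is an equivalence, and hence so is the counit $j^{\ast} j_{\ast} \to \id$.

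The only step requiring genuine input, rather than formal manipulation of the two adjunction/equivalence inputs already in hand, is the identification of the slice $\sd(S)_0 \times_{\sd(S)} \sd(S)_{x/}$ as an $\infty$-category with initial object $\id_x$; once this is granted the full faithfulness of $j_{\ast}$ is immediate. For later use we also record the resulting pointwise description of $j_{\ast}$: unwinding $j_{\ast} = \phi_{\ast}\circ\sigma$ through the pointwise formulas of \cref{thm:subdivisionExtension}(2) and \cref{thm:ExistenceLaxRightKanExtension}(1), $j_{\ast}$ sends a functor $[f\colon \sd(S_0)\to C_0]$ to the locally cocartesian section whose value on $x \in \sd(S_1)$ is $\lim\bigl(\overline{f}|_{J_x}\bigr)$, where $\overline{f}$ is the unique extension of $f$ over $\sd(S)_0$ furnished by \cref{thm:subdivisionExtension}(2) and $J_x$ is as in \cref{thm:ExistenceLaxRightKanExtension}; this matches the description in \cref{thmA}.
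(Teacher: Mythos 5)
Your proof is correct and follows essentially the same route as the paper, whose entire proof is ``Combine \cref{thm:ExistenceLaxRightKanExtension} and \cref{thm:subdivisionExtension}(2)''---i.e., exactly your factorization of $j^{\ast}$ through $\sd(S)_0$, with the full faithfulness of $\phi_{\ast}$ (left implicit in the paper) supplied by your initial-object argument for the slices $\sd(S)_0 \times_{\sd(S)} \sd(S)_{x/}$. One cosmetic slip: objects of $\Fun^{\cocart}$ carry locally cocartesian edges to locally cocartesian edges, not to equivalences, but nothing in your argument depends on that phrasing.
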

\begin{proof} Combine \cref{thm:ExistenceLaxRightKanExtension} and \cref{thm:subdivisionExtension}(2).
\end{proof}

We also have a far simpler result concerning the calculation of the left adjoint $j_!$ of $j^{\ast}$ (but see \cref{rem:leftadjointspecial}).

\begin{prp} \label{prp:openPartOfRecollementLeftAdjoint} Let $p: C \to S$ be a locally cocartesian fibration, let $\pi: S \to \Delta^1$ be a functor, and suppose that for every $s \in S_1$, the fiber $C_s$ admits an initial object $\emptyset$, and for every $[f:s \rightarrow t] \in S_1$ the pushforward functors $f_!$ all preserve initial objects. Then $j^{\ast}$ admits a fully faithful left adjoint $j_!$ such that for $F: \sd(S_0) \to C_0$, $j_! F(x) \simeq \emptyset$ for all $x \in \sd(S_1)$.
\end{prp}
\begin{proof} Suppose we have a commutative diagram
\[ \begin{tikzcd}[row sep=4ex, column sep=4ex, text height=1.5ex, text depth=0.25ex]
\sd(S)_0 \ar[hookrightarrow]{d}{\phi} \ar{r}{F} & C \ar{d}{p} \\
\sd(S) \ar{r}{\max_S} & S.
\end{tikzcd} \]
For all $x \in \sd(S_1)$, the fiber product $\sd(S)^{/x} \times_{\sd(S)} \sd(S)_0$ is the empty category. Therefore, under our assumption the $p$-left Kan extension $\phi_! F$ of $F$ along $\phi$ exists and is computed by $\phi_! F(x) = \emptyset$ on $\sd(S_1)$. Combining this observation with \cref{thm:subdivisionExtension}(2), we obtain the desired adjunction
\[ \adjunct{j_!}{\Fun^{\cocart}_{/S_0}(\sd(S_0),C_0)}{\Fun^{\cocart}_{/S}(\sd(S),C)}{j^{\ast}}. \]
\end{proof}

We next turn to the cosieve inclusion $S_1 \subset S$. Note that the inclusion $i: \into{\sd(S_1)}{\sd(S)}$ is a sub-locally cocartesian fibration with respect to $\max_S: \sd(S) \to S$, and is in addition a \emph{sieve} inclusion, and hence $i$ is a cartesian fibration. In fact, the cosieve inclusion $j: \into{\sd(S)_0}{\sd(S)}$ is complementary to $i$.

\begin{prp} \label{prp:closedPartOfRecollement} Let $p: C \to S$ be a locally cocartesian fibration, let $\pi: S \to \Delta^1$ be a functor, and suppose the fibers of $p$ admit terminal objects and the pushforward functors preserve terminal objects. Then we have the adjunction
\[ \adjunct{i^{\ast}}{\Fun^{\cocart}_{/S}(\sd(S),C)}{\Fun^{\cocart}_{/S_1}(\sd(S_1),C_1)}{i_{\ast}} \]
with $i_{\ast}$ fully faithful, where $i^{\ast}$ is given by restriction along $i$ and $i_{\ast}$ is $p$-right Kan extension along $i$. Moreover, for a functor $G: \sd(S_1) \to C_1$, we have $(i_{\ast} G)(x) \simeq \ast \in C_{\max_S(x)}$ for all $x \in \sd(S)_0$.
\end{prp}
\begin{proof} By \cref{cor:RightKanExtensionComputedInFiber}, using the hypothesis that the fibers of $p$ admit terminal objects we have the adjunction
\[ \adjunct{i^{\ast}}{\Fun_{/S}(\sd(S),C)}{\Fun_{/S_1}(\sd(S_1),C_1)}{i_{\ast}}  \]
with $i^{\ast}$ and $i_{\ast}$ as described. Then using that the pushforward functors preserve terminal objects, we see that this adjunction restricts to the one of the proposition.
\end{proof}

\begin{lem} \label{lem:rlaxLimitAdmitsFiniteLimitsAndStable} Let $p: C \to S$ be a locally cocartesian fibration and suppose that the fibers $C_s$ admit $K$-(co)limits and the pushforward functors preserve $K$-(co)limits. Then $\Fun^{\cocart}_{/S}(\sd(S), C)$ admits $K$-indexed (co)limits, and for all $\sigma \in \sd(S)$ over $s = \max_S(\sigma)$, the evaluation functor $\ev_{\sigma}: \Fun^{\cocart}_{/S}(\sd(S), C) \to C_s$ preserves $K$-indexed (co)limits. Moreover, if the fibers $C_s$ are stable $\infty$-categories and the pushforward functors are exact, then $\Fun^{\cocart}_{/S}(\sd(S), C)$ is a stable $\infty$-category.
\end{lem}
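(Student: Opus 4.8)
The plan is to reduce the statement to a fact about section $\infty$-categories and then feed it into the relative right Kan extension machinery of \cref{cor:RightKanExtensionComputedInFiber}. Set $E \coloneq \sd(S) \times_S C$, so that $q: E \to \sd(S)$ is a locally cocartesian fibration (a base change of $p$), its fibers $E_\sigma \simeq C_{\max_S(\sigma)}$ admit $K$-indexed (co)limits, its pushforward functors (base changes of the $f_!$) preserve them, and $\Fun_{/S}(\sd(S), C)$ is identified with the $\infty$-category $\Fun_{/\sd(S)}(\sd(S), E)$ of sections of $q$. The projection $\pr: \sd(S) \times K \to \sd(S)$ is both a cartesian and a cocartesian fibration with fibers $K$, so \cref{cor:RightKanExtensionComputedInFiber} applied to the pair $(\pr, q)$ — whose hypothesis holds since each $E_\sigma$ admits $K$-limits — produces the adjunction $\pr^\ast \dashv \pr_\ast$ between $\Fun_{/\sd(S)}(\sd(S), E)$ and $\Fun_{/\sd(S)}(\sd(S) \times K, E) \simeq \Fun(K, \Fun_{/\sd(S)}(\sd(S), E))$. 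Since $\pr^\ast$ is the constant-diagram functor, $\pr_\ast$ is the $K$-limit functor, so $\Fun_{/S}(\sd(S), C)$ admits $K$-limits, computed by $\sigma \mapsto \lim_k \overline{X}(k, \sigma)$ in $C_{\max_S(\sigma)}$; in particular each $\ev_\sigma$ preserves them. For $K$-colimits I would run the formal dual of \cref{cor:RightKanExtensionComputedInFiber}: its proof goes through verbatim once the appeal to \cref{lem:LimitIsRelativeLimit} is replaced by \cite[Prop.~4.3.1.10]{HTT}, which applies precisely because the $f_!$ preserve $K$-colimits; this gives the adjunction $\pr_! \dashv \pr^\ast$, and hence $\Fun_{/S}(\sd(S), C)$ admits $K$-colimits, again computed fiberwise.

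Next I would check that $\Fun^{\cocart}_{/S}(\sd(S), C)$ is closed under these fiberwise $K$-(co)limits inside $\Fun_{/S}(\sd(S), C)$. Given a diagram $\overline{X}: K \to \Fun^{\cocart}_{/S}(\sd(S), C)$ with fiberwise (co)limit $X_\infty$ and a locally $\max_S$-cocartesian edge $e: \sigma \to \tau$ lying over $f: s \to t$, the comparison map $f_! X_\infty(\sigma) \to X_\infty(\tau)$ in $C_t$ factors as $f_!(\lim_k \overline{X}(k, \sigma)) \simeq \lim_k f_! \overline{X}(k, \sigma) \simeq \lim_k \overline{X}(k, \tau) = X_\infty(\tau)$ (and dually for colimits), where the first equivalence is the hypothesis that $f_!$ preserves $K$-(co)limits and the second holds because each $\overline{X}(k, -)$ preserves locally cocartesian edges. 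Hence $X_\infty$ preserves locally cocartesian edges, so $\Fun^{\cocart}_{/S}(\sd(S), C)$ admits $K$-(co)limits, the inclusion into $\Fun_{/S}(\sd(S), C)$ preserves them, and therefore so does each $\ev_\sigma: \Fun^{\cocart}_{/S}(\sd(S), C) \to C_s$.

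Finally, assume the $C_s$ are stable and the $f_!$ exact, and write $\cC$ for $\Fun^{\cocart}_{/S}(\sd(S), C)$. Running the previous two paragraphs with $K$ ranging over finite simplicial sets shows that $\cC$ admits finite limits and colimits, all computed fiberwise, with the evaluation functors $\ev_\sigma$ (which detect equivalences objectwise) preserving them. The terminal and initial objects of $\cC$ both select the zero object of $C_{\max_S(\sigma)}$ in each fiber — legitimate since $f_!$ preserves $0$ — and the canonical map between them is a fiberwise equivalence, hence an equivalence, so $\cC$ is pointed. A square in $\cC$ is a pushout iff its image under every $\ev_\sigma$ is a pushout in $C_{\max_S(\sigma)}$ (by joint conservativity together with the fiberwise computation of pushouts), iff — by stability of each $C_s$ — its image under every $\ev_\sigma$ is a pullback, iff the square is a pullback; so $\cC$ is stable by \cite[Prop.~1.1.3.4]{HA}. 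The only genuinely fiddly point in all of this is the formal dual of \cref{cor:RightKanExtensionComputedInFiber} used for colimits, the limit half being already in hand; I would record this dual as a short remark, or else deduce the colimit statement directly from \cite[Prop.~5.4.7.11]{HTT} together with \cite[Prop.~4.3.1.10]{HTT}.
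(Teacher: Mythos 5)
Your proposal is correct, but it takes a more hands-on route than the paper, whose entire proof is a one-line appeal to \cite[Prop.~5.4.7.11]{HTT} applied to the base-changed locally cocartesian fibration $\sd(S) \times_S C \to \sd(S)$, with the subcategory of $\widehat{\Cat}_{\infty}$ taken to be either $\infty$-categories with $K$-indexed (co)limits and functors preserving them, or $\Cat_{\infty}^{\stab}$. You instead re-derive the fiberwise (co)limit statements internally: for limits you apply \cref{cor:RightKanExtensionComputedInFiber} to the projection $\sd(S) \times K \to \sd(S)$ (which is legitimate, since that projection is a cartesian fibration and the fibers of $\sd(S)\times_S C \to \sd(S)$ are the $C_{\max_S(\sigma)}$ with pushforwards induced by the $f_!$), for colimits you invoke the formal dual with \cref{lem:LimitIsRelativeLimit} replaced by \cite[Prop.~4.3.1.10]{HTT} (or the direct HTT citation you mention, which is essentially what the paper does), and you then check explicitly that $\Fun^{\cocart}_{/S}(\sd(S),C)$ is closed under these fiberwise (co)limits — using exactly the comparison-map factorization that the paper itself uses in the proof of \cref{thm:ExistenceLaxRightKanExtension}(2) — before deducing stability from the pushout-equals-pullback criterion \cite[Prop.~1.1.3.4]{HA} rather than from the $\Cat^{\stab}_{\infty}$ instance of the HTT result. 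What the paper's citation buys is brevity; what your argument buys is self-containedness within the paper's own relative Kan extension machinery, plus an explicit treatment of the passage from all sections to the locally-cocartesian-edge-preserving ones (the step a reader must supply when unwinding the paper's citation) and of the fiberwise formula needed for the evaluation-functor claims. The only genuinely informal points — the identification of the comparison map $f_!\bigl(\lim_k X_k(\sigma)\bigr) \to \lim_k X_k(\tau)$ with the stated composite, and the dualization of \cref{cor:RightKanExtensionComputedInFiber} — are routine and you flag them appropriately, so I see no gap.
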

\begin{proof} Apply \cite[Prop.~5.4.7.11]{HTT} to the locally cocartesian fibration $\sd(S) \times_S C \to \sd(S)$, with the subcategory of $\widehat{\Cat}_{\infty}$ either taken to be those $\infty$-categories that admit $K$-indexed (co)limits and functor that preserve $K$-indexed (co)limits, or the subcategory $\Cat_{\infty}^{\stab}$ of stable $\infty$-categories and exact functors thereof.
\end{proof}

Finally, putting everything together, we get:

\begin{thm} \label{thm:RecollementRlaxLimitOfLlaxFunctor} Let $p: C \to S$ be a locally cocartesian fibration whose fibers admit finite limits and whose pushforward functors preserve finite limits. Let $\pi: S \to [1]$ be a functor and suppose moreover that the hypotheses of \cref{thm:ExistenceLaxRightKanExtension} hold so that the adjunction \ref{thm:ExistenceLaxRightKanExtension}(3) exists. Then the two adjunctions of \cref{cor:openPartOfRecollement} and \cref{prp:closedPartOfRecollement} combine to exhibit $\Fun^{\cocart}_{/S}(\sd(S),C)$ as a recollement of $\Fun^{\cocart}_{/S_0}(\sd(S_0),C_0)$ and $\Fun^{\cocart}_{/S_1}(\sd(S_1),C_1)$.
\end{thm}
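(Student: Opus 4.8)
The plan is to verify the axioms of \cref{dfn:recollement} directly, with $\sX = \Fun^{\cocart}_{/S}(\sd(S),C)$, and with $\sU = \Fun^{\cocart}_{/S_0}(\sd(S_0),C_0)$, $\sZ = \Fun^{\cocart}_{/S_1}(\sd(S_1),C_1)$ identified with the essential images of the fully faithful right adjoints $j_*$ and $i_*$ produced in \cref{cor:openPartOfRecollement} and \cref{prp:closedPartOfRecollement}; these essential images are full and stable under equivalences. First I would invoke \cref{lem:rlaxLimitAdmitsFiniteLimitsAndStable}, with $K$ ranging over the finite simplicial sets (using that the fibers $C_s$ admit finite limits and the pushforward functors preserve them), to see that $\sX$, $\sU$, and $\sZ$ all admit finite limits and that these are computed pointwise, i.e.\ the evaluation functors $\ev_\sigma$ are left exact. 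Since $j^*$ and $i^*$ are restriction along the full inclusions $\sd(S_0) \hookrightarrow \sd(S)$ and $\sd(S_1) \hookrightarrow \sd(S)$, the pointwise computation of finite limits immediately gives that $j^*$ and $i^*$ are left exact; combined with \cref{cor:openPartOfRecollement} and \cref{prp:closedPartOfRecollement}, this establishes the structural hypothesis of \cref{dfn:recollement}, namely that $j_*$ and $i_*$ are fully faithful with left-exact left adjoints.

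Next I would check condition (1). The terminal object of $\sU$ is the cocartesian section $\underline{\ast}$ picking out the terminal object $\ast \in C_s$ in each fiber; this is a well-defined object of $\sU$ because the pushforward functors preserve terminal objects, and it is terminal because limits in $\sU$ are pointwise. By the pointwise formula in \cref{prp:closedPartOfRecollement}, for every $G$ the section $i_* G$ evaluates to $\ast$ on every $x \in \sd(S)_0$, hence on every $x$ in the full subcategory $\sd(S_0) \subset \sd(S)_0$; therefore $j^* i_* G = (i_*G)|_{\sd(S_0)} \simeq \underline{\ast}$, so $j^* i_*$ is constant at the terminal object of $\sU$.

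For condition (2), joint conservativity, the first point to record is that, since $S_0$ is a sieve and $S_1$ a complementary cosieve, every string in $S$ either originates in $S_0$, hence belongs to $\sd(S)_0$, or lies entirely in $S_1$, hence belongs to $\sd(S_1)$; so the objects of $\sd(S)$ are the union of those of $\sd(S)_0$ and of $\sd(S_1)$. Given $\eta\colon F \to F'$ in $\sX$ with $j^*\eta$ and $i^*\eta$ equivalences, and since equivalences in $\sX \subset \Fun_{/S}(\sd(S),C)$ are detected objectwise, it suffices to show $\eta_\sigma$ is an equivalence for every $\sigma \in \sd(S)$. For $\sigma \in \sd(S_1)$ this is immediate from $i^*\eta$ being an equivalence. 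For $\sigma \in \sd(S)_0$ one uses \cref{thm:subdivisionExtension}(2): the restriction functor $\Fun^{\cocart}_{/S}(\sd(S)_0,C) \to \Fun^{\cocart}_{/S_0}(\sd(S_0),C_0) = \sU$ is a trivial fibration, hence an equivalence of $\infty$-categories and in particular conservative; since it carries $\eta|_{\sd(S)_0}$ to $j^*\eta$, which is an equivalence, $\eta|_{\sd(S)_0}$ is an equivalence and so $\eta_\sigma$ is an equivalence for all $\sigma \in \sd(S)_0$. This exhausts $\sd(S)$, so $\eta$ is an equivalence.

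The only genuinely nonformal step is this last one: the strings that start in $S_0$ but wander into $S_1$ lie in $\sd(S)_0 \setminus \sd(S_0)$ and are not directly seen by either $j^*$ or $i^*$, so joint conservativity is not apparent from the definitions. Handling them is precisely the purpose of the extension theorem \cref{thm:subdivisionExtension}(2), which identifies $\Fun^{\cocart}_{/S}(\sd(S)_0,C)$ with $\sU$ via a trivial fibration and thereby reduces pointwise behaviour over $\sd(S)_0$ to pointwise behaviour over $\sd(S_0)$. Everything else is assembling the adjunctions from \cref{cor:openPartOfRecollement} and \cref{prp:closedPartOfRecollement} with the pointwise computation of limits from \cref{lem:rlaxLimitAdmitsFiniteLimitsAndStable}.
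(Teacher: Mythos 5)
Your proposal is correct, and its skeleton matches the paper's proof: left-exactness of $j^*$ and $i^*$ via the fiberwise computation of finite limits from \cref{lem:rlaxLimitAdmitsFiniteLimitsAndStable}, and condition (1) read off from the pointwise formula for $i_*$ in \cref{prp:closedPartOfRecollement}. The one place you diverge is joint conservativity. The paper's (terse) argument is that an equivalence between objects of $\Fun^{\cocart}_{/S}(\sd(S),C)$ is already detected on the singleton strings $[x]$, $x \in S$: since both sections preserve locally cocartesian edges and every string is reached from the singleton on its initial vertex by successively appending edges (which are exactly the locally cocartesian edges of $\max_S$), the component $\alpha_\sigma$ is, up to the cocartesian identifications, an iterated pushforward of $\alpha_{[\min\sigma]}$, so $j^*\alpha$ and $i^*\alpha$ being equivalences already forces all components to be equivalences. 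You instead check all strings and dispose of the problematic ones in $\sd(S)_0 \setminus \sd(S_0)$ (those starting in $S_0$ and wandering into $S_1$) by invoking the trivial fibration of \cref{thm:subdivisionExtension}(2) and its conservativity. Both arguments are valid, and yours costs nothing extra since that theorem is already needed for \cref{cor:openPartOfRecollement}; the paper's version is slightly more elementary at this step, using only the definition of preserving locally cocartesian edges, and you could note it makes explicit the "detected on singletons" principle that the paper states without proof. You are also more careful than the paper in identifying $\sU$ and $\sZ$ with the essential images of $j_*$ and $i_*$ to literally match \cref{dfn:recollement}, which is a harmless but welcome precision.
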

\begin{proof} We verify the conditions to be a recollement. By our assumption on $p$ and \cref{lem:rlaxLimitAdmitsFiniteLimitsAndStable}, finite limits in $\Fun^{\cocart}_{/S}(\sd(S),C)$ exist and are computed fiberwise. Therefore, the restriction functors $j^{\ast}$ and $i^{\ast}$ are left exact. By the formula for $i_{\ast}$ given in \cref{prp:closedPartOfRecollement}, it is clear that $j^{\ast} i_{\ast}$ is constant at the terminal object. Finally, we check that $j^{\ast}$ and $i^{\ast}$ are jointly conservative. Suppose given a morphism $\alpha: F \to F'$ in $\Fun^{\cocart}_{/S}(\sd(S),C)$ such that $j^{\ast} \alpha$ and $i^{\ast} \alpha$ are equivalences. Observe that $\alpha$ is an equivalence if and only if for all $x \in S$, $\alpha_x: F(x) \to F'(x)$ is an equivalence (viewing $x$ as an object in $\sd(S)$). Because any object of $S$ lies in either $S_0$ or $S_1$, we deduce that $\alpha$ is an equivalence.
\end{proof}

\begin{rem} \label{rem:downfinite}
Suppose that $S$ is a \emph{down-finite} poset $P$. Let $C \to P$ be a locally cocartesian fibration such that its fiber admit finite limits and its pushforward functors preserve finite limits. Then the hypotheses of \cref{thm:ExistenceLaxRightKanExtension} automatically hold for every sieve-cosieve decomposition of $P$. Indeed, the categories $J_x$ that appear there are all finite (cf. \cref{rem:IdentifyCategoryForLimitWhenPoset}).
\end{rem}

Let us now return to the question of the existence of $j_!$.

\begin{rem} \label{rem:leftadjointspecial}
In fact, the left adjoint $j_!$ in \cref{prp:openPartOfRecollementLeftAdjoint} should exist even if we only suppose that the fibers of $C$ admits initial objects (i.e., we need not suppose that the pushforward functors preserve initial objects). However, in that case $j_!$ will not generally be the $p$-left Kan extension along the inclusion $\phi$, and relatedly, a direct proof of this would appear to be overly cumbersome in our framework. Rather, we can say the following (which covers most cases of practical relevance):
\begin{itemize}[leftmargin=*]
\item Suppose that the hypotheses of \cref{thm:RecollementRlaxLimitOfLlaxFunctor} are satisfied and we have also shown that $\Fun^{\cocart}_{/S_1}(\sd(S_1),C)$ admits an initial object. Then as in any recollement situation, $j_!$ exists and is computed by $j_!(u)=[u,\emptyset \rightarrow i^*j_*(u)]$.
\item To exhibit the initial object of $\Fun^{\cocart}_{/S_1}(\sd(S_1),C)$, suppose also that $S_1$ is a finite poset $P$. Then using \cref{thm:RecollementRlaxLimitOfLlaxFunctor} in conjunction with \cref{lem:ColimitExistenceInRecollement}, we may proceed by induction on the cardinality of $P$ and repeatedly invoke our assumption that the fibers of $C$ admit an initial object to conclude that $\Fun^{\cocart}_{/S_1}(\sd(S_1),C)$ admits an initial object whose evaluation at every singleton string is also initial.
\end{itemize}
\end{rem}

We conclude this subsection by giving an application of \cref{thm:RecollementRlaxLimitOfLlaxFunctor} to the presentability of the right-lax limit $\Fun^{\cocart}_{/S}(\sd(S),C)$. First suppose that $S$ is equivalent to a \emph{finite} poset and write $P = S$. 


\begin{prp} \label{prp:rightLaxLimitPresentable} Suppose that the fibers $C_s$ of $p: C \to P$ are presentable and the pushforward functors are left-exact and accessible. Then $\Fun^{\cocart}_{/P}(\sd(P),C)$ is presentable, and for all $s \in P$, the evaluation functor $\ev_s: \Fun^{\cocart}_{/P}(\sd(P),C) \to C_{s}$ preserves (small) colimits and is accessible.
\end{prp}
\begin{proof} The accessibility statements follow from \cite[Prop.~5.4.7.11]{HTT} as in \cref{lem:rlaxLimitAdmitsFiniteLimitsAndStable}, so we only need to show the existence and preservation of small colimits. Our strategy is to proceed by induction on the cardinality of $P$. If $|P| \leq 1$, then the statement is clear. Suppose for the inductive hypothesis that we have established the statement for all posets $Q$ such that $|Q|< |P|$. Let $b \in P$ be a maximal object and let $\pi: P \to \Delta^1$ be the functor determined by the sieve-cosieve decomposition $P_0 = P\setminus \{b\}$ and $P_1 = \{b \}$. Because the diagrams that appear in \cref{thm:ExistenceLaxRightKanExtension} are finite, we may apply \cref{thm:RecollementRlaxLimitOfLlaxFunctor} to decompose $\Fun^{\cocart}_{/P}(\sd(P),C)$ as a recollement of $\Fun^{\cocart}_{/P_0}(\sd(P_0),C_0)$ and $C_b$. By the inductive hypothesis, both these $\infty$-categories admit all small colimits such that the evaluation functors at objects in $P_0$ and $P_1$ are colimit-preserving. By \cref{lem:ColimitExistenceInRecollement}, we conclude that $\Fun^{\cocart}_{/P}(\sd(P),C)$ admits all small colimits such that the evaluation functors for objects $s \in P$ are colimit-preserving.
\end{proof}

Next, we may use the equivalence (cf. \cref{rem:barycentricSubdivisionOrdinaryCategory})
\begin{equation} \label{hypothesis} \tag{$\star$}
\sd(S) \xot{\simeq} \colim_{[n] \in \Delta_{/S}} \sd([n])    
\end{equation}
to promote \cref{prp:rightLaxLimitPresentable} to a statement involving arbitrary $S$.

\begin{cor}
Suppose the fibers $C_s$ of $p: C \to S$ are presentable and the pushforward functors are left-exact and accessible. Then $\Fun^{\cocart}_{/S}(\sd(S), C)$ is presentable.
\end{cor}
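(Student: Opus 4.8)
The strategy is to use the equivalence $(\star)$ to present $\Fun^{\cocart}_{/S}(\sd(S),C)$ as a limit of presentable $\infty$-categories along colimit-preserving functors, each term being handled by \cref{prp:rightLaxLimitPresentable}.

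Recall from \cref{rem:barycentricSubdivisionOrdinaryCategory} that $(\star)$ realizes $\sd(S)$ as the colimit in $\Cat_{\infty}$ of the diagram $\Delta_{/S} \to \Cat_{\infty}$ sending a point $\sigma \in \Delta_{/S}$ lying over $[n_\sigma] \in \Delta$ — that is, a functor $\sigma\colon \Delta^{n_\sigma} \to S$ — to $\sd(\Delta^{n_\sigma})$, with cocone leg $\sd(\sigma)\colon \sd(\Delta^{n_\sigma}) \to \sd(S)$ and with transition maps $\sd(f)$ induced by functoriality of $\sd$ (for $f$ a morphism of $\Delta_{/S}$ lying over some $f\colon [m]\to[n]$ in $\Delta$). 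Since $\max_S \circ \sd(\sigma) \simeq \sigma \circ \max_{\Delta^{n_\sigma}}$ and $\sd$ of a functor carries locally $\max$-cocartesian edges to locally $\max$-cocartesian edges (by the explicit description of these edges in \cref{lm:subdivisionLocallyCocartesianByMaxFunctor}), this colimit lifts to one of marked simplicial sets over $S$. The contravariant functor $A \mapsto \Fun^{\cocart}_{/S}(A,C)$ carries such colimits to limits, and a functor $\sd(\Delta^{n_\sigma}) \to C$ over $S$ is the same datum as a functor $\sd(\Delta^{n_\sigma}) \to \sigma^{\ast}C$ over $\Delta^{n_\sigma}$, which preserves locally $\max_S$-cocartesian edges exactly when the latter preserves locally $\max_{\Delta^{n_\sigma}}$-cocartesian edges (using that a locally cocartesian edge of $\sigma^{\ast}C$ maps to one of $C$, together with the description of these edges in $\sd$). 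Hence
\begin{equation*}
\Fun^{\cocart}_{/S}(\sd(S),C) \;\simeq\; \lim_{\sigma \in (\Delta_{/S})^{\op}} \Fun^{\cocart}_{/\Delta^{n_\sigma}}\big(\sd(\Delta^{n_\sigma}),\, \sigma^{\ast}C\big),
\end{equation*}
with transition functors given by restriction along the maps $\sd(f)$.

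For each $\sigma$, the pullback $\sigma^{\ast}C \to \Delta^{n_\sigma}$ is a locally cocartesian fibration over a finite poset whose fibers are fibers of $C$, hence presentable, and whose pushforward functors are pushforward functors of $C$, hence left-exact and accessible; so \cref{prp:rightLaxLimitPresentable} applies and shows that each term of the limit is presentable with colimit-preserving evaluation functors at the objects of $\sd(\Delta^{n_\sigma})$. Since restriction along $\sd(f)$, for $f\colon[m]\to[n]$, followed by evaluation at $x \in \sd(\Delta^m)$ coincides with evaluation at $\sd(f)(x) \in \sd(\Delta^n)$, and colimits in these right-lax limits are detected and computed by the evaluation functors, each transition functor preserves small colimits, hence — being a colimit-preserving functor between presentable $\infty$-categories — defines a morphism in $\mathrm{Pr}^{\mathrm L}$. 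As $\Delta_{/S}$ is essentially small and $\mathrm{Pr}^{\mathrm L}$ is stable under small limits inside $\widehat{\Cat}_{\infty}$ \cite[Prop.~5.5.3.13]{HTT}, the displayed limit is presentable, which is the claim. The one step requiring genuine care is the first: promoting $(\star)$ to a colimit of marked simplicial sets over $S$ and checking that $\Fun^{\cocart}_{/S}(-,C)$ converts it into the displayed limit with the asserted terms and transition functors; granting that, the remainder is a routine application of \cref{prp:rightLaxLimitPresentable} together with standard closure properties of presentable $\infty$-categories.
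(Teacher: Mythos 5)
Your argument is correct and follows essentially the same route as the paper: decompose via $(\star)$ into $\lim_{(\Delta_{/S})^{\op}} \Fun^{\cocart}_{/[n]}(\sd[n], \sigma^*C)$, apply \cref{prp:rightLaxLimitPresentable} to each finite-poset term, check that the restriction transition functors preserve colimits via the colimit-preserving, jointly conservative evaluation functors, and conclude by closure of presentable $\infty$-categories under such limits. The only caveat is that \cref{prp:rightLaxLimitPresentable} asserts colimit-preservation of evaluation only at singleton strings $s \in P$, not at all objects of $\sd(\Delta^{n_\sigma})$, but your argument only needs the singleton evaluations, so it goes through unchanged.
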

\begin{proof}
We may simply copy over the proof strategy used to establish \cite[Prop.~6.1.6(1)]{AMGRb}. By \eqref{hypothesis}, we have that
\[ \Fun^{\cocart}_{/S}(\sd(S), C) \xto{\simeq} \lim_{[n] \in (\Delta_{/S})^{\op}} \Fun^{\cocart}_{/[n]}(\sd[n], C|_{[n]}). \]
By \cref{prp:rightLaxLimitPresentable} and \cref{thm:RecollementRlaxLimitOfLlaxFunctor}, for every $[\sigma: [n] \to S] \in \Delta_{/S}$, $\mathrm{lim}^{\rlax} \sigma^* C$ is presentable and the evaluation functors $\{ \ev_i: \mathrm{lim}^{\rlax} \sigma^* C \to C_{\sigma(i)} \}_{i=0}^n$ are colimit-preserving and jointly conservative. Note then that for any map $\alpha:[m]\to [n]$, the restriction functor
$$\alpha^*: \mathrm{lim}^{\rlax} \sigma^* C \to \mathrm{lim}^{\rlax} \alpha^* \sigma^* C$$
preserves colimits. Then since $\mathrm{lim}^{\rlax} C$ is a limit of presentable $\infty$-categories along colimit-preserving functors, it is presentable.
\end{proof}

\begin{rem} \label{rem:amgrCompare}
We explain a subtle difference between our general approach and the one of \cite[\S 6]{AMGRb}, which is adapted to the case of locally cocartesian fibrations $p: C \to P$ over a poset $P$ whose fibers are presentable stable $\infty$-categories and whose pushforward functors are exact and accessible. Suppose one could prove directly that $\Fun^{\cocart}_{/P}(\sd(P), C)$ is presentable (for any poset) and that the restriction functor $j^*: \Fun^{\cocart}_{/P}(\sd(P), C) \to \Fun^{\cocart}_{/P_0}(\sd(P_0), C)$ preserves colimits, so that it admits a right adjoint $j_*$. Then without a pointwise formula for $j_*$, it is generally difficult to show that $j_*$ is fully faithful. However, this would follow if we could also exhibit a fully faithful \emph{left} adjoint $j_!$ to $j^*$, and this turns out to be easier to analyze (cf. \cref{prp:openPartOfRecollementLeftAdjoint}). This is the strategy adopted in the proof of \cite[Prop.~6.1.6]{AMGRb}.

Therefore, if we were only interested in the existence of the recollement on $\Fun^{\cocart}_{/P}(\sd(P), C)$ in the stable presentable case, then we could bypass the work that goes into establishing the pointwise formula of \cref{thm:ExistenceLaxRightKanExtension}. However, our primary motivation for undertaking this work lay precisely in having this pointwise formula. Note also that in the presentable case, the right adjoint $j_*$ exists unconditionally even if it is not describable as a relative right Kan extension.

On the other hand, such tricks aren't available in the absence of presentability (though for idempotent-complete small stable $\infty$-categories, one can pass to their $\Ind$-completions as is done in \cite[\S 7.2]{AMGRb}). Over a down-finite poset $P$ (cf. \cref{rem:downfinite}), our \cref{thm:RecollementRlaxLimitOfLlaxFunctor} thus allows one to strengthen \cite[Thm.~A]{AMGRb} by removing all of the presentability hypotheses therein.
\end{rem}

\subsubsection{Symmetric monoidal structure} \label{sec:symmMon}

We briefly explain how to promote \cref{thm:RecollementRlaxLimitOfLlaxFunctor} to a statement involving symmetric monoidal recollements. First recall the notions of left-lax and right-lax morphisms of locally cocartesian fibrations from \cite[\S A]{AMGRb}:

\begin{rec} \label{rec:amgr}
Let $\lambda, \xi: \sC, \sD \to S$ be locally cocartesian fibrations. A \emph{left-lax} morphism $\lambda \to \xi$ is a functor $F: \sC \to \sD$ over $S$ (which need not preserve locally cocartesian edges). In contrast, a \emph{right-lax} morphism $\lambda \to \xi$ is defined as in \cite[\S A.5]{AMGRb} as the ``unstraightened'' counterpart to a right-lax natural transformation of left-lax functors.

The collection of locally cocartesian fibrations over $S$ and right-lax morphisms thereof assemble into an $\infty$-category $\LocCocart^{\rlax}_{S}$ which contains $\LocCocart_{S}$ as a wide subcategory. Moreover, $\mathrm{lim}^{\rlax}$ extends to a functor over $\LocCocart^{\rlax}_{S}$ that is right adjoint to the constant functor $\const: \sE \mapsto \sE \times S$. See \cite[Obs.~A.5.8]{AMGRb}.
\end{rec}

In view of the adjunction $\const \dashv \mathrm{lim}^{\rlax}$, $\mathrm{\lim}^{\rlax}$ sends commutative monoids in $\LocCocart^{\rlax}_S$ to symmetric monoidal $\infty$-categories. Moreover, a diagram chase shows that given a commutative monoid structure on $[p:C \to S]$, for any $\alpha:T \to S$ the pullback $[\alpha^* C \to T]$ is a commutative monoid in $\LocCocart^{\rlax}_T$ and the restriction functor $\mathrm{lim}^{\rlax} C \to \mathrm{lim}^{\rlax} \alpha^* C$ is symmetric monoidal. It follows that if the recollement of \cref{thm:RecollementRlaxLimitOfLlaxFunctor} exists in this situation, then it is symmetric monoidal.

\begin{rem}
If $S = \Delta^1$, then a commutative monoid in $\LocCocart^{\rlax}_{\Delta^1}$ is the data of a lax symmetric monoidal functor of symmetric monoidal $\infty$-categories (cf. \cite[Prop.~2.6]{kondyrev2021dualizable}). In general, to endow $p: C \to S$ with the structure of a commutative monoid entails endowing its fibers with symmetric monoidal structures and its pushforward functors and natural transformations thereof with lax symmetric monoidal structures in a coherent fashion. See \cite[\S 4]{AMGRb} for how to produce examples from simpler input.
\end{rem}

\section{\texorpdfstring{$1$}{1}-generated and extendable objects} \label{sec:extendable}

Suppose $S = \Delta^2$ and $p: C \to \Delta^2$ is a locally cocartesian fibration classified by a $2$-functor
\[ \begin{tikzcd}[row sep=4ex, column sep=6ex, text height=1.5ex, text depth=0.25ex]
C_{0} \ar{rd}[swap]{F} \ar{rr}{H} & \ar[phantom]{d}{\Downarrow} & C_{2}. \\
& C_{1} \ar{ru}[swap]{G} &
\end{tikzcd} \]
Then the data of a functor $\sd(\Delta^2) \to C$ over $\Delta^2$ that preserves locally cocartesian edges can be summarized as follows:
\begin{itemize}
    \item Objects $c_i \in C_i$ for $i = 0,1,2$.
    \item Morphisms $f: c_1 \to F(c_0)$, $g: c_2 \to G(c_1)$, and $h: c_2 \to H(c_0)$.
    \item A commutative square
\[ \begin{tikzcd}[row sep=4ex, column sep=6ex, text height=1.5ex, text depth=0.25ex]
c_2 \ar{r}{h} \ar{d}{g} & H(c_0) \ar{d}{\can} \\
G(c_1) \ar{r}{G(f)} & G F(c_0).
\end{tikzcd} \]
\end{itemize}

Furthermore, if the map $\mit{can}$ is an equivalence, then the data of the commutative square and the morphism $h$ is redundant, since then $h \simeq G(f) \circ g$ and compositions in an $\infty$-category are unique up to contractible choice. More precisely, if we let $\gamma_2: \sd_1(\Delta^2) \subset \sd(\Delta^2)$ be the subposet on $\{[0], [1], [2], [0<1], [1<2] \}$, then the functor
\[ \gamma_2^{\ast}: \Fun^{\cocart}_{/\Delta^2}(\sd(\Delta^2), C) \to \Fun^{\cocart}_{/\Delta^2}(\sd_1(\Delta^2), C) \]
is a trivial fibration onto its image when restricted to objects for which $\mit{can}$ is an equivalence.

Our goal in this section is to generalize this observation to the case where $S=\Delta^n$. We introduce subcategories of $1$-generated and extendable objects (\cref{dfn:OneGenerated} and \cref{dfn:extendability}) and show their equivalence under the restriction functor $\gamma_n^{\ast}$ (\cref{thm:OneGenerationAndExtension}), given a stability hypothesis on $C \xto{p} \Delta^n$. This material will play an important role in \cite{QS21b}.

\begin{ntn} \label{ntn:convexStringsLengthOne} Let $\gamma_n: \sd_1(\Delta^n) \subset \sd(\Delta^n)$ be the subposet on strings $[k]$ and $[k<k+1]$.
\end{ntn}

We also introduce convenient notation for convex subposets of $\Delta^n$.

\begin{ntn} Let $[i:j] \subset \Delta^n$ denote the subposet on $i \leq k \leq j$.
\end{ntn}

Via its inclusion into $\sd(\Delta^n)$, we regard $\sd_1(\Delta^n)$ as a simplicial set over $\Delta^n$ (i.e., by the functor that takes the maximum) and as a marked simplicial set (so that each edge $[k] \to [k<k+1]$ is marked). We first state the analogue of \cref{thm:RecollementRlaxLimitOfLlaxFunctor} for $\sd_1$, whose proof is far simpler.

\begin{prp} \label{prp:StaircaseRecollement} Let $p: C \to \Delta^n$ be a locally cocartesian fibration such that the fibers admit finite limits and the pushforward functors preserve finite limits. Let $0 \leq k < n$, so the subcategories $[0:k] \cong \Delta^k$ and $[k+1:n] \cong \Delta^{n-k-1}$ of $\Delta^n$ give a sieve-cosieve decomposition. Then we have adjunctions
\[ \begin{tikzcd}[row sep=4ex, column sep=4ex, text height=1.5ex, text depth=0.25ex]
\Fun^{\cocart}_{/[0:k]}(\sd_1([0:k]), C_{[0:k]}) \ar[shift right=1,right hook->]{r}[swap]{j_{\ast}} & \Fun^{\cocart}_{/\Delta^n}(\sd_1(\Delta^n), C) \ar[shift right=2]{l}[swap]{j^{\ast}} \ar[shift left=2]{r}{i^{\ast}} & \Fun^{\cocart}_{/[k+1:n]}(\sd_1([k+1:n]), C_{[k+1:n]}) \ar[shift left=1,left hook->]{l}{i_{\ast}}
\end{tikzcd} \]
that exhibit $\Fun^{\cocart}_{/\Delta^n}(\sd_1(\Delta^n), C)$ as a recollement.
\end{prp}
\begin{proof} Let $j: \sd_1([0:k]) \to \sd_1(\Delta^n)$ and $i: \sd_1([k+1:n]) \to \sd_1(\Delta^n)$ be the inclusions, so $j^{\ast}$ and $i^{\ast}$ are defined by restriction along $j$ and $i$. As in the proof of \cref{lem:rlaxLimitAdmitsFiniteLimitsAndStable}, our hypotheses on $p$ ensure that the three $\infty$-categories admit finite limits and the functors $j^{\ast}$ and $i^{\ast}$ are left-exact. Moreover, since equivalences are detected on strings $[k]$, $j^{\ast}$ and $i^{\ast}$ are jointly conservative. The functor $i_{\ast}$ is obtained by $p$-right Kan extension as in the proof of \cref{prp:closedPartOfRecollement}, and its essential image consists of functors $F: \sd_1(\Delta^n) \to C$ such that $F(i)$ is a terminal object in $C_i$ for all $0 \leq i \leq k$, so $j^{\ast} i_{\ast}$ is the constant functor at the terminal object.

Finally, we show existence of $j_{\ast}$. Let $\sd_1([0:k])^+$ be the subposet of $\sd_1([0:n])$ on all objects in $\sd_1([0:k])$ and $\{[k<k+1]\}$, with marking inherited from $\sd(\Delta^n)$. Then we have a pushout square of marked simplicial sets
\[ \begin{tikzcd}[row sep=4ex, column sep=4ex, text height=1.5ex, text depth=0.25ex]
\Delta^0 \ar{r} \ar{d} & (\Delta^1)^{\sharp} \ar{d} \\
\sd_1([0:k]) \ar{r} & \sd_1([0:k])^+ 
\end{tikzcd} \]
so the inclusion $\sd_1([0:k]) \subset \sd_1([0:k])^+$ is $\mathfrak{P}$-anodyne for the categorical pattern $\mathfrak{P}$ defining the locally cocartesian model structure on $s\Set^+_{/\Delta^n}$. We thus obtain a trivial fibration
\[ \Fun^{\cocart}_{/[0:k+1]}(\sd_1([0:k])^+, C_{[0:k+1]}) \to \Fun^{\cocart}_{/[0:k]}(\sd_1([0:k]), C_{[0:k]}). \]
On the other hand, given a commutative diagram
\[ \begin{tikzcd}[row sep=4ex, column sep=4ex, text height=1.5ex, text depth=0.25ex]
\sd_1([0:k])^+ \ar{r}{F} \ar{d} & C \ar{d}{p} \\
\sd_1([0:k+1]) \ar{r} \ar[dotted]{ru}[swap]{G} & \Delta^n,
\end{tikzcd} \]
since $\sd_1([0:k])^+ \times_{\sd_1([0:k+1])} \sd_1([0:k+1])_{[k+1]/} \cong \{[k < k+1]\}$, $F$ admits a $p$-right Kan extension along $\sd_1([0:k])^+ \subset \sd_1([0:k+1])$ and $G$ is a $p$-right Kan extension of $F$ if and only if $G$ sends the edge $[k+1] \to [k<k+1]$ to an equivalence. Therefore, we may alternate between anodyne extension and $p$-right Kan extension along the filtration
\[ \sd_1([0:k]) \subset \sd_1([0:k])^+ \subset \sd_1([0:k+1]) \subset \cdots \sd_1([0:n-1])^+ \subset \sd_1(\Delta^n) \]
to define the functor $j_{\ast}$. Moreover, we see that the essential image of $j_{\ast}$ consists of those functors $\sd_1(\Delta^n) \to C$ that send the edges $[l+1] \to [l < l+1]$ to equivalences for all $l \geq k$.
\end{proof}

We next wish to introduce a condition on objects of $\Fun^{\cocart}_{/\Delta^n}(\sd(\Delta^n), C)$, which we term \emph{$1$-generated}, that indicates that the data of such objects is essentially determined by their restriction to $\sd_1(\Delta^n)$.

\begin{ntn} Given a string $\sigma = [i<i+k]$ in $\sd(\Delta^n)$, let $Q_{\sigma} \subset \sd(\Delta^n)$ be the subposet on all strings $[i<\cdots<i+k]$. Note that $Q_{\sigma}$ is a $(k-1)$-dimensional cube lying in the fiber $\sd(\Delta^n)_{\max=i+k}$ with $\sigma$ as its minimal element.
\end{ntn}

\begin{dfn} \label{dfn:OneGenerated} Let $C \to \Delta^n$ be a locally cocartesian fibration and $F: \sd(\Delta^n) \to C$ be a functor that preserves locally cocartesian edges. We say that $F$ is \emph{$1$-generated} if for all strings $\sigma = [i<i+k]$ in $\sd(\Delta^n)$, $F|_{Q_{\sigma}}$ is a limit diagram in $C_{i+k}$.

Let $\Fun^{\cocart}_{/\Delta^n}(\sd(\Delta^n), C)_{\gen{1}}$ be the full subcategory on the $1$-generated objects.
\end{dfn}


\begin{lem} \label{lm:equivalentOneGenerationConditions} Let $C \to \Delta^n$ be a locally cocartesian fibration whose fibers are stable $\infty$-categories and whose pushforward functors are exact. Then $F: \sd(\Delta^n) \to C$ is $1$-generated if and only if for all string inclusions $e: [i<i+k] \to [i < i+1 < i+k]$ in $\sd(\Delta^n)$, $F(e)$ is an equivalence in $C_{i+k}$.
\end{lem}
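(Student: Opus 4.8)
The plan is to induct on the ``length'' $k$ of the two-term string $\sigma = [i<i+k]$, reducing the limit-cube condition of \cref{dfn:OneGenerated} at $\sigma$ to the vanishing of a single fibre. I would first dispose of $k\le 1$: there $Q_\sigma$ is empty or a single point, so it carries no nontrivial limit condition, and there is no string inclusion $[i<i+k]\to[i<i+1<i+k]$. So fix $\sigma = [a<b]$ with $b = a+k$ and $k\ge 2$, and write $e_\sigma\colon [a<b]\to [a<a+1<b]$ for the string inclusion in question. I would identify $Q_\sigma$ with the cube on subsets $U\subseteq\{a+2,\dots,b-1\}$ of the \emph{remaining} intermediate elements, where $U$ names the string $[a<U<b]$ or, after also adjoining $a+1$, the string $[a<a+1<U<b]$. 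Singling out the ``$a+1$'' coordinate exhibits $F|_{Q_\sigma}$ as a morphism of $(k-2)$-cubes $\bigl(U\mapsto F([a<U<b])\bigr)\Rightarrow\bigl(U\mapsto F([a<a+1<U<b])\bigr)$ over $C_b$, the components being the string inclusions that insert $a+1$. By the standard inductive characterization of limit cubes (a cube is a limit diagram iff the cube of fibres along any one coordinate direction is — valid since $C_b$ is stable), $F|_{Q_\sigma}$ is a limit diagram if and only if the $(k-2)$-cube $\mathrm{Fib}_\sigma\colon U\mapsto \fib\bigl(F([a<U<b])\to F([a<a+1<U<b])\bigr)$ is, and its value at $U=\emptyset$ is exactly $\fib(F(e_\sigma))$.

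The computational heart of the argument is the claim that for nonempty $U$ with $c=\min U$, the edge $F([a<U<b])\to F([a<a+1<U<b])$ is obtained by applying a composite of pushforward functors of $p$ to the edge $F([a<c])\to F([a<a+1<c])$. I would prove this by ``peeling'' the top element off the string one step at a time: the square in $\sd(\Delta^n)$ whose horizontals are two such edges and whose verticals append the next element on top has, by the description of the locally $\max$-cocartesian edges in \cref{lm:subdivisionLocallyCocartesianByMaxFunctor}, both verticals locally cocartesian over the \emph{same} edge of $\Delta^n$; since $F$ preserves such edges, the universal property of (locally) cocartesian edges identifies the bottom horizontal with the pushforward functor applied to the top one. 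Iterating reduces $[a<U<b]\to[a<a+1<U<b]$ down to $[a<c]\to[a<a+1<c]$ (the process halts there because the two remaining parallel edges then lie over different edges of $\Delta^n$). In particular $\mathrm{Fib}_\sigma(U)\simeq 0$ as soon as $F([a<c]\to[a<a+1<c])$ is an equivalence.

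Granting these preparations, both implications become short. For ``$\Leftarrow$'': if $F(e)$ is an equivalence for every string inclusion $e$ of the stated form, then for each $\sigma$ and each $U$ the edge $F([a<U<b])\to F([a<a+1<U<b])$ is an equivalence (it is $F(e_\sigma)$ when $U=\emptyset$, and a composite of functors applied to the equivalence $F([a<\min U]\to[a<a+1<\min U])$ otherwise), so $\mathrm{Fib}_\sigma$ is the constant cube at $0$, hence a limit diagram, hence so is $F|_{Q_\sigma}$; thus $F$ is $1$-generated. For ``$\Rightarrow$'' I would induct on $k$: the case $k=2$ is immediate, since then $Q_\sigma$ is the single edge $e_\sigma$ with cone point $\sigma$, and a limit diagram over it is precisely an equivalence. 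For $k\ge 3$, the inductive hypothesis makes $F([a'<b']\to[a'<a'+1<b'])$ an equivalence whenever $b'-a'<k$; applying this with $b'=\min U\le b-1$ gives $\mathrm{Fib}_\sigma(U)\simeq 0$ for all $U\ne\emptyset$, so $\lim_{U\ne\emptyset}\mathrm{Fib}_\sigma(U)\simeq 0$; since $F|_{Q_\sigma}$ is a limit diagram, $\mathrm{Fib}_\sigma$ is a limit $(k-2)$-cube, whence $\fib(F(e_\sigma)) = \mathrm{Fib}_\sigma(\emptyset)\simeq\lim_{U\ne\emptyset}\mathrm{Fib}_\sigma(U)\simeq 0$ and $F(e_\sigma)$ is an equivalence.

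The hard part will be making the peeling step of the second paragraph fully rigorous — constructing the relevant commuting squares in $\sd(\Delta^n)$, checking exactly which of their edges are locally $\max$-cocartesian and over which edges of $\Delta^n$ they sit, and extracting the pushforward-functoriality identity from the universal property of locally cocartesian edges — together with keeping the degenerate cases ($k\le 1$, $U=\emptyset$, and the $0$-cube/$1$-cube base of the cube induction) consistent with the conventions on limit diagrams.
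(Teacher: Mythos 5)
Your argument is correct, and it uses the same basic toolkit as the paper's proof (induction on the string length, the description of the locally $\max$-cocartesian edges of $\sd(\Delta^n)$ as ``append the maximum'' from \cref{lm:subdivisionLocallyCocartesianByMaxFunctor}, preservation of these edges by $F$, exactness of the pushforwards, and total-fiber reasoning in the stable fibers), but the decomposition of the cube is genuinely different. The paper filters $Q_\sigma$ by the subcubes $Q_{\sigma,j}$ of strings omitting the vertices $i+j,\dots,i+k-1$, identifies each complementary slice $Q_{\sigma,j+1}\setminus Q_{\sigma,j}$ with the smaller cube $Q_{[i<i+j]}$ pushed forward along $i+j \to i+k$ (hence a limit cube by a \emph{simultaneous} inductive hypothesis asserting the biconditional for all $j<k$), and then traverses the filtration by total fibers. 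You instead split $Q_\sigma$ along the single $i+1$-coordinate, pass to the fiber $(k-2)$-cube, and prove an edgewise ``peeling'' lemma: every insertion-of-$(i+1)$ edge with nonempty set $U$ of further intermediate vertices is an iterated pushforward of the insertion edge for $[i<\min U]$, the peeling being justified exactly as you say by squares whose two appending edges are locally cocartesian over the same edge of $\Delta^n$. What this buys is an asymmetric and arguably cleaner treatment: your backward implication needs no induction at all (all vertices of the fiber cube vanish outright), and the induction is confined to the forward implication, where the hypothesis is purely about equivalences of shorter insertion edges rather than about limit cubes. The paper's version, by contrast, packages the edgewise information at the cube level (slices are exact images of limit cubes), which is what lets it run the two directions in one simultaneous induction and is also the form reused in \cref{lm:limitCubeDegenerateIfOneGenerated}. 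One small point to keep straight in your write-up is the degenerate convention for $k\le 1$ and for $0$-cubes (a $0$-cube should be read as ``limit diagram iff its value vanishes'' for your fiber-cube step, while the $1$-generation condition of \cref{dfn:OneGenerated} is to be understood as vacuous for $\sigma=[i<i+1]$), but you flag this yourself and it matches the paper's implicit usage.
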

\begin{proof} We will prove the stronger claim that for fixed $k \geq 2$ and all string inclusions $e_{ij}: \sigma_{ij} = [i<i+j] \to [i < i+1 < i+j]$ with $2 \leq j \leq k$, $F|_{Q_{\sigma_{ij}}}$ is a limit diagram for all $Q_{\sigma_{ij}}$ if and only if $F(e_{ij})$ is an equivalence for all $e_{ij}$.

We proceed by induction on $k$. For the base case $k=2$, given a string inclusion $\sigma = [i<i+2] \to [i<i+1<i+2]$, the edge is the $1$-dimensional cube $Q_{\sigma}$, so $F|_{Q_{\sigma}}$ is a limit diagram if and only if $F(e)$ is an equivalence. Now let $k>2$ and suppose we have proven the statement for all $l<k$. Note that in proving either direction of the `if and only if' statement, we may suppose that $F|_{Q_{\sigma_{ij}}}$ is a limit diagram \emph{and} $F(e_{ij})$ for all $2 \leq j < k$, so let us do so.

Consider an edge $e: \sigma = [i<i+k] \to [i < i+1 < i+k]$. For $1<j<k$, let $Q_{\sigma,j} \subset Q_{\sigma}$ be the subposet on strings excluding vertices $i+j, ..., i+k-1$. Then we have a descending filtration of sieve inclusions
\[ Q_{\sigma} \coloneq Q_{\sigma,k} \supset Q_{\sigma, k-1} \supset Q_{\sigma, k-2} \supset \cdots \supset Q_{\sigma,2} \]
where $Q_{\sigma, j}$ is a $(j-1)$-dimensional cube and $Q_{\sigma,2}$ consists only of the edge $e$. Note that if we let $Q_{\sigma,j}' = Q_{\sigma,j+1} \setminus Q_{\sigma,j}$ for $1<j<k$, then the minimal element of $Q_{\sigma,j}'$ is given by $\sigma_j = [i<i+j<i+k]$, and if we let $\sigma'_j = [i<i+j]$, then $Q_{\sigma,j}'$ is obtained from $Q_{\sigma_j'}$ by concatenating $i+k$. By the inductive hypothesis and using that the pushforward functors are exact, we get that $F|_{Q_{\sigma,j}'}$ is a limit diagram. Taking total fibers of cubes then shows that $F|_{Q_{\sigma,j}}$ is a limit diagram if and only if $F|_{Q_{\sigma,j-1}}$ is a limit diagram. Traversing the filtration, we conclude that $F|_{Q_{\sigma}}$ is a limit diagram if and only if $F(e)$ is an equivalence.
\end{proof}


\begin{lem} \label{lm:limitCubeDegenerateIfOneGenerated} Let $Q = \sd(\Delta^n)_{\max = n}$, $D$ a stable $\infty$-category, and $f: Q \to D$ a functor. Suppose the following condition holds:
\begin{itemize} \item[($\ast$)] For all string inclusions $e: \sigma \to \sigma'$ in $Q$ obtained by concatenating $[i<k] \to [i<i+1<k] $ by a (possibly empty) suffix $\tau$, $f(e)$ is an equivalence.
\end{itemize}
Then $f$ is a limit diagram if and only if $f([n] \to [n-1<n])$ is an equivalence.
\end{lem}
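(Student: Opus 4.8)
I would first turn $Q = \sd(\Delta^n)_{\max = n}$ into an honest cube and then induct on $n$, peeling off the coordinate $0$ and using $(\ast)$ to collapse one half of the cube. A string in $Q$ is a sequence $[i_0 < \cdots < i_{k-1} < n]$ with $i_\bullet \in \{0, \dots, n-1\}$ (the case $k = 0$ being the singleton $[n]$), so $\sigma \mapsto \{i_0, \dots, i_{k-1}\}$ identifies $Q$ with the poset $\mathcal{P}(\{0, \dots, n-1\})$ of subsets under inclusion; this carries the minimal object $[n]$ to $\varnothing$ and the edge $[n] \to [n-1 < n]$ to $\varnothing \to \{n-1\}$. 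Under this dictionary, ``$f$ is a limit diagram'' means exactly that the cube $f$ is cartesian, i.e.\ $f(\varnothing) \xto{\simeq} \lim_{\varnothing \ne T} f(T)$, and a string inclusion of the type appearing in $(\ast)$ becomes an edge $T \to T \cup \{i+1\}$ with $i = \min T$, $i + 1 \notin T$, and $i + 1 < n$.

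\textbf{Base case and splitting.} For $n \le 1$ the claim is immediate: $Q = \{[1] \to [0 < 1]\}$ is a $1$-cube, a diagram on it is a limit diagram precisely when its unique edge is an equivalence, and that edge is $[n] \to [n-1 < n]$. For $n \ge 2$, I would split $Q = \mathcal{P}(\{0, \dots, n-1\})$ along the $0$-coordinate, writing $f$ as a morphism of $(n-1)$-cubes $f_0 \to f_1$, where $f_0 = f|_{\{0 \notin T\}}$ and $f_1 = f|_{\{0 \in T\}}$, both domains being identified with $\mathcal{P}(\{1, \dots, n-1\})$ (for $f_1$ via $T \mapsto T \setminus \{0\}$). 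Since $D$ is stable, $f$ is cartesian iff its total fibre vanishes, and because limits commute with limits one has $\operatorname{tfib}(f) \simeq \fib\!\big(\operatorname{tfib}(f_0) \to \operatorname{tfib}(f_1)\big)$.

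\textbf{The crux.} The heart of the argument, and the step I expect to need the most care, is the claim $\operatorname{tfib}(f_1) \simeq 0$. The point is that every edge of the face $\{0 \in T\}$ in the ``$1$''-direction has the form $(\{0\} \cup T'') \to (\{0,1\} \cup T'')$ with $T'' \subseteq \{2, \dots, n-1\}$; its minimum is $0$ and $1 < n$ since $n \ge 2$, so it is an $(\ast)$-edge, whence $f_1$ sends all of these to equivalences. Writing $f_1$ as a morphism of $(n-2)$-cubes on $\mathcal{P}(\{2, \dots, n-1\})$ along the $1$-direction, this morphism is then an objectwise equivalence, so the associated fibre cube is the zero cube and $\operatorname{tfib}(f_1) \simeq 0$. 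Therefore $\operatorname{tfib}(f) \simeq \operatorname{tfib}(f_0)$, so $f$ is a limit diagram iff $f_0$ is. Finally I would check that $f_0$ satisfies $(\ast)$ for $\sd(\Delta^{n-1})_{\max = n-1}$: under the relabelling $\{1, \dots, n-1\} \cong \{0, \dots, n-2\}$, $k \mapsto k-1$, the $(\ast)$-edges of the smaller cube pull back precisely to $(\ast)$-edges of $f$ lying in the face $\{0 \notin T\}$, hence are sent by $f_0$ to equivalences. Applying the inductive hypothesis to $f_0$ and tracing the identifications shows its test edge $[n-1] \to [n-2 < n-1]$ is exactly the edge $\varnothing \to \{n-1\}$, i.e.\ $[n] \to [n-1 < n]$, in $Q$, which closes the induction. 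The only real burden is the combinatorial bookkeeping of the relabellings and confirming $(\ast)$ transports correctly down the induction; all the conceptual content sits in the collapse of the over-$0$ face.
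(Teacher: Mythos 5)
Your proof is correct and is essentially the paper's argument: your induction on $n$, peeling off the face of strings containing $0$ and comparing total fibers in the stable setting, unrolls to exactly the paper's descending filtration of $Q$ by the minimum of the string, $Q_{\geq 0} \supset Q_{\geq 1} \supset \cdots \supset Q_{\geq n-1}$, with the same identification of the terminal stage with the edge $[n] \to [n-1<n]$. The only (harmless) difference is at the crux: you kill the ``contains $0$'' face directly by collapsing it along the $1$-direction using the $(\ast)$-edges with $i=0$ and arbitrary suffix, whereas the paper identifies this face with the cube $Q_{[j<n]}$ and invokes the filtration argument from the proof of \cref{lm:equivalentOneGenerationConditions} to see it is a limit diagram.
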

\begin{proof} The proof is similar to that of \cref{lm:equivalentOneGenerationConditions}. For $0 \leq j < n$, let $Q_{\geq j}$, $Q_{=j}$ be the subposet on strings $\sigma$ with minimum $\geq j$, resp $=j$. Then $Q_{\geq j}$ is a $(n-j)$-dimensional cube, $Q_{=j} = Q_{\geq j} \setminus Q_{\geq j+1}$ is a $(n-j-1)$-dimensional cube, and we have a descending filtration
\[ Q = Q_{\geq 0} \supset Q_{\geq 1} \supset Q_{\geq 2} \supset \cdots \supset Q_{\geq n-1}. \] 
Observe that $Q_{=j} = Q_{[j<n]}$, so $f|_{Q_{=j}}$ is a limit diagram under our hypotheses by the proof of \cref{lm:equivalentOneGenerationConditions}. Therefore, taking total fibers shows that $f|_{ Q_{\geq j}}$ is a limit diagram if and only if $f|_{ Q_{\geq j+1}}$ is a limit diagram. Traversing the filtration then proves the claim.
\end{proof}

We continue to assume $C \to \Delta^n$ is a locally cocartesian fibration whose fibers are stable $\infty$-categories and whose pushforward functors are exact. Observe that we have a commutative diagram
\[ \begin{tikzcd}[row sep=4ex, column sep=6ex, text height=1.5ex, text depth=0.5ex]
\Fun^{\cocart}_{/[0:n-1]}(\sd([0:n-1]), C_{[0:n-1]}) \ar{r}{\gamma_{n-1}^{\ast}} & \Fun^{\cocart}_{/[0:n-1]}(\sd_1([0:n-1]), C_{[0:n-1]}) \\
\Fun^{\cocart}_{/\Delta^n}(\sd(\Delta^n), C) \ar{r}{\gamma_n^{\ast}} \ar{u}{j^{\ast}} \ar{d}[swap]{i^{\ast}} & \Fun^{\cocart}_{/\Delta^n}(\sd_1(\Delta^n), C) \ar{u}{j^{\ast}} \ar{d}[swap]{i^{\ast}}  \\
C_n \ar{r}{\id} & C_n,
\end{tikzcd} \]
so in particular $\gamma_n^{\ast}$ is a morphism of stable recollements. However $\gamma_n$ generally fails to be a \emph{strict} morphism of stable recollements, i.e., the natural transformation
\[ i^{\ast} j_{\ast} \to i^{\ast} j_{\ast} \gamma_{n-1}^{\ast} \]
is typically not an equivalence.

\begin{lem} \label{lem:OneGeneratedStrictMorphismOfRecollements} Suppose $F: \sd(\Delta^n) \to C$ is $1$-generated. Then the comparison map
\[  i^{\ast} j_{\ast} j^{\ast} F = (j_{\ast} j^{\ast} F)(n) \to i^{\ast} j_{\ast} \gamma_{n-1}^{\ast} j^{\ast} F =  (j_{\ast}( F|_{\sd_1([0:n-1])}))(n) \]
is an equivalence.
\end{lem}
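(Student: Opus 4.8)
The plan is to compute both sides of the comparison map as explicit limits over the $\infty$-categories $J_n$ and $J_n^{(1)}$ appearing in \cref{thm:ExistenceLaxRightKanExtension} and \cref{prp:StaircaseRecollement}, and then to produce a cofinal functor between these indexing categories. Concretely, write $\pi: \Delta^n \to \Delta^1$ for the functor with sieve $[0:n-1]$ and cosieve $\{n\}$. Then by the pointwise formula of \cref{thm:ExistenceLaxRightKanExtension}(1) (see also \cref{rem:IdentifyCategoryForLimitWhenPoset}), the left-hand value $(j_* j^* F)(n)$ is the limit of $(j^* F) \circ \ev_1$ over $J_n$, where $J_n$ is the poset of strings $[a_0 < \cdots < a_m < n]$ with all $a_i \le n-1$, and by the analogous (and simpler) formula for $\sd_1$ extracted from the proof of \cref{prp:StaircaseRecollement}, the right-hand value $(j_*(F|_{\sd_1([0:n-1])}))(n)$ is the limit of $(F|_{\sd_1})\circ\ev_1$ over the corresponding $J_n^{(1)}$, which is the poset of strings of the special form $[k < k+1]$ or $[k]$ with $k \le n-1$ together with $[n-1 < n]$ and $[n]$ — in other words $J_n^{(1)} = \sd_1([0:n-1])$ sitting inside $\sd([0:n-1])^{x/}$ appropriately, so that the limit is taken over $\sd_1([0:n-1])$ with the values being $F$ of the string obtained by appending $n$. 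The comparison map is then the canonical map induced by restricting the $J_n$-diagram along the inclusion $J_n^{(1)} \hookrightarrow J_n$ (appending $n$ to each short string).

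First I would make the two indexing posets and the two diagrams precise, identifying $J_n$ with the poset $\sd([0:n-1])$ of all strings in $[0:n-1]$ (via $\sigma \mapsto$ the string $\sigma$ concatenated with $n$, whose image under $\ev_1$ recovers $F$ evaluated on $\sigma\!\star\!\{n\}$) and $J_n^{(1)}$ with $\sd_1([0:n-1])$ under the same recipe; the inclusion of indexing categories is then exactly $\gamma_{n-1}: \sd_1([0:n-1]) \hookrightarrow \sd([0:n-1])$. Under this identification the diagram $J_n \to C_n$ is $\sigma \mapsto F$ applied to the locally-cocartesian-pushforward of $F(\sigma)$ along the last edge into $C_n$; the key point is that because $F$ preserves locally cocartesian edges and the pushforward functors are exact, this composite diagram $\sd([0:n-1]) \to C_n$ satisfies exactly the hypothesis of \cref{lm:limitCubeDegenerateIfOneGenerated} and, more to the point, is itself "$1$-generated" in the evident sense relative to $[0:n-1]$. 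So it suffices to prove the following: if $g: \sd(P) \to D$ is a functor to a stable $\infty$-category that is $1$-generated (every $g|_{Q_\sigma}$ a limit cone), then the restriction $\gamma_P^*: \lim_{\sd(P)} g \to \lim_{\sd_1(P)} g|_{\sd_1(P)}$ is an equivalence, for $P$ a finite poset (applied here with $P = [0:n-1]$, $D = C_n$).

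The main obstacle — and the crux of the argument — is this last reduction: showing that for a $1$-generated diagram $g$ the restriction along $\sd_1(P) \hookrightarrow \sd(P)$ does not change the limit. I would prove it by a right-cofinality argument on the opposite categories, filtering $\sd(P)$ by the "length" of strings: let $\sd(P)^{\le r}$ be the full subposet on strings of length $\le r$, so $\sd_1(P) = \sd(P)^{\le 1}$ and $\sd(P)^{\le N} = \sd(P)$ for $N = \dim P$. For each $r$, the diagram $g|_{\sd(P)^{\le r+1}}$ is a relative right Kan extension of $g|_{\sd(P)^{\le r}}$ — this is precisely where $1$-generatedness enters, since the value of $g$ on a length-$(r+1)$ string $\sigma$ is the limit of $g$ over the cube $Q_{\sigma'}$-type slice built from the length-$r$ strings refining it, and $1$-generatedness says exactly that this cube is a limit cone — hence $\lim_{\sd(P)^{\le r}} g \simeq \lim_{\sd(P)^{\le r+1}} g$. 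Iterating from $r=1$ to $r=N$ gives the claim. The technical work is in verifying the slice-category identification $\sd(P)^{\le r}_{/\sigma}$ (for $\sigma$ of length $r+1$) has the right shape (a punctured cube or suitable cofinal subposet thereof) so that the pointwise Kan extension criterion of \cref{rem:ExistenceOfRelativeRKE} applies; here I expect to reuse the total-fiber-of-cubes bookkeeping already carried out in the proofs of \cref{lm:equivalentOneGenerationConditions} and \cref{lm:limitCubeDegenerateIfOneGenerated}, which is why the stability hypothesis on the fibers of $C \to \Delta^n$ is essential.
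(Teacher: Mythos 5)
Your overall plan (compute both sides by the pointwise formulas and then collapse the big limit using $1$-generatedness) is the right flavor, but two steps as written do not hold up. First, the identification of the right-hand side is incorrect. From the proof of \cref{prp:StaircaseRecollement} (alternating $\mathfrak{P}$-anodyne extension and $p$-right Kan extension), $(j_*(F|_{\sd_1([0:n-1])}))(n)$ is \emph{not} a limit over $\sd_1([0:n-1])$: it is the single value $\tau^n_{n-1}F(n-1)\simeq F([n-1<n])$, since the extension is forced to send $[n-1]\to[n-1<n]$ to a locally cocartesian edge and $[n]\to[n-1<n]$ to an equivalence. Your proposed formula $\lim_{\sigma\in\sd_1([0:n-1])}F(\sigma\star\{n\})$ does happen to agree with $F([n-1<n])$ when $F$ is $1$-generated (the maps $F([k<n])\to F([k<k+1<n])$ are equivalences by \cref{lm:equivalentOneGenerationConditions}, so the zigzag collapses onto its last vertex), but that agreement is itself something to prove and uses exactly the equivalences your later abstraction discards; as a reading of \cref{prp:StaircaseRecollement} it is wrong (for $n=2$ and a general input $f$ it would output $\tau^2_0 f(0)\times_{\tau^2_1\tau^1_0 f(0)}\tau^2_1 f(1)$ rather than $\tau^2_1 f(1)$; these differ precisely when extendability fails). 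With the correct identification, the lemma reduces to showing that the projection $\lim F|_K\to F([n-1<n])$ is an equivalence, where $K$ is the subposet of strings with maximum $n$ other than $[n]$ itself — which is how the paper argues.

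Second, the crux of your argument — the length filtration — is directionally broken. Morphisms in $\sd(P)$ are string inclusions, from shorter strings into longer ones, so for a string $\sigma$ of length $r+1$ the slice $\sd(P)^{\le r}\times_{\sd(P)}\sd(P)^{\sigma/}$ is empty; hence $g|_{\sd(P)^{\le r+1}}$ cannot be a right Kan extension of $g|_{\sd(P)^{\le r}}$ (the extension would take terminal values on the new strings). One-generatedness points the opposite way: it expresses the value on a \emph{short} string $\sigma=[i<i+k]$ as a limit over the cube $Q_\sigma$ of \emph{longer} strings above it, which is of no direct use for your induction, and I do not see a repair at the level of generality you set up — mere $1$-generatedness of the abstract diagram $g(\sigma)=F(\sigma\star\{n\})$ on $\sd([0:n-1])$ does not record the equivalences $F([i<n])\to F([i<i+1<n])$ (the suffix instances of condition $(\ast)$ of \cref{lm:limitCubeDegenerateIfOneGenerated}), which are what actually drive the collapse. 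The paper instead stays inside the single cube $Q=\sd(\Delta^n)_{\max=n}$: it verifies condition $(\ast)$ from \cref{lm:equivalentOneGenerationConditions} (suffixes handled by preservation of locally cocartesian edges together with exactness of the pushforwards) and then applies \cref{lm:limitCubeDegenerateIfOneGenerated}, whose proof filters $Q$ by the minimum of the string and takes total fibers of cubes — that is where stability enters, rather than in a filtration by length.
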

\begin{proof} Let $K \subset \sd(\Delta^n)$ be the subposet on strings $\sigma$ with $\max(\sigma) = n$ and $\sigma \neq n$. By the formulas computing $j_{\ast}$ given in \cref{thm:ExistenceLaxRightKanExtension} and \cref{prp:StaircaseRecollement}, we see that the comparison map is given by the canonical map from the limit of $F|_{K}$ to $F([n-1<n])$. Since $F$ is $1$-generated, by \cref{lm:equivalentOneGenerationConditions} the conditions of \cref{lm:limitCubeDegenerateIfOneGenerated} are satisfied, so this canonical map is an equivalence.
\end{proof}

\begin{dfn} For the functor $j_{\ast}$ defined as in \cref{thm:ExistenceLaxRightKanExtension} with respect to $[0:n-1]$ and $\{ n \}$, we say that a functor $F: \sd([0:n-1]) \to C_{[0:n-1]}$ is \emph{$+$-1-generated} if both $F$ and $j_{\ast} F$ are $1$-generated. Let
\[ \Fun^{\cocart}_{/[0:n-1]}(\sd([0:n-1]), C_{[0:n-1]})_{\gen{1}}^+ \]
be the full subcategory on the $+$-$1$-generated objects.
\end{dfn}

\begin{lem} \label{lem:OneGeneratedRecollement} We have adjunctions
\[ \begin{tikzcd}[row sep=4ex, column sep=4ex, text height=1.5ex, text depth=0.25ex]
\Fun^{\cocart}_{/[0:n-1]}(\sd([0:n-1]), C_{[0:n-1]})_{\gen{1}}^+ \ar[shift right=1,right hook->]{r}[swap]{j_{\ast}} & \Fun^{\cocart}_{/\Delta^n}(\sd(\Delta^n), C)_{\gen{1}} \ar[shift right=2]{l}[swap]{j^{\ast}} \ar[shift left=2]{r}{i^{\ast}} & C_n \ar[shift left=1,left hook->]{l}{i_{\ast}}
\end{tikzcd} \]
that exhibit $\Fun^{\cocart}_{/\Delta^n}(\sd(\Delta^n), C)_{\gen{1}}$ as a stable recollement.
\end{lem}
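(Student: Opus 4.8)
The plan is to restrict the stable recollement of \cref{thm:RecollementRlaxLimitOfLlaxFunctor} to the relevant full subcategories. Apply that theorem with $S = \Delta^n$, sieve $[0:n-1]$, and cosieve $\{n\}$; by \cref{lem:rlaxLimitAdmitsFiniteLimitsAndStable} and the exactness of the pushforward functors this recollement is stable. Write $\sX := \Fun^{\cocart}_{/\Delta^n}(\sd(\Delta^n), C)$, $\sU := \Fun^{\cocart}_{/[0:n-1]}(\sd([0:n-1]), C_{[0:n-1]})$, $\sZ := C_n$, and let $\sX' \subset \sX$ and $\sU' \subset \sU$ denote the full subcategories of $1$-generated and of $+$-$1$-generated objects appearing in the statement. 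First I would record that $\sX'$ is a stable subcategory of $\sX$ closed under finite limits: by \cref{lem:rlaxLimitAdmitsFiniteLimitsAndStable} finite limits, finite colimits, and shifts in $\sX$ are computed pointwise on strings, the class of equivalences is closed under these operations, and by \cref{lm:equivalentOneGenerationConditions} being $1$-generated is the condition that $F(e)$ be an equivalence for the relevant string inclusions $e$. The identical reasoning applied inside $\sU$ shows the $1$-generated objects form a stable subcategory of $\sU$, and since $j_\ast$ is exact (as in any stable recollement) it follows that $\sU'$ is a stable subcategory of $\sU$ as well.

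The crux is to verify two containments. First, $i_\ast$ carries $\sZ = C_n$ into $\sX'$: by the formula in \cref{prp:closedPartOfRecollement} the functor $i_\ast G$ takes the value $G$ on the singleton string $[n]$ and the value $\ast$ on every other string, and since every cube $Q_\sigma$ occurring in \cref{dfn:OneGenerated} consists of strings with at least two vertices it never contains $[n]$; hence $i_\ast G|_{Q_\sigma}$ is constant at $\ast$, so a limit diagram. Second, $j^\ast$ carries $\sX'$ into $\sU'$. For $1$-generated $F$, every cube $Q_\sigma$ with $\max(\sigma) \le n-1$ lies in $\sd([0:n-1])$, so $j^\ast F = F|_{\sd([0:n-1])}$ is $1$-generated; to see that $j_\ast j^\ast F$ is $1$-generated as well, I would argue that $j_\ast j^\ast F$ agrees with $F$ on the full subcategory $\sd(\Delta^n)_0 \subset \sd(\Delta^n)$ of strings with minimum $\le n-1$, i.e.\ on all strings except $[n]$. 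Indeed, writing $j_\ast = \phi_\ast \circ E$ where $\phi_\ast$ is the $p$-right Kan extension along $\sd(\Delta^n)_0 \hookrightarrow \sd(\Delta^n)$ furnished by \cref{thm:ExistenceLaxRightKanExtension} and $E$ is a section of the trivial fibration $\Fun^{\cocart}_{/\Delta^n}(\sd(\Delta^n)_0, C) \to \Fun^{\cocart}_{/[0:n-1]}(\sd([0:n-1]), C_{[0:n-1]})$ of \cref{thm:subdivisionExtension}(2), we have $(j_\ast j^\ast F)|_{\sd(\Delta^n)_0} \simeq E(j^\ast F)$; both $E(j^\ast F)$ and $F|_{\sd(\Delta^n)_0}$ lie over $j^\ast F = F|_{\sd([0:n-1])}$ under that trivial fibration, so they are equivalent, compatibly with the unit $F \to j_\ast j^\ast F$, whose restriction to $\sd(\Delta^n)_0$ maps to $\mathrm{id}_{j^\ast F}$ and hence is itself an equivalence since a trivial fibration is a categorical equivalence, in particular conservative. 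Since each $Q_\sigma$ in \cref{dfn:OneGenerated} consists of at-least-$2$-element strings and so lies in $\sd(\Delta^n)_0$, we conclude $(j_\ast j^\ast F)|_{Q_\sigma} \simeq F|_{Q_\sigma}$ is a limit diagram, i.e.\ $j_\ast j^\ast F$ is $1$-generated.

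With these two containments the remainder is formal. Since $j^\ast j_\ast \simeq \id$, an object $G \in \sU$ is $+$-$1$-generated if and only if $j_\ast G$ is $1$-generated, so $j_\ast$ carries $\sU'$ into $\sX'$ by construction; together with the second containment this shows the adjunction $j^\ast \dashv j_\ast$ restricts to an adjunction $\adjunct{j^\ast}{\sX'}{\sU'}{j_\ast}$ with $j_\ast$ still fully faithful, and the first containment shows $i^\ast \dashv i_\ast$ restricts to $\adjunct{i^\ast}{\sX'}{C_n}{i_\ast}$ with $i_\ast$ fully faithful. The restricted $j^\ast$ and $i^\ast$ are still left exact (finite limits in $\sX'$ are computed in $\sX$, hence pointwise), $j^\ast i_\ast$ is still constant at the terminal object — the constant-at-$\ast$ functor is $+$-$1$-generated, hence the terminal object of $\sU'$ — and $j^\ast, i^\ast$ remain jointly conservative because $\sX' \subset \sX$ is full. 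Finally $\sX'$ is stable, and the images of $\sU'$ under $j_\ast$ and of $C_n$ under $i_\ast$ are stable subcategories of it (being stable subcategories of $\sX$ contained in $\sX'$), so the recollement is stable. The only step that is not pure bookkeeping with the ambient recollement is the identification of $j_\ast j^\ast F$ with $F$ on $\sd(\Delta^n)_0$ in the second containment, which is where the uniqueness of the locally cocartesian pushforward (\cref{thm:subdivisionExtension}) does the real work.
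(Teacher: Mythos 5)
Your proposal is correct and takes essentially the same route as the paper: restrict the recollement of \cref{thm:RecollementRlaxLimitOfLlaxFunctor}, the only non-formal point being that $j_{\ast}j^{\ast}F$ is again $1$-generated for $1$-generated $F$. The paper disposes of this by noting tersely that $F \simeq j_{\ast}j^{\ast}F$ away from the string $[n]$ while the $1$-generation condition ignores $[n]$; your uniqueness argument via the trivial fibration of \cref{thm:subdivisionExtension}(2) is exactly the justification of that claim, so the two proofs agree in substance.
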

\begin{proof} Clearly, we may define $j_{\ast}$, $i^{\ast}$, and $i_{\ast}$ to be the restrictions of the corresponding functors for the adjunctions of \cref{thm:RecollementRlaxLimitOfLlaxFunctor}. The only subtle point is that given $F: \sd(\Delta^n) \to C$ which is $1$-generated, we require that the localization $j_{\ast} j^{\ast} F$ is also $1$-generated. But this holds, since $F \simeq j_{\ast} j^{\ast} F$ except possibly at $n \in \sd(\Delta^n)$ and the $1$-generated condition ignores $n$. Therefore, we may also define $j^{\ast}$ as the restricted functor, and the recollement conditions are then immediate. 
\end{proof}

\begin{cor} \label{cor:OneGeneratedStrictMorphismOfRecollements} The restriction $\gamma_n^{\ast}: \Fun^{\cocart}_{/\Delta^n}(\sd(\Delta^n), C)_{\gen{1}} \to \Fun^{\cocart}_{/\Delta^n}(\sd_1(\Delta^n), C)$ is a strict morphism of stable recollements with respect to \cref{lem:OneGeneratedRecollement} and \cref{prp:StaircaseRecollement}.
\end{cor}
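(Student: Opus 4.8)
The plan is to deduce this directly from \cref{lem:OneGeneratedStrictMorphismOfRecollements}, which supplies exactly the required pointwise statement. Recall from the commutative diagram preceding \cref{lem:OneGeneratedStrictMorphismOfRecollements} that $\gamma_n^{\ast}$ restricts to a morphism of stable recollements from $\Fun^{\cocart}_{/\Delta^n}(\sd(\Delta^n), C)_{\gen{1}}$ (with its recollement from \cref{lem:OneGeneratedRecollement}) to $\Fun^{\cocart}_{/\Delta^n}(\sd_1(\Delta^n), C)$ (with its recollement from \cref{prp:StaircaseRecollement}); this morphism is moreover left-exact, since $\gamma_n^{\ast}$ is a restriction functor and finite limits in both right-lax limits are computed objectwise (\cref{lem:rlaxLimitAdmitsFiniteLimitsAndStable}). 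By \cref{obs:MorphismOfRecollements}, its induced functors on the open and closed parts are $(\gamma_n^{\ast})_U \simeq \gamma_{n-1}^{\ast}$ and $(\gamma_n^{\ast})_Z \simeq \id_{C_n}$: the first identification follows because restricting a functor on $\sd(\Delta^n)$ first to $\sd_1(\Delta^n)$ and then to $\sd_1([0:n-1])$ agrees with restricting first to $\sd([0:n-1])$ and then to $\sd_1([0:n-1])$, combined with $j^{\ast} j_{\ast} \simeq \id$ on the open part; the second because $(i_1)_{\ast}$ is terminal away from $[n]$ and $i_2^{\ast}$ is evaluation at $[n]$.

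By the definition of a strict morphism of recollements together with \cref{LaxVsStrictMorphismsOfRecollements}, it then remains only to check that the natural transformation
\[ \nu''\colon (\gamma_n^{\ast})_Z \circ (i^{\ast} j_{\ast}) \Rightarrow (i^{\ast} j_{\ast}) \circ (\gamma_n^{\ast})_U \]
is an equivalence, where $i^{\ast} j_{\ast}$ denotes the gluing functor of the ambient recollement. In both recollements the closed part is $C_n$, included via the object $[n]$, so $i^{\ast}$ is in each case evaluation at $[n]$. Consequently, evaluated on an object $F\colon \sd([0:n-1]) \to C_{[0:n-1]}$ of the open part of $\Fun^{\cocart}_{/\Delta^n}(\sd(\Delta^n), C)_{\gen{1}}$ — that is, a $+$-$1$-generated functor — the transformation $\nu''_F$ is precisely the comparison map $(j_{\ast} F)(n) \to (j_{\ast}(\gamma_{n-1}^{\ast} F))(n)$, with $j_{\ast}$ on the left the relative right Kan extension of \cref{thm:ExistenceLaxRightKanExtension} (restricted to $1$-generated objects as in \cref{lem:OneGeneratedRecollement}) and $j_{\ast}$ on the right that of \cref{prp:StaircaseRecollement}.

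To conclude, I would note that $F$ is $+$-$1$-generated precisely when $F = j^{\ast} F'$ for some $1$-generated $F'\colon \sd(\Delta^n) \to C$: if $F$ is $+$-$1$-generated then $F' \coloneq j_{\ast} F$ is $1$-generated by definition and $j^{\ast} F' \simeq F$; conversely, for $1$-generated $F'$ one has $j_{\ast} j^{\ast} F' \simeq F'$ away from $[n]$, and since the $1$-generated condition ignores $[n]$ this shows $j^{\ast} F'$ is $+$-$1$-generated. Substituting $F = j^{\ast} F'$ rewrites $\nu''_F$ as the comparison map $(j_{\ast} j^{\ast} F')(n) \to (j_{\ast}(F'|_{\sd_1([0:n-1])}))(n)$, which is an equivalence by \cref{lem:OneGeneratedStrictMorphismOfRecollements}. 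Hence $\nu''$ is an equivalence and $\gamma_n^{\ast}$ is strict. The only genuine input is \cref{lem:OneGeneratedStrictMorphismOfRecollements}; everything else is bookkeeping with the various restriction and Kan-extension functors, and I expect the main (mild) obstacle to be keeping these identifications straight rather than any substantive difficulty.
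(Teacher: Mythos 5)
Your proof is correct and takes essentially the same route as the paper, whose entire proof is the one-line observation that the statement follows from \cref{lem:OneGeneratedStrictMorphismOfRecollements}; your additional bookkeeping (identifying $(\gamma_n^{\ast})_U \simeq \gamma_{n-1}^{\ast}$, $(\gamma_n^{\ast})_Z \simeq \id_{C_n}$, and rewriting $\nu''$ at a $+$-$1$-generated $F = j^{\ast}F'$ as the comparison map of that lemma) is exactly the implicit content of that citation.
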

\begin{proof} This follows immediately from \cref{lem:OneGeneratedStrictMorphismOfRecollements}.
\end{proof}

We want to apply \cref{cor:OneGeneratedStrictMorphismOfRecollements} to show that $\gamma_n^{\ast}$ is an equivalence (in fact, a trivial fibration) onto its essential image. To understand this image as a condition on objects in the codomain, we introduce the following definition. For $0 \leq i < j \leq n$, let $\tau^j_i: C_i \to C_j$ denote the pushforward functor encoded by the locally cocartesian fibration.

\begin{dfn} \label{dfn:extendability} We say that a functor $f: \sd_1(\Delta^n) \to C$ is \emph{extendable} if for every string $[i<i+1<i+k]$ in $\sd(\Delta^n)$, the canonical map in $C_{i+k}$
\[ \tau^{i+k}_i f(i) \to (\tau^k_{i+1} \circ \tau^{i+1}_i) f(i) \]
encoded by the locally cocartesian fibration is an equivalence. Let
\[ \Fun^{\cocart}_{/\Delta^n}(\sd_1(\Delta^n),C)_{\ext} \]
denote the full subcategory on the extendable objects.
\end{dfn}

\begin{dfn} For the functor $j_{\ast}$ defined as in \cref{prp:StaircaseRecollement} with respect to $[0:n-1]$ and $\{ n \}$, we say that a functor $f: \sd_1([0:n-1]) \to C$ is \emph{$+$-extendable} if both $f$ and $j_{\ast} f$ are extendable. Let
\[ \Fun^{\cocart}_{/[0:n-1]}(\sd_1([0:n-1]), C_{[0:n-1]})_{\ext}^+ \]
be the full subcategory on the $+$-extendable objects.
\end{dfn}

Note that the extendability condition becomes stronger through considering the additional strings in $\sd(\Delta^n)$; for example, extendability is no condition on $f: \sd_1([0:1]) \to C_{[0:1]}$, but we acquire the condition that the map $\tau^2_0 f(0) \to \tau^2_1 \tau^1_0 f(0)$ is an equivalence upon enlarging to $\Delta^2$. Let us first state the evident counterpart to \cref{lem:OneGeneratedRecollement}.

\begin{lem} \label{lem:ExtendableObjectsRecollement} We have adjunctions
\[ \begin{tikzcd}[row sep=4ex, column sep=4ex, text height=1.5ex, text depth=0.25ex]
\Fun^{\cocart}_{/[0:n-1]}(\sd_1([0:n-1]), C_{[0:n-1]})_{\ext}^+ \ar[shift right=1,right hook->]{r}[swap]{j_{\ast}} & \Fun^{\cocart}_{/\Delta^n}(\sd_1(\Delta^n), C)_{\ext} \ar[shift right=2]{l}[swap]{j^{\ast}} \ar[shift left=2]{r}{i^{\ast}} & C_n \ar[shift left=1,left hook->]{l}{i_{\ast}}
\end{tikzcd} \]
that exhibit $\Fun^{\cocart}_{/\Delta^n}(\sd_1(\Delta^n), C)_{\ext}$ as a stable recollement.
\end{lem}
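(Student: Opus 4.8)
The plan is to carry over the proof of \cref{lem:OneGeneratedRecollement} mutatis mutandis, replacing the full subdivision $\sd(\Delta^n)$ by the one-dimensional subdivision $\sd_1(\Delta^n)$, the recollement of \cref{thm:RecollementRlaxLimitOfLlaxFunctor} by that of \cref{prp:StaircaseRecollement} (taken for the sieve-cosieve decomposition $[0{:}n-1]$, $\{n\}$), and the adjectives ``$1$-generated'' and ``$+$-$1$-generated'' by ``extendable'' and ``$+$-extendable.'' First I would take the recollement adjunctions among $\Fun^{\cocart}_{/[0:n-1]}(\sd_1([0:n-1]), C_{[0:n-1]})$, $\Fun^{\cocart}_{/\Delta^n}(\sd_1(\Delta^n), C)$, and $C_n$ supplied by \cref{prp:StaircaseRecollement}, and check that $j_\ast$, $i^\ast$, $i_\ast$ restrict to the extendability subcategories, so that the recollement restricts. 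The functor $j_\ast$ does so essentially by the definition of ``$+$-extendable.'' The functor $i_\ast$ does so because, in the stable setting, $i_\ast c$ is the section valued in $c\in C_n$ at the string $[n]$ and in $0$ at every other string, while by \cref{dfn:extendability} extendability only constrains the values at the singleton strings $[i]$ with $i\le n-2$ (here all $0$) together with the canonical maps between their iterated pushforwards (here maps between zero objects); so $i_\ast c$ is vacuously extendable. And $i^\ast$ trivially lands in $C_n$.

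Next I would treat the one genuinely substantive point, exactly mirroring \cref{lem:OneGeneratedRecollement}: if $F\colon\sd_1(\Delta^n)\to C$ is extendable, then so is the localization $j_\ast j^\ast F$, so that $j^\ast$ too can be taken to be the restricted functor. For this, observe that the unit $F\to j_\ast j^\ast F$ is an equivalence after applying $j^\ast$ (since $j^\ast j_\ast\simeq\id$), hence $j_\ast j^\ast F$ agrees with $F$ on all of $\sd_1([0{:}n-1])$; the only objects of $\sd_1(\Delta^n)$ not in $\sd_1([0{:}n-1])$ are $[n]$ and $[n-1<n]$. Since extendability of a section of $\sd_1(\Delta^n)\times_{\Delta^n}C$ is, by \cref{dfn:extendability}, a condition only on its values at the singleton strings $[i]$ with $i\le n-2$ together with comparison maps internal to the locally cocartesian fibration — data untouched by $j_\ast j^\ast$ — we conclude that $j_\ast j^\ast F$ is extendable if and only if $F$ is, and in particular it is. The same remark, applied over $[0{:}n-1]$, shows that $j^\ast F=F|_{\sd_1([0:n-1])}$ is extendable whenever $F$ is; combined with the previous sentence this makes $j^\ast F$ $+$-extendable, so $j^\ast$ restricts to a functor into $\Fun^{\cocart}_{/[0:n-1]}(\sd_1([0:n-1]), C_{[0:n-1]})^+_{\ext}$. (Note that ``$+$-extendable'' is strictly stronger than ``extendable'' over $[0{:}n-1]$, since $j_\ast F$ being extendable over $\sd_1(\Delta^n)$ imposes the further condition at strings $[i<i+1<n]$, just as in the $1$-generated case.)

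Finally I would verify the recollement axioms for the restricted adjunctions. As in the proof of \cref{lem:rlaxLimitAdmitsFiniteLimitsAndStable} — via \cite[Prop.~5.4.7.11]{HTT} applied to the locally cocartesian fibration $\sd_1(\Delta^n)\times_{\Delta^n}C\to\sd_1(\Delta^n)$ — the three ambient section $\infty$-categories are stable with finite limits computed pointwise; since the pushforward functors are exact and extendability is a pointwise-checkable condition preserved by finite limits and colimits, the full subcategories of ($+$-)extendable sections are stable subcategories closed under these operations, and $j^\ast$, $i^\ast$ remain left exact. By the description of $i_\ast$ above, $j^\ast i_\ast$ is constant at the terminal (zero) object, and $j^\ast$, $i^\ast$ are jointly conservative because a morphism of sections of $\sd_1(\Delta^n)\times_{\Delta^n}C$ is an equivalence as soon as it is so on every singleton string $[i]$. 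This exhibits $\Fun^{\cocart}_{/\Delta^n}(\sd_1(\Delta^n), C)_{\ext}$ as the asserted stable recollement. The only place any real work hides is the closure of the ($+$-)extendable subcategories under the recollement functors and under finite (co)limits, for which the decisive inputs are exactness of the pushforwards and the pointwise nature of both extendability and the relevant limits and colimits; the rest is formal.
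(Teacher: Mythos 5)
Your proposal is correct and takes essentially the same approach as the paper: the paper's proof consists of the single sentence that the lemma is immediate from restricting the recollement of \cref{prp:StaircaseRecollement}, and your argument simply spells out that restriction (checking that $j^*$, $j_*$, $i^*$, $i_*$ preserve the ($+$-)extendable subcategories and that the recollement axioms persist), in parallel with \cref{lem:OneGeneratedRecollement}. No gaps to report.
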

\begin{proof} This is immediate from restricting the recollement of \cref{prp:StaircaseRecollement}.
\end{proof}

We have assembled all the ingredients needed to prove \cref{thm:OneGenerationAndExtension}. Note that by \cref{lm:limitCubeDegenerateIfOneGenerated}, $\gamma_n^{\ast}$ of a $1$-generated object is extendable, so the functor of \cref{thm:OneGenerationAndExtension} is well-defined.

\begin{thm} \label{thm:OneGenerationAndExtension} Suppose $C \to \Delta^n$ is a locally cocartesian fibration whose fibers are stable $\infty$-categories and whose pushforward functors are exact. Then the functor
\[ \gamma_n^{\ast}: \Fun^{\cocart}_{/\Delta^n}(\sd(\Delta^n), C)_{\gen{1}} \to \Fun^{\cocart}_{/\Delta^n}(\sd_1(\Delta^n),C)_{\ext} \]
is an equivalence of $\infty$-categories.
\end{thm}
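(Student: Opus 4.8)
The plan is to prove this by induction on $n$, leveraging the recollement structures established in Lemmas \ref{lem:OneGeneratedRecollement} and \ref{lem:ExtendableObjectsRecollement} together with the strictness result of \cref{cor:OneGeneratedStrictMorphismOfRecollements}. For the base case $n \leq 1$ the functor $\gamma_n$ is the identity on $\sd_1(\Delta^n) = \sd(\Delta^n)$, and the extendability and $1$-generation conditions are vacuous, so $\gamma_n^{\ast}$ is an equivalence. For the inductive step, I would compare the two stable recollements in which $\gamma_n^{\ast}$ lives: by \cref{cor:OneGeneratedStrictMorphismOfRecollements}, $\gamma_n^{\ast}$ is a \emph{strict} morphism of stable recollements, with closed parts both equal to $C_n$ (on which it acts by the identity) and open parts
\[ \Fun^{\cocart}_{/[0:n-1]}(\sd([0:n-1]), C_{[0:n-1]})_{\gen{1}}^+ \quad\text{and}\quad \Fun^{\cocart}_{/[0:n-1]}(\sd_1([0:n-1]), C_{[0:n-1]})_{\ext}^+, \]
with $\gamma_n^{\ast}$ inducing $\gamma_{n-1}^{\ast}$ (suitably restricted) between them. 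By \cref{rem:TwoOutOfThreePropertyEquivalencesStrictMorphismRecoll}, a strict left-exact morphism of stable recollements is an equivalence iff the induced functors on the open and closed parts are equivalences. The closed part is handled trivially. So the entire theorem reduces to showing that the restriction
\[ \gamma_{n-1}^{\ast}: \Fun^{\cocart}_{/[0:n-1]}(\sd([0:n-1]), C_{[0:n-1]})_{\gen{1}}^+ \to \Fun^{\cocart}_{/[0:n-1]}(\sd_1([0:n-1]), C_{[0:n-1]})_{\ext}^+ \]
is an equivalence.

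The key point is then to match up the ``$+$'' decorations with the ordinary inductive hypothesis. By induction, $\gamma_{n-1}^{\ast}$ is already an equivalence between $\Fun^{\cocart}_{/[0:n-1]}(\sd([0:n-1]), C_{[0:n-1]})_{\gen{1}}$ and $\Fun^{\cocart}_{/[0:n-1]}(\sd_1([0:n-1]),C_{[0:n-1]})_{\ext}$. What remains is to verify that $\gamma_{n-1}^{\ast}$ carries $+$-$1$-generated objects to $+$-extendable objects and conversely — i.e., that for $F$ a $1$-generated object of $\Fun^{\cocart}_{/[0:n-1]}(\sd([0:n-1]), C_{[0:n-1]})$ with image $f = \gamma_{n-1}^{\ast} F$, the object $j_\ast F$ (right Kan extended over $\Delta^n$ via \cref{thm:ExistenceLaxRightKanExtension}) is $1$-generated if and only if $j_\ast f$ (right Kan extended over $\Delta^n$ via \cref{prp:StaircaseRecollement}) is extendable. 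The crucial input here is \cref{lem:OneGeneratedStrictMorphismOfRecollements}: when $F$ is $1$-generated, the comparison map $i^\ast j_\ast j^\ast F \to i^\ast j_\ast \gamma_{n-1}^\ast j^\ast F$ is an equivalence, which means the value of $j_\ast F$ at $n$ agrees with the value of $j_\ast f$ at $n$ computed by the simpler staircase formula. Then the $1$-generation condition on $j_\ast F$ at the strings with maximum $n$ — which by \cref{lm:equivalentOneGenerationConditions} amounts to degeneracy of $F(e)$ for string inclusions $[i<i+k]\to[i<i+1<i+k]$ containing $n$ — must be translated, using \cref{lm:limitCubeDegenerateIfOneGenerated} and the pointwise formula for $j_\ast$, into precisely the extendability conditions on $j_\ast f$ involving the pushforward $\tau^n_i$.

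For the actual construction of the inverse, the cleanest route is to observe that $\gamma_n^{\ast}$ is not merely an equivalence but a \emph{trivial fibration} onto its image. Concretely, since $\sd_1(\Delta^n) \subset \sd(\Delta^n)$ and all of $\sd(\Delta^n)$ lies over $\Delta^n$, I would exhibit $\sd(\Delta^n)$ as obtained from $\sd_1(\Delta^n)$ by a sequence of pushout-product-style attachments of cubes $Q_\sigma$ (one for each string $\sigma = [i < \cdots < j]$ of length $\geq 2$, attached along the subposet obtained by deleting the minimum, which sits over the fixed object $j \in \Delta^n$), alternating anodyne extensions (from the flatness/left-properness arguments as in the proof of \cref{thm:subdivisionExtension} and \cref{prp:StaircaseRecollement}) with $p$-right Kan extensions — the latter using the stability hypothesis to guarantee that the relevant relative limits exist, and that the $1$-generation condition (a total-fiber-vanishing condition) is exactly what makes the right Kan extension condition automatic. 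This gives a section of $\gamma_n^{\ast}$ whose image lands in the $1$-generated objects and which inverts $\gamma_n^{\ast}$ up to contractible choice. The main obstacle I anticipate is bookkeeping: making the filtration of $\sd(\Delta^n)$ by subposets compatible with the marking (so that the attached edges really are the locally $\max$-cocartesian ones) and checking at each stage that the combinatorial pushout squares satisfy the flatness criterion of \cref{lem:posetPushoutViaFlatness} — this is the same style of argument as the proof of \cref{thm:subdivisionExtension}(2) but with cubes of arbitrary dimension rather than just initial-segment parametrizations, so the inductive structure on cube dimension (parallel to the inductions in \cref{lm:equivalentOneGenerationConditions} and \cref{lm:limitCubeDegenerateIfOneGenerated}) has to be threaded through carefully.
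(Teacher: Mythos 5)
Your proposal is correct and takes essentially the same route as the paper: induction on $n$, the strict morphism of stable recollements from \cref{cor:OneGeneratedStrictMorphismOfRecollements} (with codomain the recollement of \cref{lem:ExtendableObjectsRecollement}), the two-out-of-three property of \cref{rem:TwoOutOfThreePropertyEquivalencesStrictMorphismRecoll}, and the matching of the $+$-decorated open parts via \cref{lem:OneGeneratedStrictMorphismOfRecollements}, \cref{lm:equivalentOneGenerationConditions}, and \cref{lm:limitCubeDegenerateIfOneGenerated}. Your final paragraph sketching an explicit cell-attachment construction of an inverse is superfluous: once the open and closed parts are handled, the strictness-plus-two-out-of-three argument already gives the equivalence without producing an inverse by hand.
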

\begin{proof} We proceed by induction on $n$. For the base cases $n = 0$ and $n=1$, the result is trivial. Let $n>1$ and suppose we have proven the theorem for all $k<n$. By the inductive hypothesis, $\gamma_{n-1}^{\ast}$ is an equivalence. Observe that $\gamma_{n-1}^{\ast}$ restricts to a functor
\[ (\gamma_{n-1}^{\ast})^+: \Fun^{\cocart}_{/[0:n-1]}(\sd([0:n-1]), C_{[0:n-1]})_{\gen{1}}^+ \to \Fun^{\cocart}_{/[0:n-1]}(\sd_1([0:n-1]), C_{[0:n-1]})_{\ext}^+. \]
If we let $(\gamma_{n-1}^{\ast})^{-1}$ be an inverse functor, then by \cref{lm:equivalentOneGenerationConditions}, if $f: \sd_1([0:n-1]) \to C_{[0:n-1]}$ is $+$-extendable, then $(\gamma_{n-1}^{\ast})^{-1}(f)$ is $+$-1-generated. Therefore, $(\gamma_{n-1}^{\ast})^+$ is also an equivalence. By \cref{cor:OneGeneratedStrictMorphismOfRecollements} (but replacing the codomain there with the recollement of \cref{lem:ExtendableObjectsRecollement}) and the two-out-of-three property of equivalences for a strict morphism of stable recollements (\cref{rem:TwoOutOfThreePropertyEquivalencesStrictMorphismRecoll}), we deduce that $\gamma_n^{\ast}$ is an equivalence.
\end{proof}

\begin{obs} \label{dualizedDescriptionOfSpine} To make better use of \cref{thm:OneGenerationAndExtension}, let us further unpack $\Fun_{/\Delta^n}^{\cocart}(\sd_1(\Delta^n),C)$. Note that we may write $\sd_1(\Delta^n)$ as the union of marked simplicial sets
\[ \sd([0:1]) \cup_{1} \sd([1:2]) \cup_{2} \cdots \cup_{n} \sd([n-1:n]), \]
so we obtain a fiber product decomposition
\[ \Fun_{/\Delta^n}^{\cocart}(\sd_1(\Delta^n),C) \simeq \Fun_{/[0:1]}^{\cocart}(\sd([0:1]), C_{[0:1]}) \times_{C_1} \cdots \times_{C_{n-1}} \Fun_{/[n-1:n]}^{\cocart}(\sd([n-1:n]), C_{[n-1:n]}). \]

Let $\tau^{i+1}_i: C_i \to C_{i+1}$ be the pushforward functors as before, and with respect to the trivial fibration (induced by the inner anodyne spine inclusion $[0:1] \cup_1 \cdots \cup_{n-1} [n-1:n] \to \Delta^n$)
\[ \Fun(\Delta^n, \Cat_{\infty}) \xto{\simeq} \Fun([0:1], \Cat_{\infty}) \times_{1} \cdots \times_{n-1} \Fun([n-1:n], \Cat_{\infty}), \]
let $\tau_{\bullet}: \Delta^n \to \Cat_{\infty}$ be a functor lifting the $\tau^{i+1}_i$. Let $C^{\vee} \to (\Delta^n)^{\op}$ be a cartesian fibration classified by $\tau_{\bullet}$. Then if we let $[i+1:i] = [i:i+1]^{\op}$, we have that $(C^{\vee})_{[i+1:i]} \simeq (C_{[i:i+1]})^{\vee}$ where the righthand $(-)^{\vee}$ denotes the dual cartesian fibration of the cocartesian fibration $C_{[i:i+1]} \to [i:i+1]$. Then by \cref{dualizingOneSimplex}, we have an equivalences of $\infty$-categories
\[ \Fun^{\cocart}_{/[i:i+1]}(\sd([i:i+1]), C_{[i:i+1]}) \simeq \Fun_{/[i+1:i]} ([i+1:i], C^{\vee}_{[i+1:i]}) \simeq \Ar(C_{i+1}) \times_{\ev_1,C_{i+1}, \tau^{i+1}_i} C_i. \]
\end{obs}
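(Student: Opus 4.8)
The plan is to make explicit the three things that are implicit in the statement: the pushout decomposition of $\sd_1(\Delta^n)$, the resulting fiber product decomposition of $\Fun^{\cocart}_{/\Delta^n}(\sd_1(\Delta^n),C)$, and the identification of each factor. First I would observe that $\sd_1(\Delta^n)$ is the zigzag poset $[0]\to[0<1]\leftarrow[1]\to[1<2]\leftarrow\cdots\leftarrow[n-1]\to[n-1<n]\leftarrow[n]$ (each $[k]$ with $0<k<n$ having exactly the two edges $[k]\to[k-1<k]$ and $[k]\to[k<k+1]$, only the latter being locally $\max_S$-cocartesian), so that as a marked simplicial set over $\Delta^n$ it is the iterated pushout $\sd([0:1])\cup_{\{[1]\}}\sd([1:2])\cup_{\{[2]\}}\cdots\cup_{\{[n-1]\}}\sd([n-1:n])$, each summand carrying the marking inherited from $\sd(\Delta^n)$ and the gluings being along the singleton strings $\{[i]\}$. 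Every marked edge of $\sd_1(\Delta^n)$ lies in a unique summand, so a functor $\sd_1(\Delta^n)\to C$ over $\Delta^n$ preserves locally cocartesian edges iff each of its restrictions to the $\sd([i:i+1])$ does; since $\Fun_{/\Delta^n}(-,C)$ carries this colimit of simplicial sets to the corresponding limit on the nose, $\Fun^{\cocart}_{/\Delta^n}(\sd_1(\Delta^n),C)$ is the strict iterated pullback of the $\Fun^{\cocart}_{/\Delta^n}(\sd([i:i+1]),C)$ over the $\Fun^{\cocart}_{/\Delta^n}(\{[i]\},C)$. To see this strict pullback is a homotopy pullback, I would note that each inclusion $\{[i]\}\hookrightarrow\sd([i:i+1])$ is a cofibration of marked simplicial sets over $\Delta^n$ and $\leftnat C$ is fibrant for the locally cocartesian model structure, so the induced restriction functor on $\cocart$-function-categories is a categorical fibration (standard properties of categorical pattern model structures, cf. \cite[\S B]{HA}); inducting over the $n-1$ gluings then yields the iterated fiber product of $\infty$-categories. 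Finally $\Fun^{\cocart}_{/\Delta^n}(\{[i]\},C)=C\times_{\Delta^n}\{i\}=C_i$ since $[i]\mapsto i$ under $\max_S$, and $\Fun^{\cocart}_{/\Delta^n}(\sd([i:i+1]),C)=\Fun^{\cocart}_{/[i:i+1]}(\sd([i:i+1]),C_{[i:i+1]})$ since $\max_S$ carries $\sd([i:i+1])$ into $[i:i+1]\subset\Delta^n$ and cocartesianness over an edge depends only on the pullback to that edge; this gives the first display.

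For each factor I would fix $i$, write $\Delta^1\cong[i:i+1]$ via $0\mapsto i$, $1\mapsto i{+}1$, so that $C_{[i:i+1]}\to\Delta^1$ is a cocartesian fibration with pushforward $\tau^{i+1}_i$. Since $\sd(\Delta^1)\cong\TwAr(\Delta^1)$ (as recorded in \cref{dualizingOneSimplex}), that observation gives $\Fun^{\cocart}_{/\Delta^1}(\sd(\Delta^1),C_{[i:i+1]})\simeq\Fun_{/[i+1:i]}([i+1:i],(C_{[i:i+1]})^\vee)$, where $(C_{[i:i+1]})^\vee\to[i:i+1]^{\op}=[i+1:i]$ is the dual cartesian fibration. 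I would then note that $(C_{[i:i+1]})^\vee$ and $C^\vee_{[i+1:i]}$ are both cartesian fibrations over $[i+1:i]$ classified by the functor $\tau^{i+1}_i\colon[i:i+1]\to\Cat_\infty$ — for $(C_{[i:i+1]})^\vee$ because passing to the dual re-presents the straightening of the cocartesian fibration $C_{[i:i+1]}$ as a cartesian fibration over the opposite base, and for $C^\vee_{[i+1:i]}$ because $\tau_\bullet$ was chosen to lift the $\tau^{i+1}_i$ — hence they are equivalent over $[i+1:i]$. Applying \cref{lm:sectionsPullbackSquare} to $\cM=C^\vee_{[i+1:i]}\to\Delta^1$, whose fiber $\cM_0$ over the bottom object $i{+}1$ is $C_{i+1}$, whose fiber $\cM_1$ over the top object $i$ is $C_i$, and whose associated functor $\cM_1\to\cM_0$ is the pullback functor $\tau^{i+1}_i$, gives $\Fun_{/\Delta^1}(\Delta^1,C^\vee_{[i+1:i]})\simeq\Ar(C_{i+1})\times_{\ev_1,C_{i+1},\tau^{i+1}_i}C_i$, which is the second display.

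I expect the only real work to be the bookkeeping in the last step — keeping track of which object of $[i+1:i]$ carries which fiber of the dual fibration, and confirming that the functor appearing in \cref{lm:sectionsPullbackSquare} is genuinely $\tau^{i+1}_i$ and not some adjoint or its opposite — together with the standard, but worth stating, fact that applying $\Fun^{\cocart}_{/\Delta^n}(-,C)$ to the homotopy pushout presenting $\sd_1(\Delta^n)$ produces a homotopy pullback of $\infty$-categories, for which it suffices that one leg of each pullback square is a categorical fibration.
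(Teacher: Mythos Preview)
Your proposal is correct and follows exactly the approach sketched in the paper: the statement is presented there as an observation with inline justification, and you have simply fleshed out those same steps—the pushout decomposition of $\sd_1(\Delta^n)$, the resulting fiber product via $\Fun^{\cocart}_{/\Delta^n}(-,C)$, and the identification of each factor by invoking \cref{dualizingOneSimplex} and \cref{lm:sectionsPullbackSquare}. The only addition you make beyond the paper is the explicit check that the strict pullback is a homotopy pullback (via one leg being a categorical fibration), which the paper leaves implicit.
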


Again using that the spine inclusion is inner anodyne, we obtain the following proposition.

\begin{prp} \label{prp:DualDescriptionOfSections} We have equivalences of $\infty$-categories
\begin{align*} \Fun_{/\Delta^n}^{\cocart}(\sd_1(\Delta^n),C) & \simeq \Fun_{/(\Delta^n)^{\op}}((\Delta^n)^{\op},C^{\vee}) \\
 & \simeq \Ar(C_n) \times_{C_n} \Ar(C_{n-1}) \times_{C_{n-1}} \cdots \times_{C_2} \Ar(C_1) \times_{C_1} C_0,
\end{align*}
where in the fiber product, the maps $\Ar(C_k) \to C_k$ are given by evaluation at the target, and the maps $\Ar(C_k) \to C_{k+1}$ are given by composing evaluation at the source with $\tau^{k+1}_k: C_k \to C_{k+1}$.
\end{prp}

\begin{ntn} Under the equivalence of \cref{prp:DualDescriptionOfSections}, let $\Fun_{/(\Delta^n)^{\op}}((\Delta^n)^{\op},C^{\vee})_{\ext}$ denote the extendable objects. Then we will also write (abusing notation)
\[ \begin{tikzcd}[row sep=4ex, column sep=6ex, text height=1.5ex, text depth=0.5ex]
\Fun^{\cocart}_{/\Delta^n}(\sd(\Delta^n), C)_{\gen{1}} \ar{r}{\gamma_n^{\ast}}[swap]{\simeq} \ar[hook]{d} & \Fun_{/(\Delta^n)^{\op}}((\Delta^n)^{\op},C^{\vee})_{\ext} \ar[hook]{d} \\
\Fun^{\cocart}_{/\Delta^n}(\sd(\Delta^n), C) \ar{r}{\gamma_n^{\ast}} & \Ar(C_n) \times_{C_n} \cdots \times_{C_1} C_0.
\end{tikzcd} \]
\end{ntn}

\begin{rem} \label{rem:NikolausScholze} The type of iterated fiber product occuring in \cref{prp:DualDescriptionOfSections} appears in the work of Nikolaus and Scholze when they describe the data of a (genuine) $C_{p^n}$-spectrum $X$ whose geometric fixed points (except possibly $\Phi^{C_{p^n}} X$ and $\Phi^{C_{p^{n-1}}} X$) are all bounded below; cf. \cite[Rem.~II.4.8]{NS18}.\footnote{Nikolaus and Scholze elide the subtlety involving the lack of bounded-below hypotheses needed on $\Phi^{C_{p^n}} X$ and $\Phi^{C_{p^{n-1}}} X$.} In fact, \cref{thm:OneGenerationAndExtension} together with \cite[Thm.~E]{AMGRb} applies to give a proof of \cite[Rem.~II.4.8]{NS18} that is independent of the machinery of ``coalgebras for endofunctors'' developed in \cite[\S II.5]{NS18}. We will explain this in more detail in \cite[\S 3.2]{QS21b} as well as prove a dihedral refinement of this assertion. For now, we give an overview of the argument:

By \cite[Thm.~E]{AMGRb}, for any finite group $G$ with subconjugacy poset $P$ there exists a locally cocartesian fibration $\Sp^{G}_{\phi\text{-locus}} \to P$ whose right-lax limit is canonically equivalent\footnote{The comparison functor is defined analogously to the functor \eqref{eqn:comparison_map} in \cref{thmB}; cf. \cite[Constr.~2.43]{QS21a}.} to the $\infty$-category $\Sp^{G}$ of (genuine) $G$-spectra. Furthermore, for every subgroup $H \leq G$, $(\Sp^{G}_{\phi\text{-locus}})_{H} \simeq \Sp^{h W_G H} = \Fun(B W_G H, \Sp)$ where $W_G H = N_G H/H$ is the Weyl group, and the equivalence transports a $G$-spectrum $X$ to its associated diagram of geometric fixed points $\{ \Phi^H X \in \Sp^{h W_G H} \}$. If $G = C_{p^n}$, then we may identify the pushforward functor associated to $[C_{p^k} \leq C_{p^m}]$ with the proper Tate construction $(-)^{\tau C_{p^{m-k}}}$ endowed with residual action; in particular, when $m = k+1$, this is the ordinary Tate construction $(-)^{t C_p}$. In addition, under the equivalence $\Sp^{C_{p^n}} \simeq \mathrm{lim}^{\rlax} \Sp^{C_{p^n}}_{\phi\text{-locus}}$ and the isomorphism $P \cong [n]$, the map $\gamma_n^*$ identifies with the forgetful functor
\[ \Sp^{C_{p^n}} \to \Sp^{h C_{p^n}} \times_{\scriptscriptstyle (-)^{tC_p}, \Sp^{h C_{p^{n-1}}}, \ev_1} \Ar(\Sp^{h C_{p^{n-1}}}) \times_{\scriptscriptstyle (-)^{tC_p} \ev_0, \Sp^{h C_{p^{n-2}}}, \ev_1} \Ar(\Sp^{h C_{p^{n-2}}}) \times \cdots \times \Ar(\Sp) \] 
that sends $X$ to $[\Phi^e X, \Phi^{C_p}X \rightarrow (\Phi^e X)^{t C_p}, ..., \Phi^{C_{p^n}} X \to (\Phi^{C_{p^{n-1}}} X)^{t C_p}]$ where the maps are the usual ones. The assertion made in \cite[Rem.~II.4.8]{NS18} is that $\gamma_n^*$ restricts to an equivalence
\[ \Sp^{C_{p^n}}_{+} \xto{\simeq} \Sp^{h C_{p^n}}_{+} \times_{\scriptscriptstyle (-)^{tC_p}, \Sp^{h C_{p^{n-1}}}, \ev_1} \Ar'(\Sp^{h C_{p^{n-1}}}) \times_{\scriptscriptstyle (-)^{tC_p} \ev_0, \Sp^{h C_{p^{n-2}}}, \ev_1} \Ar'(\Sp^{h C_{p^{n-2}}}) \times \cdots \times \Ar(\Sp) \]
where:
\begin{itemize}[leftmargin=*]
\item $\Sp^{C_{p^n}}_{+} \subset \Sp^{C_{p^n}}$ denotes the full subcategory of $C_{p^n}$-spectra spanned by those objects whose geometric fixed points (except possibly $\Phi^{C_{p^n}}$ and $\Phi^{C_{p^{n-1}}}$) are all bounded-below;
\item $\Sp^{h C_{p^n}}_{+} \subset \Sp^{h C_{p^n}}$ denotes the full subcategory of Borel $C_{p^n}$-spectra spanned by those objects whose underlying spectrum is bounded-below;
\item $\Ar'$ denotes the full subcategory on arrows whose source is bounded-below. 
\end{itemize} 

To invoke \cref{thm:OneGenerationAndExtension} to deduce this, we need to show that for every $X \in \Sp^{C_{p^n}}_{+}$, $X$ is $1$-generated as an object in $\mathrm{lim}^{\rlax} \Sp^{C_{p^n}}_{\phi\text{-locus}}$. If $n=2$, then this is precisely the content of the \emph{Tate orbit lemma} of \cite[Lem.~I.2.1]{NS18} once one identifies the fiber of the natural transformation $\can: (-)^{\tau C_{p^2}} \Rightarrow ((-)^{t C_p})^{t C_{p^2}/ C_p}$ encoded by $\Sp^{C_{p^2}}_{\phi\text{-locus}}$ with $((-)_{h C_p})^{t C_{p^2}/ C_p}$. Proceeding by induction on $n$, it is then not difficult to verify that the condition of \cref{lm:equivalentOneGenerationConditions} holds for all $X \in \Sp^{C_{p^n}}_{+}$; we record this as \cite[Cor.~3.40]{QS21b}.
\end{rem}

\section{Reconstruction of sheaves on stratified \texorpdfstring{$\infty$}{infinity}-topoi}

We explain how to apply \cref{thm:RecollementRlaxLimitOfLlaxFunctor} to prove a reconstruction theorem (\cref{thm:ReconstructionTopoi}) for sheaves in an $\infty$-topos stratified by a finite poset $P$ in the sense of Barwick--Glasman--Haine (\cref{dfn:strattopos}). We then prove a conjecture of Barwick--Glasman--Haine by establishing an equivalence (\cref{thm:AdjunctionTopos}) between the $\infty$-category of $P$-stratified $\infty$-topoi and that of \emph{toposic} locally cocartesian fibrations over $P^{\op}$ (\cref{dfn:toposicfibration}). To begin with, we recall the basic structure theory of recollements of $\infty$-topoi.

\begin{exm} \label{exm:RecollementTopoi}
Let $\sX$ be an $\infty$-topos and $U$ a $(-1)$-truncated object. The slice $\infty$-topos $\sX_{/U}$ is said to be an \emph{open subtopos} of $\sX$ \cite[\S 6.3.5]{HTT}.\footnote{Lurie uses the terminology ``\'etale geometric morphism''.} Let $\sX_{\setminus U} = \{ x \in \sX: x \times U \xto{\simeq} U \} \subset \sX$. $\sX_{\setminus U}$ is the \emph{closed subtopos of $\sX$ complementary to $U$} \cite[Def.~7.3.2.6]{HTT}. We then have a diagram of adjunctions
\[ \begin{tikzcd}[column sep=4em]
\sX_{/U} \ar[hookrightarrow, shift left=2]{r}{j_!} \ar[hookrightarrow, shift right=4]{r}[swap]{j_*} & \sX \ar[shift left=1]{l}[description]{j^*} \ar[shift left=2]{r}{i^*} \ar[shift right=1, hookleftarrow]{r}[swap]{i_*} & \sX_{\setminus U} 
\end{tikzcd} \]
that exhibits $(\sX_{/U}, \sX_{\setminus U})$ as a recollement of $\sX$. Conversely, by \cite[Prop.~A.8.15]{HA} given a left-exact accessible functor $\phi: \sU \to \sZ$ between $\infty$-topoi, the fiber product $\sX \coloneq \Ar(\sZ) \times_{\ev_1, \sZ, \phi} \sU$ is an $\infty$-topos and there exists a uniquely determined $(-1)$-truncated object $U$ such that $\sU \simeq \sX_{/U}$ and $\sZ \simeq \sX_{\setminus U}$ compatibly with the adjunctions to $\sX$.

In what follows, we will generically use the notation $j_! \dashv j^* \dashv j_*$ and $i^* \dashv i_*$ for these functors arising from a recollement on an $\infty$-topos.
\end{exm}

\begin{dfn}
A \emph{locale} is a $0$-topos, i.e. a poset $L$ such that $L$ admits infinite joins $\bigvee_{\alpha} x_{\alpha}$ (so that $L$ is presentable) and infinite joins distribute over finite meets.
\end{dfn}

\begin{exm}
Let $\sX$ be an $\infty$-topos. Then its full subcategory $\Open(\sX)$ of $(-1)$-truncated objects is a locale. Note that $\Open(\sX)$ is isomorphic to the poset of open subtopoi of $\sX$ (embedded in $\sX$ via $j_!$) via the assignment $U \mapsto \sX_{/U}$. Note also that if $X$ is a topological space, then $\Open(\Shv(X))$ is isomorphic to the poset $\Open(X)$ of open sets in $X$. If $P$ is a poset equipped with the Alexandroff topology, then these are precisely the cosieves in $P$.
\end{exm}

\begin{exm} \label{exm:balmerlocale}
Let $\sC$ be a presentably symmetric monoidal stable $\infty$-category and suppose there is some regular cardinal $\kappa$ such that the unit and tensor product restrict to define a symmetric monoidal structure on the full subcategory $\sC^{\kappa}$ of $\kappa$-compact objects in $\sC$. Then the set of radical thick $\otimes$-ideals in $\sC^{\kappa}$ forms a coherent locale \cite[Thm.~3.1.9]{Kock2016}.
\end{exm}

\begin{dfn}[{\cite[Def.~8.2.1]{Exodromy}}] \label{dfn:strattopos}
Let $P$ be a poset and $\sX$ an $\infty$-topos. A \emph{$P$-stratification of $\sX$} is a geometric morphism $\pi_*: \sX \to \Shv(P)$ of $\infty$-topoi, or equivalently a geometric morphism $\pi_*: \Open(\sX) \to \Open(P)$ of locales. We also say that the data $(\sX, \pi_*)$ comprises that of a \emph{$P$-stratified $\infty$-topos}.
\end{dfn}

In the next remark, we consider $P^{\op} \subset \Open(P)$ as a subposet via the map $p \mapsto P^{\geq p}$.

\begin{rem} \label{rem:AMGRdefinition}
Via the assignment $\pi_* \mapsto \pi^*|_{P^{\op}}$, geometric morphisms $\pi_*: \Open(\sX) \to \Open(P)$ are in bijective correspondence with maps of posets $f: P^{\op} \to \Open(\sX)$ such that
\begin{enumerate}
\item $\bigvee_{p \in P} f(p) =  \mathds{1}$.
\item For every $p,q \in P$, $\bigvee_{r \geq p,q} f(r) \xto{\cong} f(p) \times f(q)$.
\end{enumerate}
Indeed, given any map of posets $f: P^{\op} \to \Open(\sX)$, its left Kan extension $F: \Open(P) \to \Open(\sX)$ admits a right adjoint $G$ defined by $G(U) = \{ p \in P: f(p) \leq U \}$, and $F$ is then left-exact if and only if $f$ satisfies conditions (1) and (2).

Furthermore, (2) is also equivalent to the following factorization property: for every $p,q \in P$, the square
\[ \begin{tikzcd}
\sX_{/\bigvee_{r \geq p,q} f(r)} \ar[hookrightarrow]{r}{j_!} & \sX_{/f(p)} \\ 
\sX_{/f(q)} \ar[hookrightarrow]{r}{j_!} \ar[->>]{u}{j^*} & \sX \ar[->>]{u}{j^*}
\end{tikzcd} \]
commutes. We thus see that the notion of a $P$-stratification of $\sX$ is the evident toposic analogue of the notion of a $P$-stratification of a presentable stable $\infty$-category in the sense of \cite[Def.~2.4.3]{AMGRb}. Conversely, in view of \cref{exm:balmerlocale} one can sometimes give a `localic' reformulation of \cite[Def.~2.4.3]{AMGRb} (or rather, its symmetric monoidal refinement \cite[Def.~4.3.2]{AMGRb}).
\end{rem}

We now proceed to notate various subtopoi associated to a $P$-stratified $\infty$-topos.

\begin{ntn}[{\cite[Notn.~8.2.3]{Exodromy}}] \label{ntn:ExodromySubtopoi}
Let $\pi_*: \sX \to \Shv(P)$ be a $P$-stratification of $\sX$. In what follows, all fiber products are computed in $\Top_{\infty}$. For any open subset $O \subset P$, we let
$$\sX_O \coloneq \sX_{/\pi^* O} \simeq \sX \times_{\Shv(P)} \Shv(O).$$
Dually, for any closed subset $Z \subset P$, we let
$$ \sX_Z \coloneq \sX_{\setminus \pi^*(P \setminus Z)} \simeq \sX \times_{\Shv(P)} \Shv(Z). $$
For any $p \in P$, we define the \emph{$p$th stratum} of $(\sX, \pi_*)$ to be
$$ \sX_p \coloneq \sX \times_{\Shv(P)} \Shv(\{p\}) .$$
\end{ntn}

\begin{ntn} \label{ntn:GeometricFixedPoints}
In \cref{ntn:ExodromySubtopoi}, the $p$th stratum $\sX_p$ is the closed complement of $\sX_{P^{>p}}$ in $\sX_{P^{\geq p}} = \sX_{/\pi^*(p)}$, or alternatively the open complement of $\sX_{P^{<p}}$ in $\sX_{P^{\leq p}}$. We then have the adjunction
\[ \begin{tikzcd}
\Phi^p: \sX \ar[shift left=2]{r}{j^*} \ar[hookleftarrow, shift right=1]{r}[swap]{j_*} & \sX_{/\pi^*(p)} \ar[shift left=2]{r}{i^*} \ar[hookleftarrow, shift right=1]{r}[swap]{i_*}  & \sX_p: \rho_p,
\end{tikzcd} \]
in which $\rho_p$ is a geometric morphism.
\end{ntn}

\begin{rem} \label{rem:OutOfPosition}
Let $\pi_*: \sX \to \Shv(P)$ be a $P$-stratification of $\sX$ and suppose $p,q \in P$ such that $p \ngeq q$. Then $\Phi^q \rho_p$ is homotopic to the constant map at the final object. Indeed, by \cref{rem:AMGRdefinition} we have a factorization of $\Phi^q \rho_p$ as
\[ \begin{tikzcd}
\sX_p \ar[hookrightarrow]{r}{i_*} & \sX_{/\pi^*(p)} \ar[hookrightarrow]{r}{j_*} \ar[->>]{d}{j^*} & \sX \ar[->>]{d}{j^*} \\ 
& \sX_{/\pi^*(P^{\geq p,q})} \ar[hookrightarrow]{r}{j_*} & \sX_{/\pi^*(q)} \ar[->>]{d}{i^*} \\ 
& & \sX_q
\end{tikzcd} \]
and since $p \notin P^{\geq p,q}$, the composite $j^* i_*: \sX_p \to \sX_{/\pi^*(P^{\geq p,q})}$ is homotopic to the constant map at the final object.
\end{rem}

Given a $P$-stratified $\infty$-topos $(\sX, \pi_*)$, we may construct its associated \emph{gluing diagram} in the same manner as \cite[Def.~2.5.7]{AMGRb}.

\begin{cnstr} \label{con:GluingDiagram}
Let $\sG(\sX) = \{ (x,p): x \in \sX_p \} \subset \sX \times P^{\op}$, where $\sX_p \subset \sX$ via $\rho_p$. The projection
$$\lambda: \sG(\sX) \to P^{\op}$$
is then a locally cocartesian fibration with fibers $\sX_p$ such that for all $q \leq p$, the corresponding pushforward functor $\Gamma^q_p: \sX_p \to \sX_q$ is given by $\Phi^q \circ \rho_p$ (cf. [AMGR, Obs.~2.5.5]). 
\end{cnstr}

We codify the structure of $\lambda: \sG(\sX) \to P^{\op}$ by means of the following definition.

\begin{dfn} \label{dfn:toposicfibration}
We call a locally cocartesian fibration $\lambda: \widehat{\sX} \to P^{\op}$ \emph{toposic} if its fibers are $\infty$-topoi and its pushforward functors are left-exact and accessible.
\end{dfn}

If $P$ is finite, we will show that taking the limit in $\sX$ furnishes an equivalence $\Theta_P: \mathrm{lim}^{\rlax} \sG(\sX) \xto{\simeq} \sX$, thereby proving a \emph{reconstruction theorem} for $(\sX, \pi_*)$. First, we note:

\begin{lem} \label{lem:ToposicFibrationYieldsTopos}
Let $P$ be a finite poset and $\lambda: \widehat{\sX} \to P^{\op}$ a toposic locally cocartesian fibration. Then the right-lax limit $\sX = \Fun^{\cocart}_{/P^{\op}}(\sd(P^{\op}), \widehat{\sX})$ is an $\infty$-topos. Moreover, any cosieve $O \subset P$ determines a recollement of $\sX$ with open subtopos given by the right-lax limit of $\lambda|_{O^{\op}}$ and complementary closed subtopos given by the right-lax limit of $\lambda|_{(P \setminus O)^{\op}}$.
\end{lem}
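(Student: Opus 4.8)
The plan is to proceed by induction on the cardinality of $P$, using \cref{thm:RecollementRlaxLimitOfLlaxFunctor} to peel off a minimal stratum and \cite[Prop.~A.8.15]{HA} (as recalled in \cref{exm:RecollementTopoi}) to assemble an $\infty$-topos from a recollement whose constituent pieces are $\infty$-topoi. Concretely, since $P$ is finite, a minimal element $p_0 \in P$ gives a sieve $\{p_0\}$ and complementary cosieve $P \setminus \{p_0\}$ in $P$, hence (passing to opposites) a sieve-cosieve decomposition $P^{\op}_0, P^{\op}_1$ of $P^{\op}$, where $P^{\op}_1 = \{p_0\}$ and $P^{\op}_0 = (P \setminus \{p_0\})^{\op}$. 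Wait — I need to be careful about which of $\{p_0\}$, $P\setminus\{p_0\}$ is the sieve in $P^{\op}$: a minimal $p_0$ in $P$ is maximal in $P^{\op}$, so $\{p_0\}$ is a cosieve in $P^{\op}$ and $(P\setminus\{p_0\})^{\op}$ is the sieve. In any case, since a toposic locally cocartesian fibration has fibers admitting finite limits and pushforward functors preserving finite limits, and (by \cref{rem:downfinite}, as $P^{\op}$ is certainly down-finite) the hypotheses of \cref{thm:ExistenceLaxRightKanExtension} hold, \cref{thm:RecollementRlaxLimitOfLlaxFunctor} exhibits $\sX = \mathrm{lim}^{\rlax}\sG$ — with $\sG = \widehat{\sX}$ here — as a recollement of $\mathrm{lim}^{\rlax}(\lambda|_{(P\setminus\{p_0\})^{\op}})$ and $\mathrm{lim}^{\rlax}(\lambda|_{\{p_0\}}) = \widehat{\sX}_{p_0}$, the latter being an $\infty$-topos by hypothesis and the former being an $\infty$-topos by the inductive hypothesis applied to the restricted toposic fibration over $(P \setminus \{p_0\})^{\op}$.

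The next step is to promote this recollement to the conclusion that $\sX$ itself is an $\infty$-topos. For this I would invoke \cite[Prop.~A.8.15]{HA}: it suffices to check that the gluing functor $\phi = i^* j_*$ of the recollement is left-exact and accessible. Left-exactness is automatic since $i^*$ and $j_*$ are left-exact (the former as a restriction, which computes limits fiberwise via \cref{lem:rlaxLimitAdmitsFiniteLimitsAndStable}; the latter as a right adjoint). For accessibility I would use the pointwise formula for $j_*$ from \cref{thm:ExistenceLaxRightKanExtension}(1): each value of $j_*$ is a finite limit of composites of the pushforward functors of $\lambda$ applied to evaluations of the input, and these pushforward functors are accessible by the definition of toposic (\cref{dfn:toposicfibration}), while evaluation functors on the right-lax limit are accessible by \cref{prp:rightLaxLimitPresentable} (or the accessibility clause of \cref{lem:rlaxLimitAdmitsFiniteLimitsAndStable}); composing finitely many accessible functors and taking a finite limit preserves accessibility. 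Hence $\phi$ is left-exact accessible, and \cite[Prop.~A.8.15]{HA} produces a $(-1)$-truncated object $U \in \sX$ with $\sX_{/U} \simeq \mathrm{lim}^{\rlax}(\lambda|_{(P\setminus\{p_0\})^{\op}})$ and $\sX_{\setminus U} \simeq \widehat{\sX}_{p_0}$, realizing $\sX$ as an $\infty$-topos. The base case $|P| = 1$ is trivial: $\sX = \widehat{\sX}_{p_0}$ is an $\infty$-topos by hypothesis.

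For the second assertion, given an arbitrary cosieve $O \subset P$, the complementary sieve $P \setminus O$ and $O$ give (on opposites) a sieve-cosieve decomposition of $P^{\op}$ with $O^{\op}$ the cosieve; applying \cref{thm:RecollementRlaxLimitOfLlaxFunctor} directly to this decomposition of $P^{\op}$ and the toposic fibration $\lambda$ yields a recollement of $\sX$ with open part $\mathrm{lim}^{\rlax}(\lambda|_{O^{\op}})$ and closed part $\mathrm{lim}^{\rlax}(\lambda|_{(P\setminus O)^{\op}})$; both are $\infty$-topoi by the first part of the lemma applied to the restricted toposic fibrations, and running the accessibility argument above once more for this recollement's gluing functor shows via \cite[Prop.~A.8.15]{HA} that it exhibits $\mathrm{lim}^{\rlax}(\lambda|_{O^{\op}})$ as an open subtopos of $\sX$ with complementary closed subtopos $\mathrm{lim}^{\rlax}(\lambda|_{(P\setminus O)^{\op}})$, as claimed.

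The main obstacle I anticipate is the bookkeeping around accessibility of $j_*$ and the verification that the recollement produced by \cref{thm:RecollementRlaxLimitOfLlaxFunctor} matches the hypotheses of \cite[Prop.~A.8.15]{HA} on the nose — in particular confirming that the gluing functor of \cref{thm:RecollementRlaxLimitOfLlaxFunctor} is genuinely the left-exact accessible functor $\phi$ appearing there, so that the $(-1)$-truncated object $U$ and the identification of open/closed subtopoi come out compatibly with the recollement adjunctions. Everything else is a routine induction; there is no deep categorical difficulty, only the need to track that each piece produced inductively remains a toposic fibration over a smaller poset so the induction closes.
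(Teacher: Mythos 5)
Your proof is correct and follows essentially the same route as the paper's, which is just the terse version of your argument: induct on $|P|$, peel off a minimal stratum via \cref{thm:RecollementRlaxLimitOfLlaxFunctor}, and invoke the recollement-of-$\infty$-topoi statement of \cref{exm:RecollementTopoi} (i.e.\ \cite[Prop.~A.8.15]{HA}), with the second assertion following from \cref{thm:RecollementRlaxLimitOfLlaxFunctor} applied to the given cosieve; your attention to accessibility of the gluing functor fills in exactly what the paper leaves implicit. One cosmetic slip: since $O$ is a cosieve in $P$, the subposet $O^{\op}$ is the \emph{sieve} (not the cosieve) in $P^{\op}$, which is precisely why it furnishes the open part of the recollement — your stated conclusion is the right one, only the label is reversed.
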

\begin{proof}
Given \cref{thm:RecollementRlaxLimitOfLlaxFunctor} and proceeding by induction on the cardinality of $P$, the first part follows from the known statement for recollements of $\infty$-topoi recalled in \cref{exm:RecollementTopoi}. The second statement then follows by \cref{thm:RecollementRlaxLimitOfLlaxFunctor} again.
\end{proof}

Consider now the functor $\Theta_P: \Fun^{\cocart}_{/P^{\op}}(\sd(P^{\op}), \sG(\sX)) \to \sX$ that sends a functor $f: \sd(P^{\op}) \to \sG(\sX)$ to $\lim_{\sd(P^{\op})} (\pr_{\sX} \circ f)$.

\begin{thm} \label{thm:ReconstructionTopoi}
Suppose $P$ is a finite poset and let $(\sX, \pi_*)$ be a $P$-stratified $\infty$-topos. Then
$$\Theta_P: \Fun^{\cocart}_{/P^{\op}}(\sd(P^{\op}), \sG(\sX)) \to \sX$$
is an equivalence.
\end{thm}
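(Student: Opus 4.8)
The plan is to argue by induction on $|P|$, stripping off one stratum at a time via the recollement of \cref{thm:RecollementRlaxLimitOfLlaxFunctor}. When $|P|\le 1$ the category $\sd(P^{\op})$ has $\mathrm{lim}^{\rlax}\sG(\sX)\simeq\sX_p\simeq\sX$ and $\Theta_P$ is (equivalent to) the identity, so assume $|P|>1$ and the theorem for all posets of smaller cardinality. Fix a maximal element $p\in P$ and put $P'=P\setminus\{p\}$, so $\{p\}$ is open and $P'$ closed in $P$ (equivalently $\{p\}$ is a sieve and $(P')^{\op}$ a cosieve in $P^{\op}$). Restricting $\pi_*$ along the open inclusion $\Shv(P')\hookrightarrow\Shv(P)$ makes $\sX_{P'}$ a $P'$-stratified $\infty$-topos, and since for $q\in P'$ one has $P^{\le q}\subseteq P'$---whence $\rho_q^{\sX}\simeq i_*^{\sX}\rho_q^{\sX_{P'}}$ and $\Phi^q\simeq\Phi^q_{P'}i^{*}_{\sX}$ for $i^*_\sX\dashv i_*^\sX$ the closed localization onto $\sX_{P'}$---one checks that $\sG(\sX)|_{(P')^{\op}}\simeq\sG(\sX_{P'})$, hence $\mathrm{lim}^{\rlax}\big(\sG(\sX)|_{(P')^{\op}}\big)\simeq\mathrm{lim}^{\rlax}\sG(\sX_{P'})$. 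By \cref{lem:ToposicFibrationYieldsTopos} the cosieve $\{p\}\subset P$ yields a recollement of $\mathrm{lim}^{\rlax}\sG(\sX)$ with open part $\sX_p$ (the fibre over the singleton string $[p]$, with $j^*=\ev_{[p]}$) and closed part $\mathrm{lim}^{\rlax}\sG(\sX_{P'})$; by \cref{exm:RecollementTopoi} and \cref{ntn:ExodromySubtopoi} the open subtopos $\sX_p$ and closed subtopos $\sX_{P'}$ form a recollement of $\sX$ with the same open part. Finally $\Theta_P=\lim\circ(\pr_\sX)_*$ is left-exact, since postcomposition with $\pr_\sX$ computes finite limits fibrewise and the $\rho_q$ preserve limits, while $\lim$ preserves all limits.

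I would then establish three identities. First, $\Phi^p\Theta_P\simeq\ev_{[p]}$: since $\Phi^p=j^*_\sX$ has both adjoints it preserves all limits, so $\Phi^p\Theta_P(F)\simeq\lim_{\sd(P^{\op})}\Phi^p\rho_{\max\sigma}F(\sigma)$; by \cref{rem:OutOfPosition} this diagram is terminal away from the string $[p]$ (where it is $F([p])$, as $\Phi^p\rho_p\simeq\id$), and as $[p]$ is initial in the cosieve of $p$-containing strings the limit is $F([p])$. Second, $\Theta_P i_*\simeq i_*^{\sX}\Theta_{P'}$: for $g$ in the closed part the functor $\pr_\sX i_*(g)$ is terminal on the $p$-containing strings and equals $i_*^{\sX}\pr_{\sX_{P'}}(g)$ on $\sd((P')^{\op})$, so its limit over $\sd(P^{\op})$ collapses to the limit over the sieve $\sd((P')^{\op})$, and $i_*^{\sX}$ preserves limits. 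Third---the key point---$\Theta_P j_*\simeq\rho_p=j_*^{\sX}$: by \cref{thm:subdivisionExtension}(2) and the pointwise formula of \cref{thm:ExistenceLaxRightKanExtension}, $j_*(u)$ is the relative right Kan extension along $\sd(P^{\op})_0\hookrightarrow\sd(P^{\op})$ of the locally cocartesian extension $\overline u$ of $u$; since $\pr_\sX$ preserves the fibrewise limits computing this Kan extension (by \cref{lem:LimitIsRelativeLimit}) and right Kan extension is transitive, $\lim_{\sd(P^{\op})}\pr_\sX j_*(u)\simeq\lim_{\sd(P^{\op})_0}\pr_\sX\overline u=\rho_p(u)$, the last equality because $[p]$ is initial in the cosieve $\sd(P^{\op})_0$.

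Granting these, the fracture square of \cref{recollementFractureSquare} for $\mathrm{lim}^{\rlax}\sG(\sX)$, pushed through the left-exact $\Theta_P$, expresses $\Theta_P(F)$ as $j_*^{\sX}F([p])\times_{\,i_*^{\sX}\Theta_{P'}\psi(F([p]))}\,i_*^{\sX}\Theta_{P'}(i^*F)$, where $\psi=i^*j_*$ is the gluing functor of the right-lax-limit recollement. Comparing with the fracture square of $\sX$ for the recollement $(\sX_p,\sX_{P'})$, whose gluing functor is $\psi_\sX=i_*^{\sX}\rho_p$, one reads off that $\Theta_P$ is a morphism of recollements with induced functors $\mathrm{id}_{\sX_p}$ and $\Theta_{P'}$ on the open and closed parts (the latter an equivalence by the inductive hypothesis), \emph{provided} $\Theta_{P'}$ intertwines $\psi$ with $\psi_\sX$. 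This last compatibility is exactly strictness of $\Theta_P$, and I expect it to be the main obstacle. To prove it I would evaluate $\psi(u)=j_*(u)|_{\sd((P')^{\op})}$ string by string via \cref{thm:ExistenceLaxRightKanExtension}: on a singleton $[q]$ with $q\in P'$ it is $\Phi^q\rho_p(u)$ when $q<_P p$ and terminal otherwise (\cref{rem:OutOfPosition}), matching the value at $[q]$ of the transport $\Theta_{P'}^{-1}\psi_\sX(u)$, namely $\Phi^q_{P'}\big(i^*_\sX\rho_p(u)\big)\simeq\Phi^q\rho_p(u)$ using $\Phi^q_{P'}i^*_\sX\simeq\Phi^q$ (cf.\ \cref{rem:transportingSheaf}); the general strings are handled by the same unwinding together with transitivity and cofinality for relative right Kan extensions and the vanishing of out-of-position terms. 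With strictness in hand, $\Theta_P$ is a strict left-exact morphism of recollements whose two induced functors are equivalences, so \cref{rem:TwoOutOfThreePropertyEquivalencesStrictMorphismRecoll} shows $\Theta_P$ is an equivalence, completing the induction.
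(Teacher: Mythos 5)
Your skeleton is the same as the paper's (induction on $|P|$, peel off one element, identify the restricted gluing diagram, exhibit $\Theta_P$ as a strict morphism of recollements, conclude by \cref{rem:TwoOutOfThreePropertyEquivalencesStrictMorphismRecoll}), but you peel off a \emph{maximal} element $p$, so the singleton stratum is the \emph{open} part and the inductive piece is the \emph{closed} subtopos $\sX_{P'}$, whereas the paper peels off a \emph{minimal} element, putting the singleton on the closed side and the inductive piece on the open side. This flip is exactly where your two unproven steps sit. First, the identification $\sG(\sX)|_{(P')^{\op}}\simeq\sG(\sX_{P'})$ — equivalently $\rho_q\simeq i_*^{\sX}\rho_q^{\sX_{P'}}$ and $\Phi^q\simeq\Phi^q_{P'}i^*_{\sX}$ for $q\in P'$ — is asserted from ``$P^{\leq q}\subseteq P'$,'' but $\rho_q$ and $\Phi^q$ are \emph{defined} (\cref{ntn:GeometricFixedPoints}) through the open subtopos $\sX_{P^{\geq q}}$, and $P^{\geq q}$ typically contains the removed maximal element $p$, so nothing in that construction is internal to $\sX_{P'}$. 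What you need is the interchange of the open-then-closed and closed-then-open presentations of the locally closed stratum; this is true, but it is a real step that neither your sketch nor the paper's quoted results supply. (For a cosieve $O$ the analogous identification is immediate precisely because $P^{\geq q}\subseteq O$, which is why stripping a minimal element works so smoothly.)

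Second, the strictness comparison $\Theta_{P'}\circ(i^*j_*)\simeq i^*_{\sX}j_*^{\sX}$, which you yourself call the main obstacle, is not actually established. Matching values string-by-string does not produce the required \emph{natural} equivalence: you have no canonical comparison map in hand, because the canonical $\nu''$ of \cref{LaxVsStrictMorphismsOfRecollements} only exists once $\Theta_P$ is known to be a morphism of recollements, which needs the compatibility $i^*_{\sX}\Theta_P\simeq\Theta_{P'}i^*$ — and your identity $\Theta_P i_*\simeq i_*^{\sX}\Theta_{P'}$ does not yield it (proving it directly is essentially the same difficulty as strictness). Moreover your evaluation of $\Theta_{P'}^{-1}\psi_{\sX}(u)$ invokes \cref{rem:transportingSheaf}, which in the paper is a \emph{consequence} of the theorem via \cref{cor:ReconstructionOfStrat}; to use it for $P'$ you must fold a stratification-compatibility statement into the inductive hypothesis, which you do not do, and the ``general strings'' case again uses the unproven $\Phi^q_{P'}i^*\simeq\Phi^q$. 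By contrast, with a minimal element $b$ the gluing functor of the right-lax-limit recollement lands in the single stratum $\sX_b$, and its pointwise formula (\cref{thm:ExistenceLaxRightKanExtension}, with $J_{[b]}\cong\sd(O^{\op})$) coincides term-by-term with $i^*j_*$ applied to $\Theta_O$, so strictness is immediate; the only nontrivial inputs are the two $j^*$/$i^*$ compatibilities, proved by Kan-extension/cofinality arguments (your steps analogous to (a) and (c) do adapt). So either supply the open/closed interchange for strata and a genuine natural-transformation argument for strictness with a strengthened induction, or switch to peeling a minimal element, where your fracture-square assembly goes through as in the paper.
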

\begin{proof} To ease notation, let $\sX' \coloneq \Fun^{\cocart}_{/P^{\op}}(\sd(P^{\op}), \sG(\sX))$. We proceed by induction on the cardinality of $P$. We may suppose that $P$ is nonempty. Choose a minimal element $b \in P$ and let $O = P \setminus \{ b \}$. Let
$$(\pi_O)_*: \Open(\sX_{/\pi^*(O)}) \to \Open(O)$$
denote the $O$-stratification of the open subtopos $\sX_{/\pi^*(O)}$ restricted from that of $\sX$. Note that $\sG(\sX)|_{O^{\op}} \simeq \sG(\sX_{/\pi^*(O)})$ as locally cocartesian fibrations over $O^{\op}$. Indeed, one observes that for all $p \in O$, the fully faithful inclusion $\rho_p: \into{\sX_p}{\sX}$ factors through $\sX_{/\pi^*(O)}$ and identifies $\sX_p$ with $(\sX_{/\pi^*(O)})_p$ embedded via $(\rho_O)_p$, so the inclusion $\sG(\sX)|_{O^{\op}} \subset \sX \times O^{\op}$ factors through $\sX_{/\pi^*(O)}$ (embedded via $j_*$ in $\sX$) and identifies with $\sG(\sX_{/\pi^*(O)})$.

Let $(\sX_{/\pi^*(O)})' \coloneq \Fun^{\cocart}_{/O^{\op}}(\sd(O^{\op}), \sG(\sX_{/\pi^*(O)}))$ and write
$$ \Theta_O: (\sX_{/\pi^*(O)})' \xto{\pr_*} \Fun(\sd(O^{\op}),\sX_{/\pi^*(O)}) \xto{\lim} \sX_{/\pi^*(O)}.$$
We now show that $\Theta_P: \sX' \to \sX$ is a morphism of recollements from $((\sX_{/\pi^*(O)})', \sX_b)$ to $(\sX_{/\pi^*(O)}, \sX_b)$:
\begin{enumerate}[leftmargin=*]
\item We have a distinguished homotopy making the diagram
\[ \begin{tikzcd}[column sep=4em]
\sX' \ar{r}{j^* = \res} \ar{d}{\Theta_P} & (\sX_{/\pi^*(O)})' \ar{d}{\Theta_O} \\ 
\sX \ar{r}{j^*} & \sX_{/\pi^*(O)}
\end{tikzcd} \]
commute as follows: given $[f: \sd(P^{\op}) \to \sG(\sX)] \in \sX'$, consider the composite
\[ g: \sd(P^{\op}) \xto{f} \sG(\sX) \xto{\pr} \sX \xto{j^*} \sX_{/\pi^*(O)} \]
whose limit is $j^* \Theta_P(f)$. Then since $\sX_b \xtolong{i_* = \rho_b}{1.2} \sX \xto{j^*} \sX_{/\pi^*(O)}$ is homotopic to the constant map at the final object, $g$ is a right Kan extension of its restriction $g_0$ to $\sd(O^{\op})$. But since the limit of $g_0$ is $\Theta_O j^*(f)$, this supplies an equivalence $j^* \Theta_P(f) \simeq \Theta_O j^*(f)$ that is natural in $f$.
\item Likewise, we may construct an equivalence
$$i^* \Theta_P = \Phi^b \Theta_P \simeq \ev_b: \sX' \to \sX_b$$
as follows: let $[f: \sd(P^{\op}) \to \sG(\sX)] \in \sX'$ and consider the composite
\[ g: \sd(P^{\op}) \xto{f} \sG(\sX) \xto{\pr} \sX \xto{i^*} \sX_b. \]
If $a \ngeq b$, then the composite $\begin{tikzcd} \sX_a \ar[hookrightarrow]{r}{\rho_a} & \sX \ar[->>]{r}{\Phi^b} & \sX_b \end{tikzcd}$ is homotopic to the constant map at the final object by \cref{rem:OutOfPosition}. Consequently, $g$ is the right Kan extension of its restriction to $\sd((P^{\geq b})^{\op})$. Let $\sd^+((P^{> b})^{\op})$ be the subposet on strings ending at $b$ (in $P^{\op}$) and note that $ \sd((P^{> b})^{\op}) \cong \sd^+((P^{> b})^{\op})$ via the ``append $b$'' map. We then have a pullback square
\[ \begin{tikzcd}
\lim g|_{\sd((P^{\geq b})^{\op})} \ar{r} \ar{d}{\gamma'} & \lim g|_{\sd((P^{> b})^{\op})} \ar{d}{\gamma} \\ 
g(b) \ar{r} & \lim g|_{\sd^+((P^{> b})^{\op})}
\end{tikzcd} \]
in which $\gamma$ is induced by the ``append $b$'' homotopy $\into{\sd((P^{> b})^{\op}) \times [1]}{\sd((P^{\geq b})^{\op})}$. For all strings $\sigma = [p_1 > ... > p_n]$ in $(P^{>b})^{\op}$, letting $\sigma^+ \coloneq [p_1 > ... > p_n > b]$ we note that $g(\sigma \subset \sigma^+)$ is an equivalence. Therefore, $\gamma$ and hence $\gamma'$ is an equivalence, and this is clearly natural in the input $f$.
\end{enumerate}
We conclude that we have a morphism of recollements
\[ \begin{tikzcd}[column sep=4em]
(\sX_{/\pi^*(O)})' \ar{d}{\Theta_O} & \sX' \ar{d}{\Theta_P} \ar{r}{i^* = \ev_b} \ar{l}[swap]{j^* = \res} & \sX_b \ar{d}{=} \\
\sX_{/\pi^*(O)} & \sX \ar{l}[swap]{j^*} \ar{r}{i^* = \Phi^b} & \sX_b.
\end{tikzcd} \]
By the inductive hypothesis, $\Theta_O$ is an equivalence. To then deduce that $\Theta_P$ is an equivalence, by \cref{rem:TwoOutOfThreePropertyEquivalencesStrictMorphismRecoll} it remains to observe that we have a \emph{strict} morphism of recollements, i.e., that the adjoint square
\[ \begin{tikzcd}
(\sX_{/\pi^*(O)})' \ar{r}{i^*j_*} \ar{d}{\Theta_O} & \sX_b \ar{d}{=} \\ 
\sX_{/\pi^*(O)} \ar{r}{i^*j_*} & \sX_b
\end{tikzcd} \]
commutes. But using that the lower $i^* j_*: \sX_{/\pi^*(O)} \to \sX_b$ is left-exact, this precisely amounts to our formula for the gluing functor $i^* j_*: (\sX_{/\pi^*(O)})' \to \sX_b$ of the recollement on $\sX'$ that we gave in \cref{thm:ExistenceLaxRightKanExtension}.
\end{proof}

In fact, we can elaborate upon \cref{thm:ReconstructionTopoi} to also reconstruct the $P$-stratification of $\sX$.

\begin{cnstr} \label{cnstr:StratOfFibration}
Let $P$ be a finite poset, $\lambda: \widehat{\sX} \to P^{\op}$ a toposic locally cocartesian fibration, and $\sX = \Fun^{\cocart}_{/P^{\op}}(\sd(P^{\op}), \widehat{\sX})$ its right-lax limit, which is an $\infty$-topos by \cref{lem:ToposicFibrationYieldsTopos}. Given a cosieve $O \subset P$, let $\pi^*(O) \in \sX$ be the uniquely determined $(-1)$-truncated object such that $\Fun^{\cocart}_{/O^{\op}}(\sd(O^{\op}), \widehat{\sX}|_{O^{\op}}) \simeq \sX_{/\pi^*(O)}$. Then we may define a $P$-stratification of $\sX$ by the map of posets
\[ \pi^*: \Open(P) \to \Open(\sX), \]
as it is clear that $\pi^*$ preserves joins and meets (e.g., in view of \cref{rem:AMGRdefinition}).
\end{cnstr}

\begin{cor} \label{cor:ReconstructionOfStrat}
Let $P$ be a finite poset and $(\sX, \pi_*)$ a $P$-stratified $\infty$-topos. The $P$-stratification of $\sX' \coloneq \Fun^{\cocart}_{/P^{\op}}(\sd(P^{\op}), \sG(\sX))$ given by \cref{cnstr:StratOfFibration} coincides with that of $\sX$ under the equivalence $\Theta_P$ of \cref{thm:ReconstructionTopoi}.
\end{cor}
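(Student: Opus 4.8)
The plan is to reduce the corollary to a pointwise identification of open subtopoi and then rerun the induction on $|P|$ that powers the proof of \cref{thm:ReconstructionTopoi}. A $P$-stratification of $\sX' \coloneq \Fun^{\cocart}_{/P^{\op}}(\sd(P^{\op}), \sG(\sX))$ is the same data as the monotone map $\pi^*\colon \Open(P) \to \Open(\sX')$ (the left adjoint of the corresponding geometric morphism of locales); since $\Open(\sX')$ is a poset, two such maps agree as soon as they agree on every object of $\Open(P)$, i.e. on every cosieve $O \subseteq P$. Moreover $\Theta_P$ is an equivalence, hence preserves $(-1)$-truncated objects and induces an isomorphism of posets $\Open(\sX') \xto{\simeq} \Open(\sX)$ carrying a slice $\sX'_{/U}$ to $\sX_{/\Theta_P(U)}$. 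So it suffices to prove: for every cosieve $O \subseteq P$, the equivalence $\Theta_P$ carries the open subtopos $\sX'_{/\pi^*(O)} \subseteq \sX'$ — which by \cref{cnstr:StratOfFibration} is $\Fun^{\cocart}_{/O^{\op}}(\sd(O^{\op}), \sG(\sX)|_{O^{\op}})$ embedded via the recollement of \cref{thm:RecollementRlaxLimitOfLlaxFunctor} — onto the open subtopos $\sX_{/\pi^*(O)} \subseteq \sX$ cut out by the given stratification. (Using \cref{rem:AMGRdefinition} one may reduce further to the principal cosieves $P^{\geq p}$, but this is not needed.)

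I would prove the displayed assertion by induction on $|P|$. The cases $|P| \leq 1$, and the case $O = P$ for arbitrary $P$, are trivial. For $O \subsetneq P$, pick a minimal element $b$ of the nonempty sieve $P \setminus O$ (which is then minimal in $P$) and put $O' \coloneq P \setminus \{b\}$, a cosieve with $O \subseteq O'$ and $|O'| < |P|$. The proof of \cref{thm:ReconstructionTopoi} — step (1), together with the identification $\sG(\sX)|_{O'^{\op}} \simeq \sG(\sX_{/\pi^*(O')})$ recorded there — shows that $j^* \circ \Theta_P \simeq \Theta_{O'} \circ j^*$ for the restriction functors $j^*$ attached to the cosieve $O'$; passing to right adjoints, $\Theta_P \circ j_* \simeq j_* \circ \Theta_{O'}$, so $\Theta_P$ carries $\sX'_{/\pi^*(O')} = \Fun^{\cocart}_{/O'^{\op}}(\sd(O'^{\op}), \sG(\sX_{/\pi^*(O')}))$ onto $\sX_{/\pi^*(O')}$ via $\Theta_{O'}$, compatibly with the embeddings. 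Applying the inductive hypothesis to the $O'$-stratified $\infty$-topos $(\sX_{/\pi^*(O')}, (\pi_{O'})_*)$ of \cref{ntn:ExodromySubtopoi} and to the cosieve $O \subseteq O'$, we learn that $\Theta_{O'}$ carries its open subtopos for $O$ onto $(\sX_{/\pi^*(O')})_O$. Chaining the two identifications, using $\sG(\sX_{/\pi^*(O')})|_{O^{\op}} \simeq \sG(\sX)|_{O^{\op}}$ (so the source open subtopos really is $\sX'_{/\pi^*(O)}$) and the transitivity of open subtopoi $(\sX_{/\pi^*(O')})_O \simeq \sX \times_{\Shv(P)} \Shv(O') \times_{\Shv(O')} \Shv(O) \simeq \sX_O = \sX_{/\pi^*(O)}$, we conclude that $\Theta_P$ carries $\sX'_{/\pi^*(O)}$ onto $\sX_{/\pi^*(O)}$, closing the induction.

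The main obstacle I anticipate is purely organizational: one must pin down that, for nested cosieves $O \subseteq O' \subseteq P$, the recollement on $\sX'$ attached to $O$ is the restriction of the one attached to $O'$, so that ``the open subtopos for $O$ inside $\sX'$'' and ``the open subtopos for $O$ inside the open subtopos $\sX'_{/\pi^*(O')}$'' genuinely coincide. This is a compatibility among the constructions of \cref{thm:subdivisionExtension}, \cref{thm:ExistenceLaxRightKanExtension} and \cref{thm:RecollementRlaxLimitOfLlaxFunctor} that flows from the nesting $\sd(O^{\op})_0 \subseteq \sd(O'^{\op})_0 \subseteq \sd(P^{\op})$ of cosieve inclusions, but takes some care to phrase cleanly. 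Beyond this, every ingredient — the equivalence $\Theta_P$, its morphism-of-recollements structure, and the $\sG(-)$ identifications — has already been produced in the proof of \cref{thm:ReconstructionTopoi}, so the remaining work is formal.
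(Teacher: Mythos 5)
Your proposal is correct and follows essentially the same route as the paper: reduce the coincidence of stratifications to identifying, cosieve by cosieve, the open subtopos $\sX'_{/\pi^*(O)}$ with $\sX_{/\pi^*(O)}$ under $\Theta_P$, and then induct by factoring through the cosieve $P\setminus\{b\}$ for a minimal $b\in P\setminus O$, using the commutation $j^*\circ\Theta_P\simeq\Theta_{P\setminus\{b\}}\circ j^*$ already established in the proof of \cref{thm:ReconstructionTopoi}. The only differences are cosmetic — you induct on $|P|$ and phrase the identification via the right adjoints $j_*$ (chaining embeddings, with the nesting compatibility you flag following from the fact that restriction functors along $\sd(O^{\op})\subset\sd(O'^{\op})\subset\sd(P^{\op})$ compose and hence so do their right adjoints), whereas the paper inducts on $|P\setminus O|$ and checks commutativity of the squares of restriction functors $j^*$ directly.
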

\begin{proof}
For every cosieve $O \subset P$, let $(\sX_{/\pi^*(O)})' \coloneq \Fun^{\cocart}_{/O^{\op}}(\sd(O^{\op}), \sG(\sX)|_{O^{\op}})$ and note that as in the proof of \cref{thm:ReconstructionTopoi} that $ \sG(\sX)|_{O^{\op}} \simeq \sG(\sX_{/\pi^*(O)})$. By \cref{thm:ReconstructionTopoi}, we have that $\Theta_O: (\sX_{/\pi^*(O)})' \to \sX_{/\pi^*(O)}$ is an equivalence. To then see that $(\sX_{/\pi^*(O)})'$ identifies with the open subtopos $\sX_{/\pi^*(O)}$ under the equivalence $\Theta_P$, it remains to observe that the square
\[ \begin{tikzcd}
\sX' \ar{r}{j^*} \ar{d}{\Theta_P} & (\sX_{/\pi^*(O)})' \ar{d}{\Theta_O} \\ 
\sX \ar{r}{j^*} & \sX_{/\pi^*(O)}
\end{tikzcd} \]
commutes. We may proceed by induction on the cardinality of $P \setminus O$.\footnote{Of course, we could also adapt the proof of \cref{thm:ReconstructionTopoi} to show this directly.} If $O = P$ or $O = P \setminus \{ b\}$, then we are done by the proof of \cref{thm:ReconstructionTopoi}. If not, let $b \in P \setminus O$ be a minimal element. We have a factorization
\[ \begin{tikzcd}
\sX' \ar{r}{j^*} \ar{d}{\Theta_P} & (\sX_{/\pi^*(P\setminus \{ b\})})' \ar{r}{j^*} \ar{d}{\Theta_{P\setminus \{ b\}}} & (\sX_{/\pi^*(O)})' \ar{d}{\Theta_O} \\ 
\sX \ar{r}{j^*} & \sX_{/\pi^*(P\setminus \{ b\})} \ar{r}{j^*} & \sX_{/\pi^*(O)}
\end{tikzcd} \]
By the inductive hypothesis, both the inner squares commute, hence the outer square commutes.
\end{proof}

\begin{rem} \label{rem:transportingSheaf}
By \cref{cor:ReconstructionOfStrat}, it follows that given a sheaf $x \in \sX$, under the equivalence of \cref{thm:ReconstructionTopoi} $x$ corresponds to a functor $f_x: \sd(P^{\op}) \to \sG(\sX)$ that sends $[p]$ to $\Phi^p(x)$. The equivalence $x \simeq \Theta_P(f_x)$ then ``reconstructs'' $x$ from its stratumwise values $\Phi^p(x)$ and gluing data thereof.
\end{rem}

We next turn to questions of functoriality in the $P$-stratified $\infty$-topos.

\begin{obs} \label{obs:FunctorialityForToposicRecollements}
Continuing from \cref{exm:RecollementTopoi}, we explain how recollements of topoi are functorial in geometric morphisms. In one direction, suppose given a commutative square
\[ \begin{tikzcd}
\sU \ar{r}{\phi} \ar{d}[swap]{(f_U)_*} & \sZ \ar{d}{(f_Z)_*} \\ 
\sU' \ar{r}{\phi'} & \sZ'
\end{tikzcd} \]
of $\infty$-topoi, where $(f_U)_*, (f_Z)_*$ are geometric morphisms and $\phi, \phi'$ are left-exact accessible functors. Let $\sX$ and $\sX'$ be the $\infty$-topoi $\Ar(\sZ) \times_{\ev_1, \sZ, \phi} \sU$ and $\Ar(\sZ') \times_{\ev_1, \sZ', \phi'} \sU'$. Then the induced functor $f_*: \sX \to \sX'$ admits a left adjoint $f^*$ induced by the mate $(f_Z)^* \phi' \Rightarrow \phi (f_U)^*$; explicitly,
$$f^*[u', z' \rightarrow \phi'(u')] = [(f_U)^*(u'), (f_Z)^*(z') \rightarrow (f_Z)^* \phi'(u') \rightarrow \phi (f_U)^*(u') ].$$
Moreover, since $(f_U)^*, (f_Z)^*, \phi, \phi'$ are left-exact and $(j^*, i^*): \sX \to \sU \times \sZ$ creates finite limits, we see that $f^*$ is left-exact. We conclude that $f_*$ is a geometric morphism. Moreover, $f_*$ is a strict morphism of recollements whose left adjoint $f^*$ is a (not necessarily strict) morphism of recollements. Note also that if we identify $\sU \simeq \sX_{/U}$ and $\sU' \simeq \sX_{/U'}$ for $(-1)$-truncated objects $U, U'$, then $f^*(U') \simeq U$.

Conversely, let $\sX, \sX'$ be $\infty$-topoi decomposed by recollements $(\sU, \sZ), (\sU', \sZ')$ with gluing functors $\phi, \phi'$ and suppose $f_*: \sX \to \sX'$ is a geometric morphism such that both $f^*$ and $f_*$ are morphisms of recollements. Then $f_*$ is necessarily a strict morphism of recollements, and we obtain a commutative square $(f_Z)_* \phi \simeq \phi' (f_U)_*$ as above.

Finally, the theory of recollements implies that these constructions are mutually inverse.
\end{obs}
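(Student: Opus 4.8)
The plan is to deduce the final claim---that the forward and backward constructions of \cref{obs:FunctorialityForToposicRecollements} are mutually inverse---from the equivalence $\mathrm{lim}^{\rlax}\colon\Ar_{\lex}(\Cat_{\infty})\xrightarrow{\ \simeq\ }\Recoll_{\str}$ of \cref{recollEquivalenceToOplaxLim}, by exhibiting each construction as the restriction of this equivalence (respectively its inverse) to the subcategory cut out by the topos-theoretic side conditions.

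First I would name the relevant subcategories. Let $\mathcal{A}\subset\Ar_{\lex}(\Cat_{\infty})$ be the subcategory whose objects are the left-exact accessible functors $\phi\colon\sU\to\sZ$ between $\infty$-topoi and whose morphisms are the strictly commuting squares all of whose vertical maps are geometric morphisms; this makes sense because a geometric morphism, being a right adjoint with left-exact left adjoint, is itself left-exact and accessible, so such squares really are morphisms of $\Ar_{\lex}(\Cat_{\infty})$. By \cref{exm:RecollementTopoi} and \cref{cor:RecollementAsPullbackSquare}, $\mathrm{lim}^{\rlax}(\phi)\simeq\Ar(\sZ)\times_{\ev_1,\sZ,\phi}\sU$ is then an $\infty$-topos equipped with a canonical $(-1)$-truncated object, and conversely \cite[Prop.~A.8.15]{HA} says every such pair arises this way; let $\mathcal{R}\subset\Recoll_{\str}$ be the subcategory on these objects whose morphisms are the geometric morphisms $f_*\colon\sX\to\sX'$ that are strict morphisms of recollements and whose left adjoints $f^*$ are also morphisms of recollements. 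The assertion to be proven is exactly that $\mathrm{lim}^{\rlax}$ restricts to an equivalence $\mathcal{A}\simeq\mathcal{R}$ whose inverse is the ``record the induced square $(j'^*f_*j_*,\,i'^*f_*i_*)$'' construction.

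The steps are then: (i) the object-level identification $\mathcal{A}\leftrightarrow\mathcal{R}$ is \cref{cor:RecollementAsPullbackSquare} together with \cite[Prop.~A.8.15]{HA}; (ii) on morphisms, the forward construction of \cref{obs:FunctorialityForToposicRecollements} sends a square of $\mathcal{A}$ to the functor $f_*$ that $\mathrm{lim}^{\rlax}$ induces on the two pullbacks, and the inline argument there checks that $f_*$ lies in $\mathcal{R}$ (it is left-exact because $(j^*,i^*)$ jointly creates finite limits and $(f_U)^*,(f_Z)^*,\phi,\phi'$ are left-exact; it has a left adjoint $f^*$ built from the mate $(f_Z)^*\phi'\Rightarrow\phi(f_U)^*$; it is a strict morphism of recollements and $f^*$ is a morphism of recollements), while the backward construction sends $f_*\in\mathcal{R}$ to the square with verticals $F_U=j'^*f_*j_*$, $F_Z=i'^*f_*i_*$ and commutation datum $\nu''$ of \cref{LaxVsStrictMorphismsOfRecollements}, which is an equivalence precisely because $f_*$ is strict, and by \cref{obs:MorphismOfRecollements} these $F_U,F_Z$ recover the $(f_U)_*,(f_Z)_*$ one began with; since this is by construction the inverse of $\mathrm{lim}^{\rlax}$ on morphisms, the two constructions compose to identities. (iii) Finally I would note that running the same argument $P$-indexed---replacing $\Delta^1$ by a finite poset $P$ and invoking \cref{thm:ReconstructionTopoi} and \cref{cor:ReconstructionOfStrat} for the object-level reconstruction---upgrades $\Theta_P$ to the counit of an adjoint equivalence $\adjunct{\mathrm{lim}^{\rlax}}{\LocCocart^{\mathrm{top}}_{P^{\op}}}{\StrTop_{\infty,P}}{\sG}$, which is \cref{thm:AdjunctionTopos}.

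The main obstacle is the morphism-level bookkeeping in (ii): one must verify that the topos-theoretic conditions match exactly on the two sides, with no gain or loss of data---that ``$f_*$ geometric, strict morphism of recollements, with $f^*$ a morphism of recollements'' is equivalent to ``strictly commuting square of geometric morphisms''---and that this matching is coherent as an equivalence of $\infty$-categories rather than merely a bijection on equivalence classes. The safe route is to obtain everything by restricting the functor $\mathrm{lim}^{\rlax}$ and its inverse (as packaged in \cref{recollEquivalenceToOplaxLim}), so that coherence is inherited for free; the only genuine verifications are that $\mathcal{A}$ and $\mathcal{R}$ are sent into one another, and these are the pointwise checks recorded inline in \cref{obs:FunctorialityForToposicRecollements} and in \cref{exm:RecollementTopoi}.
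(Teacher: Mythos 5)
Your overall route is the paper's: the forward and backward constructions are restrictions of the equivalence $\mathrm{lim}^{\rlax}\colon \Ar_{\lex}(\Cat_{\infty}) \xto{\simeq} \Recoll_{\str}$ of \cref{recollEquivalenceToOplaxLim}, and the pointwise verifications you list (left-exactness of $f^*$ via creation of finite limits by $(j^*,i^*)$, the mate formula for $f^*$, recovery of the verticals via \cref{obs:MorphismOfRecollements}) are exactly the inline checks of the observation. There is, however, one claim of the statement that your proposal neither proves nor can prove as set up: in the converse direction the hypothesis is only that $f_*$ is a geometric morphism such that both $f_*$ and $f^*$ are morphisms of recollements, and the statement asserts that strictness of $f_*$ is then \emph{automatic}. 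You instead bake strictness into the definition of $\mathcal{R}$, so your equivalence $\mathcal{A}\simeq\mathcal{R}$ is silent about morphisms not assumed strict. This is not cosmetic: the backward construction is defined on the a priori larger class, and since the forward construction always outputs a strict morphism, the two constructions could only be mutually inverse on that class if every such $f_*$ is in fact strict. So without the automatic-strictness step, the final ``mutually inverse'' claim as stated is not established.

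The missing step is short and you should include it: since $f^*$ is a morphism of recollements, it sends $j'^*$-equivalences in $\sX'$ to $j^*$-equivalences in $\sX$; hence by adjunction $f_*$ carries $j^*$-local objects, i.e.\ the essential image $j_*(\sU)$, into $j'^*$-local objects, i.e.\ $j'_*(\sU')$ (for $x \in j_*(\sU)$ and a $j'^*$-equivalence $a \to b$, the map $\Map_{\sX'}(b,f_*x) \to \Map_{\sX'}(a,f_*x)$ is identified with $\Map_{\sX}(f^*b,x)\to \Map_{\sX}(f^*a,x)$, an equivalence). Equivalently the unit $\nu\colon f_* j_* \Rightarrow j'_*\,(j'^* f_* j_*)$ is an equivalence, which by \cref{LaxVsStrictMorphismsOfRecollements} is the same as $\nu''$ being an equivalence, i.e.\ $f_*$ is strict. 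For the subcategories to match on the nose you should also record why the verticals produced by the backward construction, $(f_U)_* = j'^* f_* j_*$ and $(f_Z)_* = i'^* f_* i_*$, are geometric: their left adjoints are $j^* f^* j'_*$ and $i^* f^* i'_*$ (the adjunction $f^*\dashv f_*$ restricts to the localizations because $f^*$ is a morphism of recollements), and these are left-exact and accessible as composites of such. With these two insertions your argument proves the full statement and agrees with the paper's.
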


\begin{dfn}[{\cite[8.2.2]{Exodromy}}]
A \emph{geometric morphism of $P$-stratified $\infty$-topoi} $(\sX, \pi_*) \to (\sY, \rho_*)$ is a geometric morphism $f_*: \sX \to \sY$ subject to the condition that the induced diagram of posets
\[ \begin{tikzcd}
\Open(\sX) \ar{rr}{f_*} \ar{rd}[swap]{\pi_*} & & \Open(\sY) \ar{ld}{\rho_*} \\ 
& \Open(P)
\end{tikzcd} \]
commutes, i.e., for all cosieves $O \subset P$, $f^* \rho^* (O) \cong \pi^*(O)$.

The collection of $P$-stratified $\infty$-topoi and geometric morphisms thereof assembles into an $\infty$-category $\StrTop_{\infty,P}$. Note also that $\StrTop_{\infty,P} \simeq \Top_{\infty} \times_{\Top_0} (\Top_0)_{/\Open(P)}$.
\end{dfn}

\begin{dfn}[{\cite[8.2.7]{Exodromy}}] A \emph{geometric morphism of toposic locally cocartesian fibrations} from $[\lambda: \widehat{\sX} \to P^{\op}]$ to $[\xi: \widehat{\sY} \to P^{\op}]$ is a functor $F: \widehat{\sX} \to \widehat{\sY}$ over $P^{\op}$ such that
\begin{enumerate}
\item $F$ preserves locally cocartesian edges.
\item For all $p \in P$, the fiber $F_p: \widehat{\sX}_p \to \widehat{\sY}_p$ is a geometric morphism of $\infty$-topoi.
\end{enumerate}
The collection of toposic locally cocartesian fibrations and geometric morphisms thereof assembles into an $\infty$-category $\LocCocart^{\mathrm{top}}_{P^{\op}}$.\footnote{Barwick--Glasman--Haine label this $\infty$-category as $\LocCocart^{\lex,\mathrm{top}}_{P^{\op}}$.}
\end{dfn}

\begin{obs} \label{obs:StratGeometricMorphisms}
Let $f_*: (\sX, \pi_*) \to (\sY, \rho_*)$ be a geometric morphism of $P$-stratified $\infty$-topoi. Then for all cosieves $O \subset P$, $f_*$ is a strict morphism of recollements with respect to $(\sX_{/\pi^*(O)}, \sX_{\setminus \pi^*(O)} )$ and $(\sY_{/\rho^*(O)}, \sY_{\setminus \rho^*(O)} )$. Moreover, for all maps of posets $Q \to P$, restriction along $\Shv(Q) \to \Shv(P)$ (in $\Top_{\infty}$) defines a geometric morphism $f'_*: \Shv(Q) \times_{\Shv(P)} \sX \to \Shv(Q) \times_{\Shv(P)} \sY$ of $Q$-stratified $\infty$-topoi. Consequently, for all $p \in P$, $f_*$ sends the stratum $\sX_p$ into $\sY_p$ (with respect to the embeddings $\rho_p$ of \cref{ntn:GeometricFixedPoints}) and we may thus restrict $f_* \times \id: \sX \times P^{\op} \to \sY \times P^{\op}$ to obtain a functor
\[ \sG(f_*): \sG(\sX) \to \sG(\sY) \]
over $P^{\op}$ that preserves locally cocartesian edges. We may thereby promote \cref{con:GluingDiagram} to a functor
$$\sG: \StrTop_{\infty,P} \to \LocCocart^{\lex,\mathrm{top}}_{P^{\op}}.$$

Conversely, suppose $P$ is a finite poset and let $F: \widehat{\sX} \to \widehat{\sY}$ be a geometric morphism of toposic locally cocartesian fibrations. Let $\sX = \mathrm{lim}^{\rlax} \widehat{\sX}$ and $\sY = \mathrm{lim}^{\rlax} \widehat{\sY}$. Let $f_*: \sX \to \sY$ denote the functor induced by $F$. Then by \cref{obs:FunctorialityForToposicRecollements}, \cref{thm:RecollementRlaxLimitOfLlaxFunctor}, and proceeding by induction on the cardinality of $P$, we see that $f_*$ is a geometric morphism such that for every cosieve $O \subset P$, $f_*$ is a strict morphism of recollements from $(\mathrm{lim}^{\rlax} \widehat{\sX}|_{O^{\op}}, \mathrm{lim}^{\rlax} \widehat{\sX}|_{(P \setminus O)^{\op}})$ to $(\mathrm{lim}^{\rlax} \widehat{\sY}|_{O^{\op}}, \mathrm{lim}^{\rlax} \widehat{\sY}|_{(P \setminus O)^{\op}})$. It follows that $f_*$ is a geometric morphism of $P$-stratified $\infty$-topoi with respect to the $P$-stratifications of \cref{cnstr:StratOfFibration}. Therefore, $\mathrm{lim}^{\rlax}$ promotes to a functor
\[ \mathrm{lim}^{\rlax}: \LocCocart^{\mathrm{top}}_{P^{\op}} \to \StrTop_{\infty,P}. \]
\end{obs}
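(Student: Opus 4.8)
The plan is to establish the two directions separately; both reduce, via \cref{obs:FunctorialityForToposicRecollements}, to bookkeeping with strict morphisms of recollements, and both invoke the inductive decomposition of a right-lax limit from \cref{thm:RecollementRlaxLimitOfLlaxFunctor}. The construction of $\sG$ will work for arbitrary $P$, whereas the construction of $\mathrm{lim}^{\rlax}$ will use finiteness of $P$ through \cref{lem:ToposicFibrationYieldsTopos}.

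For the forward direction, start with a geometric morphism $f_*\colon(\sX,\pi_*)\to(\sY,\rho_*)$ of $P$-stratified $\infty$-topoi. By definition $f^*\rho^*(O)\simeq\pi^*(O)$ for every cosieve $O\subset P$; since étale and closed geometric morphisms are stable under base change in $\Top_\infty$, this yields identifications $\sX_{/\pi^*(O)}\simeq\sX\times_\sY\sY_{/\rho^*(O)}$ and $\sX_{\setminus\pi^*(O)}\simeq\sX\times_\sY\sY_{\setminus\rho^*(O)}$ of geometric morphisms over $\sX$, so $f$ restricts to geometric morphisms on the open and closed parts compatibly with the embeddings into $\sX$ and $\sY$. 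In particular both $f^*$ and $f_*$ are morphisms of the recollements of \cref{exm:RecollementTopoi} attached to $\pi^*(O)$ and $\rho^*(O)$, so by \cref{obs:FunctorialityForToposicRecollements} $f_*$ is a strict morphism of recollements; this is the first assertion. The second assertion is immediate from the universal property of the pullbacks $\Shv(Q)\times_{\Shv(P)}\sX$ and $\Shv(Q)\times_{\Shv(P)}\sY$ along $\Shv(Q)\to\Shv(P)$ and the fact that the resulting square over $\Shv(Q)$ commutes by construction. Taking $Q=\{p\}$ identifies $\sX_p\to\sY_p$ as a geometric morphism and, combined with the strict-morphism property applied to the nested cosieves $P^{\geq p}\supset P^{>p}$ that realize $\rho_p$ (using that $f_*$ preserves terminal objects, so the mate $\nu'$ of \cref{LaxVsStrictMorphismsOfRecollements} is an equivalence), shows $f_*\rho_p(\sX_p)\subset\rho'_p(\sY_p)$; hence $f_*\times\id_{P^{\op}}$ restricts to a functor $\sG(f_*)\colon\sG(\sX)\to\sG(\sY)$ over $P^{\op}$. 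That $\sG(f_*)$ preserves locally cocartesian edges is the commutation $(f_q)_*\Gamma^q_p\simeq\Gamma'^q_p(f_p)_*$ of the pushforwards $\Gamma^q_p=\Phi^q\rho_p$, which follows by writing $\Phi^q=i^*j^*$ and using that $f_*$ is a morphism of the recollements attached to $P^{\geq q}$ and to $P^{>q}$, together with $f_*\rho_p\simeq\rho'_p(f_p)_*$. Finally, these fiberwise and stratumwise compatibilities are assembled into an honest functor $\sG\colon\StrTop_{\infty,P}\to\LocCocart^{\lex,\mathrm{top}}_{P^{\op}}$ exactly as the gluing diagram is built in \cite[\S 2.5]{AMGRb}, using the presentation $\StrTop_{\infty,P}\simeq\Top_\infty\times_{\Top_0}(\Top_0)_{/\Open(P)}$.

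For the converse, let $P$ be finite and let $F\colon\widehat{\sX}\to\widehat{\sY}$ be a geometric morphism of toposic locally cocartesian fibrations; postcomposition by $F$ carries locally $\max$-cocartesian sections of $\sd(P^{\op})$ to such sections, hence induces $f_*\colon\sX=\mathrm{lim}^{\rlax}\widehat{\sX}\to\sY=\mathrm{lim}^{\rlax}\widehat{\sY}$. I would show $f_*$ is a geometric morphism of $P$-stratified $\infty$-topoi (for the stratifications of \cref{cnstr:StratOfFibration}) by induction on $|P|$, the case $P=\emptyset$ being trivial. For the step, choose a minimal $b\in P$, set $O=P\setminus\{b\}$, and use \cref{lem:ToposicFibrationYieldsTopos} and \cref{thm:RecollementRlaxLimitOfLlaxFunctor} to write $\sX$ as a recollement of $\mathrm{lim}^{\rlax}(\widehat{\sX}|_{O^{\op}})$ and $\widehat{\sX}_b$, and similarly for $\sY$. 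Then $F$ restricts to a geometric morphism of toposic fibrations over $O^{\op}$ (so by the inductive hypothesis $(f_O)_*$ is a geometric morphism of $O$-stratified $\infty$-topoi) and to a geometric morphism $F_b$ of $\infty$-topoi on the closed strata; since $F$ preserves locally cocartesian edges it commutes with the pushforward functors, hence — via the pointwise formula for the gluing functor $i^*j_*$ in \cref{thm:ExistenceLaxRightKanExtension} as a limit over $J_b$ of pushforwards, preserved by the left-exact $F_b$ — it commutes with the gluing functors of the two recollements. Feeding this commutative square into \cref{obs:FunctorialityForToposicRecollements} shows $f_*$ is a geometric morphism and a strict morphism of recollements for the cosieve $O$; running the same argument over an arbitrary cosieve (by a further induction on $|P\setminus O|$ as in the proof of \cref{cor:ReconstructionOfStrat}) gives strictness for every cosieve, which is precisely the condition that $f_*$ be a geometric morphism of $P$-stratified $\infty$-topoi. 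Naturality of these constructions then upgrades $\mathrm{lim}^{\rlax}$ to a functor $\LocCocart^{\mathrm{top}}_{P^{\op}}\to\StrTop_{\infty,P}$.

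The main difficulty is not any single diagram chase — each rests on base-change stability of open and closed subtopoi together with the recollement-theoretic inputs already in hand — but the two coherence issues: keeping aligned the several notions of ``compatibility'' that appear ($f^*\rho^*(O)\simeq\pi^*(O)$, being a strict morphism of recollements, commuting with gluing functors, and preserving locally cocartesian edges) across every cosieve and every inductive step, and promoting the object- and morphism-level assignments to genuine functors of $\infty$-categories. Both are handled by passing to global models — sections of fibrations on one side, the presentation $\StrTop_{\infty,P}\simeq\Top_\infty\times_{\Top_0}(\Top_0)_{/\Open(P)}$ on the other — along the lines of \cite[\S 2.5]{AMGRb}.
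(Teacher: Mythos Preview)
Your proposal is correct and follows essentially the same approach as the paper: the observation itself contains only a terse sketch (base-change of open and closed subtopoi along $f$, then \cref{obs:FunctorialityForToposicRecollements} for the forward direction; induction on $|P|$ via \cref{thm:RecollementRlaxLimitOfLlaxFunctor} and \cref{obs:FunctorialityForToposicRecollements} for the converse), and you have simply fleshed out those steps, including the useful observation that commutation with the gluing functor in the inductive step follows from left-exactness of $F_b$ together with the pointwise limit formula of \cref{thm:ExistenceLaxRightKanExtension}.
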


Our remaining goal is to prove that $\sG$ and $\mathrm{lim}^{\rlax}$ define an adjoint equivalence of $\infty$-categories. For the proof, we will need to use the following piece of $(\infty,2)$-category theory from \cite[\S A.8]{AMGRb}:

\begin{obs} \label{leftlaxTorightlax}
Let $\sC, \sD \to P^{\op}$ be locally cocartesian fibrations and recall our discussion of left-lax and right-lax morphisms of locally cocartesian fibrations from \cref{rec:amgr}. Then the space $\Map^{\llax, R}_{/P^{\op}}(\sC, \sD)$ of left-lax morphisms whose fibers are right adjoints is naturally equivalent to the space $\Map^{\rlax, L}_{/P^{\op}}(\sD, \sC)$ of right-lax morphisms whose fibers are left adjoints, with the equivalence implemented fiberwise by passage to adjoints.
\end{obs}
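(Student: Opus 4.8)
The plan is to deduce this from the $(\infty,2)$-categorical calculus of mates of \cite{gaitsgory2017study}, which is precisely the form in which it is quoted as \cite[Lem.~A.8.1]{AMGRb}; I will describe the reduction that makes that input applicable and isolate the part that is genuinely $(\infty,2)$-categorical. First I would rephrase both sides via straightening. A locally cocartesian fibration over $P^{\op}$ is the unstraightening of a left-lax functor $P^{\op} \to \Cat_\infty$, i.e.\ of a $1$-morphism into the $(\infty,2)$-category of $\infty$-categories; by \cref{rec:amgr} (following \cite[\S A.5]{AMGRb}) a left-lax morphism $\sD \to \sC$ over $P^{\op}$ corresponds to a left-lax natural transformation between the associated left-lax functors, while a right-lax morphism $\sC \to \sD$ corresponds to a right-lax natural transformation, and in either case the fiber over $p \in P$ is the component functor ($\sD_p \to \sC_p$, resp.\ $\sC_p \to \sD_p$). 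It therefore suffices to produce an equivalence, natural in $\sC$ and $\sD$,
\[ \Map^{\rlax,R}_{/P^{\op}}(\sC,\sD) \;\simeq\; \Map^{\llax,L}_{/P^{\op}}(\sD,\sC), \]
between the space of right-lax natural transformations $\sC \to \sD$ with all components right adjoints and the space of left-lax natural transformations $\sD \to \sC$ with all components left adjoints, and to identify it with componentwise passage to adjoints (note the triple reversal: direction of the morphism, type of lax structure, and side of the adjunction).

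Second I would reduce this to a componentwise statement together with a compatibility. Each morphism space is the space of objects of an auxiliary category built over $P^{\op}$: on the right-lax side one has a locally cocartesian fibration over $P^{\op}$ with fiber $\Fun(\sC_p,\sD_p)$ over $p$ and pushforward functors obtained by whiskering with the structure functors of $\sC$, $\sD$ and the lax coherence data, whose right-lax limit (in the sense of \cref{rec:amgr}) recovers the right-lax morphisms; dually on the left-lax side with fibers $\Fun(\sD_p,\sC_p)$. For each $p$, passing to adjoints restricts to an equivalence from the full subcategory of $\Fun(\sC_p,\sD_p)$ on the right adjoints onto the full subcategory of $\Fun(\sD_p,\sC_p)$ on the left adjoints. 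The remaining content is that these componentwise equivalences intertwine the two systems of pushforward functors --- i.e.\ that for $q \le p$ the square comparing ``whisker, then take the mate'' with ``take the mate, then whisker'' commutes coherently --- which is the naturality of the mate correspondence, with all its higher coherences.

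Third, that naturality over all of $P^{\op}$ is exactly the $(\infty,2)$-categorical theorem to be cited: in an $(\infty,2)$-category $\mathbf{C}$, formation of the right adjoint of a $1$-morphism (when it exists) is the restriction of an equivalence between the sub-$(\infty,2)$-category of $\mathbf{C}$ on the $1$-morphisms admitting right adjoints and the $2$-cell-reversal of the one on $1$-morphisms admitting left adjoints, compatibly with composition of $1$-morphisms \cite{gaitsgory2017study} (as recalled in \cite[Lem.~A.8.1]{AMGRb}). Applying this with $\mathbf{C} = \Cat_\infty$ and transporting through the straightening equivalences and the section descriptions of the second step yields the claimed natural equivalence and identifies it with fiberwise passage to adjoints. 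The main obstacle is precisely this third step: the coherent bookkeeping of the mate $2$-cells over a general poset cannot be carried out by hand, so the proof is in essence an application of the $(\infty,2)$-categorical adjunction machinery of \cite{gaitsgory2017study}, the first two steps being the translation that puts one in position to invoke it.
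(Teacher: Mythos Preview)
The paper does not prove this observation at all: it is stated as a black-box import from \cite{gaitsgory2017study}, with the precise formulation taken from \cite[Lem.~A.8.1]{AMGRb}, and is used without further justification. Your proposal is therefore not a competing proof but rather an unpacking of why the cited input yields the statement as phrased; that unpacking is accurate and aligns with how \cite[\S A]{AMGRb} organizes matters, so there is no disagreement in approach, only a difference in level of detail.
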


\begin{thm} \label{thm:AdjunctionTopos}
Let $P$ be a finite poset. $\sG$ and $\mathrm{lim}^{\rlax}$ participate in an adjoint equivalence
\[ \adjunct{\mathrm{lim}^{\rlax}}{\LocCocart^{\mathrm{top}}_{P^{\op}}}{\StrTop_{\infty,P}}{\sG}. \]
\end{thm}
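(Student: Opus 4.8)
The strategy is to combine the object-level equivalence $\Theta_P\colon \mathrm{lim}^{\rlax}\sG(\sX)\xto{\simeq}\sX$ of \cref{thm:ReconstructionTopoi} (together with its compatibility with the $P$-stratifications, \cref{cor:ReconstructionOfStrat}) with a matching morphism-level statement, and then identify the two constructions $\sG$ and $\mathrm{lim}^{\rlax}$ as mutually inverse at the level of $\infty$-categories by producing the unit and counit as natural equivalences. First I would establish that $\mathrm{lim}^{\rlax}$ is essentially surjective: given a toposic locally cocartesian fibration $\lambda\colon\widehat{\sX}\to P^{\op}$, form $\sX=\mathrm{lim}^{\rlax}\widehat{\sX}$ with the $P$-stratification of \cref{cnstr:StratOfFibration}; one checks (by induction on $|P|$, using \cref{obs:FunctorialityForToposicRecollements} and \cref{thm:RecollementRlaxLimitOfLlaxFunctor} to control the recollement decomposition) that the canonical comparison $\widehat{\sX}\to\sG(\mathrm{lim}^{\rlax}\widehat{\sX})$ over $P^{\op}$ is an equivalence of locally cocartesian fibrations — i.e.\ the counit $\sG\circ\mathrm{lim}^{\rlax}\Rightarrow \id$ is an equivalence. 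Indeed for each $p$ the fiber $\widehat{\sX}_p$ is identified with the $p$th stratum $\sX_p$ by the inductive description of the strata, and the pushforward functors $\Gamma^q_p=\Phi^q\rho_p$ of \cref{con:GluingDiagram} are identified with those of $\lambda$ by unwinding the pointwise gluing formula of \cref{thm:ExistenceLaxRightKanExtension}. Dually, \cref{thm:ReconstructionTopoi} and \cref{cor:ReconstructionOfStrat} say precisely that the unit $\id\Rightarrow\mathrm{lim}^{\rlax}\circ\sG$ — sending $(\sX,\pi_*)$ to $(\mathrm{lim}^{\rlax}\sG(\sX),\text{stratif.\ of \cref{cnstr:StratOfFibration}})$ via $\Theta_P$ — is an equivalence in $\StrTop_{\infty,P}$, once we know $\Theta_P$ and its inverse are geometric morphisms of stratified topoi, which is exactly the content of those results.

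With essential surjectivity and the pointwise equivalences in hand, the remaining work is fully faithfulness, for which the key input is \cref{leftlaxTorightlax}. Given $P$-stratified topoi $(\sX,\pi_*),(\sY,\rho_*)$, a geometric morphism $f_*\colon\sX\to\sY$ of stratified topoi has an \emph{adjoint} $f^*$ which is a left-exact left adjoint; by \cref{obs:StratGeometricMorphisms} the assignment $f_*\mapsto\sG(f_*)$ produces a functor $\sG(\sX)\to\sG(\sY)$ over $P^{\op}$ preserving locally cocartesian edges whose fibers are geometric morphisms, i.e.\ a morphism in $\LocCocart^{\mathrm{top}}_{P^{\op}}$. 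Conversely, from a geometric morphism $F\colon\sG(\sX)\to\sG(\sY)$ of toposic fibrations, applying $\mathrm{lim}^{\rlax}$ and using \cref{obs:StratGeometricMorphisms} again gives back a geometric morphism of stratified topoi; the two assignments are mutually inverse because passing to fiberwise adjoints (legitimate by \cref{leftlaxTorightlax}, since each $F_p$ is a geometric morphism hence a right adjoint, and $\sd(P^{\op})$-indexed limits commute with the right-adjoint parts) intertwines the right-lax limit of the "$*$-functors" with the left-lax limit of the "$^*$-functors", and under $\Theta_P$ the latter is identified with the pullback $f^*$ and the former with the pushforward $f_*$. Concretely, I would verify that the square relating $F_*=\mathrm{lim}^{\rlax}F$ and $F^*$ (the fiberwise-adjoint left-lax morphism, whose right-lax limit is $\const\dashv\mathrm{lim}^{\rlax}$-adjoint to $F_*$) commutes with the recollement decompositions coming from each cosieve $O\subset P$, exactly as in the proof of \cref{thm:ReconstructionTopoi}, so that $F_*$ is a morphism of stratified topoi; then the space of such is the space of maps $\sG(\sX)\to\sG(\sY)$ by \cref{leftlaxTorightlax}, giving the desired equivalence on mapping spaces.

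Finally I would assemble the adjunction itself: the naturality of $\Theta_P$ in geometric morphisms (proven by the same cosieve-induction as \cref{thm:ReconstructionTopoi}, cf.\ the diagrams in \cref{obs:StratGeometricMorphisms}) exhibits $\Theta\colon\mathrm{lim}^{\rlax}\circ\sG\Rightarrow\id_{\StrTop_{\infty,P}}$ as a natural transformation; since $\sG$ has been shown essentially surjective with $\sG\circ\mathrm{lim}^{\rlax}\simeq\id$ on the nose, and since both functors are fully faithful by the previous paragraph, $\Theta$ is a natural equivalence and $\sG$ is an equivalence of $\infty$-categories with inverse $\mathrm{lim}^{\rlax}$; one then checks the triangle identities — which hold because the unit and counit are both equivalences — to upgrade this to an adjoint equivalence with $\mathrm{lim}^{\rlax}$ left adjoint to $\sG$.

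\textbf{Main obstacle.}
I expect the hard part to be the morphism-level identification — specifically, showing that the passage-to-fiberwise-adjoints of \cref{leftlaxTorightlax} is compatible, coherently and naturally, with taking $\mathrm{lim}^{\rlax}$, so that $\mathrm{lim}^{\rlax}$ of a geometric morphism of toposic fibrations really is (left-)adjoint to the induced functor on right-lax limits and lands in $\StrTop_{\infty,P}$. Establishing essential surjectivity (the counit $\sG\circ\mathrm{lim}^{\rlax}\simeq\id$) is a relatively routine induction built on \cref{thm:RecollementRlaxLimitOfLlaxFunctor} and \cref{obs:FunctorialityForToposicRecollements}, and the unit is handed to us by \cref{thm:ReconstructionTopoi} and \cref{cor:ReconstructionOfStrat}; but bookkeeping the $(\infty,2)$-categorical coherence needed to turn pointwise adjunctions into a genuine adjoint equivalence of functor $\infty$-categories — rather than just a levelwise equivalence of mapping spaces — is where the real care is required, and is precisely the place where invoking the $(\infty,2)$-categorical lemma from \cite{gaitsgory2017study} (via \cite[Lem.~A.8.1]{AMGRb}) does the essential work.
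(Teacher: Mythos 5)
Your plan has the right ingredients (\cref{thm:ReconstructionTopoi}, \cref{cor:ReconstructionOfStrat}, \cref{obs:StratGeometricMorphisms}, and the left-lax/right-lax dualization of \cref{leftlaxTorightlax}), but it rests on a step you never actually supply: the ``canonical comparison'' $\widehat{\sX}\to\sG(\mathrm{lim}^{\rlax}\widehat{\sX})$ as a morphism in $\LocCocart^{\mathrm{top}}_{P^{\op}}$, natural in $\widehat{\sX}$. Identifying, for each $p$, the fiber $\widehat{\sX}_p$ with the $p$th stratum of $\mathrm{lim}^{\rlax}\widehat{\sX}$, and the pushforwards with $\Phi^q\rho_p$, is indeed accessible by the inductive recollement analysis, but such fiberwise identifications do not assemble for free into a map of locally cocartesian fibrations over $P^{\op}$: one must produce the map together with its compatibility with all composite pushforwards and the lax coherences among them, and naturality in $\widehat{\sX}$ on top of that. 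Calling this ``a relatively routine induction'' is where your argument has a genuine hole -- it is exactly the coherence problem you flag at the end, and your proposal defers it rather than solving it. The proof in the paper is organized precisely to avoid ever building this map by hand: it first manufactures the adjunction abstractly, via the chain of equivalences, natural in both variables, $\Map_{\Cat}(\sY,\mathrm{lim}^{\rlax}\widehat{\sX})\simeq\Map^{\rlax}_{/P^{\op}}(\sY\times P^{\op},\widehat{\sX})$ (from $\const\dashv\mathrm{lim}^{\rlax}$, \cref{rec:amgr}), restricted to left adjoints, dualized by \cref{leftlaxTorightlax}, and then cut down to $\Map_{\StrTop_{\infty,P}}(\mathrm{lim}^{\rlax}\widehat{\sX},\sY)\simeq\Map_{\LocCocart^{\mathrm{top}}_{P^{\op}}}(\widehat{\sX},\sG(\sY))$ using \cref{obs:StratGeometricMorphisms}, \cref{cor:ReconstructionOfStrat}, and \cref{thm:ReconstructionTopoi}. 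With the adjunction in hand, the unit exists automatically and its invertibility is a fiberwise check ($\eta_p\simeq\id$ by compatibility with restriction in $P$), which is legitimate because equivalences of locally cocartesian fibrations are detected fiberwise. Your proposal reverses this order and therefore needs the coherence data it never constructs.

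Two further points. First, your concluding step ``check the triangle identities -- which hold because the unit and counit are both equivalences'' is not a valid inference: invertibility of candidate unit and counit maps does not give the triangle identities, and while any equivalence of $\infty$-categories can be promoted to an adjoint equivalence, the theorem (as stated in the introduction as part of Theorem B) requires specifically that $\Theta_P$ be the counit, which again needs the mapping-space identification rather than an abstract promotion. (You also swap the labels: $\Theta_P\colon\mathrm{lim}^{\rlax}\sG(\sX)\to\sX$ is the counit of $\mathrm{lim}^{\rlax}\dashv\sG$, and $\widehat{\sX}\to\sG(\mathrm{lim}^{\rlax}\widehat{\sX})$ is the unit.) Second, your full-faithfulness paragraph produces, for each pair of objects, some equivalence of mapping spaces, but it does not verify that this equivalence is the map induced by $\sG$, nor that it is natural in both variables -- and naturality is what converts pointwise bijections into an adjunction of $\infty$-categories. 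Both defects are repaired simultaneously by running the argument in the paper's order: construct the natural equivalence of mapping spaces first, and read off the adjunction, the identification of $\Theta_P$ as counit, and the fiberwise criterion for the unit.
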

\begin{proof}
We proceed as in the proof of \cite[Thm.~6.2.6]{AMGRb}. Suppose $[\lambda: \widehat{\sX} \to P^{\op}]$ is a toposic locally cocartesian fibration and $(\sY, \rho_*)$ is a $P$-stratified $\infty$-topos. In view of the adjunction $\const \dashv \mathrm{lim}^{\rlax}$, we first note that we have a natural equivalence\footnote{Here, $\Cat$ refers to the $\infty$-category of large $\infty$-categories, so that $\Pr^L$ and $\Pr^R$ are subcategories of $\Cat$.}
\[ \psi: \Map_{\Cat}(\sY, \mathrm{lim}^{\rlax} \widehat{\sX}) \xto{\simeq} \Map^{\rlax}_{/P^{\op}}(\sY \times P^{\op}, \widehat{\sX}). \]
Since the evaluation functors $\mathrm{lim}^{\rlax} \widehat{\sX} \to \widehat{\sX}_p$ at each $p \in P$ are all left adjoints, $\psi$ restricts to the equivalence $\psi'$ in the diagram
\[ \begin{tikzcd}
\Map_{\Pr^L}(\sY, \mathrm{lim}^{\rlax} \widehat{\sX}) \ar{r}{\psi'}[swap]{\simeq} \ar{d}{\simeq} & \Map^{\rlax, L}_{/P^{\op}}(\sY \times P^{\op}, \widehat{\sX}) \ar{d}{\simeq} \\ 
\Map_{\Pr^R}(\mathrm{lim}^{\rlax} \widehat{\sX}, \sY) \ar{r}{\psi''}[swap]{\simeq} & \Map^{\llax, R}_{/P^{\op}}(\widehat{\sX}, \sY \times P^{\op}).
\end{tikzcd} \]
We then have the vertical equivalences (with the righthand one explained in \cref{leftlaxTorightlax}), yielding the equivalence $\psi''$ in which a right-adjoint functor $f_*: \mathrm{lim}^{\rlax} \widehat{\sX} \to \sY$ transports to a functor $F: \widehat{\sX} \to \sY \times P^{\op}$ such that for all $p \in P$, the fiber $F_p: \widehat{\sX}_p \to \sY$ is the right adjoint to the composite
\[ \sY \xto{f^*} \mathrm{lim}^{\rlax} \widehat{\sX} \xto{\ev_p} \widehat{\sX}_p. \]
We now observe that $f_*$ is a geometric morphism of $P$-stratified $\infty$-topoi if and only if for all $p \in P$, $F_p$ is a geometric morphism, $F_p$ factors through $\sY_p$, and the resulting map $F: \widehat{\sX} \to \sG(\sY)$ preserves locally cocartesian edges. Indeed, the ``only if' implication follows from the first half of \cref{obs:StratGeometricMorphisms}, while for the ``if'' implication, we note that $f_*$ factors as the composite
$$\mathrm{lim}^{\rlax} \widehat{\sX} \xtolong{\mathrm{lim}^{\rlax} F}{1.2} \mathrm{lim}^{\rlax} \sG(\sY) \xto{\Theta_P} \sY,$$
which respect $P$-stratifications by the second half of \cref{obs:StratGeometricMorphisms} and \cref{cor:ReconstructionOfStrat}, respectively. Therefore, $\psi''$ restricts to the desired natural equivalence
\[ \psi''': \Map_{\StrTop_{\infty,P}}(\mathrm{lim}^{\rlax} \widehat{\sX}, \sY) \simeq \Map_{{\LocCocart^{\mathrm{top}}_{P^{\op}}}}(\widehat{\sX}, \sG(\sY)).\]
We conclude that $\mathrm{lim}^{\rlax} \dashv \sG$. Furthermore, unpacking this equivalence of mapping spaces shows that $\Theta_P$ is the counit of the adjunction. Since $\Theta_P$ is an equivalence by \cref{thm:ReconstructionTopoi}, it remains to show that the unit $\eta$ is an equivalence. But the compatibility of the equivalence $\psi'''$ with restriction in the base $P$ shows that $\eta_p$ is homotopic to the identity for all $p \in P$, hence $\eta$ is an equivalence. 
\end{proof}

\begin{rem}
\cref{thm:AdjunctionTopos} should be viewed as the unstable counterpart to \cite[Thm.~A]{AMGRb}, which sets up a similar equivalence between $P$-stratified stable presentable $\infty$-categories (\cite[Def.~2.4.3]{AMGRb}) and locally cocartesian fibrations fibered in such with exact accessible pushforward functors.
\end{rem}

\bibliographystyle{amsalpha}
\bibliography{master}

\end{document}